\theoremstyle{plain}
\theoremstyle{definition}
\newtheorem{definition}{Definition}[section]
\newtheorem{lemma}{Lemma}[section]
\newtheorem{theorem}{Theorem}[section]
\newtheorem{remark}{Remark}[section]
\numberwithin{equation}{section}
\newcommand{\bs}{\boldsymbol}
\newcommand{\mb}{\mathbb}
\newcommand{\mc}{\mathcal}
\newcommand{\ms}{\mathscr}
\newcommand{\mr}{\mathrm}
\def \pa{\partial}
\begin{document}
\title[]{Almost Global Solutions of Kirchhoff Equation}

\let\thefootnote\relax\footnotetext{Supported by NNSFC (Grant Nos. 12090010, 12090013)}

\author{Jianjun Liu \quad Duohui Xiang}
\address[Jianjun Liu] {College of Mathematics\\ Sichuan University\\ Chengdu 610065, China}
\email{jianjun.liu@scu.edu.cn}

\address[Duohui Xiang] {College of Mathematics\\ Sichuan University\\ Chengdu 610065, China}
\email{duohui.xiang@outlook.com}

\thanks{}

\begin{abstract}
    This paper is concerned with the original Kirchhoff equation 
    $$
\left\{\begin{aligned}
  & \pa_{tt}u-\Big(1+\int_{0}^{\pi}|\pa_xu|^2 dx\Big)\pa_{xx}u=0, 
  \\&u(t,0)=u(t,\pi)=0.
\end{aligned}\right.
$$
We obtain almost global existence and stability of solutions for almost any small initial data of size $\varepsilon$.
In Sobolev spaces, the time of existence and stability is of order $\varepsilon^{-r}$ for arbitrary positive integer $r$.
In Gevrey and analytic spaces, the time is of order $e^{\frac{|\ln\varepsilon|^2}{c\ln|\ln\varepsilon|}}$ with some positive constant $c$.
%
To achieve these, we build rational normal form for infinite dimensional  reversible vector fields without external parameters. We emphasize that for vector fields, the homological equation and the definition of rational normal form are significantly different from those for Hamiltonian  functions.
 \\\textbf{Keywords:}  Kirchhoff equation, almost global solution, rational normal form.
\end{abstract}

\maketitle
\tableofcontents

\section{Introduction and main results}
Consider the Kirchhoff equation with Dirichlet boundary conditions
\begin{equation}
\label{form11}
\left\{\begin{aligned}
  & \pa_{tt}u-\Big(1+\int_{0}^{\pi}|\pa_xu|^2 dx\Big)\pa_{xx}u=0, \; x\in[0,\pi],
  \\&u(t,0)=u(t,\pi)=0.
\end{aligned}\right.
\end{equation}
%
This equation was firstly introduced by Kirchhoff \cite{K1876} to model the small transversal oscillations of a clamped string. 
The Cauchy problem for \eqref{form11}  is locally well-posed for initial data $\big(u(0,x),\pa_tu(0,x)\big)$ in Sobolev space $H^{\frac{3}{2}}\times H^{\frac{1}{2}}$, seeing \cite{AP96} for example, where for any $s\geq0$, the Sobolev space is defined by
\begin{equation*}
H ^s:=\Big\{u=\sum_{a\in\mb{N}_*}u_a\sqrt{\frac{2}{\pi}}\sin ax\;\big|\;\; u_a\in\mb{R},\; \|u\|^{2}_{s}:=\sum_{a\in\mb{N}_*}a^{2s}|u_a|^{2}<+\infty\Big\}
\end{equation*}
with the positive integer set $\mb{N}_*$.
%
For analytic initial data, Bernstein \cite{B40} proved that the Cauchy problem for \eqref{form11} is globally well-posed. 
After that, the global well-posedness result was extended to a little larger spaces, such as quasi-analytic spaces, seeing \cite{N84} for example.
However, the global well-posedness is still an open problem for general initial data in Sobolev spaces or Gevrey spaces.
Notably, it is not even known for small initial data.
We will investigate almost global existence and stability for equation \eqref{form11} with small initial data in these spaces.
%

Under the standard inner product on $L^{2}([0,\pi])$, the equation \eqref{form11} is written as the system
\begin{equation}
\label{form12}
\left\{\begin{aligned}
  &\pa_{t}u=\;\;\frac{\pa H}{\pa v}=v,
  \\& \pa_{t}v=-\frac{\pa H}{\pa u}=\Big(1+\int_{0}^{\pi}|\pa_xu|^2 dx\Big)\pa_{xx}u,
\end{aligned}\right.
\end{equation}
with the Hamiltonian
\begin{equation}
\label{form13}
H(u,v)=\frac{1}{2}\int_{0}^{\pi}v^2 dx+\frac{1}{2}\int_{0}^{\pi}|\pa_xu|^2 dx+\frac{1}{4}\Big(\int_{0}^{\pi}|\pa_xu|^2 dx\Big)^2.
\end{equation}
For phase space $H^{s+\frac{1}{2}}\times H^{s-\frac{1}{2}}$, introduce the Gaussian measure formally defined by
\begin{equation}
d\mu_g=\frac{e^{-\sum_{a\in\mb{N}_{*}}(a^{2s+3}|u_a|^{2}+a^{2s+1}|v_a|^2)}dudv}{\int_{H^{s+\frac{1}{2}}\times H^{s-\frac{1}{2}}}e^{-\sum_{a\in\mb{N}_{*}}(a^{2s+3}|u_a|^{2}+a^{2s+1}|v_a|^2)}dudv}.
\end{equation}
This measure $\mu_g$ is viewed as the weak limit of finite dimensional Gaussian measures. By Theorem 2.4 in \cite{K19}, the measure $\mu_g$ is countably additive on the space $H^{s+\frac{1}{2}}\times H^{s-\frac{1}{2}}$ with $\mu_g(H^{s+\frac{1}{2}}\times H^{s-\frac{1}{2}})=1$.
Denote the open ball
$$B_{s}(R)=\{(u,v)\in H^{s+\frac{1}{2}}\times H^{s-\frac{1}{2}}\mid \|u\|^2_{s+\frac{1}{2}}+\|v\|^2_{s-\frac{1}{2}}<R^2\}$$ with the radius $R>0$.
Then $0<\mu_g(B_{s}(R))<1$, and thus we further  introduce the Gaussian measure $\mu$ in the unit ball $B_{s}(1)$ by
\begin{equation}
\label{form14}
d\mu=\frac{d\mu_g}{\mu_g(B_{s}(1))}.
\end{equation}
Then for small initial data in Sobolev spaces, we have  the following result.
\begin{theorem}
\label{th11}
For any integer $r\geq4$, there exists $s_{0}$ depending on $r$ such that for any $s\geq s_{0}=O(r^2)$, there exist $0<\varepsilon_{0}\ll 1$ depending on $r,s$ and an open set $\mc{V}_{r,s}\subset B_{s}(\varepsilon_0)$ such that for any $0<\varepsilon\leq\varepsilon_{0}$, if the initial datum $\big(u(0,x),v(0,x)\big)\in\mc{V}_{r,s}\cap B_{s}(\varepsilon)$,
then for any $|t|\leq\varepsilon^{-r}$,  the solution $u(t,x)$ of the Kirchhoff equation \eqref{form11} exists and satisfies
\begin{equation}
\label{form15}
\|u(t,x)\|_{s+\frac{1}{2}}^2+\|v(t,x)\|_{s-\frac{1}{2}}^2\leq4\varepsilon^2,
\end{equation}
\begin{equation}
\label{form16}
\sup_{a\in\mb{N}_*}a^{2s}|I_{a}(t)-I_{a}(0)|\leq\varepsilon^{3},
\end{equation}
where the action $I_a=\frac{a|u_a|^{2}+a^{-1}|v_a|^2}{2}$.
Moreover, the open set $\mc{V}_{r,s}$ is asymptotically of full measure, i.e., \begin{equation}
\label{form17}
\mu\Big(\varepsilon (u,v)\in\mc{V}_{r,s}\Big)\geq1-\varepsilon^{\frac{1}{14}}.
\end{equation}
\end{theorem}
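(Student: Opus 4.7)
The plan is to prove Theorem \ref{th11} by constructing a rational normal form for the reversible vector field associated with \eqref{form11} and then running a bootstrap argument on a large time scale. First I would diagonalize the linearization: writing $u=\sum_{a}u_a\sqrt{2/\pi}\sin(ax)$, $v=\sum_a v_a\sqrt{2/\pi}\sin(ax)$ and setting
\[
\xi_a=\frac{au_a+iv_a}{\sqrt{2a}},\qquad \eta_a=\frac{au_a-iv_a}{\sqrt{2a}}=\bar\xi_a,
\]
the system \eqref{form12} becomes $\dot\xi_a=ia\xi_a+N_a(\xi,\eta)$ where the nonlinearity inherits from the nonlocal coefficient $1+\int|\partial_xu|^2dx=1+\sum_b b\,\xi_b\eta_b/\cdots$ a very specific multilinear form, and the actions become $I_a=\xi_a\eta_a$. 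The system is reversible under $(t,\xi,\eta)\mapsto(-t,\eta,\xi)$ but, crucially, the normal form procedure will be carried out at the level of vector fields, not via a generating Hamiltonian, so as to accommodate the loss of derivatives inherent to the quasilinear structure.

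Second, I would iterate the rational normal form reduction developed earlier in the paper: at step $n$ one solves a homological equation for a reversible vector field $\chi_n$ whose time-one flow $\Phi_n$ eliminates all non-resonant monomials of homogeneity $n$. Because the linear frequencies $\omega_a=a$ are completely resonant over $\mathbb{Z}$, the classical Birkhoff normal form is unavailable; one instead keeps in the denominators of the coefficients of $\chi_n$ rational expressions of the form $\sum_j k_j a_j+P(I)/Q(I)$ with $P,Q$ multilinear in the actions coming from the Kirchhoff coefficient $\int|\partial_xu|^2dx$. The key algebraic point, specific to vector fields rather than Hamiltonians, is that the cohomological operator $\mathrm{ad}_{L}$ (where $L$ is the linear field) has a different kernel and range than in the scalar Hamiltonian case, so the definition of "resonant monomial" and the splitting must be set up accordingly; this is the part the paper emphasizes in the abstract.

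Third, I would establish the arithmetic/measure side. The bad set is the union, over finitely many multi-indices $(k,\alpha,\beta)$ appearing through degree $r$, of the zero-neighborhoods $|\omega\cdot k+P_{k,\alpha,\beta}(I)/Q_{k,\alpha,\beta}(I)|<\gamma$. Using the explicit polynomial dependence on $I$ and the regularity of the Gaussian measure $\mu$ with respect to each $I_a$, one shows these have Gaussian measure $\lesssim\gamma^{1/c}$ and sums them into the complement of $\mathcal{V}_{r,s}$; tuning $\gamma=\varepsilon^{1/\kappa}$ produces the estimate \eqref{form17} with exponent $1/14$. This step also fixes the required loss of regularity and hence the relation $s_0=O(r^2)$.

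Finally, after $r-2$ normal form steps, the transformed vector field reads $L+Z+R$ with $Z$ preserving all actions (consequence of the resonant structure plus reversibility) and $R$ of size $\|R\|\lesssim\|\xi\|^{r+1}$ in the appropriate Sobolev topology. A standard bootstrap on $\|\xi(t)\|_s^2=\sum a^{2s}I_a(t)$ then yields \eqref{form15} and \eqref{form16} on $|t|\leq\varepsilon^{-r}$: the actions evolve only through $R$, giving $|\dot I_a|\lesssim a^{-2s}\varepsilon^{r+2}$, which integrated to time $\varepsilon^{-r}$ produces the $\varepsilon^3$ bound. The main obstacle, in my view, is the combined analytic/algebraic problem of step two: solving the homological equation for a quasilinear reversible vector field while keeping the rational denominators under tame control on Sobolev spaces, because the Kirchhoff nonlinearity loses two derivatives and the usual para-differential tricks must be reconciled with the rational divisors coming from the actions.
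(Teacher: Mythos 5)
Your outline follows the paper's general philosophy (reversible vector fields, amplitude--frequency modulation through rational normal forms, a Gaussian measure estimate, then a bootstrap), but it has a genuine gap at the very first analytic step. You propose to pass directly to the complex variables $\xi_a,\eta_a$ and then run the normal form iteration on the resulting quasilinear system, deferring the loss of derivatives to the iteration itself ("the normal form procedure will be carried out at the level of vector fields \dots so as to accommodate the loss of derivatives", and your closing paragraph hopes to reconcile "para-differential tricks" with the rational divisors). In those variables the nonlinearity is $-\mathrm{i}aQ(\psi,\bar\psi)(\psi_a+\bar\psi_a)$, an unbounded (order-one) vector field, and no bound of the form needed to solve the homological equations and control commutators is available; energy estimates fail for exactly this term, which is why the paper does \emph{not} treat the unboundedness inside the normal form. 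Instead it first applies the Baldi--Haus symmetrization \eqref{form22}, the time reparametrization \eqref{form23-12}, and the rescaling \eqref{form24}, after which the system \eqref{form218-12-1} has a \emph{bounded} polynomial vector field; the choice of vector fields over Hamiltonians is forced because this transformed system is no longer Hamiltonian (only reversible), not because it accommodates derivative loss. Without this preliminary reduction your step two does not get off the ground, and you offer no substitute for it.

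Two further points would break your quantitative conclusions as stated. First, you claim that after the reduction the normal form $Z$ "preserv[es] all actions (consequence of the resonant structure plus reversibility)"; this is false for generic resonant reversible monomials (e.g.\ the $\bar z_a z_b^2\bar z_c^2$ terms with $b-c=a$ appearing in $K_5$ of \eqref{form35}), and eliminating them is precisely where the new machinery lives: the vector-field homological equation \eqref{form136} produces non-integrable rational normal form terms $\tilde Z_{2l+1}$, $\tilde Z_{2l-1}$ (including one of \emph{lower} order), which forces the modified solution $\chi_3+M_3$ with $DI_a[M_3]=0$ for the quintic step, a separate treatment of the septic term using only $Z_3$, the control condition \eqref{form125}, and a count of small divisors --- none of which your plan addresses beyond noting that the kernel of $\mathrm{ad}_L$ differs from the Hamiltonian case. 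Second, your bookkeeping does not close: with a remainder of size $\|\xi\|^{r+1}$ the action drift over $|t|\le\varepsilon^{-r}$ is only $O(\varepsilon^{2})$, not the $\varepsilon^{3}$ of \eqref{form16}; the paper normalizes to order $2r+3$, loses roughly half the gain to the divisor factors $\gamma^{-13r}N^{O(r^2)}$ (this balance, together with the truncation requirement $N^{-2s}\lesssim\varepsilon^{r}$, is what yields $s_0=O(r^2)$), and the dominant contribution to \eqref{form16} comes from the coordinate transformations, of size $\varepsilon^{4}/\gamma^{2}$ with $\gamma\sim\varepsilon^{1/14}$, rather than from integrating the remainder.
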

Remark that apart from the existence of solutions, the theorem also contains two stability etimates, seeing \eqref{form15} for Sobolev norm and \eqref{form16} for every Fourier mode.
Moreover, we could assume that the actions $\{I_{a}\}_{a\in\mb{N}_*}$ are independent and uniformly distributed in an infinite cube $\prod_{a\in\mb{N}_{*}}(0, |a|^{-2s-2})$, and then use the product measure instead of the Gaussian measure \eqref{form14}. 

Next, introduce the Hilbert space 
\begin{equation}
\label{form18}
\mc{G}_{\rho,\theta,s}:=\Big\{u=\sum_{a\in\mb{N}_*}u_a\sqrt{\frac{2}{\pi}}\sin ax\;\big|\;\; u_a\in\mb{R},\;  \|u\|_{\rho,\theta,s}^{2}:=\sum_{a\in\mb{Z}}e^{2\rho a^{\theta}}a^{2s}|u_a|^{2}<+\infty\Big\},
\end{equation}
where $\rho>0$, $0<\theta\leq1$ and $s\geq0$. Taking 
$0<\theta<1$, it is the Gevrey space; and taking 
$\theta=1$, it is the analytic space.
Then we consider the Kirchhoff equation \eqref{form11} in the phase space $\mc{G}_{\rho,\theta,\frac{3}{2}}\times \mc{G}_{\rho,\theta,\frac{1}{2}}$.
 Denote the open ball
$$B_{\rho,\theta}(R)=\{(u,v)\in \mc{G}_{\rho,\theta,\frac{3}{2}}\times \mc{G}_{\rho,\theta,\frac{1}{2}}\mid \|u\|^2_{\rho,\theta,\frac{3}{2}}+\|v\|^2_{\rho,\theta,\frac{1}{2}}<R^2\}.$$
In a manner akin to \eqref{form14}, we formally define the Gaussian measure in the unit ball $B_{\rho,\theta}(1)$:
\begin{equation}
\label{form19}
d\mu=\frac{e^{-\sum_{a\in\mb{N}_{*}}e^{2\rho a^{\theta}}(a^{5}|u_a|^{2}+a^3|v_a|^2)}dudv}{\int_{B_{\rho,\theta}(1)}e^{-\sum_{a\in\mb{N}_{*}}e^{2\rho a^{\theta}}(a^{5}|u_a|^{2}+a^3|v_a|^2)}dudv}.
\end{equation}
Then for small initial data in Gevrey and analytic spaces, we have  the following  result.
\begin{theorem}
\label{th12}
Fix $\rho>0$ and $0<\theta\leq1$. There exist $0<\varepsilon_{0}\ll 1$ depending on $\rho,\theta$ and an open set $\mc{V}_{\rho,\theta}\subset B_{\rho,\theta}(\varepsilon_0)$ such that for any $0<\varepsilon\leq\varepsilon_{0}$, if the initial datum $u(0,x)\in\mc{V}_{\rho,\theta}\cap B_{\rho,\theta}(\varepsilon)$,
then for any
\begin{equation}
\label{form110}
|t|\leq\varepsilon^{-\frac{|\ln\varepsilon|}{15800(1+\frac{2}{\theta})\ln|\ln\varepsilon|}},
\end{equation}
 the solution $u(t,x)$ of the Kirchhoff equation \eqref{form11} exists and satisfies
\begin{equation}
\label{form111}
\|u(t,x)\|_{\rho,\theta,\frac{3}{2}}^2+\|v(t,x)\|_{\rho,\theta,\frac{1}{2}}^2\leq4\varepsilon^2,
\end{equation}
\begin{equation}
\label{form112}
\sup_{a\in\mb{N}_*}e^{2\rho|a|^{\theta}}a^{2}|I_{a}(t)-I_{a}(0)|\leq\varepsilon^{3}.
\end{equation}
Moreover, the open set $\mc{V}_{\rho,\theta}$ is asymptotically of full measure, i.e., \begin{equation}
\label{form113}
\mu\Big(\varepsilon (u,v)\in\mc{V}_{\rho,\theta}\Big)\geq1-\varepsilon^{\frac{1}{15}}.
\end{equation}
\end{theorem}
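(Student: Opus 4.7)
The plan is to prove Theorem~\ref{th12} by constructing a rational normal form for the Kirchhoff vector field, regarded as a reversible infinite-dimensional perturbation of the linear string equation, directly on the Gevrey/analytic scale $\mc{G}_{\rho,\theta,s}$. First, pass to complex Fourier coordinates $z_{a} = (\sqrt{a}\,u_{a} + i v_{a}/\sqrt{a})/\sqrt{2}$, so that the linear part becomes $\sum_{a\in\mb{N}_{*}} a\,|z_{a}|^{2}$ with unperturbed frequencies $\omega_{a}=a$, and the nonlinearity $\tfrac14(\int|\pa_{x}u|^{2}dx)^{2}$ becomes a quartic polynomial in $z,\bar z$ whose coefficients depend only on the super-action $M=\sum_{a}a\,I_{a}$. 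Because every $\omega_{a}$ is an integer, the system is massively resonant and classical Birkhoff normal form fails; the correct divisors are the modulated ones $\omega_{a_{1}}\pm\cdots\pm\omega_{a_{k}}$ shifted by rational functions of $M$.

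Next, iterate rational normal form steps, each eliminating non-(modulated-)resonant monomials of a fixed degree by the time-one flow of a reversible auxiliary vector field, solving the corresponding homological equation at the vector-field level rather than the Hamiltonian level. The decisive measure-theoretic input is that, off a set of small Gaussian measure, the modulated divisors stay quantitatively away from zero on $\varepsilon^{-1}B_{\rho,\theta}(\varepsilon)$, which produces the open set $\mc{V}_{\rho,\theta}$ with $\mu(\varepsilon(u,v)\in\mc{V}_{\rho,\theta})\geq 1-\varepsilon^{1/15}$, giving \eqref{form113}. In the Gevrey weight $e^{2\rho a^{\theta}}$ each step is solved after truncating Fourier modes at some level $K$; the key algebraic fact is that for the Kirchhoff nonlinearity every surviving resonant term Poisson-commutes with every action $I_{a}$, so the normal-form flow preserves each $I_{a}$ to leading order, and after back-transformation this yields the Gevrey bound \eqref{form111} and the per-mode estimate \eqref{form112}.

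The sub-exponential time \eqref{form110} comes from optimizing the step count $N$ against the truncation $K=K(N)$ and the size of the remainder. Each step gains a factor $\varepsilon^{2}$ but loses a combinatorial/analytic factor of the shape $N!^{c_{1}}e^{c_{2}\rho K^{\theta}}$; this exponential loss is forced by the Gevrey weight. Balancing it against the $\varepsilon^{2N}$ gain forces $K\sim(N/\rho)^{1/\theta}$ and $N\sim|\ln\varepsilon|/\ln|\ln\varepsilon|$, which produces the explicit exponent in \eqref{form110}; the constant $15800(1+2/\theta)$ is the accounting of all combinatorial losses accumulated along the iteration. A standard continuity/bootstrap argument then propagates the normal-form estimates throughout the time interval \eqref{form110}, and the measure bound \eqref{form113} is assembled from the bad-set estimates used at each step.

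The main obstacle is the quasi-linear character of the equation: the nonlinear vector field loses one derivative, so the homological equations must be solved at the level of vector fields rather than of Hamiltonian functions, and the auxiliary flows do not a priori preserve $\mc{G}_{\rho,\theta,s}$. Overcoming this requires the ``rational'' aspect of the normal form --- replacing non-resonant monomials by expressions that are rational in $M$ instead of by small-divisor-divided terms --- together with sharp commutator estimates for reversible vector fields on the weighted analytic scale, so that the one-derivative loss does not accumulate over the $N$ iterations. This is exactly the new technical content flagged in the abstract; once it is in place, the rest of the argument is a weighted analogue of the Sobolev Theorem~\ref{th11}.
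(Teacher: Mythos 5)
Your overall route is the one the paper takes (transplant the Sobolev rational normal form machinery to the weight $e^{2\rho a^{\theta}}$, truncate at a level $N$, prove a small-divisor measure estimate, and then optimize the step number $r$ and truncation $N$ in $\varepsilon$), but the mechanism as you describe it would not work. You assert that the nonlinearity, and hence the frequency modulation, depends only on the super-action $M=\sum_a aI_a$, and that the divisors are the integer combinations $\omega_{a_1}\pm\cdots\pm\omega_{a_k}$ shifted by rational functions of $M$. In fact $\int_0^{\pi}|\pa_xu|^2dx$ contains the non-action terms $z_b^2,\bar z_b^2$, and after the resonant normal form the modulation is by the individual actions: $\omega_a^{(2)}=\tfrac14 I_a$ and the quadratic corrections $\omega_a^{(4)}$ in \eqref{form34} and \eqref{form44}. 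A shift depending only on $M$ (or proportional to $a$ times a function of $M$) cancels identically on zero-momentum resonances $\sum\delta_k a_k=0$, so it provides no non-degeneracy and the measure estimates of \Cref{le62}, \Cref{le63} and \Cref{le71} would have nothing to exploit. Likewise, ``rational in $M$ instead of small-divisor-divided'' misdescribes the normal form: the paper's rational vector fields carry products of the action-dependent divisors $\Omega^{(2)}_{\bs{h}_m},\Omega^{(4)}_{\bs{h}_m}$ in their denominators, globally controlled by \eqref{form416}, respectively \eqref{form84} in the Gevrey case.

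Beyond this, the specifically new steps of the proof are absent from your plan. Solving the homological equation for reversible vector fields produces non-integrable rational normal form terms of the same and even lower order, as in \eqref{form136}; this forces the non-integrable definition of normal form, the modified solution $\chi_3+M_3$ with $DI_a[M_3]=0$ in the quintic step, the treatment of the septic term by $Z_3$ alone, the bookkeeping of the number of small divisors (\eqref{form596}), and an energy estimate on $\|z\|^2_{\rho,\theta}$ and on each $a^{2}e^{2\rho a^{\theta}}I_a$ at the end rather than a direct vector-field bound. Also, the quasi-linear derivative loss is not overcome by the rational normal form: it is removed beforehand by the bounded Baldi--Haus type transformation \eqref{form22} together with the time reparametrization \eqref{form23-12}, after which the nonlinearity is bounded. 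Finally, your balancing is not the paper's: the Gevrey weight causes no exponential loss per step (each step loses only powers of $N$ and $\gamma^{-1}$, via $\varepsilon\le\gamma^{13}(C_3N)^{-1128(r-1)}$), while the weight enters through the exponentially small truncation tail $e^{-2\rho(N/r)^{\theta}}$; this forces $N=|\ln\varepsilon|^{1+\frac{2}{\theta}}$, i.e. truncation of size roughly $r^{1+2/\theta}$ rather than your $K\sim(N/\rho)^{1/\theta}$, and it is exactly this choice that produces the factor $(1+\tfrac{2}{\theta})$ in the exponent \eqref{form110}.
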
 
Compared to Sobolev spaces, the time of existence and stability in \Cref{th12} is longer. As indicated in \eqref{form110},  the larger the value of $\theta$, the longer the time. It reflects an increase in stability time with higher regularity.
%
For analytic spaces, while the existence of global solutions has been previously established, this theorem is still meaningful as it provides the stability results \eqref{form111} and \eqref{form112}.
Remark that both \Cref{th11} and \Cref{th12} focus on  small initial data. Hence, the conclusions still hold true for the generalized Kirchhoff equation
$$\left\{\begin{aligned}
  & \pa_{tt}u-\phi\Big(\int_{0}^{\pi}|\pa_xu|^2 dx\Big)\pa_{xx}u=0, \; x\in[0,\pi],
  \\&u(t,0)=u(t,\pi)=0,
\end{aligned}\right.$$
where $\phi$ is a real analytic function on a neighborhood of the origin satisfying $\phi(0)>0$ and
$\phi'(0)\neq0$. Especially, taking $\phi(y)=1+y$, it is the Kirchhoff equation \eqref{form11}.

As previously noted, the Kirchhoff equation \eqref{form11} constitutes a Hamiltonian system. For Hamiltonian partial differential equations, Birkhoff normal form is typically employed to investigate the long time stability of solutions. 
For instance, Bambusi and Gr\'{e}bert \cite{BG06} proved an abstract Birkhoff normal form theorem adapted to a wide class of Hamiltonian partial differential equations. These equations often contain external parameters to guarantee non-resonant conditions, such as the mass in wave equations and the potential in Schr\"{o}dinger equations. Consequently, for nonlinear wave equations and nonlinear Schr\"{o}dinger equations in Sobolev spaces $H^s$ with sufficiently large $s$, they demonstrated arbitrarily polynomially long time stability, i.e., the stability time is of order $\varepsilon^{-r}$ for any positive integer $r$. 
There are numerous other results on polynomially long time stability for equations in Sobolev spaces with external parameters.
For equations with bounded nonlinear vector fields, see
\cite{B96,B03,G07,BDGS07,B08,GIP09,Z10,CLY16,BFG20,BFM24} for example. For equations with unbounded nonlinear vector fields, refer to \cite{D09a,D12,YZ14,D15,BD18,Z20,BMM24}.
%
%
Moreover, for equations in Gevrey or analytic spaces with external parameters, there are sub-exponentially long time stability results, i.e., the stability time is of order $\varepsilon^{-|\ln\varepsilon|^{\beta}}$ for some $0<\beta<1$. See \cite{FG13,CLSY18,BMP20,CCMW22,CMWZ22} for example.

For equations without external parameters, their linear frequencies typically fail to meet the high-order non-resonant conditions. An effective method is to extract parameters from nonlinear integrable terms by amplitude-frequency modulation, in which the amplitudes of initial data are used as parameters. 
For one dimensional nonlinear Schr\"{o}dinger equations, Kuksin and P\"{o}schel \cite{KP96} employed four-order normal form terms as part of the unperturbed Hamiltonian to modulate the resonant linear frequencies, thereby ensuring the non-resonant conditions in KAM iteration. Through this nonlinear modulation,  Bambusi \cite{B99b} demonstrated exponentially long time stability for a particular set of initial data in Sobolev space $H^1$, and Bourgain \cite{B00} proved arbitrarily polynomially long time stability for most small initial data in Sobolev spaces $H^s$ with large $s$. 
More recently, Bernier, Faou and Gr\'{e}bert \cite{BFG20b,BG21} used four-order and six-order normal form terms to modulate the resonant linear frequencies, and proposed rational normal form method to research more Hamiltonian partial differential equations, including one dimensional nonlinear Schr\"{o}dinger and Schr\"{o}dinger-Poisson equations, generalized KdV and Benjamin-Ono equations. This approach results in more twisted frequencies with respect to parameters.
In \cite{LX24}, we built rational normal form with exact global control of small divisors. As an application to one dimensional nonlinear Schr\"{o}dinger equations in Gevrey spaces, we obtained a stability time scale of order $\varepsilon^{-|\ln\varepsilon|^{\beta}}$ for any $0<\beta<1$. In \cite{BCGW24}, Bernier, Camps, Gr\'{e}bert and Wang achieved the stability time scale of order $\varepsilon^{-\frac{|\ln\varepsilon|}{\ln|\ln\varepsilon|}}$ for nonlinear Schr\"{o}dinger-Poisson equations, where this time scale is conjectured to be optimal.
Moreover, see\cite{LX24JMAA} for derivative nonlinear Schr\"{o}dinger equations, and see \cite{BC24} for  nonlinear Schr\"{o}dinger equations on non-rectangular flat tori.

We emphasize that the Kirchhoff equation \eqref{form11} is a quasi-linear partial differential equation without external parameters. Compared with KdV and Benjamin-Ono and derivative nonlinear Schr\"{o}dinger equations studied in \cite{BG21,LX24JMAA}, the Kirchhoff equation  exhibits a higher level of unboundedness. This leads to the failure of energy estimates, as noted in \cite{BH20}.
Actually, Baldi and Haus \cite{BH20,BH21,BH22} investigated the Kirchhoff equation on the $d$-dimensional torus $\mb{T}^d$. 
They eliminated the off-diagonal unbounded terms prior performing the normal form process, which is based on the method from \cite{D09a,D12,D15} of constructing a normal form for quasi-linear Klein-Gordon equations.
Concretely, the  $d$-dimensional Kirchhoff equation is transformed into
\begin{equation}
\label{formdK1}
\pa_t\left(\begin{array}{c}\eta
\\\bar{\eta}\end{array}\right)=(1+\mc{P}(\eta,\bar{\eta}))\left(\begin{array}{c}-{\rm i}|D_x|\eta \\{\rm i}|D_x|\bar{\eta}\end{array}\right)+R_{\geq3}(\eta,\bar{\eta}),
\end{equation}
where the term $\mc{P}$ is a small scalar multiplicative factor independent of the space variable $x$,
the notation $|D_x|$ is the Fourier multiplier $e^{{\rm i}k\cdot x}\mapsto |k|e^{{\rm i}k\cdot x}$ for any $k\in\mb{Z}^d$, and the remainder term $R_{\geq3}$ is a bounded vector field of order at least three.
Then after two steps of normal form procedure, 
the system \eqref{formdK1} becomes 
\begin{equation}
\label{formdK}
\pa_t\left(\begin{array}{c}w
\\\bar{w}\end{array}\right)=(1+\mc{P}(w,\bar{w}))\left(\begin{array}{c}-{\rm i}|D_x|w \\{\rm i}|D_x|\bar{w}\end{array}\right)+Z_3(w,\bar{w})+K_5(w,\bar{w})+R_{\geq7}(w,\bar{w}),
\end{equation}
where $Z_3$ is a cubic resonant vector field and gives no contribution to energy estimates,  $K_5$ is a quintic resonant vector field but provides a non-zero contribution to energy estimates, and $R_{\geq7}$ is a remainder term of order at least seven.
As a result, they obatined a lower bound on the existence time of order $\varepsilon^{-4}$ for all small initial data and $\varepsilon^{-6}$ for small initial data satisfying a suitable non-resonant condition. 
It is natural to ask for a longer time scale of existence and stability.
However, this is a challenging problem due to the resonant linear frequencies and the absence of external parameters. 
%
%
Recently, based on the structure of $K_5$ in \eqref{formdK}, the existence of chaotic-like motions was demonstrated in \cite{BGGH25}. 
%
For other equations with unbounded nonlinear vector fields and lacking external parameters , there are some results of long time stability featuring time scales of order $\varepsilon^{-4}$ or shorter, seeing \cite{T04,YZ16,BFP23,FLX24} for example.

For the one dimensional Kirchhoff equation \eqref{form11}, luckily, the cubic vector field $Z_3$ in \eqref{formdK} is integrable of the form
$${\rm i}\sum_{a\in\mb{N}_*}I_a(z_a\pa_{z_a}-\bar{z}_a\pa_{\bar{z}_a})$$
with the action $I_a=z_a\bar{z}_a$.
This provides a chance to perform rational normal form process as mentioned earlier. We emphasize that the previous rational normal form research has only focused on Hamiltonian systems. However, the transformed system \eqref{formdK} is no longer a Hamiltonian system.
%
%
%
Instead, we find that the transformed system retains the structure of a reversible vector field. 
Concretely, for the transformed vector field of   \eqref{form11}, the homogeneous polynomial vector field of order $2l+1$ is of the form 
\begin{equation}
\label{form120}
X=\sum_{a\in\mb{N}_*}\big(X^{(z_a)}\pa_{z_a}+ \overline{X^{(z_a)}}\pa_{\bar{z}_a}\big)
\end{equation}
with the $z_a$-component
\begin{equation}
\label{form121}
X^{(z_a)}=z_a\sum_{(\bs{b},\bs{c},\bs{d})\in\mb{N}_*^{l}}\tilde{X}^{(z_a)}_{(\bs{b},\bs{c},\bs{d})}z^2_{\bs{b}}\bar{z}^2_{\bs{c}}I_{\bs{d}}+\bar{z}_{a}\sum_{(\bs{b},\bs{c},\bs{d})\in\mb{N}_*^{l}}\tilde{X}^{(\bar{z}_a)}_{(\bs{b},\bs{c},\bs{d})}z^2_{\bs{b}}\bar{z}^2_{\bs{c}}I_{\bs{d}},
\end{equation}
where $z_{\bs{b}}=z_{b_1}\cdots z_{b_p}$, $\bar{z}_{\bs{c}}=\bar{z}_{b_1}\cdots \bar{z}_{b_q}$, $I_{\bs{d}}=I_{d_1}\cdots I_{d_k}$ with $p+q+k=l$. Especially, the coefficients $\tilde{X}^{(z_a)}_{(\bs{b},\bs{c},\bs{d})},\tilde{X}^{(\bar{z}_a)}_{(\bs{b},\bs{c},\bs{d})}$ are purely imaginary, which implies the vector field \eqref{form120} is reversible. 
In order to obtain a longer time scale of existence and stability,  we build rational normal form for infinite dimensional reversible vector fields. In the following, we elaborate on some significant differences between reversible vector fields and Hamiltonian functions in the context of rational normal form.

{\bf{(i) Solving homological equation.}} Due to the distinct operations of commutators and Poisson brackets, vector fields and Hamiltonian functions present a significant difference in solving homological equations, i.e., solving homological equations for vector fields generates a new class of terms that do not arise when solving the homological equations for Hamiltonian functions. The new terms are non-integrable and even of lower order. 

Precisely, the integrable polynomial vector field $Z_3+Z_5$ is employed to handle the resonant rational vector field $K_{2l+1}$ of higher order. 
As usual, we wish to find a rational vector field $\chi$ such that 
\begin{equation}
\label{form127}
[Z_3+Z_5,\chi]+K_{2l+1}=Z_{2l+1},
\end{equation}
where $Z_{2l+1}$ is an integrable rational vector field of order $2l+1$. 
However, for reversible vector fields,
we encounter the following homological equation
\begin{equation}
\label{form136}
[Z_3+Z_5,\chi]+K_{2l+1}=Z_{2l+1}+\tilde{Z}_{2l+1}+\tilde{Z}_{2l-1}
\end{equation}
with non-integrable rational vector fields $\tilde{Z}_{2l+1}$ and $\tilde{Z}_{2l-1}$,  where 
%
 $\tilde{Z}_{2l+1}$ is of the same order as $K_{2l+1}$, and $\tilde{Z}_{2l-1}$ is even two orders lower than $K_{2l+1}$.
This indicates that a lower order non-integrable term is generated when using $Z_3+Z_5$ to eliminate higher order rational vector fields.
More precisely, the $z_a$-component of $\tilde{Z}_{2l+1}$ consists of the terms  
\begin{equation}
\label{form130-4-22}
z_aI_dDI_a[\chi]\quad\text{and}\quad z_aI_dDI_d[\chi],
\end{equation}
and the $z_a$-component of $\tilde{Z}_{2l-1}$ is of the form
\begin{equation}
\label{form130}
z_aDI_a[\chi],
\end{equation}
where  
\begin{equation}
\label{form131}
DI_a[\chi]:=\bar{z}_a\chi^{(z_a)}+z_a\overline{\chi^{(z_a)}}.
\end{equation}
Notably, the term $DI_a[\chi]$ is the sum of two conjugate functions and does not only depend on the actions, which means $\tilde{Z}_{2l+1}$ and $\tilde{Z}_{2l-1}$ are non-integrable. 
Therefore,  a new definition and a different process of rational normal form are required. Moreover, during the iterative process, the lower order non-integrable terms also lead to  a new difficulty of estimating the number of small divisors in rational vector fields.

{\bf{(ii)  The definition of rational normal form.}} 
Due to the generated terms $\tilde{Z}_{2l+1}$ and $\tilde{Z}_{2l-1}$ in the homological equation \eqref{form136}, the definition of rational normal form differs from that in previous papers, i.e., it is not merely composed of integrable vector fields.
Precisely, consider resonant rational functions consisted of monomials of the form 
\begin{equation}
\label{form122}
\tilde{X}^{(z_a)}_{(\bs{b},\bs{c},\bs{d},\bs{h})}\frac{z_az^2_{\bs{b}}\bar{z}^2_{\bs{c}}I_{\bs{d}}}{\prod\limits_{m=1}^{\beta}\Omega_{\bs{h}_{m}}(I)}\quad\text{and}\quad \tilde{X}^{(\bar{z}_a)}_{(\bs{b},\bs{c},\bs{d},\bs{h})}\frac{\bar{z}_az^2_{\bs{b}}\bar{z}^2_{\bs{c}}I_{\bs{d}}}{\prod\limits_{m=1}^{\beta}\Omega_{\bs{h}_{m}}(I)}
\end{equation}
with $(\bs{b},\bs{c},\bs{d})\in\mb{N}_*^{\alpha}$ and $0\leq\beta<\alpha$, where $\bs{h}_{m}$ is an integer vector, and $\Omega_{\bs{h}_{m}}$ is a small divisor for each $m=1,\cdots,\beta$. 
%
Then we define the rational normal form by the coefficient conditions 
$$\tilde{X}^{(z_a)}_{(\bs{b},\bs{c},\bs{d},\bs{h})}=\tilde{X}^{(z_a)}_{(\bs{c},\bs{b},\bs{d},\bs{h})}\quad\text{and}\quad\tilde{X}^{(\bar{z}_a)}_{(\bs{b},\bs{c},\bs{d},\bs{h})}=\tilde{X}^{(\bar{z}_a)}_{(\bs{c},\bs{b},\bs{d},\bs{h})}.$$
Notice that in the homological equation \eqref{form136}, the terms $\tilde{Z}_{2l+1}$ and $\tilde{Z}_{2l-1}$  could be regarded as  rational normal forms.
Importantly, the rational normal forms have no effect on the action $I_a$ for any $a\in\mb{N}_*$.
Nevertheless, these terms still cause a difference in obtaining the long time stability result, i.e., although the final vector field is bounded, we must perform energy estimation rather than  direct vector field estimation. 

{\bf{(iii)  The control of small divisors.}} To construct rational vector fields, we propose an  exact global control condition of small divisors and prove its preservation in normal form process. In addition, the generated term $\tilde{Z}_{2l-1}$ in \eqref{form136} complicates the control of the number of small divisors in rational vector fields. We will illustrate these in detail below.

The small divisor $\Omega_{\bs{h}_{m}}(I)$ in \eqref{form122} is controlled by $\kappa_{\bs{h}_{m}}$, which could be roughly viewed as the smallest index of $\bs{h}_{m}$.  
Introduce the general Hilbert space 
$$\mr{h}_{\mr{w}}:=\{u=\{u_a\}_{a\in\mb{N}_*}\mid |u|_{\mr{w}}^{2}:=\sum_{a\in\mb{N}_*}\mr{w}_a^{2}|u_a|^{2}<+\infty\},$$
where the weight $\mr{w}=\{\mr{w}_a\}_{a\in\mb{N}_*}$ is  a real sequence satisfying $1\leq\mr{w}_a\leq\mr{w}_{a'}$ for $a\leq a'$.
Then it very roughly holds that
\begin{equation*}
|\Omega_{\bs{h}_{m}}(I)|>\mr{w}_{\kappa_{\bs{h}_{m}}}^{-1}\quad\text{and}\quad |z_a|<\mr{w}_a^{-1}.
\end{equation*}
In \cite{LX24}, two global conditions were established to exactly control small divisors in rational Hamiltonian functions, and these two  conditions could not be separately kept in the normal form process.
%
Instead, for reversible rational vector fields, we propose the following condition to globally control small divisors in rational monomials \eqref{form122}:
\begin{equation}
\label{form125}
\prod_{m=1}^{\beta}\mr{w}_{\kappa_{\bs{h}_{m}}}\leq\Big(\prod_{m=1}^{p}\mr{w}_{b_m}\Big)\Big(\prod_{m=1}^{q}\mr{w}_{c_m}\Big)\Big(\prod_{m=1}^{k}\mr{w}_{d_m}\Big).
\end{equation}
Specially, in Sobolev spaces, the weight ${\rm w}_a=a^{s}$ and the control condition is
$$\prod_{m=1}^{\beta}\kappa_{\bs{h}_{m}}\leq\Big(\prod_{m=1}^{p}b_m\Big)\Big(\prod_{m=1}^{q}c_m\Big)\Big(\prod_{m=1}^{k}d_m\Big);$$
and in Gevrey or analytic spaces, the weight ${\rm w}_a=e^{\rho a^{\theta}}$ and the control condition is
$$\sum_{m=1}^{\beta}\kappa^{\theta}_{\bs{h}_{m}}\leq\sum_{m=1}^{p}b_m^{\theta}+\sum_{m=1}^{q}c_m^{\theta}+\sum_{m=1}^{k}d_m^{\theta}.$$ 
The condition \eqref{form125} is exactly adequate to bound rational vector fields. Moreover, it could be solely preserved for the commutator of two rational vector fields.

In addition, it is also crucial to control the number of small divisors in rational vector fields. The number significantly impacts the  time scale of existence and stability. 
%
%
As mentioned above, after solving the homological equation \eqref{form136}, a lower order rational normal form is generated. Even worse, this term newly introduces an additional small divisor. This brings a difficulty to estimate the number of small divisors in the iterative process.
To address this issue, we propose a novel approach, i.e., we categorize the order of rational vector fields based on iterative steps, and then control the upper bound of the number of small divisors in each segment.

{\bf{(iv)  The rational normal form process.}} The first two steps of rational normal form process differ significantly from those in the previous papers. 
Typically, the cubic integrable vector field $Z_3$ is used to deal with the quintic resonant term, and after one step of rational normal form process, the integrable polynomial vector field $Z_3+Z_5$ is introduced to handle higher order non-integrable vector fields.
However, a quintic non-integrable term $\tilde{Z}_5$ is newly generated after solving the homological equation.  Despite $\tilde{Z}_5$ being a rational normal form term,  it must be  eliminated to ensure the subsequent iterative process.  To achieve this, we divide the process into two sub-steps to construct the solution of the homological equation in the first step. 
Moreover,  if we address the septic vector field via the homological equation \eqref{form136}, then the quintic rational normal form term $\tilde{Z}_5$ will re-emerge. Thus we still solely use the cubic integrable vector field $Z_3$ to eliminate the septic vector field in the second step. We will illustrate these
in detail below.

The homological equation only associated with $Z_3$ is of the form
\begin{equation}
\label{form129}
[Z_3,\chi]+K_{2l+1}=Z_{2l+1}+\tilde{Z}_{2l+1},
\end{equation}
where $\tilde{Z}_{2l+1}$ is a newly generated rational vector field of the same order as $K_{2l+1}$. It is different from the homological equation \eqref{form136} associated with $Z_3+Z_5$.
%
%
%
%
In the first step, the homological equation \eqref{form129} is 
\begin{equation}
\label{form128}
[Z_3,\chi_3]+K_{5}=Z_{5}+\tilde{Z}_{5}.
\end{equation}
%
%
To eliminate $\tilde{Z}_5$, we must solve another homological equation
\begin{equation}
\label{form133}
[Z_3,M_3]+\tilde{Z}_5=\tilde{\tilde{Z}}_5,
\end{equation}
in which another term $\tilde{\tilde{Z}}_{5}$ is regenerated. This seems to be an endless process.
%
%
Fortunately, we find that the solution $M_3$ satisfies
\begin{equation}
\label{form134}
DI_a[M_3]=0,
\end{equation}
which implies $\tilde{\tilde{Z}}_{5}=0$.
In fact, we have 
$$M_3=\sum_{a\in\mb{N}_*}(M^{(z_a)}\pa_{z_a}+\overline{M^{(z_a)}}\pa_{\bar{z}_a})$$ 
with its $z_a$-component of the form
\begin{equation}
\label{form135}
M^{(z_a)}=z_a(M^a-\overline{M^a}),
\end{equation}
which is the difference of two conjugate functions multiplied by $z_a$.
%
In view of the structure \eqref{form131}, the solution $M_3$  satisfies \eqref{form134}.
Combining \eqref{form128} and \eqref{form133} with $\tilde{\tilde{Z}}_5=0$, it is equivalent to seek a  modified solution $\chi_3+M_3$ satisfying the homological equation
\begin{equation}
\label{form132}
[Z_3,\chi_3+M_3]+K_5=Z_5.
\end{equation}
Remark that two additional small divisors are newly introduced into the denominators of $M_3$. 
Therefore, the method of constructing the modified solutions is not universally applicable to eliminating higher order rational normal form terms, i.e., for $l\geq3$, we could not solve the following homological equation
$$[Z_3+Z_5,\chi+M]+K_{2l+1}=Z_{2l+1}\quad\text{or}\quad[Z_3,\chi+M]+K_{2l+1}=Z_{2l+1}$$
instead of \eqref{form136} or \eqref{form129}.
%

%
%
%
Moreover, we need make sure that the solution of the homological equation \eqref{form129} for $K_7$ meets the control condition \eqref{form125}.
Luckily, we find that after the first step of rational normal form process,  the septic vector field $K_7$ satisfies a stronger condition than the control condition \eqref{form125}, namely
\begin{equation}
\label{form138}
\prod_{m=1}^{\beta}\mr{w}_{\kappa_{\bs{h}_{m}}}\leq\frac{\Big(\prod_{m=1}^{p}\mr{w}_{b_m}\Big)\Big(\prod_{m=1}^{q}\mr{w}_{c_m}\Big)\Big(\prod_{m=1}^{k}\mr{w}_{d_m}\Big)}{\mr{w}_{\mu_1(\bs{b},\bs{c},\bs{d})}}.
\end{equation}
With this help, we show that after solving the homological equation \eqref{form129} for $K_7$, the control condition \eqref{form125} is satisfied.
In fact, it is precisely sufficient to perform two steps of rational normal form process by only using the integrable polynomial vector field $Z_3$ to solve homological equations.

We emphasize that the above four key points of rational normal form are not only valid for the Kirchhoff equation \eqref{form11}, but also applicable to more general  reversible systems. 
In addition, we also mention some results concerning the existence of periodic and quasi-periodic solutions for Kirchhoff equations with an external force or a Fourier multiplier, seeing \cite{Baldi09,M17,CM18,CG22,CG24} for example. We will further investigate the Kirchhoff equation \eqref{form11} via KAM theory in our next paper.

 Now we lay out an outline:
 \Cref{sec2}-\Cref{sec6} focus on proving the long time stability for the Kirchhoff equation \eqref{form11} in Sobolev spaces. We emphasize that for all
constants in the normal form process, the dependence on the iterative step $r$ is concretely calculated.  In \Cref{sec7}, the process is extended to Gevrey and analytic spaces. By taking $r$ appropriately large depending on $\varepsilon$, we achieve a longer time scale of existence and stability.
\\\indent \Cref{sec2} contains notations and resonant normal form theorem. In \Cref{sec21}, we define homogeneous polynomial vector fields based on the structure of the transformed Kirchhoff equation \eqref{form218-12-1}. We distinguish reversible and anti-reversible vector fields, with the latter used to characterize solutions of homological equations. In addition, the estimates of polynomial vector fields and their commutator are also provided.
In \Cref{sec22}, the system \eqref{form218-12-1} is associated with a vector field \eqref{form218}. Then for this vector field, we prove the resonant normal form theorem, seeing \Cref{th31}. In the theorem, the expression of the cubic integrable polynomial vector field $Z_3$ and the quintic resonant polynomial vector field $K_5$ are explicitly given.
\\\indent In \Cref{sec3}, we propose two suitable
small divisor conditions and define rational vector fields. In \Cref{sec31}, we truncate the resonant polynomial vector fields in \Cref{th31}. 
%
Then according to the integrable polynomial vector fields $Z_3$ and $Z_5$, we introduce the non-resonant set $\mc{U}^{N}_{\gamma}$, and demonstrate the preservation of the small divisor conditions under perturbations, seeing \Cref{le41}.  
In \Cref{sec32}, we define rational vector fields with the global condition \eqref{form416} to exactly control the small divisors in Sobolev spaces. Then we estimate rational vector fields, seeing \Cref{le42}, where the number of small divisors and the order of numerators are critical factors in these estimates.
In \Cref{sec33}, we estimate the commutator of reversible and anti-reversible rational vector fields, seeing \Cref{le43}. As a supplement, the increase in the number of small divisors is indicated in \Cref{re42}.
\\\indent In \Cref{sec4}, we firstly define the rational normal form, 
seeing \Cref{def43}. The rational normal form has no effect on the actions, seeing \Cref{re41-12}.
Then we solve three types of homological equations. The first type is related to the integrable polynomial vector field $Z_3+Z_5$, seeing \Cref{le44}. It is used to eliminate non-normal form parts of higher order resonant rational vector fields.
%
The other two types of homological equations are associated solely with the cubic integrable polynomial vector field $Z_3$, seeing \Cref{le51} and \Cref{le52}. They are respectively used to eliminate non-integrable parts of $K_5$ and non-normal form parts of $K_7$. 
%
Moreover, we indicate the increase in the number of small divisors after solving homological equations, seeing \Cref{re43}.
\\\indent \Cref{sec5} presents two rational normal form theorems. In \Cref{sec51}, we eliminate non-integrable parts of quintic terms and non-normal form parts of septic terms by using the integrable polynomial vector field $Z_3$, and thus get \Cref{th51} by two steps of rational normal form process. In \Cref{sec52}, we eliminate higher order resonant truncated rational vector fields by $Z_{3}+Z_{5}$, and thus get \Cref{th52} by arbitrary finite steps of rational normal form process. Especially, in the iterative lemma, a novel approach is  proposed to estimate the number of small divisors in rational
vector fields, seeing \eqref{form596} in \Cref{le53}
\\\indent In \Cref{sec6}, firstly, we estimate the measure of the non-resonant set $\mc{U}^{N}_{\gamma}$, seeing \Cref{le61}. Then combining with the above three normal form theorems, we complete the proof of \Cref{th11}.
\\\indent \Cref{sec7} is devoted to proving \Cref{th12}. In \Cref{sec71}, for Gevrey and analytic spaces, we propose two non-resonant conditions \eqref{form82} and \eqref{form83} instead of \eqref{form49} and \eqref{form410}, and then establish the corresponding rational framework. 
%
In \Cref{sec72}, we give the normal form theorem for Kirchhoff equation \eqref{form11} in Gevrey and analytic spaces, seeing \Cref{th71}. 
%
In \Cref{sec73}, we estimate the measure in \Cref{le71},  and then achieve the sub-exponentially long time stability result by selecting the iterative step $r$ and the truncation parameter $N$.
\section{Resonant normal form}
\label{sec2}
Recall the original system \eqref{form12}, and then we rewrite this system as a discrete infinite dimensional system. 
For any $a\in\mb{N}_*$, let
\begin{equation}
\label{form21}
\psi_a=\frac{a^{\frac{1}{2}}u_a+{\rm i}a^{-\frac{1}{2}}v_a}{\sqrt{2}},\quad \bar{\psi}_a=\frac{a^{\frac{1}{2}}u_a-{\rm i}a^{-\frac{1}{2}}v_a}{\sqrt{2}},
\end{equation}
and then the system \eqref{form12} becomes 
\begin{equation}
\label{form23'}
\left\{\begin{aligned}
  & \pa_t\psi_a=-{\rm i}a\psi_a-{\rm i}aQ(\psi,\bar{\psi})(\psi_a+\bar{\psi}_a),
  \\& \pa_t\bar{\psi}_a={\rm i}a\bar{\psi}_a+{\rm i}aQ(\psi,\bar{\psi})(\psi_a+\bar{\psi}_a),
\end{aligned}\right.
\end{equation}
where $$Q(\psi,\bar{\psi})=\frac{1}{4}\sum_{b\in\mb{N}_*}b\big|\psi_b+\bar{\psi}_b\big|^2=\frac{1}{2}\|u\|_{1}^2.$$ 
%
As a result of Lemma 3.1 in \cite{BH20}, there exists  a nonlinear bounded transformation $(\psi_a,\bar{\psi}_a)_{a\in\mb{N}_*}\mapsto(\eta_a,\bar{\eta}_a)_{a\in\mb{N}_*}$ to remove the unbounded off-diagonal parts of the system \eqref{form23'}. Precisely, make the change of variables
\begin{equation}
\label{form22}
\left(\begin{array}{c} \eta_a \\ \bar{\eta}_a  \end{array}\right)=
\frac{1}{\sqrt{1-\rho^2\big(Q(\psi,\bar{\psi})\big)}}
\left(\begin{array}{cc}1 & \rho\big(Q(\psi,\bar{\psi})\big) \\\rho\big(Q(\psi,\bar{\psi})\big) & 1 \end{array}\right)
\left(\begin{array}{c} \psi_a \\ \bar{\psi}_a  \end{array}\right)
\end{equation}
with $$\rho(x)=\frac{x}{1+x+\sqrt{1+2x}},$$
and then the system \eqref{form23'} becomes
\begin{equation}
\label{form23''}
\left\{\begin{aligned}
  & \pa_t\eta_a=-{\rm i}\sqrt{1+2\varphi\big(Q(\eta,\bar{\eta})\big)}a\eta_a-{\rm i}\frac{\sum_{b\in\mb{N}_*}b^2(\eta_b^2-\bar{\eta}_b^2)}{4\Big(1+2\varphi\big(Q(\eta,\bar{\eta})\big)\Big)}\bar{\eta}_a,
  \\& \pa_t\bar{\eta}_a={\rm i}\sqrt{1+2\varphi\big(Q(\eta,\bar{\eta})\big)}a\bar{\eta}_a-{\rm i}\frac{\sum_{b\in\mb{N}_*}b^2(\eta_b^2-\bar{\eta}_b^2)}{4\Big(1+2\varphi\big(Q(\eta,\bar{\eta})\big)\Big)}\eta_a,
\end{aligned}\right.
\end{equation}
where 
$$Q(\eta,\bar{\eta})=\frac{1}{4}\sum_{b\in\mb{N}_*}b\big|\eta_b+\bar{\eta}_b\big|^2=Q(\psi,\bar{\psi})\sqrt{1+2Q(\psi,\bar{\psi})},$$
and $\varphi$ is the inverse of the real map $x\mapsto x\sqrt{1+2x},\; x\geq0$.

Reparameterizing the time variable by 
\begin{equation}
\label{form23-12}
\frac{d\tau}{dt}=\sqrt{1+2\varphi\big(Q(\eta,\bar{\eta})\big)},
\end{equation}
the system \eqref{form23''} becomes
\begin{equation}
\label{form23}
\left\{\begin{aligned}
  &\pa_{\tau}\eta_a=-{\rm i}a\eta_a-\frac{{\rm i}}{4}\Big(1+2\varphi\big(Q(\eta,\bar{\eta})\big)\Big)^{-\frac{3}{2}}\Big(\sum_{a\in\mb{N}_*}b^2(\eta_b^2-\bar{\eta}_b^2)\Big)\bar{\eta}_a,
  \\& \pa_{\tau}\bar{\eta}_a=\;\,\;{\rm i}a\bar{\eta}_a-\frac{{\rm i}}{4}\Big(1+2\varphi\big(Q(\eta,\bar{\eta})\big)\Big)^{-\frac{3}{2}}\Big(\sum_{a\in\mb{N}_*}b^2(\eta_b^2-\bar{\eta}_b^2)\Big)\eta_a.
\end{aligned}\right.
\end{equation}
%
For convenience, introduce the new coordinate $z=\{z_{a}\}_{a\in\mb{N}_*}$ by letting 
\begin{equation}
\label{form24}
\eta_a=\frac{z_a}{a},
\end{equation}
, and then the system  \eqref{form23} becomes
\begin{equation}
\label{form218-12-1}
\left\{\begin{aligned}
  &\pa_{\tau}z_a=-{\rm i}az_a-\frac{{\rm i}}{4}\Big(1+2\varphi\big(y(z,\bar{z})\big)\Big)^{-\frac{3}{2}}\Big(\sum_{b\in\mb{N}_*}(z_b^2-\bar{z}_b^2)\Big)\bar{z}_a,
  \\& \pa_{\tau}\bar{z}_a=\;\,\;{\rm i}a\bar{z}_a-\frac{{\rm i}}{4}\Big(1+2\varphi\big(y(z,\bar{z})\big)\Big)^{-\frac{3}{2}}\Big(\sum_{b\in\mb{N}_*}(z_b^2-\bar{z}_b^2)\Big)z_a,
\end{aligned}\right.
\end{equation}
where 
$$y(z,\bar{z})=\frac{1}{4}\sum_{b\in\mb{N}_*}\frac{\big|z_b+\bar{z}_b\big|^2}{b}=\frac{1}{4}\sum_{b\in\mb{N}_*}\frac{z_b^2+\bar{z}_b^2+2I_b}{b}.$$

Next in this section, we firstly define homogeneous polynomial vector fields based on the structure of the Kirchhoff equation. Then we prove a resonant normal form theorem.

\subsection{Polynomial vector fields}
\label{sec21}
For any $s\geq0$, introduce the Sobolev space 
$$\ell_{s}^2:=\{z=\{z_a\}_{a\in\mb{N}_*}\in\mb{C}^{\mb{N}_*}\mid \|z\|_{s}:=\big(\sum_{a\in \mb{N}_*}a^{2s}|z_a|^2\big)^{\frac{1}{2}}<+\infty\},$$
and the phase space 
$$\ms{P}_{s}:=\ell_{s}^2\times \ell_{s}^2\ni\bs{z}:= (z,\bar{z})= (\{z_{a}\}_{a\in\mb{N}_*},\{\bar{z}_{a}\}_{a\in\mb{N}_*}).$$
Write $\bar{\bs{z}}:=(\bar{z},z)$, and denote the open ball
$$B_s(\varepsilon):=\{\bs{z}\in\ms{P}_{s}\mid \|\bs{z}\|^2_{s}:=\|z\|^2_{s}+\|\bar{z}\|^2_{s}<\varepsilon^2\}.$$
Consider the vector field of the form 
$$X(\bs{z})=\big(X^{(z)}(\bs{z}), X^{(\bar{z})}(\bs{z})\big)=\big(\{X^{(z_a)}(\bs{z})\}_{a\in\mb{N}_*}, \{X^{(\bar{z}_a)}(\bs{z})\}_{a\in\mb{N}_*}\big)\in\ms{P}_{s},$$ 
and we also use the differential geometry notation 
\begin{align}
\label{form26}
X(\bs{z})=X^{(\bs{z})}(\bs{z})\pa_{\bs{z}}=&X^{(z)}(\bs{z})\pa_{z}+ X^{(\bar{z})}(\bs{z})\pa_{\bar{z}}
\\\notag=&\sum_{a\in\mb{N}_*}\big(X^{(z_a)}(\bs{z})\pa_{z_a}+ X^{(\bar{z}_a)}(\bs{z})\pa_{\bar{z}_a}\big).
\end{align}

\indent In a brief statement, we use the convenient notation $\zeta=(\zeta_{j})_{j=(\delta,a)\in\mb{U}_{3}\times\mb{N}_*}$ with $\mb{U}_{3}=\{\pm1,0\}$, where 
\begin{equation}
\label{form3-23-notation}
\zeta_{j}=\left\{\begin{aligned}&z^2_{a},\qquad\; \qquad\text{when}\;\delta=1,
\\&\bar{z}^2_{a}, \qquad\;\qquad\text{when}\;\delta=-1,
\\&I_a:=|z_a|^2, \quad\text{when}\;\delta=0.
\end{aligned}\right.
\end{equation}
Let $\bar{j}=(-\delta,a)$, then $\bar{\zeta}_{j}=\zeta_{\bar{j}}$. Set $|j|=a$ and define 
\begin{equation}
\label{formzetanorm}
\|\zeta\|_s:=\sum_{j\in\mb{U}_{3}\times\mb{N}_*}|j|^{2s}|\zeta_{j}|=3\|z\|_s^2.
\end{equation}

For $\bs{j}=(j_{1},\cdots,j_{l})\in(\mb{U}_{3}\times\mb{N}_*)^{l}$, denote the monomial $\zeta_{\bs{j}}=\zeta_{j_{1}}\cdots \zeta_{j_{l}}$.
Denote by $\ms{M}_{2l+1}$ the set of homogeneous polynomial vector fields in \eqref{form26} of order $2l+1$ with each component 
\begin{equation}
\label{form25}
X^{(z_a)}(\bs{z})=z_a\sum_{\bs{j}\in(\mb{U}_{3}\times\mb{N}_*)^{l}}\tilde{X}^{(z_a, z_a)}_{\bs{j}}\zeta_{\bs{j}}+\bar{z}_{a}\sum_{\bs{j}\in(\mb{U}_{3}\times\mb{N}_*)^{l}}\tilde{X}^{(z_a, \bar{z}_a)}_{\bs{j}}\zeta_{\bs{j}},
\end{equation}
\begin{equation}
\label{form25-5}
X^{(\bar{z}_a)}(\bs{z})=\bar{z}_a\sum_{\bs{j}\in(\mb{U}_{3}\times\mb{N}_*)^{l}}\tilde{X}^{(\bar{z}_a, \bar{z}_a)}_{\bs{j}}\zeta_{\bar{\bs{j}}}+z_{a}\sum_{\bs{j}\in(\mb{U}_{3}\times\mb{N}_*)^{l}}\tilde{X}^{(\bar{z}_a, z_a)}_{\bs{j}}\zeta_{\bar{\bs{j}}},
\end{equation}
where the coefficients $\tilde{X}^{(z_a, z_a)}_{\bs{j}},\tilde{X}^{(z_a, \bar{z}_a)}_{\bs{j}}, \tilde{X}^{(\bar{z}_a, \bar{z}_a)}_{\bs{j}},\tilde{X}^{(\bar{z}_a, z_a)}_{\bs{j}}\in\mb{C}$ satisfy
\begin{equation}
\tilde{X}^{(z_a, z_a)}_{\bs{j}}=\overline{\tilde{X}^{(\bar{z}_a, \bar{z}_a)}_{\bs{j}}},\quad \tilde{X}^{(z_a, \bar{z}_a)}_{\bs{j}}=\overline{\tilde{X}^{(\bar{z}_a, z_a)}_{\bs{j}}},
\end{equation}
\begin{equation}
\label{defnorm}
\|X\|_{\ell^{\infty}}:=\sup_{\substack{a\in\mb{N}_*\\\bs{j}\in(\mb{U}_{3}\times\mb{N}_*)^{l}}}\Big\{\big|\tilde{X}^{(z_a, z_a)}_{\bs{j}}\big|,\big|\tilde{X}^{(z_a, \bar{z}_a)}_{\bs{j}}\big|\Big\}<+\infty,
\end{equation}
and for any permutation $\sigma$, one has
\begin{equation*}
\tilde{X}^{(z_a, z_a)}_{\sigma(\bs{j})}=\tilde{X}^{(z_a, z_a)}_{\bs{j}},
\quad\tilde{X}^{(z_a, \bar{z}_a)}_{\sigma(\bs{j})}=\tilde{X}^{(z_a, \bar{z}_a)}_{\bs{j}}.
\end{equation*} 
Especially, if the coefficients in \eqref{form25} and \eqref{form25-5} satisfy the reversible condition
\begin{equation}
\label{form26-12-1}
\tilde{X}^{(z_a, z_a)}_{\bs{j}}=-\tilde{X}^{(\bar{z}_a, \bar{z}_a)}_{\bs{j}},\quad \tilde{X}^{(z_a, \bar{z}_a)}_{\bs{j}}=-\tilde{X}^{(\bar{z}_a, z_a)}_{\bs{j}},
\end{equation}
then we say that the above polynomial vector field $X(\bs{z})$ belongs to the set $\ms{M}^{{\rm rev}}_{2l+1}$.
%
%
If the coefficients \eqref{form25} and \eqref{form25-5} satisfy the anti-reversible condition
\begin{equation}
\label{form27-12-1}
\tilde{X}^{(z_a, z_a)}_{\bs{j}}=\tilde{X}^{(\bar{z}_a, \bar{z}_a)}_{\bs{j}},\quad \tilde{X}^{(z_a, \bar{z}_a)}_{\bs{j}}=\tilde{X}^{(\bar{z}_a, z_a)}_{\bs{j}},
\end{equation}
then we say that the above polynomial vector field $X(\bs{z})$ belongs to the set $\ms{M}^{{\rm an-rev}}_{2l+1}$.

Remark that for any $X(\bs{z})\in\ms{M}_{2l+1}$, one has 
$\overline{X^{(z_a)}(\bs{z})}=X^{(\bar{z}_a)}(\bs{z})$, and the $\bar{z}_a$-component $X^{(\bar{z}_a)}(\bs{z})$ is uniquely determined by  the $z_a$-component $X^{(z_a)}(\bs{z})$. Especially, the coefficients of $X(\bs{z})\in \ms{M}^{{\rm rev}}_{2l+1}$ are purely imaginary, and the coefficients of $X(\bs{z})\in \ms{M}^{{\rm an-rev}}_{2l+1}$ are real.
Then these vector fields are estimated in the following lemma.

\begin{lemma}
\label{le21-1}
For $s\geq0$ and $X(\bs{z})\in\ms{M}_{2l+1}$, we have
\begin{equation}
\label{form27}
\|X(\bs{z})\|_{s}<3^{l+1}\|X\|_{\ell^{\infty}}\|z\|_s\|z\|_{0}^{2l}.
\end{equation}
\end{lemma}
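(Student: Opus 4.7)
The plan is to unwind the definition of $\|X(\bs{z})\|_s$ on $\ms{P}_s$, bound each Fourier component of $X$ via the triangle inequality and the $\ell^\infty$-norm of its coefficients, and then exploit the factorization of the monomial sum over $\bs{j}\in(\mb{U}_3\times\mb{N}_*)^l$.

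First I would estimate the $z_a$-component. From \eqref{form25} together with the coefficient bound \eqref{defnorm},
\begin{equation*}
|X^{(z_a)}(\bs{z})|\leq (|z_a|+|\bar{z}_a|)\,\|X\|_{\ell^\infty}\sum_{\bs{j}\in(\mb{U}_3\times\mb{N}_*)^l}|\zeta_{\bs{j}}|.
\end{equation*}
The key observation is that the sum factorizes: since $\zeta_{\bs{j}}=\zeta_{j_1}\cdots\zeta_{j_l}$,
\begin{equation*}
\sum_{\bs{j}\in(\mb{U}_3\times\mb{N}_*)^l}|\zeta_{\bs{j}}|=\Big(\sum_{j\in\mb{U}_3\times\mb{N}_*}|\zeta_j|\Big)^{l}=\|\zeta\|_0^{l}=(3\|z\|_0^2)^l=3^l\|z\|_0^{2l},
\end{equation*}
using \eqref{formzetanorm} at $s=0$. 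Since $|\bar{z}_a|=|z_a|$, this yields $|X^{(z_a)}(\bs{z})|\leq 2\cdot 3^l\|X\|_{\ell^\infty}|z_a|\,\|z\|_0^{2l}$, and the identical bound for $|X^{(\bar{z}_a)}(\bs{z})|$ follows from \eqref{form25-5} in the same way.

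Next I would square, multiply by $a^{2s}$, sum over $a\in\mb{N}_*$, and use the definition of $\|X(\bs{z})\|_s^2$ on $\ms{P}_s$:
\begin{equation*}
\|X(\bs{z})\|_s^2=\sum_{a\in\mb{N}_*}a^{2s}\bigl(|X^{(z_a)}(\bs{z})|^2+|X^{(\bar{z}_a)}(\bs{z})|^2\bigr)\leq 8\cdot 3^{2l}\|X\|_{\ell^\infty}^{2}\|z\|_0^{4l}\sum_{a\in\mb{N}_*}a^{2s}|z_a|^{2},
\end{equation*}
which equals $8\cdot 3^{2l}\|X\|_{\ell^\infty}^{2}\|z\|_s^2\|z\|_0^{4l}$. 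Taking square roots gives $\|X(\bs{z})\|_s\leq 2\sqrt{2}\cdot 3^l\|X\|_{\ell^\infty}\|z\|_s\|z\|_0^{2l}$, and the conclusion follows from the strict inequality $2\sqrt{2}<3$.

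There is no real obstacle here; the only thing to be careful about is the bookkeeping of factors of $3$ coming from the three sectors $\delta\in\{\pm 1,0\}$ of $\mb{U}_3$ in \eqref{form3-23-notation}, and the factor of $2$ coming from having both a $z_a$- and a $\bar{z}_a$-term in \eqref{form25} (and an additional factor of $2$ from summing both the $z_a$- and $\bar{z}_a$-components into $\|X(\bs{z})\|_s^2$). These combine into $8=2^3$ under the square, which is exactly compatible with the claimed constant $3^{l+1}$ because $\sqrt{8}<3$.
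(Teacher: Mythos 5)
Your proof is correct and follows essentially the same route as the paper: bound each component by $\|X\|_{\ell^\infty}$ times $(|z_a|+|\bar z_a|)$ and the factorized monomial sum $\sum_{\bs{j}}|\zeta_{\bs{j}}|=\|\zeta\|_0^l=3^l\|z\|_0^{2l}$, then sum in $a$ and conclude via $2\sqrt{2}\,3^l<3^{l+1}$. The only cosmetic difference is that the paper first reduces to the $z$-component by the conjugation symmetry ($\|X(\bs{z})\|_s=\sqrt{2}\,\|X^{(z)}(\bs{z})\|_s$), while you estimate both components separately, which amounts to the same bookkeeping.
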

\begin{proof}
In view of \eqref{form26}, one has
\begin{equation}
\label{form27-1}
\|X(\bs{z})\|_{s}=\sqrt{\|X^{(z)}(\bs{z})\|^2_{s}+\|X^{(\bar{z})}(\bs{z})\|^2_{s}}=\sqrt{2}\|X^{(z)}(\bs{z})\|_{s}.
\end{equation}
In view of \eqref{form25} and the definition of norm \eqref{defnorm}, one has
\begin{align}
\label{form27-2}
&\|X^{(z)}(\bs{z})\|_{s}=\Big(\sum_{a\in \mb{N}_*}a^{2s}\big|X^{(z_a)}\big|^2\Big)^{\frac{1}{2}}
\\\notag\leq&\|X\|_{\ell^{\infty}}\Big(\sum_{a\in \mb{N}_*}a^{2s}\Big||\bar{z}_a|\sum_{\bs{j}\in(\mb{U}_{3}\times\mb{N}_*)^{l}}|\zeta_{\bs{j}}|+|z_{a}|\sum_{\bs{j}\in(\mb{U}_{3}\times\mb{N}_*)^{l}}|\zeta_{\bs{j}}|\Big|^2\Big)^{\frac{1}{2}}
\\\notag\leq&\|X\|_{\ell^{\infty}}\Big(\sum_{a\in \mb{N}_*}a^{2s}\big(|\bar{z}_a|+|z_{a}|\big)^2\Big)^{\frac{1}{2}}\sum_{\bs{j}\in(\mb{U}_{3}\times\mb{N}_*)^{l}}|\zeta_{\bs{j}}|
\\\notag=&2\|X\|_{\ell^{\infty}}\|z\|_s\|\zeta\|_{0}^{l}.
\end{align}
By  \eqref{formzetanorm}, \eqref{form27-1} and \eqref{form27-2}, one has
\begin{align*}
\|X(\bs{z})\|_{s}\leq&2\sqrt{2}\|X\|_{\ell^{\infty}}\|z\|_s(3\|z\|_0^2)^{l}
\\<&3^{l+1}\|X\|_{\ell^{\infty}}\|z\|_s\|z\|_{0}^{2l},
\end{align*}
where the last inequality follows from the fact $2\sqrt{2}\times3^{l}<3^{l+1}$.
\end{proof}

Next, we define resonant polynomial vector fields. For $l\in\mb{N}_*$ and $b\in\mb{Z}$, denote
\begin{align*}
\mc{R}^b_{l}:=&\{\bs{j}=(\delta_k,a_k)_{k=1}^l\in(\mb{U}_{3}\times\mb{N}_*)^{l}\mid \Delta_{\bs{j}}:=\sum_{k=1}^l\delta_ka_{k}=b\}.
\end{align*}
Write $\mc{R}^{b}:=\bigcup_{l\geq1}\mc{R}_{l}^{b}$. Similarly with $\ms{M}_{2l+1}$, 
denote by $\ms{R}_{2l+1}$ the set of resonant polynomial vector fields of order $2l+1$ with $\mc{R}^0_{l}$ or $\mc{R}^a_{l}$ instead of $(\mb{U}_{3}\times\mb{N}_*)^{l}$ in \eqref{form25} and \eqref{form25-5}.
Concretely, a vector field $K(\bs{z})\in\ms{R}_{2l+1}$ is of the form
\begin{equation}
\label{form31} 
K(\bs{z})=\sum_{a\in\mb{N}_*}\big(K^{(z_a)}(\bs{z})\pa_{z_a}+ K^{(\bar{z}_a)}(\bs{z})\pa_{\bar{z}_a}\big)
\end{equation}
with each component
\begin{align}
\label{form32}
&K^{(z_a)}(\bs{z})=z_a\sum_{\bs{j}\in\mc{R}^0_{l}}\tilde{K}^{(z_a, z_a)}_{\bs{j}}\zeta_{\bs{j}}+\bar{z}_{a}\sum_{\bs{j}\in\mc{R}^a_{l}}\tilde{K}^{(z_a, \bar{z}_a)}_{\bs{j}}\zeta_{\bs{j}},
\\\label{form32-4-30}&K^{(\bar{z}_a)}(\bs{z})=\bar{z}_a\sum_{\bs{j}\in\mc{R}^0_{l}}\tilde{K}^{(\bar{z}_a, \bar{z}_a)}_{\bs{j}}\zeta_{\bar{\bs{j}}}+z_{a}\sum_{\bs{j}\in\mc{R}^a_{l}}\tilde{K}^{(\bar{z}_a, z_a)}_{\bs{j}}\zeta_{\bar{\bs{j}}}.
\end{align}
Especially, if the coefficients satisfy the reversible condition \eqref{form26-12-1} or anti-reversible condition \eqref{form27-12-1}, then we say $K\in\ms{R}^{{\rm rev}}_{2l+1}$ or $K\in\ms{R}^{{\rm an-rev}}_{2l+1}$, respectively.

Especially, denote the integrable index sets
\begin{equation*}
\mc{I}_{l}^{0}=\big\{\bs{j}=(\delta_k,a_k)_{k=1}^l\in\mc{R}_{l}^{0}\mid \exists\;\text{permutation}\;\sigma,\;\text{s.t.}\;\forall k,\; \delta_{k}=-\delta_{\sigma(k)},\;a_{k}=a_{\sigma(k)}\big\},
\end{equation*}
and for $a\in\mb{N}_*$,
$$\mc{I}_{l}^{a}:=\{\bs{j}\in\mc{R}^a_{l}\mid \big(\bs{j},(-1,a)\big)\in\mc{I}_{l+1}^{0}\}.$$
Write $\mc{I}^{0}:=\bigcup_{l\geq1}\mc{I}_{l}^{0}$ and $\mc{I}^{a}:=\bigcup_{l\geq1}\mc{I}_{l}^{a}$. 
Similarly with $\ms{R}_{2l+1}$, 
denote by $\ms{I}_{2l+1}$ the set of  integrable polynomial vector fields of order $2l+1$ with $\mc{I}^0_{l}$ and $\mc{I}^a_{l}$ instead of $\mc{R}^0_{l}$ and $\mc{R}^a_{l}$ in \eqref{form32}  and \eqref{form32-4-30}.
Remark that an  integrable polynomial vector field $Z(\bs{z})\in\ms{I}^{{\rm rev}}_{2l+1}$ could be rewritten as
\begin{equation}
\label{form32'}
Z(\bs{z})=\sum_{a\in\mb{N}_*}\sum_{\bs{d}\in\mb{N}_*^{l}}\tilde{Z}^{(z_a,z_a)}_{\bs{d}}I_{\bs{d}}(z_a\pa_{z_a}-\bar{z}_a\pa_{\bar{z}_a}).
\end{equation}
%
%

Finally, for two vector fields $X(\bs{z})$ and $Y(\bs{z})$, introduce their commutator 
$$[X,Y](\bs{z}):=DX(\bs{z})[Y(\bs{z})]-DY(\bs{z})[X(\bs{z})],$$ 
where the $z_a$-component is
\begin{align}
\label{form26'}
[X,Y]^{(z_a)}=&\frac{\pa X^{(z_a)}}{\pa \bs{z}}Y^{(\bs{z})}-\frac{\pa Y^{(z_a)}}{\pa \bs{z}}X^{(\bs{z})}
\\\notag=&\sum_{b\in\mb{N}_*}\Big(\frac{\pa X^{(z_a)}}{\pa z_b}Y^{(z_b)}+\frac{\pa X^{(z_a)}}{\pa \bar{z}_b}Y^{(\bar{z}_b)}-\frac{\pa Y^{(z_a)}}{\pa z_b}X^{(z_b)}-\frac{\pa Y^{(z_a)}}{\pa \bar{z}_b}X^{(\bar{z}_b)}\Big).
\end{align}
%
Given a vector field $X$, its transformed vector field under the time 1 flow generated by $\chi$ is
\begin{equation}
\label{form26-9-26}
e^{ad_{\chi}}X:=\sum_{k=0}^{+\infty}\frac{1}{k!}ad_{\chi}^kX
\end{equation}
where $ad^0_{\chi}X=X$ and  $ad_{\chi}^kX:=[ad_{\chi}^{k-1}X, \chi]$. 
Similarly with Lemma 4.12 in \cite{BG06}, we have the following result.

\begin{lemma}
\label{le22-12}
For $X\in \ms{M}^{{\rm rev}}_{2l_1+1}$ and $\chi\in \ms{M}^{{\rm an-rev}}_{2l_2+1}$, one has $[X,\chi]\in\ms{M}^{{\rm rev}}_{2(l_1+l_2)+1}$ with
\begin{equation}
\label{form211-9-25}
\|[X,\chi]\|_{\ell^{\infty}}<6(l_1+l_2+1)\|X\|_{\ell^{\infty}}\|\chi\|_{\ell^{\infty}}.
\end{equation}
\end{lemma}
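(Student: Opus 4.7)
The plan is to proceed in three stages: verify that $[X,\chi]$ has the structural form of $\ms{M}_{2(l_1+l_2)+1}$, check the reversibility of the result, and then establish the coefficient bound.

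For the structural step I would expand the commutator \eqref{form26'} using the explicit forms \eqref{form25}--\eqref{form25-5} of $X$ and $\chi$. When $\partial/\partial z_b$ acts on a monomial $z_a\zeta_{\bs{j}}$ of $X^{(z_a)}$, three cases arise: the derivative hits the explicit factor $z_a$ (contributing only when $b=a$, producing $\zeta_{\bs{j}}$), it hits a factor $z_c^2$ inside $\zeta_{\bs{j}}$ (producing $2z_c\zeta_{\bs{j}'}$ with $|\bs{j}'|=l_1-1$ when $b=c$), or it hits a factor $I_c=z_c\bar z_c$ (producing $\bar z_c\zeta_{\bs{j}'}$ when $b=c$). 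Multiplying by $\chi^{(z_b)}=z_b(\cdots)+\bar z_b(\cdots)$ and summing over $b$ then combines the loose $z_b$ or $\bar z_b$ arising from the derivative with the $z_b$ or $\bar z_b$ supplied by $\chi$ to produce a new $\zeta$-type factor lying in $\{z_b^2,\bar z_b^2,I_b\}$, while the explicit leading variable in $[X,\chi]^{(z_a)}$ is furnished either by the undifferentiated $z_a$ of $X^{(z_a)}$ or by the explicit factor of $\chi^{(z_a)}$ in the case $b=a$. The resulting $\zeta$-product has length $l_1+l_2$, and after symmetrizing over permutations one recovers the convention in \eqref{form25}. The same argument handles the remaining three summands of \eqref{form26'} and the analogous computation for $[X,\chi]^{(\bar z_a)}$, establishing $[X,\chi]\in\ms{M}_{2(l_1+l_2)+1}$.

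Reversibility follows immediately from the remark after \eqref{form27-12-1}: reversible vector fields have purely imaginary coefficients while anti-reversible ones have real coefficients. Since every coefficient of $[X,\chi]$ is a finite sum of products of one coefficient of $X$ with one of $\chi$, and the derivatives $\partial/\partial z_b$ and $\partial/\partial\bar z_b$ introduce no complex conjugation, each such coefficient is purely imaginary, so $[X,\chi]\in\ms{M}^{{\rm rev}}_{2(l_1+l_2)+1}$. Equivalently, for the reversibility involution $\rho(z,\bar z)=(\bar z,z)$ one has $\rho_{*}X=-X$ and $\rho_{*}\chi=\chi$, hence $\rho_{*}[X,\chi]=[-X,\chi]=-[X,\chi]$.

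For the quantitative bound \eqref{form211-9-25}, fix a target coefficient $\tilde Y^{(z_a,z_a)}_{\bs{l}}$ or $\tilde Y^{(z_a,\bar z_a)}_{\bs{l}}$ with $|\bs{l}|=l_1+l_2$. Each contribution to this coefficient arises from choosing one of the four summands in \eqref{form26'}, a splitting of $\bs{l}$ between the $X$- and $\chi$-monomials, and the slot inside the relevant monomial at which the derivative acts. Differentiation produces at most a multiplicative factor $2$ (from $\partial_{z_c}z_c^2=2z_c$), and every coefficient factor contributes at most $\|X\|_{\ell^\infty}$ or $\|\chi\|_{\ell^\infty}$. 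The number of effective slots is $l_1+1$ in $X^{(z_a)}$ (one explicit leading variable plus $l_1$ factors $\zeta_j$) and $l_2+1$ in $\chi^{(z_a)}$; combining this with the three possible types of $\zeta_j$ that can be produced after the derivative and $\chi$-multiplication, together with a factor $2$ absorbing both $\partial/\partial z_b$ and $\partial/\partial\bar z_b$ contributions, yields the stated bound $6(l_1+l_2+1)\|X\|_{\ell^\infty}\|\chi\|_{\ell^\infty}$ after the required symmetrization over permutations of $\bs{l}$. The main obstacle is precisely this bookkeeping: tracking the permutation symmetry of the coefficients against the explicit constant $6(l_1+l_2+1)$, in the spirit of the Poisson bracket estimate in Lemma 4.12 of \cite{BG06} but with the extra complication that the vector-field index $a$ is held fixed while the summation index $b$ must be traced through every case.
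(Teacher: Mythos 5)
Your overall route is the same as the paper's: expand the commutator componentwise, split according to whether the derivative hits the explicit leading variable or a factor of $\zeta_{\bs j}$, observe that in the latter case the loose variable recombines with the leading factor of the other field to create one new $\zeta$-type factor, and then bound each coefficient of $[X,\chi]$ by a count of such contributions times $\|X\|_{\ell^\infty}\|\chi\|_{\ell^\infty}$. The structural part and the reversibility part are fine; in particular your two arguments for reversibility (purely imaginary $\times$ real coefficients, and the involution identity $\rho_*[X,\chi]=[-X,\chi]=-[X,\chi]$) are both valid and match the paper's coefficient identity $\tilde X^{(\bs z_b,\bs z_b)}_{\bs j'}\tilde\chi^{(\bs z_c,\bs z_c)}_{\bs j''}=-\tilde X^{(\bar{\bs z}_b,\bar{\bs z}_b)}_{\bs j'}\tilde\chi^{(\bar{\bs z}_c,\bar{\bs z}_c)}_{\bs j''}$.

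The one place your argument does not close is the final count. As written, you allow $6$ contributions ($3$ types $\times$ $2$ derivative directions) for each of the $(l_1+1)+(l_2+1)$ slots, which gives $6(l_1+l_2+2)$, strictly larger than the claimed $6(l_1+l_2+1)$; the appeal to ``symmetrization over permutations of $\bs l$'' does not remove this excess (symmetrization only averages products, it does not reduce the number of mechanisms). The fix is the paper's sharper treatment of the leading-variable case: when the derivative hits the explicit $z_a$ or $\bar z_a$, no new $\zeta$-factor is created and the output monomial is just the concatenation $\zeta_{\bs j'}\zeta_{\bs j''}$, so this case contributes at most $4$ terms in total (two choices of which field is differentiated, two choices of which leading factor, with the surviving leading variable forced). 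Together with at most $2\times 3$ contributions per each of the $l_1+l_2$ positions of $\bs j$ in the other case, one gets $4+6(l_1+l_2)<6(l_1+l_2+1)$, which is exactly how the paper obtains the stated constant.
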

\begin{proof}
In view of the definition of the commutator, one has the component 
\begin{align}
\label{formp1}
[X,\chi]^{(z_a)}(\bs{z})=&\sum_{\bs{j}'\in(\mb{U}_{3}\times\mb{N}_*)^{l_1}}\tilde{X}^{(z_a, z_a)}_{\bs{j}'}\frac{\pa(z_a\zeta_{\bs{j}'})}{\pa\bs{z}}\chi^{(\bs{z})}+\sum_{\bs{j}'\in(\mb{U}_{3}\times\mb{N}_*)^{l_1}}\tilde{X}^{(z_a, \bar{z}_a)}_{\bs{j}'}\frac{\pa(\bar{z}_{a}\zeta_{\bs{j}'})}{\pa\bs{z}}\chi^{(\bs{z})}
\\\notag&-\sum_{\bs{j}''\in(\mb{U}_{3}\times\mb{N}_*)^{l_2}}\tilde{\chi}^{(z_a, z_a)}_{\bs{j}''}\frac{(\pa z_a\zeta_{\bs{j}''})}{\pa\bs{z}}X^{(\bs{z})}+\sum_{\bs{j}''\in(\mb{U}_{3}\times\mb{N}_*)^{l_2}}\tilde{\chi}^{(z_a, \bar{z}_a)}_{\bs{j}''}\frac{\pa(\bar{z}_{a}\zeta_{\bs{j}''})}{\pa\bs{z}}X^{(\bs{z})}
\\\notag:=&z_a\sum_{\bs{j}\in(\mb{U}_{3}\times\mb{N}_*)^{l_1+l_2}}\tilde{Y}^{(z_a, z_a)}_{\bs{j}}\zeta_{\bs{j}}+\bar{z}_{a}\sum_{\bs{j}\in(\mb{U}_{3}\times\mb{N}_*)^{l_1+l_2}}\tilde{Y}^{(z_a, \bar{z}_a)}_{\bs{j}}\zeta_{\bs{j}},
\end{align}
\begin{align}
\label{formp2}
[X,\chi]^{(\bar{z}_a)}(\bs{z})=&\sum_{\bs{j}'\in(\mb{U}_{3}\times\mb{N}_*)^{l_1}}\tilde{X}^{(\bar{z}_a, \bar{z}_a)}_{\bs{j}'}\frac{\pa(\bar{z}_a\zeta_{\bar{\bs{j}}'})}{\pa\bs{z}}\chi^{(\bs{z})}+\sum_{\bs{j}'\in(\mb{U}_{3}\times\mb{N}_*)^{l_1}}\tilde{X}^{(\bar{z}_a, z_a)}_{\bs{j}'}\frac{\pa(z_{a}\zeta_{\bar{\bs{j}}'})}{\pa\bs{z}}\chi^{(\bs{z})}
\\\notag&-\sum_{\bs{j}''\in(\mb{U}_{3}\times\mb{N}_*)^{l_2}}\tilde{\chi}^{(\bar{z}_a, \bar{z}_a)}_{\bs{j}''}\frac{(\pa \bar{z}_a\zeta_{\bar{\bs{j}}''})}{\pa\bs{z}}X^{(\bs{z})}+\sum_{\bs{j}''\in(\mb{U}_{3}\times\mb{N}_*)^{l_2}}\tilde{\chi}^{(\bar{z}_a, z_a)}_{\bs{j}''}\frac{\pa(z_{a}\zeta_{\bar{\bs{j}}''})}{\pa\bs{z}}X^{(\bs{z})}
\\\notag:=&\bar{z}_a\sum_{\bs{j}\in(\mb{U}_{3}\times\mb{N}_*)^{l_1+l_2}}\tilde{Y}^{(\bar{z}_a, \bar{z}_a)}_{\bs{j}}\zeta_{\bar{\bs{j}}}+z_{a}\sum_{\bs{j}\in(\mb{U}_{3}\times\mb{N}_*)^{l_1+l_2}}\tilde{Y}^{(\bar{z}_a, z_a)}_{\bs{j}}\zeta_{\bar{\bs{j}}}.
\end{align}
Without loss of generality, let us consider this term $z_a\tilde{Y}^{(z_a, z_a)}_{\bs{j}}\zeta_{\bs{j}}$ in \eqref{formp1}, where $\bs{j}$ is generated by $\bs{j}'\in(\mb{U}_{3}\times\mb{N}_*)^{l_1}$ and $\bs{j}''\in(\mb{U}_{3}\times\mb{N}_*)^{l_2}$.

{\bf{Case 1}}: this term is generated by taking the partial derivative with respect to $\bs{z}_a$. There are at most four combinations to obtain $z_a$, i.e., 
$$\frac{\pa (z_a\zeta_{\bs{j}'})}{\pa z_a}z_a\zeta_{\bs{j}''}\quad\text{or}\quad\frac{\pa (\bar{z}_a\zeta_{\bs{j}'})}{\pa \bar{z}_a}z_a\zeta_{\bs{j}''}\quad\text{or}\quad\frac{\pa (z_a\zeta_{\bs{j}''})}{\pa z_a}z_a\zeta_{\bs{j}'}\quad\text{or}\quad\frac{\pa(\bar{z}_a\zeta_{\bs{j}''})}{\pa\bar{z}_a}z_a\zeta_{\bs{j}'}.$$
In this case, one has $\bs{j}=(\bs{j}',\bs{j}'')$.  

{\bf{Case 2}}: this term is generated by taking the partial derivative with respect to $\zeta$. 
%
Remark that for any $j\in\bs{j}$, there are at most three combinations to obtain the term $\zeta_{j}$. Concretely, $\zeta_{j}=I_{b}$ is generated by
$$\frac{\pa z_b^2}{\pa z_b}\bar{z}_b\quad\text{or}\quad \frac{\pa\bar{z}_b^2}{\pa \bar{z}_b}z_b\quad\text{or}\quad \frac{\pa I_b}{\pa\bs{z}_b}\bs{z}_b;$$
$\zeta_{j}=z^2_{b}$ is generated by
$$\frac{\pa z_b^2}{\pa z_b}z_b\quad\text{or}\quad \frac{\pa I_b}{\pa\bar{z}_b}z_b;$$ 
and $\zeta_{j}=\bar{z}^2_{b}$ is generated by
$$\frac{\pa \bar{z}_b^2}{\pa\bar{z}_b}\bar{z}_b\quad\text{or}\quad \bar{z}^2_{b}=\frac{\pa I_b}{\pa z_b}\bar{z}_b.$$
In this case, there are at most one different index in $\bs{j}$ with $(\bs{j}',\bs{j}'')$, and $z_a$ is well determined.

To sum up, the coefficient $\tilde{Y}^{(z_a, z_a)}_{\bs{j}}$ consists of at most $4+2\times3(l_1+l_2)$ terms of the form $\tilde{X}^{(\bs{z}_b, \bs{z}_b)}_{\bs{j'}}\tilde{\chi}^{(\bs{z}_c,\bs{z}_c)}_{\bs{j}''}$. Thus, one has 
\begin{align*}
\|[X,\chi]\|_{\ell^{\infty}}\leq&\big(4+2\times3(l_1+l_2)\big)\|X\|_{\ell^{\infty}}\|\chi\|_{\ell^{\infty}}<6(l_1+l_2+1)\|X\|_{\ell^{\infty}}\|\chi\|_{\ell^{\infty}}.
\end{align*}
Moreover, by $X\in \ms{M}^{{\rm rev}}_{2l_1+1}$ and $\chi\in \ms{M}^{{\rm an-rev}}_{2l_2+1}$,
one has 
$$\tilde{X}^{(\bs{z}_b, \bs{z}_b)}_{\bs{j'}}\tilde{\chi}^{(\bs{z}_c,\bs{z}_c)}_{\bs{j}''}=-\tilde{X}^{(\bar{\bs{z}}_b,\bar{\bs{z}}_b)}_{\bs{j'}}\tilde{\chi}^{(\bar{\bs{z}}_c,\bar{\bs{z}}_c)}_{\bs{j}''},$$
which implies $[X,\chi]\in\ms{M}^{{\rm rev}}_{2(l_1+l_2)+1}$. 
\end{proof}

\subsection{Resonant normal form theorem}
\label{sec22}
Recall the transformed system \eqref{form218-12-1}. Fix an integer $r\geq2$, and expand $f(y):=\big(1+2\varphi(y)\big)^{-\frac{3}{2}}$ up to the order $r-1$, namely
$$f(y)=f(0)+f'(0)y+\frac{f''(0)}{2!}y^2+\cdots\frac{f^{(r-1)}(0)}{(r-1)!}y^{r-1}+O(y^{r}).$$
By direct calculation, we have $f(0)=1$ and $f'(0)=-3$.
%
Then the system \eqref{form218-12-1} becomes
\begin{align}
  \label{form28}
\pa_{\tau}\bs{z}=Z_1(\bs{z})+\sum_{l=1}^{r}P_{2l+1}(\bs{z})+R_{\geq2r+3}(\bs{z}),
\end{align}
where 
\begin{equation}
\label{form213-9-24}
Z_1(\bs{z})=-{\rm i}\sum_{a\in\mb{N}_*}a(z_a\pa_{z_a}-\bar{z}_a\pa_{\bar{z}_a}),
\end{equation}
\begin{equation}
\label{form210-12-20}
P_{2l+1}(\bs{z})=-\frac{{\rm i}f^{(l-1)}(0)}{4^l(l-1)!}\left(\sum_{b\in\mb{N}_*}\frac{z_b^2+\bar{z}_b^2+2I_b}{b}\right)^{l-1}\left(\sum_{b\in\mb{N}_*}(z_b^2-\bar{z}_b^2)\right)\sum_{a\in\mb{N}_*}(\bar{z}_{a}\pa_{z_a}+z_a\pa_{\bar{z}_a}),
\end{equation}
and $R_{\geq2r+3}(\bs{z})$ is the remainder term. 
%
Especially, we have
\begin{equation}
\label{form29}
P_3(\bs{z})=-\frac{{\rm i}}{4}\left(\sum_{b\in\mb{N}_*}(z_b^2-\bar{z}_b^2)\right)\sum_{a\in\mb{N}_*}(\bar{z}_{a}\pa_{z_a}+z_a\pa_{\bar{z}_a}),
\end{equation}
\begin{equation}
\label{form210}
P_5(\bs{z})=\frac{3{\rm i}}{16}\left(\sum_{b\in\mb{N}_*}\frac{z_b^2+\bar{z}_b^2+2I_b}{b}\right)\left(\sum_{b\in\mb{N}_*}(z_b^2-\bar{z}_b^2)\right)\sum_{a\in\mb{N}_*}(\bar{z}_{a}\pa_{z_a}+z_a\pa_{\bar{z}_a}).
\end{equation}
Moreover, we have $P_{2l+1}(\bs{z})\in\ms{M}^{{\rm rev}}_{2l+1}$, and there exists a  positive constant $C_0$ such that
\begin{equation}
\label{form211}
\|P_{2l+1}\|_{\ell^{\infty}}\leq C_0^{l},
\end{equation} 
\begin{equation}
\label{form212}
\|R_{\geq2r+3}(\bs{z})\|_s\leq C_0^r\|z\|_s^{2r+3}.
\end{equation} 

For convenience, we use the vector field 
\begin{equation}
\label{form218}
X:=Z_1+\sum_{l=1}^{r}P_{2l+1}+R_{\geq2r+3}
\end{equation}
to respresent the system \eqref{form28}.
In this subsection, we will eliminate the non-resonant parts of $P_{2l+1}$ to prove the following resonant normal form theorem.

\begin{theorem}
\label{th31}
Fix an integer $r\geq2$. Denote 
$$C_1=\max\{C_0, 20\}, \quad \varepsilon_{0}=(6rC_1^{r+1})^{-\frac{1}{2}}.$$ 
For any $s\geq0$ and $0<\varepsilon\leq\varepsilon_{0}$, there exists a nearly identity coordinate transformation $\phi^{(1)}:B_{s}(3\varepsilon)\to B_{s}(4\varepsilon)$ such that the vector field \eqref{form218} is transformed into 
\begin{equation}
\label{form33}
X'= Z_1+Z_3+\sum_{l=2}^{r}K_{2l+1}+R'_{\geq2r+3},
\end{equation}
where
\begin{enumerate}[(i)]
 \item  the transformation is bound and satisfies the estimate
    \begin{equation}
    \label{form36}
    \|\bs{z}-\phi^{(1)}(\bs{z})\|_{s}<9C_1\|z\|_{s}^{3},
    \end{equation}
    and the same estimate is fulfilled by the inverse transformation;
\item $Z_3$ is an integrable  polynomial vector field with the form
    \begin{equation}
    \label{form34}
    Z_{3}(\bs{z})=-\frac{{\rm i}}{4}\sum_{a\in\mb{N}_*}I_a(z_a\pa_{z_a}-\bar{z}_a\pa_{\bar{z}_a});
    \end{equation}
\item the vector field $K_{2l+1}\in\ms{R}^{{\rm rev}}_{2l+1}$ satisfies the coefficient estimate
     \begin{equation}
    \label{form37}
    \|K_{2l+1}\|_{\ell^{\infty}}\leq C_1^{l^2};
     \end{equation}
 \item the remainder term $R'_{\geq2r+3}$ satisfies the estimate
    \begin{equation}
    \label{form38}
    \|R'_{\geq2r+3}(\bs{z})\|_{s}<(3C_1)^{r(r+1)}\|z\|_s^{2r+3}.
     \end{equation}
\end{enumerate}
 Moreover, the $z_a$-component of $K_5(\bs{z})$ is 
\begin{equation}
\label{form35}
 K^{(z_a)}_{5}(\bs{z})=Z^{(z_a)}_5(\bs{z})+{\rm i}\frac{3a}{32}\bar{z}_a\Big(\sum_{b_1+b_2=a}\frac{z_{b_1}^2z_{b_2}^2}{b_1b_2}+\sum_{b-c=a}\frac{2z_{b}^2\bar{z}_{c}^2}{bc}\Big)
\end{equation}
with its integrable part
\begin{equation}
\label{form35'} 
Z^{(z_a)}_5(\bs{z})={\rm i}z_a\left(\frac{27}{64a}I^2_a+\frac{1}{8}I_a\sum_{d\neq a}\Big(\frac{3}{d}+\frac{d}{d^2-a^2}\Big)I_d-\frac{a}{16}\sum_{d\neq a}\frac{1}{d^2-a^2}I^2_d\right).
 \end{equation}
\end{theorem}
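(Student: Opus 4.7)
The plan is to carry out $r$ successive Lie-transform steps, the $l$-th step removing the non-resonant part of the order $2l+1$ vector field via a single homological equation
\[
[Z_1,\chi_{2l+1}] + X^{(l)}_{2l+1} = K_{2l+1},
\]
where $X^{(l)}_{2l+1}$ is the order-$(2l+1)$ component at the beginning of step $l$ and $K_{2l+1}\in\ms{R}_{2l+1}$ is its resonant projection. The total coordinate transformation $\phi^{(1)}$ is then the composition of the time-one flows of $\chi_3,\chi_5,\ldots,\chi_{2r+1}$, and it produces \eqref{form33}.

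The key observation for solving the homological equation is that $ad_{Z_1}$ acts diagonally on monomials: a direct calculation from \eqref{form26'} using $Z_1^{(z_b)}=-{\rm i}bz_b$ and $Z_1^{(\bar z_b)}={\rm i}b\bar z_b$ shows that on a monomial $cz_a\zeta_{\bs j}$ in the $z_a$-component $ad_{Z_1}$ acts as multiplication by $2{\rm i}\Delta_{\bs j}$, and on $c\bar z_a\zeta_{\bs j}$ by $2{\rm i}(\Delta_{\bs j}-a)$. Hence the kernel of $ad_{Z_1}$ on $\ms{M}_{2l+1}$ is exactly $\ms{R}_{2l+1}$, and $\chi_{2l+1}$ is built monomial-by-monomial by dividing each non-resonant coefficient of $X^{(l)}_{2l+1}$ by $-2{\rm i}\Delta_{\bs j}$ or $-2{\rm i}(\Delta_{\bs j}-a)$. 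Since $\Delta_{\bs j}\in\mb{Z}$, these divisors have absolute value $\geq 2$, yielding $\|\chi_{2l+1}\|_{\ell^\infty}\leq\tfrac12\|X^{(l)}_{2l+1}\|_{\ell^\infty}$ with no small-divisor issue. Because the inductive hypothesis gives $X^{(l)}_{2l+1}\in\ms{M}^{{\rm rev}}_{2l+1}$ with purely imaginary coefficients, dividing by $\pm2{\rm i}\Delta$ produces $\chi_{2l+1}\in\ms{M}^{{\rm an-rev}}_{2l+1}$, and \Cref{le22-12} then guarantees that every commutator with $\chi_{2l+1}$ stays in $\ms{M}^{{\rm rev}}$.

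The iteration is driven by the Lie series \eqref{form26-9-26}. After step $l$, orders $\leq 2l+1$ are in resonant form and unaffected, while higher orders pick up brackets $ad^k_{\chi_{2l+1}}Y_{2m+1}$ with $m+kl$ equal to the target order. Estimating each bracket by \Cref{le22-12}, combined with the a priori bound $\|\chi_{2l+1}\|_{\ell^\infty}\leq C_1^{l^2}/2$ obtained inductively, and using that the smallness $\varepsilon\leq\varepsilon_0=(6rC_1^{r+1})^{-1/2}$ absorbs both the combinatorial factors $6(l_1+l_2+1)$ and the factorials $1/k!$, gives the announced bound $\|K_{2l+1}\|_{\ell^\infty}\leq C_1^{l^2}$. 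The remainder bound \eqref{form38} follows by transporting \eqref{form212} through $\phi^{(1)}$ and truncating the Lie series at order $2r+3$, and the closeness estimate \eqref{form36} is the Sobolev version obtained from \Cref{le21-1} applied to $\chi^{(1)}$, whose leading contribution is the cubic field $\chi_3$.

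Finally, the explicit formula \eqref{form34} for $Z_3$ follows immediately by projecting $P_3^{(z_a)}=-\tfrac{\rm i}{4}\bigl(\sum_b z_b^2-\sum_b\bar z_b^2\bigr)\bar z_a$ onto the only resonant monomial $z_a^2\bar z_a=z_aI_a$. The formula for $K_5$ in \eqref{form35}--\eqref{form35'} requires combining (i) the resonant part of $P_5^{(z_a)}$ from \eqref{form210} with (ii) the Lie-series corrections $[\chi_3,P_3]+\tfrac12[\chi_3,[\chi_3,Z_1]]=\tfrac12[\chi_3,P_3+Z_3]$. Symmetrizing $\tfrac{1}{b_1}\mapsto\tfrac12\bigl(\tfrac{1}{b_1}+\tfrac{1}{b_2}\bigr)$ on the resonance $b_1+b_2=a$, and analogously on $b-c=a$, produces the non-integrable coefficient $\tfrac{3{\rm i}a}{32}$ in \eqref{form35}; the remaining resonant contributions, after applying $ad_{Z_1}^{-1}$ to monomials with $\Delta_{\bs j}=\pm(d\pm a)$, collapse via the identity $\tfrac{1}{d-a}+\tfrac{1}{d+a}=\tfrac{2d}{d^2-a^2}$ to give the $\tfrac{d}{d^2-a^2}$ and $\tfrac{1}{d^2-a^2}$ factors of $Z_5^{(z_a)}$. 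The main obstacle I anticipate is this final step: an honest combinatorial accounting of every resonant monomial produced by $[\chi_3,P_3]$ and $\tfrac12[\chi_3,[\chi_3,Z_1]]$, making sure that the coefficients $\tfrac{27}{64a}$, $\tfrac{3}{d}+\tfrac{d}{d^2-a^2}$ and $\tfrac{1}{d^2-a^2}$ collapse exactly as stated in \eqref{form35'}; elsewhere the argument is a standard weighted Birkhoff iteration in the reversible class.
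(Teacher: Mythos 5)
Your proposal follows essentially the same route as the paper: solving the homological equation for $Z_1$ monomial-by-monomial with integer divisors $2\Delta_{\bs j}$, $2(\Delta_{\bs j}-a)$ of modulus at least $2$ (the paper's \Cref{le31}), iterating Lie transforms and tracking the reversible/anti-reversible structure via \Cref{le22-12} (the paper's \Cref{le32}), and extracting $K_5$ as the resonant part of $\tfrac12[K_3+P_3,\chi_3]+P_5$ with the explicit $\chi_3$, exactly as in the paper's proof of \Cref{th31}. One small correction: the coefficient bound $\|K_{2l+1}\|_{\ell^{\infty}}\leq C_1^{l^2}$ is $\varepsilon$-independent and is obtained purely combinatorially by letting the generous constant $C_1=\max\{C_0,20\}$ absorb the bracket factors and binomial sums (growth $C_1^{il}\to C_1^{(i+1)l}$), while the smallness $\varepsilon\leq\varepsilon_0$ is used only for well-definedness of the time-one flows and for the transformation and remainder estimates \eqref{form36}, \eqref{form38}.
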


\begin{remark}
\label{re21-12}
The above theorem for $r=2$ is contained in \cite{BH21}. Actually, they obtained the second step resonant normal form of Kirchhoff equation on $d$-dimensional torus.
\end{remark}

%
We eliminate the non-resonant parts of $P_{2l+1}$ in \eqref{form210-12-20} by using the linear vector field $Z_1$ in \eqref{form213-9-24}.
So before proving \Cref{th31}, we solve the homological equation with respect to $Z_1$, which will be used to construct the transformation $\phi^{(1)}$.
\begin{lemma}
\label{le31}
For $l\in\mb{N}_*$ and $P\in\ms{M}^{{\rm rev}}_{2l+1}$, there exist homogenous polynomial vector fields $\chi\in\ms{M}^{{\rm an-rev}}_{2l+1}$ and $K\in\ms{R}^{{\rm rev}}_{2l+1}$ 
such that
\begin{equation}
 \label{form39}
[Z_1,\chi]+P=K.
\end{equation}
Moreover, $\chi$ and $K$ fulfill the coefficient estimates
\begin{equation}
 \label{form310}
\|\chi\|_{\ell^{\infty}}\leq\frac{1}{2}\|P\|_{\ell^{\infty}}\quad\text{and}\quad\|K\|_{\ell^{\infty}}\leq\|P\|_{\ell^{\infty}}.
\end{equation}
\end{lemma}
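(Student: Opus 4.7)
My plan is to diagonalize $\mr{ad}_{Z_1}$ on the natural monomial basis underlying $\ms{M}_{2l+1}$ and then invert it on the non-resonant eigenspaces, dumping the resonant part straight into $K$.

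First I would compute $\mr{ad}_{Z_1}$ on monomials. Since $Z_1^{(z_b)}=-ibz_b$ and $Z_1^{(\bar z_b)}=ib\bar z_b$, the general commutator formula \eqref{form26'} reduces, for a basis monomial sitting in the $z_a$-component, to
\begin{align*}
[Z_1,\, c\,z_a\zeta_{\bs{j}}\pa_{z_a}]^{(z_a)} &= 2i\,\Delta_{\bs{j}}\,c\,z_a\zeta_{\bs{j}},\\
[Z_1,\, c\,\bar z_a\zeta_{\bs{j}}\pa_{z_a}]^{(z_a)} &= 2i\,(\Delta_{\bs{j}}-a)\,c\,\bar z_a\zeta_{\bs{j}},
\end{align*}
where $\Delta_{\bs{j}}=\sum_{k=1}^l\delta_k a_k$. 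So these monomials are eigenvectors of $\mr{ad}_{Z_1}$, and the eigenvalues vanish exactly on the index sets $\mc{R}^0_l$ and $\mc{R}^a_l$ used in \eqref{form32}--\eqref{form32-4-30} to characterize $\ms{R}_{2l+1}$.

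Next I would solve \eqref{form39} coefficient by coefficient. Define $K$ by copying those coefficients of $P$ whose indices lie in $\mc{R}^0_l$ resp.\ $\mc{R}^a_l$, and define $\chi$ on the complementary indices by inverting the eigenvalue:
\[
\tilde\chi^{(z_a,z_a)}_{\bs{j}}=\frac{\tilde P^{(z_a,z_a)}_{\bs{j}}}{2i\,\Delta_{\bs{j}}}\quad(\bs{j}\notin\mc{R}^0_l), \qquad \tilde\chi^{(z_a,\bar z_a)}_{\bs{j}}=\frac{\tilde P^{(z_a,\bar z_a)}_{\bs{j}}}{2i\,(\Delta_{\bs{j}}-a)}\quad(\bs{j}\notin\mc{R}^a_l),
\]
and fix the $\bar z_a$-components of $\chi$ and $K$ via the anti-reversibility condition \eqref{form27-12-1} and reversibility condition \eqref{form26-12-1} respectively. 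Permutation symmetry transfers from $P$ to both $\chi$ and $K$ automatically, since $\Delta_{\bs{j}}$ is permutation invariant.

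The one genuinely non-clerical point (though still brief) will be verifying $\chi\in\ms{M}^{\mr{an-rev}}_{2l+1}$. Reversibility of $P$ combined with $\overline{\tilde P^{(z_a,z_a)}_{\bs{j}}}=\tilde P^{(\bar z_a,\bar z_a)}_{\bs{j}}$ forces the coefficients of $P$ to be purely imaginary; dividing a purely imaginary number by the purely imaginary scalar $2i\,\Delta_{\bs{j}}$ (resp.\ $2i\,(\Delta_{\bs{j}}-a)$) yields a real number, which is precisely what \eqref{form27-12-1} requires. The bounds \eqref{form310} are then immediate: coefficients of $K$ are copied from $P$, and since $\Delta_{\bs{j}}$ and $a$ are integers, each nonzero divisor has modulus at least $2$, giving $\|\chi\|_{\ell^\infty}\leq\tfrac12\|P\|_{\ell^\infty}$. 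No nontrivial small-divisor analysis enters at this stage, since the divisors are simply $2i$ times nonzero integers; genuine small-divisor bookkeeping is deferred to \Cref{sec3}.
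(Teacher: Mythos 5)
Your proposal follows essentially the same route as the paper's proof: diagonalize $ad_{Z_1}$ on the monomial basis (your eigenvalues $2{\rm i}\Delta_{\bs{j}}$ and $2{\rm i}(\Delta_{\bs{j}}-a)$ are exactly the divisors appearing in the paper's formulas \eqref{formchi1}--\eqref{formchi4}), invert on the non-resonant indices, place the resonant part in $K$, and use reversibility of $P$ to see that the coefficients of $\chi$ are real, hence $\chi\in\ms{M}^{\rm an-rev}_{2l+1}$; the bounds \eqref{form310} follow because the nonzero divisors are even integers of modulus at least $2$. One correction: solving \eqref{form39} on a non-resonant monomial requires $2{\rm i}\Delta_{\bs{j}}\tilde\chi^{(z_a,z_a)}_{\bs{j}}+\tilde P^{(z_a,z_a)}_{\bs{j}}=0$, i.e.\ $\tilde\chi^{(z_a,z_a)}_{\bs{j}}=-\tilde P^{(z_a,z_a)}_{\bs{j}}/\bigl(2{\rm i}\Delta_{\bs{j}}\bigr)={\rm i}\,\tilde P^{(z_a,z_a)}_{\bs{j}}/\bigl(2\Delta_{\bs{j}}\bigr)$, and similarly with $\Delta_{\bs{j}}-a$; your displayed formulas are off by a sign and as written would give $[Z_1,\chi]+P=2P$ on the non-resonant part. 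The sign does not affect the reality argument or the estimates. Finally, when you define the $\bar z_a$-components of $\chi$ through \eqref{form27-12-1}, it is worth the one-line check that the $\bar z_a$-component of the homological equation is then automatically satisfied: the eigenvalue on that block is $-2{\rm i}\Delta_{\bs{j}}$, and the sign flip is compensated by the reversibility relation $\tilde P^{(z_a,z_a)}_{\bs{j}}=-\tilde P^{(\bar z_a,\bar z_a)}_{\bs{j}}$, which is exactly what the paper's formulas \eqref{formchi3}--\eqref{formchi4} encode.
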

\begin{proof}
In view of the definition of $\ms{M}_{2l+1}$, we write the components of $P$ in the form:
\begin{align*}
&P^{(z_a)}(\bs{z})=z_a\sum_{\bs{j}\in(\mb{U}_{3}\times\mb{N}_*)^{l}}\tilde{P}^{(z_a, z_a)}_{\bs{j}}\zeta_{\bs{j}}+\bar{z}_{a}\sum_{\bs{j}\in(\mb{U}_{3}\times\mb{N}_*)^{l}}\tilde{P}^{(z_a, \bar{z}_a)}_{\bs{j}}\zeta_{\bs{j}},
\\&P^{(\bar{z}_a)}(\bs{z})=\bar{z}_a\sum_{\bs{j}\in(\mb{U}_{3}\times\mb{N}_*)^{l}}\tilde{P}^{(\bar{z}_a, \bar{z}_a)}_{\bs{j}}\zeta_{\bar{\bs{j}}}+z_{a}\sum_{\bs{j}\in(\mb{U}_{3}\times\mb{N}_*)^{l}}\tilde{P}^{(\bar{z}_a, z_a)}_{\bs{j}}\zeta_{\bar{\bs{j}}}.
\end{align*}
Let $\chi(\bs{z})=\sum_{a\in\mb{N}_*}\big(\chi^{(z_a)}(\bs{z})\pa_{z_a}+ \chi^{(\bar{z}_a)}(\bs{z})\pa_{\bar{z}_a}\big)$ with
\begin{align*}
&\chi^{(z_a)}(\bs{z})=z_a\sum_{\bs{j}\in(\mb{U}_{3}\times\mb{N}_*)^{l}}\tilde{\chi}^{(z_a, z_a)}_{\bs{j}}\zeta_{\bs{j}}+\bar{z}_{a}\sum_{\bs{j}\in(\mb{U}_{3}\times\mb{N}_*)^{l}}\tilde{\chi}^{(z_a, \bar{z}_a)}_{\bs{j}}\zeta_{\bs{j}},
\\&\chi^{(\bar{z}_a)}(\bs{z})=\bar{z}_a\sum_{\bs{j}\in(\mb{U}_{3}\times\mb{N}_*)^{l}}\tilde{\chi}^{(\bar{z}_a, \bar{z}_a)}_{\bs{j}}\zeta_{\bar{\bs{j}}}+z_{a}\sum_{\bs{j}\in(\mb{U}_{3}\times\mb{N}_*)^{l}}\tilde{\chi}^{(\bar{z}_a, z_a)}_{\bs{j}}\zeta_{\bar{\bs{j}}},
\end{align*}
where
\begin{align}
\label{formchi1}
\tilde{\chi}^{(z_a, z_a)}_{\bs{j}}=&
\begin{cases}
\frac{{\rm i}\tilde{P}^{(z_a, z_a)}_{\bs{j}}}{2\Delta_{\bs{j}}},&\text{if}\;\bs{j}\notin\mc{R}^0_l, \\
0,&\text{if}\;\bs{j}\in\mc{R}^0_{l},
\end{cases}
\end{align}
\begin{align}
\label{formchi2}
\tilde{\chi}^{(z_a, \bar{z}_a)}_{\bs{j}}=&
\begin{cases}
\frac{  {\rm i}\tilde{P}^{(z_a, \bar{z}_a)}_{\bs{j}}}{2(\Delta_{\bs{j}}-a)},&\text{if}\;\bs{j}\notin\mc{R}^a_l, \\
0,&\text{if}\;\bs{j}\in\mc{R}^a_{l}.
\end{cases}
\end{align}
\begin{align}
\label{formchi3}
\tilde{\chi}^{(\bar{z}_a, \bar{z}_a)}_{\bs{j}}=&
\begin{cases}
-\frac{{\rm i}\tilde{P}^{(\bar{z}_a, \bar{z}_a)}_{\bs{j}}}{2\Delta_{\bs{j}}},&\text{if}\;\bs{j}\notin\mc{R}^0_l, \\
0,&\text{if}\;\bs{j}\in\mc{R}^0_{l},
\end{cases}
\end{align}
and 
\begin{align}
\label{formchi4}
\tilde{\chi}^{(\bar{z}_a, z_a)}_{\bs{j}}=&
\begin{cases}
-\frac{ {\rm i}\tilde{P}^{(\bar{z}_a, z_a)}_{\bs{j}}}{2(\Delta_{\bs{j}}-a)},&\text{if}\;\bs{j}\notin\mc{R}^a_l, \\
0,&\text{if}\;\bs{j}\in\mc{R}^a_{l}.
\end{cases}
\end{align}
Then the homological equation \eqref{form39} holds with 
\begin{align}
\label{form241-12-20}
&K^{(z_a)}(\bs{z})=z_a\sum_{\bs{j}\in\mc{R}^0_{l}}\tilde{P}^{(z_a, z_a)}_{\bs{j}}\zeta_{\bs{j}}+\bar{z}_{a}\sum_{\bs{j}\in\mc{R}^a_{l}}\tilde{P}^{(z_a, \bar{z}_a)}_{\bs{j}}\zeta_{\bs{j}},
\\\label{form242-12-20}
&K^{(\bar{z}_a)}(\bs{z})=\bar{z}_a\sum_{\bs{j}\in\mc{R}^0_{l}}\tilde{P}^{(\bar{z}_a, \bar{z}_a)}_{\bs{j}}\zeta_{\bar{\bs{j}}}+z_{a}\sum_{\bs{j}\in\mc{R}^a_{l}}\tilde{P}^{(\bar{z}_a, z_a)}_{\bs{j}}\zeta_{\bar{\bs{j}}}.
\end{align}
Notice that $P\in\ms{M}_{2l+1}^{\rm rev}$. Hence, by \eqref{formchi1}--\eqref{form242-12-20}, we have $\chi\in\ms{M}^{\rm an-rev}_{2l+1}$ and $K\in\ms{R}^{\rm rev}_{2l+1}$ with the estimates \eqref{form310}.
\end{proof}

In order to state the iterative lemma, let $\varepsilon_{i}=\varepsilon(4-\frac{i}{r})$ for any positive integer $i\leq r$. We proceed by induction from $i-1$ to $i$. Initially, $P^{(0)}_{2l+1}=P_{2l+1}$ and $R^{(0)}_{\geq2r+3}=R_{\geq2r+3}$ in \eqref{form218}.
\begin{lemma}[Iterative Lemma]
\label{le32}
Assume that the vector field
\begin{equation}
\label{form242-vec}
X^{(i-1)}=Z_1+\sum_{l=1}^{i-1}K_{2l+1}+\sum_{l=i}^{r}P^{(i-1)}_{2l+1}+R^{(i-1)}_{\geq2r+3}
\end{equation}
is defined in $B_{s}(\varepsilon_{i-1})$, where the polynomial vector fields $K_{2l+1}\in\ms{R}^{{\rm rev}}_{2l+1}$, $P^{(i-1)}_{2l+1}\in\ms{M}^{{\rm rev}}_{2l+1}$ satisfy the coefficient estimates
  \begin{equation}
  \label{form313}
  \|K_{2l+1}\|_{\ell^{\infty}}\leq C_1^{l^2},\qquad\|P^{(i-1)}_{2l+1}\|_{\ell^{\infty}}\leq C_1^{il},
  \end{equation}
and the remainder term $R^{(i-1)}_{\geq2r+3}$ satisfies the estimate
  \begin{equation}
  \label{form314}
  \|R^{(i-1)}_{\geq2r+3}(\bs{z})\|_{s}<(3C_1)^{ir}\|z\|_s^{2r+3}.
  \end{equation}
Then there exists a homogeneous  polynomial vector field $\chi_{2i+1}\in\ms{M}^{{\rm an-rev}}_{2i+1}$ with the coefficient estimate
\begin{equation}
\label{form316}
\|\chi_{2i+1}\|_{\ell^{\infty}}\leq\frac{1}{2}C_1^{i^2},
\end{equation}
such that the transformed field under the time 1 flow generated by $\chi_{2i+1}$ is
\begin{equation}
\label{form311}
X^{(i)}:=e^{ad_{\chi_{2i+1}}}X^{(i-1)}=Z_1+\sum_{l=1}^{i}K_{2l+1}+\sum_{l=i+1}^{r}P^{(i)}_{2l+1}+R^{(i)}_{\geq2r+3},
\end{equation}
where the time 1 flow $\Phi^1_{\chi_{2i+1}}: B_{s}(\varepsilon_{i})\to  B_{s}(\varepsilon_{i-1})$ is bound with the estimate
\begin{equation}
\label{form312}
    \|\bs{z}-\Phi^1_{\chi_{2i+1}}(\bs{z})\|_{s}<\frac{3^{i+1}C_1^{i^2}}{2}\|z\|_{s}^{2i+1},
    \end{equation}
and $K_{2l+1}\in\ms{R}^{{\rm rev}}_{2l+1}$, $P^{(i)}_{2l+1}\in\ms{M}^{{\rm rev}}_{2l+1}$, $R^{(i)}_{\geq2r+3}$ satisfy the same estimates \eqref{form313}, \eqref{form314} with $i$ in place of $i-1$.
%
\end{lemma}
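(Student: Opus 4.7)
The plan is to perform one step of Birkhoff-style normalization: use the linear $Z_1$ to solve a homological equation for the lowest non-normalized term $P^{(i-1)}_{2i+1}$, apply the time-one flow of the resulting generator, and regroup the Lie series into the new normal form piece $K_{2i+1}$, the new perturbations $P^{(i)}_{2l+1}$ ($l>i$), and the new remainder. Concretely, I would first apply \Cref{le31} to $P^{(i-1)}_{2i+1}\in \ms{M}^{\mr{rev}}_{2i+1}$ to produce $\chi_{2i+1}\in \ms{M}^{\mr{an-rev}}_{2i+1}$ and $K_{2i+1}\in \ms{R}^{\mr{rev}}_{2i+1}$ satisfying
\begin{equation*}
[Z_1,\chi_{2i+1}]+P^{(i-1)}_{2i+1}=K_{2i+1},
\end{equation*}
with $\|\chi_{2i+1}\|_{\ell^{\infty}}\leq \tfrac{1}{2}\|P^{(i-1)}_{2i+1}\|_{\ell^{\infty}}\leq \tfrac{1}{2}C_1^{i^{2}}$ and $\|K_{2i+1}\|_{\ell^{\infty}}\leq C_1^{i^{2}}$, giving \eqref{form316} and matching the bound \eqref{form313} for the freshly normalized piece. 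Then I expand $X^{(i)}:=e^{ad_{\chi_{2i+1}}}X^{(i-1)}=\sum_{k\geq 0}\tfrac{1}{k!}ad^{k}_{\chi_{2i+1}}X^{(i-1)}$ and regroup by homogeneity: the order-$(2i+1)$ part equals $P^{(i-1)}_{2i+1}+[Z_1,\chi_{2i+1}]=K_{2i+1}$ by the homological equation, the orders $2l+1$ with $l<i$ are untouched, the orders $i+1\leq l\leq r$ collect into $P^{(i)}_{2l+1}$, and terms of order $>2r+1$ together with $e^{ad_{\chi_{2i+1}}}R^{(i-1)}_{\geq 2r+3}$ form $R^{(i)}_{\geq 2r+3}$. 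Reversibility is preserved throughout by \Cref{le22-12} because one factor always lies in $\ms{M}^{\mr{an-rev}}$, so $P^{(i)}_{2l+1}\in \ms{M}^{\mr{rev}}_{2l+1}$.

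Each piece of $P^{(i)}_{2l+1}$ is of the form $\tfrac{1}{k!}ad^{k}_{\chi_{2i+1}}Y$ with $Y\in\{Z_1,K_{2l'+1}\ (l'<i),P^{(i-1)}_{2l'+1}\ (l'\geq i)\}$ and $l'+ki=l$. Iterating \Cref{le22-12} gives
\begin{equation*}
\left\|\tfrac{1}{k!}ad^{k}_{\chi_{2i+1}}Y\right\|_{\ell^{\infty}}\leq \frac{(6(r+1))^{k}}{k!}\,\|Y\|_{\ell^{\infty}}\,\|\chi_{2i+1}\|_{\ell^{\infty}}^{k}.
\end{equation*}
The worst case $Y=P^{(i-1)}_{2l'+1}$ produces exponent $il'+ki^{2}=il$ (saturated), while the $K_{2l'+1}$ case gives $l'^{2}+ki^{2}=il-l'(i-l')<il$ for $0<l'<i$, and the $Z_1$ case (handled via $ad^{k}_{\chi}Z_1=ad^{k-1}_{\chi}(K_{2i+1}-P^{(i-1)}_{2i+1})$) also saturates at $il$. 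Summing the at most $\lfloor r/i\rfloor+1$ contributions per order and absorbing the resulting $r$-dependent combinatorial constant into the gain $C_1^{l}$ available for $l\geq i+1\geq 2$ (this is where $C_1=\max\{C_0,20\}$ chosen large enters) produces $\|P^{(i)}_{2l+1}\|_{\ell^{\infty}}\leq C_1^{(i+1)l}$.

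Finally, \Cref{le21-1} converts the coefficient bound on $\chi_{2i+1}$ into the vector field bound $\|\chi_{2i+1}(\bs{z})\|_{s}<\tfrac{3^{i+1}C_1^{i^{2}}}{2}\|z\|_{s}^{2i+1}$; the precise choice $\varepsilon_{0}=(6rC_1^{r+1})^{-1/2}$ is calibrated so that $\Phi^{t}_{\chi_{2i+1}}(\bs{z})$ keeps $\bs{z}\in B_{s}(\varepsilon_{i})$ inside $B_{s}(\varepsilon_{i-1})$ for $t\in[0,1]$, and integrating in $t$ delivers \eqref{form312}. For the remainder $R^{(i)}_{\geq 2r+3}$ I would combine \Cref{le21-1} applied to the tail of the Lie series (orders $l>r$) with a Gr\"onwall-type estimate for the pullback of $R^{(i-1)}_{\geq 2r+3}$ along the flow, the prefactors absorbing into $(3C_{1})^{(i+1)r}$ by virtue of $\|z\|_{s}\leq \varepsilon_{i}\leq \varepsilon_{0}$. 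The main obstacle is the coefficient bookkeeping in the middle step: since the $P$-commutator saturates the exponent exactly at $C_1^{il}$, the upgrade to $C_1^{(i+1)l}$ must be extracted solely from the strict inequality $l\geq i+1$ together with the factorial $1/k!$, and verifying that the combinatorial constant depending on $r$ is dominated by a single factor $C_1^{l}$ is the numerical heart of the iteration.
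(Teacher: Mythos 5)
Your overall route is the same as the paper's: solve the homological equation for $P^{(i-1)}_{2i+1}$ via \Cref{le31}, conjugate by the time-one flow of $\chi_{2i+1}$, regroup the Lie series by homogeneity (using $ad_{\chi}Z_1=K_{2i+1}-P^{(i-1)}_{2i+1}$ for the $Z_1$-chain), preserve reversibility via \Cref{le22-12}, get \eqref{form312} from \Cref{le21-1} and the choice of $\varepsilon_0$, and split the remainder into the pulled-back $R^{(i-1)}_{\geq2r+3}$ plus the tail of the series. The genuine gap is in the step you yourself call the numerical heart. From \Cref{le22-12}, the $n$-th commutator with $\chi_{2i+1}$ costs a factor $6(m+1+in)$; you replace each factor by $6(r+1)$ and then assert that the resulting $r$-dependent combinatorial constant is absorbed by the slack $C_1^{l}$. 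This cannot hold uniformly in $r$: the slack $C_1^{l}$ is bounded whenever $l$ is (the extreme case $l=i+1$ already occurs at $l=2$ for $i=1$), while your constant grows with $r$. Concretely, for $i=1$, $l=2$ your scheme bounds $ad_{\chi_3}P^{(0)}_3$ and $\tfrac12 ad^2_{\chi_3}Z_1$ by about $3(r+1)C_1^{2}$ each, so the total is $\approx C_1^{2}\bigl(1+6(r+1)\bigr)$, and the target $C_1^{(i+1)l}=C_1^{4}$ forces $1+6(r+1)\leq C_1^{2}=400$, i.e.\ $r\lesssim 66$. Since the lemma must hold for arbitrary $r$ (and the Gevrey application takes $r\to\infty$ as $\varepsilon\to0$), the stated bound $\|P^{(i)}_{2l+1}\|_{\ell^{\infty}}\leq C_1^{(i+1)l}$ does not follow from your estimate.

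The repair is precisely the finer bookkeeping in \eqref{form326-9-16}--\eqref{form262-12-1}: keep the product $\prod_{n=1}^{k}(m+1+in)$ and note that $m+1+in\leq l+1$ (not merely $\leq r+1$) because $m+ik=l$; combined with the $1/k!$ this gives each term a bound of the form $(3i)^{k}\binom{(l+1)/i}{k}C_1^{il}$, the sum over $k$ is controlled by the binomial theorem by $(1+3i)^{(l+1)/i+1}\leq 16^{l}$ (using $i+1\leq l$), and then $1+\tfrac{6^{l}}{2}+16^{l}\leq C_1^{l}$ — this is where $C_1\geq 20$ actually enters — yields $C_1^{(i+1)l}$ uniformly in $r$. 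With that replacement (and the corresponding $r$-free bound for the tail terms $Q_{2l+1}$ in the remainder), your argument coincides with the paper's proof; the rest of your sketch (flow bounds, $\|(D\Phi^1_{\chi})^{-1}\|\leq2$ instead of a Gr\"onwall argument, tail estimate via \Cref{le21-1}) is sound.
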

\begin{proof}
%
By \Cref{le31}, there exist vector fields $\chi_{2i+1}\in\ms{M}^{{\rm an-rev}}_{2i+1}$ and $K_{2i+1}\in\ms{R}^{{\rm rev}}_{2i+1}$ 
such that
\begin{equation}
 \label{form318}
[Z_1,\chi_{2i+1}]+P^{(i-1)}_{2i+1}=K_{2i+1}
\end{equation}
with the coefficient estimates
\begin{align}
\label{form319}
&\|\chi_{2i+1}\|_{\ell^{\infty}}\leq\frac{1}{2}\|P^{(i-1)}_{2i+1}\|_{\ell^{\infty}}\leq \frac{1}{2}C_1^{i^2},
\\\label{form319'}&\|K_{2i+1}\|_{\ell^{\infty}}\leq\|P^{(i-1)}_{2i+1}\|_{\ell^{\infty}}\leq C_1^{i^2}.
\end{align}
%
By \Cref{le21-1} and the estimate \eqref{form319}, one has
\begin{align}
\label{form320}
\sup_{\bs{z}\in B_{s}(\varepsilon_{i-1})}\|\chi_{2i+1}(\bs{z})\|_s<&3^{i+1}\|\chi_{2i+1}\|_{\ell^{\infty}}\|z\|_s^{2i+1}
\\\notag\leq&\frac{3^{i+1}C_1^{i^2}}{2}\|z\|_s^{2i+1}.
\end{align}
For $\bs{z}\in B_{s}(\varepsilon_{i-1})$, one has 
$\|z\|_s\leq\frac{\varepsilon_{i-1}}{\sqrt{2}}\leq2\sqrt{2}\varepsilon$. 
By the fact $\varepsilon\leq(6rC_1^{r+1})^{-\frac{1}{2}}<(72\sqrt{2}rC_1^{r})^{-\frac{1}{2}}$, one has
\begin{align}
\label{form320-12-1}
\sup_{\bs{z}\in B_{s}(\varepsilon_{i-1})}\|\chi_{2i+1}(\bs{z})\|_s<&\frac{3^{i+1}C_1^{i^2}}{2}(2\sqrt{2}\varepsilon)^{2i+1}
\\\notag\leq&\frac{\varepsilon}{r}=\varepsilon_{i-1}-\varepsilon_i.
\end{align}
Hence, the time 1 flow $\Phi^{1}_{\chi_{2i+1}}$  generated by $\chi_{2i+1}$ is well defined in $B_{s}(\varepsilon_{i})$ and satisfies the estimate
%
\begin{equation}
\label{form322-9-26}
\sup_{\bs{z}\in B_{s}(\varepsilon_{i})}\|\bs{z}-\Phi^1_{\chi_{2i+1}}(\bs{z})\|_{s}\leq\sup_{\bs{z}\in B_{s}(\varepsilon_{i-1})}\|\chi_{2i+1}(\bs{z})\|_s<\frac{3^{i+1}C_1^{i^2}}{2}\|z\|_s^{2i+1},
\end{equation}
which is the estimate \eqref{form312}.
By the formula \eqref{form26-9-26} and \eqref{form318}, one has
\begin{align}
e^{ad_{\chi_{2i+1}}}X^{(i-1)}=&Z_1+\sum_{l=1}^{i-1}K_{2l+1}+[Z_1,\chi_{2i+1}]+\sum_{l=i}^{r}P^{(i-1)}_{2l+1}+\sum_{k=2}^{+\infty}\frac{1}{k!}ad_{\chi_{2i+1}}^kZ_1
\\\notag&+\sum_{k=1}^{+\infty}\frac{1}{k!}ad_{\chi_{2i+1}}^k\Big(\sum_{l=1}^{i-1}K_{2l+1}+\sum_{l=i}^{r}P^{(i-1)}_{2l+1}\Big)+e^{ad_{\chi_{2i+1}}}R^{(i-1)}_{\geq2r+3}
\\\notag:=&Z_1+\sum_{l=1}^{i}K_{2l+1}+\sum_{l=i+1}^{r}P^{(i)}_{2l+1}+R^{(i)}_{\geq2r+3},
\end{align}
where for $l=i+1,\cdots, r$, one has
\begin{align}
\label{form324-9-26}
P^{(i)}_{2l+1}=&P^{(i-1)}_{2l+1}+\sum_{ik=l,k\geq2}\frac{1}{k!}ad_{\chi_{2i+1}}^{k}Z_1
\\\notag&+\sum_{\substack{m+ik=l\\k\geq1, 1\leq m\leq i-1}}\frac{1}{k!}ad_{\chi_{2i+1}}^kK_{2m+1}+\sum_{\substack{m+ik=l\\k\geq1, i\leq m\leq l}}\frac{1}{k!}ad_{\chi_{2i+1}}^kP^{(i-1)}_{2m+1},
\end{align}
%
and
\begin{align}
\label{form325-9-26}
R^{(i)}_{\geq2r+3}=&\sum_{ik\geq r+1}\frac{1}{k!}ad_{\chi_{2i+1}}^{k}Z_1+\sum_{\substack{m+ik\geq r+1\\1\leq m\leq i-1}}\frac{1}{k!}ad_{\chi_{2i+1}}^kK_{2m+1}
\\\notag&+\sum_{\substack{m+ik\geq r+1\\i\leq m\leq l}}\frac{1}{k!}ad_{\chi_{2i+1}}^kP^{(i-1)}_{2m+1}+e^{ad_{\chi_{2i+1}}}R^{(i-1)}_{\geq2r+3}.
\end{align}
Remark that in \eqref{form324-9-26}, the notation $\sum_{ik=l, k\geq2}$ represents the sum with respect to integer $k$ satisfying the conditions $ik=l$ and $k\geq2$, which is empty for $\frac{l}{i}\notin\mb{N}$; the notation $\sum_{\substack{m+ik=l\\k\geq1, 1\leq m\leq i-1}}$ represents the sum with respect to  integers $k,m$ satisfying the conditions $m+ik=l$, $k\geq1$ and $1\leq m\leq i-1$.

Next, we will estimate the coefficients of $P^{(i)}_{2l+1}$.
%
%
By \Cref{le22-12}, the coefficient estimates \eqref{form313}, \eqref{form319} and the fact $m+ik=l$, we have 
\begin{align}
\label{form326-9-16}
d_1(k,m):=&\Big\|\frac{1}{k!}ad_{\chi_{2i+1}}^kP^{(i-1)}_{2m+1}\Big\|_{\ell^{\infty}}
\\\notag\leq&\frac{1}{k!}\|P^{(i-1)}_{2m+1}\|_{\ell^{\infty}}\big(6\|\chi_{2i+1}\|_{\ell^{\infty}}\big)^k\prod_{n=1}^k(m+1+in)
\\\notag\leq&\frac{3^k}{k!}C_1^{im+i^2k}\prod_{n=1}^k(m+1+in)
\\\notag=&(3i)^k\binom{\frac{l+1}{i}}{k}C_1^{il},
\end{align}
where in the last equality we introduce the generalized combination number 
$$\binom{\alpha}{k}:=\frac{\alpha(\alpha-1)\cdots(\alpha-k+1)}{k!}.$$
%
Similarly with \eqref{form326-9-16}, one has
\begin{align}
\label{form330-9-26}
d_2(k,m):=&\Big\|\frac{1}{k!}ad_{\chi_{2i+1}}^kK_{2m+1}\Big\|_{\ell^{\infty}}
\\\notag\leq&\frac{1}{k!}\|K_{2m+1}\|_{\ell^{\infty}}\big(6\|\chi_{2i+1}\|_{\ell^{\infty}}\big)^k\prod_{n=1}^k(m+1+in)
\\\notag\leq&\frac{3^k}{k!}C_1^{m^2+i^2k}\prod_{n=1}^k(m+1+in)
\\\notag<&(3i)^k\binom{\frac{l+1}{i}}{k}C_1^{il},
\end{align}
where the last inequality follows from $m^2+i^2k<im+i^2k=il$.
By the homological equation \eqref{form318}, the coefficient estimates \eqref{form313}, \eqref{form319} and the fact $ik=l$, we have
\begin{align}
\label{form332-9-26}
d_3(k):=&\Big\|\frac{1}{k!}ad_{\chi_{2i+1}}^{k}Z_  1\Big\|_{\ell^{\infty}}
\\\notag\leq&\frac{1}{k!}\|P^{(i-1)}_{2i+1}\|_{\ell^{\infty}}\big(6\|\chi_{2i+1}\|_{\ell^{\infty}}\big)^{k-1}\prod_{n=1}^{k-1}(i+1+in)
\\\notag\leq&\frac{3^k}{k!}C_1^{i^2+i^2(k-1)}\prod_{n=1}^{k-1}(i+1+in)
\\\notag\leq&\frac{6^{l}}{2}C_1^{il},
\end{align}
where the last inequality follows from $i^2+i^2(k-1)=il$ and
\begin{align*}
\frac{3^k}{k!}\prod_{n=1}^{k-1}(i+1+in)\leq&\frac{3^k}{k!}\prod_{n=1}^{k-1}\big((i+1)(n+1)\big)
\\=&3^k(i+1)^{k-1}
\\\leq&3^{ik}\times2^{i(k-1)}
\\\leq&\frac{6^{l}}{2}.
\end{align*}
%
In view of \eqref{form324-9-26}, by the coefficient estimates  \eqref{form313} and \eqref{form326-9-16}--\eqref{form332-9-26}, one has
\begin{align}
\label{form260-12-1}
\|P^{(i)}_{2l+1}\|_{\ell^{\infty}}\leq&\|P^{(i-1)}_{2l+1}\|_{\ell^{\infty}}+\sum_{ik=l,k\geq2}d_3(k)+\sum_{\substack{m+ik=l\\k\geq1, 1\leq m\leq i-1}}d_2(k,m)+\sum_{\substack{m+ik=l\\k\geq1, i\leq m\leq l}}d_1(k,m)
\\\notag\leq&C_1^{il}+\frac{6^{l}}{2}C_1^{il}+\sum_{\substack{m+ik=l\\k\geq1, 1\leq m\leq l}}(3i)^k\binom{\frac{l+1}{i}}{k}C_1^{il}.
\end{align}
Notice that by binomial theorem, one has
\begin{align}
\label{form261-12-1}
\sum_{\substack{m+ik=l\\k\geq1, 1\leq m\leq l}}(3i)^k\binom{\frac{l+1}{i}}{k}
\leq&\sum_{k=1}^{[\frac{l-1}{i}]}(3i)^{k}\binom{\big[\frac{l+1}{i}\big]+1}{k}
\\\notag<&(1+3i)^{[\frac{l+1}{i}]+1}
\\\notag\leq&4^{i(\frac{l+1}{i}+1)}
\\\notag\leq&16^{l},
\end{align}
where the last inequality follows from $i+1\leq l$. 
Thus by \eqref{form260-12-1} and \eqref{form261-12-1}, we have
\begin{align}
\label{form262-12-1}
\|P^{(i)}_{2l+1}\|_{\ell^{\infty}}< (1+\frac{6^{l}}{2}+16^{l})C_1^{il}\leq C_1^{(i+1)l},
\end{align}
where the last inequality follows from $1+\frac{6^{l}}{2}+16^{l}\leq C_1^l$.

Finally, we estimate the remainder term $R^{(i)}_{\geq2r+3}$ in \eqref{form325-9-26}. Divide it into two parts $R=R_1+R_2$,
where 
\begin{align}
\label{form333-9-28}
R_1=&e^{ad_{\chi_{2i+1}}}R^{(i-1)}_{\geq2r+3}=(D\Phi^1_{\chi_{2i+1}})^{-1}R^{(i-1)}_{\geq2r+3}\circ\Phi^1_{\chi_{2i+1}},
\\\label{form334-9-28}
R_2=&\sum_{ik\geq r+1}\frac{1}{k}ad_{\chi_{2i+1}}^{k}Z_1+\sum_{\substack{m+ik\geq r+1\\1\leq m\leq i-1}}\frac{1}{k!}ad_{\chi_{2i+1}}^kK_{2m+1}+\sum_{\substack{m+ik\geq r+1\\i\leq m\leq l}}\frac{1}{k!}ad_{\chi_{2i+1}}^kP^{(i-1)}_{2m+1}
\\\notag:=&\sum_{l\geq r+1}Q_{2l+1}
\end{align}
with
\begin{align*}
Q_{2l+1}=\sum_{ik=l,k\geq2}ad_{\chi_{2i+1}}^{k}Z_1
+\sum_{\substack{m+ik=l\\k\geq1, 1\leq m\leq i-1}}\frac{1}{k!}ad_{\chi_{2i+1}}^kK_{2m+1}+\sum_{\substack{m+ik=l\\k\geq1, i\leq m\leq l}}\frac{1}{k!}ad_{\chi_{2i+1}}^kP^{(i-1)}_{2m+1}.
\end{align*}
Thus
\begin{equation}
\label{form326}
\|R^{(i)}_{\geq2r+3}\|_s\leq\|R_1\|_s+\|R_2\|_s.
\end{equation}
In view of  \eqref{form333-9-28}, by \eqref{form314} and \eqref{form312}, one has 
\begin{align}
\label{form327}
\|R_1\|_s\leq&\|(D\Phi^1_{\chi_{2i+1}}(\bs{z}))^{-1}\|_{\ms{P}_s\to\ms{P}_s}\|R^{(i-1)}_{\geq2r+3}(\Phi^1_{\chi_{2i+1}}(\bs{z}))\|_{s}
\\\notag<&2(3C_1)^{ir}(2\|z\|_{s})^{2r+3}
\\\notag\leq&\frac{1}{3}(3C_1)^{(i+1)r}\|z\|_{s}^{2r+3},
\end{align}
where the last inequality follows from $2^{2r+4}\leq \frac{1}{3}(3C_1)^r$.
The same as the coefficient estimate of $P^{(i)}_{2l+1}$ in \eqref{form262-12-1}, one has
\begin{equation}
\label{form328}
\|Q_{2l+1}\|_{\ell^{\infty}}\leq C_1^{il}.
\end{equation}
By \Cref{le21-1}, \eqref{form328} and the fact $\|z\|_s\leq(6rC_1^{r+1})^{-\frac{1}{2}}<(6C_1^r)^{-\frac{1}{2}}$, one has
\begin{align}
\label{form329}
\|R_2\|_s\leq\sum_{l\geq r+1}\|Q_{2l+1}\|_s<&\sum_{l\geq r+1} 3^{l+1}\|Q_{2l+1}\|_{\ell^{\infty}}\|z\|_s\|z\|_{0}^{2l}
\\\notag\leq&\sum_{l\geq r+1}3^{l+1}C_1^{il}\|z\|_{s}^{2l+1}
\\\notag\leq&2\times3^{r+2}C_1^{i(r+1)}\|z\|_{s}^{2r+3}
\\\notag\leq&\frac{2}{3}(3C_1)^{(i+1)r}\|z\|_{s}^{2r+3},
\end{align}
where the last inequality follows form $3^{2-ir}C_1^{i-r}\leq\frac{1}{3}$ for $1\leq i\leq r$ and $r\geq2$. 
To sum up, by \eqref{form326}, \eqref{form327} and \eqref{form329}, the remainder term satisfies the estimate 
$$\|R^{(i)}_{\geq2r+3}\|_s<(3C_1)^{(i+1)r}\|z\|_{s}^{2r+3}.$$
\end{proof}

\begin{proof}[Proof of $\Cref{th31}$]
Initially, by the estimates \eqref{form211} and \eqref{form212}, the assumptions \eqref{form313} and \eqref{form314} in \Cref{le32} are satisfied.
After $r$ iterations, one has
$$X^{(r)}=Z_1+\sum_{l=1}^{r}K_{2l+1}+R^{(r)}_{\geq2r+3}.$$
Let $\phi^{(1)}=\Phi^1_{\chi_{3}}\circ\cdots\circ\Phi^1_{\chi_{2r+1}}$,  and then we have the vector field \eqref{form33} with  $Z_3:=K_3$ and $R'_{\geq2r+3}:=R^{(r)}_{\geq2r+3}$. 
Moreover, the estimates \eqref{form37} and \eqref{form38} follow from \eqref{form313} and \eqref{form314} with $r$ in place of $i-1$, respectively.
By \eqref{form312}, the transformation $\phi^{(1)}:B_{s}(3\varepsilon)\to B_{s}(4\varepsilon)$ satisfies the estimate
\begin{align*}
\sup_{\bs{z}\in B_{s}(3\varepsilon)}\|\bs{z}-\phi^{(1)}(\bs{z})\|_{s}
&\leq\sum_{i=1}^{r}\sup_{\bs{z}\in B_{s}(\varepsilon_{i})}\|\bs{z}-\Phi^1_{\chi_{2i+1}}(\bs{z})\|_{s}
\\&<\sum_{i=1}^{r}\frac{3^{i+1}C_1^{i^2}}{2}\sup_{\bs{z}\in B_{s}(\varepsilon_{i})}\|z\|_{s}^{2i+1}
\\&\leq 9C_1\|z\|_s^3,
\end{align*}
where the last inequality follows from the fact $\varepsilon\leq(6rC_1^{r+1})^{-\frac{1}{2}}<(6C_1^{r+1})^{-\frac{1}{2}}$.
Notice that $K_3$ is the resonant part of $P_3$ in \eqref{form29}, and thus 
$$K_3(\bs{z})=-\frac{{\rm i}}{4}\sum_{a\in\mb{N}_*}(z^2_a\bar{z}_a\pa
_{z_a}-\bar{z}^2_az_a\pa_{\bar{z}_a})=-\frac{{\rm i}}{4}\sum_{a\in\mb{N}_*}I_a(z_a\pa_{z_a}-\bar{z}_a\pa_{\bar{z}_a}),$$
which is actually an integrable vector field.

Now, the only remaining task is to compute $K_5$. According to the proof of \Cref{le32}, $K_5$ is the resonant part of $P^{(1)}_5$.
By \eqref{form324-9-26} and the homological equation $[Z_1,\chi_{3}]+P_{3}=K_{3}$, one has 
\begin{equation}
\label{form330}
P^{(1)}_5=\frac{1}{2}ad_{\chi_3}^2Z_1+ad_{\chi_3}P_3+P_5=\frac{1}{2}[K_3+P_3,\chi_3]+P_5.
\end{equation}
%
%
In view of \eqref{form29},  by the proof of \Cref{le31}, one has 
\begin{equation}
\label{form331}
\chi_3(\bs{z})=\frac{1}{8}\sum_{a\in\mb{N}_*}\left(\Big(\sum_{b\neq a}\frac{z^2_b}{b-a}+\sum_{b\in\mb{N}_*}\frac{\bar{z}^2_b}{b+a}\Big)\bar{z}_a\pa_{z_a}+\Big(\sum_{b\neq a}\frac{\bar{z}^2_b}{b-a}+\sum_{b\in\mb{N}_*}\frac{z^2_b}{b+a}\Big)z_a\pa_{\bar{z}_a}\right).
\end{equation}
By a direct calculation, the $z_a$-component of $[P_3,\chi_3](\bs{z})$ is
\begin{align}
\label{form332}
[P_3,\chi_3]^{(z_a)}(\bs{z})=&-\frac{{\rm i}z_{a}}{32}\Big(\sum_{b\in\mb{N}_*}(z_b^2-\bar{z}_b^2)\Big)\Big(\sum_{b\neq a}\frac{\bar{z}^2_b-z_b^2}{b-a}+\sum_{b\in\mb{N}_*}\frac{z^2_b-\bar{z}_b^2}{b+a}\Big)
\\\notag&-\frac{{\rm i}\bar{z}_a}{16}\sum_{b\in\mb{N}_*}I_b\Big(\sum_{c\neq b}\frac{z^2_c}{c-b}+\sum_{c\in\mb{N}_*}\frac{\bar{z}^2_c}{c+b}-\sum_{c\neq b}\frac{\bar{z}^2_c}{c-b}-\sum_{c\in\mb{N}_*}\frac{z^2_c}{c+b}\Big)
\\\notag&+\frac{{\rm i}\bar{z}_a}{16}\Big(\sum_{b\in\mb{N}_*}(z_b^2-\bar{z}_b^2)\Big)\Big(\sum_{c\neq a}\frac{I_c}{c-a}+\sum_{c\in\mb{N}_*}\frac{I_c}{c+a}\Big),
\end{align}
where the resonant part of the first line is
\begin{equation}
\label{form273-12-27}
-\frac{{\rm i}z_{a}}{16}\Big(\sum_{b\neq a}\frac{I^2_b}{b-a}-\sum_{b\in\mb{N}_*}\frac{I_b^2}{b+a}\Big)=\frac{{\rm i}z_{a}I_a^2}{32a}+\frac{{\rm i}az_{a}}{8}\sum_{b\neq a}\frac{I^2_b}{a^2-b^2};
\end{equation}
the resonant part of the second line  is
\begin{equation}
\label{form274-12-27}
-\frac{{\rm i}z_aI_a}{16}\Big(\sum_{b\neq a}\frac{I_b}{a-b}-\sum_{b\in\mb{N}_*}\frac{I_b}{a+b}\Big)=\frac{{\rm i}z_aI^2_a}{32a}-\frac{{\rm i}z_aI_a}{8}\sum_{b\neq a}\frac{bI_b}{a^2-b^2};
\end{equation}
and the resonant part of the third line is
\begin{equation}
\label{form275-12-27}
\frac{{\rm i}z_aI_a}{16}\Big(\sum_{c\neq a}\frac{I_c}{c-a}+\sum_{c\in\mb{N}_*}\frac{I_c}{c+a}\Big)=\frac{{\rm i}z_aI^2_a}{32a}-\frac{{\rm i}z_aI_a}{8}\sum_{b\neq a}\frac{bI_b}{a^2-b^2}.
\end{equation}
Taking sum of \eqref{form273-12-27}--\eqref{form275-12-27} and multiplying $\frac{1}{2}$, we give the resonant part of $\frac{1}{2}[P_3,\chi_3]^{(z_a)}$: 
\begin{equation}
\label{form333}
\frac{3{\rm i}z_aI^2_a}{64a}-\frac{{\rm i}z_aI_a}{8}\sum_{b\neq a}\frac{bI_b}{a^2-b^2}+\frac{{\rm i}az_a}{16}\sum_{b\neq a}\frac{I_b^2}{a^2-b^2}.
\end{equation}
In view of \eqref{form210}, the resonant part of $P_5^{(z_a)}(\bs{z})$ is equal to:
\begin{align}
\label{form334}
&\frac{3{\rm i}\bar{z}_a}{16}\sum_{b_1+b_2=a}\frac{z^2_{b_1}z^2_{b_2}}{b_1}+\frac{3{\rm i}\bar{z}_a}{16}\sum_{b-c=a}(\frac{1}{c}-\frac{1}{b})z^2_{b}\bar{z}^2_{c}+\frac{3{\rm i}z_aI_a}{16}\sum_{b\in\mb{N}_*}\frac{2I_b}{b}
\\\notag=&\frac{3a{\rm i}\bar{z}_a}{32}\sum_{b_1+b_2=a}\frac{z^2_{b_1}z^2_{b_2}}{b_1b_2}+\frac{3a{\rm i}\bar{z}_a}{16}\sum_{b-c=a}\frac{z^2_{b}\bar{z}^2_{c}}{bc}+\frac{3{\rm i}z_aI_a}{8}\sum_{b\in\mb{N}_*}\frac{I_b}{b}.
\end{align}
Notice that all terms in $[K_3,\chi_3]$ are non-resonant. %
Hence, taking sum of  \eqref{form333} and \eqref{form334},  we get $K^{(z_a)}_5$ in \eqref{form35}.
\end{proof}


\section{Small divisors and rational vector fields}
\label{sec3}
In this section, we firstly introduce truncated resonant vector fields and two suitable small divisor conditions. Taking these conditions into account, we define rational vector fields. Then we estimate the commutator and solve the homological equation for rational vector fields.

\subsection{Truncation and small divisors}
\label{sec31}
For $\bs{j}=(j_{k})_{k=1}^{l}\in(\mb{U}_{3}\times\mb{N}_*)^{l}$, let $j_{1}^{*}\geq\cdots\geq j_{l}^{*}$ denote the decreasing rearrangement of  $\{|j_{1}|,\cdots,|j_{l}|\}$, and denote $\mu_{\min}(\bs{j}):=j_l^*$. 
For $l\in\mb{N}_*$, $b\in\mb{Z}$ and $N\geq1$, denote the truncated index sets
\begin{align*}
\mc{R}^{b}_{l,N}:=&\{\bs{j}\in\mc{R}^b_{l}\mid j_{1}^{*}\leq N\}.
\end{align*}
Then we divide the resonant polynomial vector field $K(\bs{z})\in\ms{R}_{2l+1}$ into two parts: 
\begin{equation}
\label{form41} 
K(\bs{z})=K^{\leq N}(\bs{z})+K^{>N}(\bs{z}),
\end{equation}
where 
the $z_a$-component of $K^{\leq N}(\bs{z})\in\ms{R}_{2l+1}$ is the truncated resonant polynomial
$$K^{(z_a)}_{\leq N}(\bs{z})=
\left\{\begin{aligned}
z_a\sum_{\bs{j}\in\mc{R}^0_{l,N}}\tilde{K}^{(z_a,z_a)}_{\bs{j}}\zeta_{\bs{j}}+\bar{z}_a\sum_{\bs{j}\in\mc{R}^a_{l,N}}\tilde{K}^{(z_a,\bar{z}_a)}_{\bs{j}}\zeta_{\bs{j}},&\quad  \text{for}\;a\leq N,
\\z_a\sum_{\bs{j}\in\mc{R}^0_{l,N}}\tilde{K}^{(z_a,z_a)}_{\bs{j}}\zeta_{\bs{j}},\qquad\qquad&\quad  \text{for}\; a>N,
\end{aligned}\right.$$
and the $\bar{z}_a$-component of $K^{\leq N}(\bs{z})$ is the conjugate of $K^{(z_a)}_{\leq N}(\bs{z})$.

Now we estimate the remainder term $K^{>N}(\bs{z})$ in the following lemma.
\begin{lemma}
\label{le30}
For any $s\geq0$, the vector field $K^{>N}(\bs{z})$ in \eqref{form41} satisfies 
\begin{equation}
\label{form42}
\|K^{>N}(\bs{z})\|_s<\frac{3^{l+1}l^{2s}}{N^{2s}}\|K\|_{\ell^{\infty}}\|z\|_s^3\|z\|_{0}^{2l-2}.
\end{equation}
\end{lemma}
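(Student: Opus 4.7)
The plan is to mirror the estimate of \Cref{le21-1}, while extracting an additional factor $N^{-2s}$ from the restriction that defines $K^{>N}$. By the reversibility-type identity $\overline{K^{(z_a)}}=K^{(\bar z_a)}$ one has $\|K^{>N}\|_s^2=2\|(K^{>N})^{(z)}\|_s^2$, so it suffices to bound the $z$-component. Comparing with the explicit formula for $K^{\le N}$ splits $K^{(z_a)}_{>N}$ into a $z_a$-head piece whose index set $\{\bs j\in\mc R^0_l\}$ is always restricted by $j_1^{*}>N$, and a $\bar z_a$-head piece whose index set $\{\bs j\in\mc R^a_l\}$ is restricted by $j_1^{*}>N$ when $a\le N$ but is left unrestricted when $a>N$.

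For the two contributions with the explicit restriction $j_1^{*}>N$, I would use $\mathbf{1}_{j_1^{*}>N}\le (j_1^{*})^{2s}/N^{2s}$ together with the elementary bound $(j_1^{*})^{2s}\le\sum_{k=1}^l|j_k|^{2s}$ and the symmetry of the product $|\zeta_{\bs j}|=\prod_k|\zeta_{j_k}|$, to obtain
$$\sum_{\bs j\,:\,j_1^{*}>N}|\zeta_{\bs j}|\le\frac{l}{N^{2s}}\,\|\zeta\|_s\,\|\zeta\|_0^{\,l-1}.$$
Combined with $\|\zeta\|_s=3\|z\|_s^2$, $\|\zeta\|_0=3\|z\|_0^2$ and the $\ell^2_s$-bound on the head factor $z_a$ (resp.\ $\bar z_a$ restricted to $a\le N$), this reproduces the desired inequality \eqref{form42} for these two contributions.

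For the remaining $\bar z_a$-head piece with $a>N$, the constraint $\Delta_{\bs j}=a$ forces at least one $|j_k|\ge a/l$, so $j_1^{*}\ge a/l$. I would exploit this by pulling an extra weight $(j_1^{*})^{2s}\ge (a/l)^{2s}$ into a single factor $|\zeta_{j_{k^\star}}|$ as above, yielding
$$\sum_{\bs j\in\mc R^a_l}|\zeta_{\bs j}|\le\frac{3^{\,l}\,l^{1+2s}}{a^{2s}}\,\|z\|_s^2\,\|z\|_0^{2l-2}.$$
After squaring, multiplying by $a^{2s}|\bar z_a|^2\|K\|_{\ell^\infty}^2$ and summing over $a>N$, the residual $a^{-4s}$ is converted via $\sum_{a>N}a^{-2s}|\bar z_a|^2\le N^{-4s}\|z\|_s^2$ into the required factor $N^{-2s}$, together with a second $\|z\|_s^2$ coming from the head.

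Combining the three contributions by Minkowski and absorbing the numerical $l$-dependence into $l^{2s}$ (using $l\ge 1$ and the $3^l$ margin) yields the stated bound. The main obstacle is precisely the last case: when $a>N$ and $\bs j$ is unrestricted, the decay factor $N^{-2s}$ cannot be read off directly from a condition on $\bs j$, and must instead be extracted through the resonance identity $\Delta_{\bs j}=a$. The trade-off $a\le l\,j_1^{*}$ is exactly what produces the extra combinatorial factor $l^{2s}$ that distinguishes this estimate from \Cref{le21-1}.
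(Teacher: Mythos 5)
Your proposal is correct and follows essentially the same route as the paper: the decisive point in both is that for $a>N$ the unrestricted $\bar z_a$-head terms still satisfy $j_1^*\geq a/l>N/l$ because of the resonance relation $\Delta_{\bs{j}}=a$, the paper folding every monomial of $K^{>N}$ into one estimate under the condition $j_1^*>N/l$ while you keep the $a^{-2s}$ decay for that piece and convert it to $N^{-2s}$ by summing over $a>N$ — an equivalent bookkeeping. The only slippage is in constants: the factor $l$ you pick up from $(j_1^*)^{2s}\leq\sum_k|j_k|^{2s}$ (plus the Minkowski step) is not literally absorbed by the margin between $2\sqrt{2}\cdot 3^l$ and $3^{l+1}$ when $s<\tfrac12$, but this is harmless for all later uses of the lemma, and the paper's own accounting of which index realizes $j_1^*$ is loose in the same way.
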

\begin{proof}
According to the definition, the $z_a$-component of $K^{>N}(\bs{z})$ is 
\begin{equation}
\label{form33-12-29}
K^{(z_a)}_{>N}(\bs{z})=
\left\{\begin{aligned}
z_a\sum_{\substack{\bs{j}\in\mc{R}_{l}^0\\j_{1}^{*}>N}}\tilde{K}^{(z_a,z_a)}_{\bs{j}}\zeta_{\bs{j}}+\bar{z}_a\sum_{\substack{\bs{j}\in\mc{R}_{l}^a\\j_{1}^{*}>N}}\tilde{K}^{(z_a,\bar{z}_a)}_{\bs{j}}\zeta_{\bs{j}},\;  &\text{for}\;a\leq N,
\\z_a\sum_{\substack{\bs{j}\in\mc{R}_{l}^0\\j_{1}^{*}>N}}\tilde{K}^{(z_a,z_a)}_{\bs{j}}\zeta_{\bs{j}}+\bar{z}_a\sum_{\bs{j}\in\mc{R}_{l}^a}\tilde{K}^{(z_a,\bar{z}_a)}_{\bs{j}}\zeta_{\bs{j}}, \quad  &\text{for}\; a>N.
\end{aligned}\right.
\end{equation}
Notice that for $\bs{j}=(\delta_k,a_k)_{k=1}^l\in\mc{R}^a_{l}$ with $a>N$, one has $\sum_{k=1}^{l}\delta_ka_k=a>N$ and thus $j_{1}^{*}>\frac{N}{l}$. 
%
In view of \eqref{form33-12-29}, for any $\bs{j}\in\mc{R}^0_{l}$ or $\mc{R}^a_{l}$ in  $K^{(z_a)}_{>N}(\bs{z})$, we have $j_{1}^{*}>\frac{N}{l}.$
Similarly with the proof of \Cref{le21-1}, one has
\begin{align*}
\|K^{>N}(\bs{z})\|_{s}=&\sqrt{2}\Big(\sum_{a\in \mb{N}_*}a^{2s}\big|K_{>N}^{(z_a)}\big|^2\Big)^{\frac{1}{2}}
\\\leq&\sqrt{2}\|K\|_{\ell^{\infty}}\Big(\sum_{a\in \mb{N}_*}a^{2s}\big(|\bar{z}_a|+|z_{a}|\big)^2\Big)^{\frac{1}{2}}\sum_{\substack{\bs{j}\in(\mb{U}_{3}\times\mb{N}_*)^{l}\\j_{1}^{*}>\frac{N}{l}}}|\zeta_{\bs{j}}|
\\<&2\sqrt{2}\frac{3^ll^{2s}}{N^{2s}}\|K\|_{\ell^{\infty}}\|z\|_s^3\|z\|_{0}^{2l-2},
\end{align*}
which implies \eqref{form42}.
\end{proof}

Remark that by taking $N$ big enough depending on $\varepsilon$,   the remainder term $K^{>N}(\bs{z})$ is sufficiently small as to not  influence the long time stability result. Thus, we only need eliminate the non-integrable parts of $K^{\leq N}$. 

Next, for the transformed vector field \eqref{form33} in \Cref{th31}, we introduce modified frequencies and small divisor conditions.
In view of \eqref{form34}, one has the cubic truncated integrable polynomial vector field 
\begin{equation}
\label{form33-1-20}
Z_3^{\leq N}(\bs{z})=-{\rm i}\sum_{a\in\mb{N}_*}\omega_a^{(2)}(z_a\pa_{z_a}-\bar{z}_a\pa_{\bar{z}_a})
\end{equation} 
with
\begin{equation}
\label{form43}
\omega^{(2)}_a:=\left\{\begin{aligned}\frac{1}{4}I_a,\;\quad&\text{for}\; a\leq N, 
\\0,\qquad&\text{for}\; a>N;
\end{aligned}\right.
\end{equation}
and in view of \eqref{form35'}, one has the quintic truncated integrable polynomial vector field 
\begin{align}
\label{form10-2}
Z_5^{\leq N}=-{\rm i}\sum_{a\in\mb{N}_*}\omega^{(4)}_a(z_a\pa_{z_a}-\bar{z}_a\pa_{\bar{z}_a})
\end{align}
with
\begin{equation}
\label{form44}
\omega^{(4)}_a:=\left\{\begin{aligned}-\frac{27}{64a}I^2_a-\frac{1}{8}I_a\sum_{a\neq d\leq N}\Big(\frac{3}{d}+\frac{d}{d^2-a^2}\Big)I_d+\frac{a}{16}\sum_{a\neq d\leq N}\frac{1}{d^2-a^2}I^2_d,
\quad&\text{for}\; a\leq N, 
\\\frac{a}{16}\sum_{a\neq d\leq N}\frac{1}{d^2-a^2}I^2_d,\qquad\qquad\qquad\qquad&\text{for}\; a>N.\end{aligned}\right.
\end{equation}
For convenience, let $\omega^{(2)}_0=\omega^{(4)}_0=0$. For $\bs{j}=(\delta_k,a_k)_{k=1}^l\in\mc{R}^b_l$ with $\Delta_{\bs{j}}=b\geq0$, define
\begin{equation}
\label{formomega2}
\Omega_{\bs{j}}^{(2)}(I):=2\Big(\sum_{k=1}^l\delta_k\omega^{(2)}_{a_k}-\omega^{(2)}_{b}\Big),
\end{equation}
\begin{equation}
\label{formomega4}
\Omega_{\bs{j}}^{(4)}(I):=2\Big(\sum_{k=1}^l\delta_k(\omega^{(2)}_{a_k}+\omega^{(4)}_{a_k})-(\omega^{(2)}_{b}+\omega^{(4)}_{b})\Big).
\end{equation}

For any $\bs{j}=(j_{k})_{k=1}^{l}\in\mc{R}^0_l$,  
%
denote $Irr({\bs{j}})$ as the irreducible part of $\bs{j}$, namely the subsequence of maximal length $(j'_1,\cdots,j'_{l'})$ containing no action in the sense that $j'_i\neq\bar{j'_k}$ for all $i,k=1,\cdots,l'$.
Similarly, for any $\bs{j}\in\mc{R}^a_l$, denote the  irreducible part $Irr({\bs{j}})$ with the above condition and additionally  deleting one index $j'_m=(1,a)$ if it exists. 
%
%
For example, consider the monomial $\zeta_{\bs{j}}=z^2_{b_1}z^2_{b_2}z^2_{b_3}z^2_{b_4}\bar{z}^2_{c_1}\bar{z}^2_{c_2}I_d$ with its index
$$\bs{j}:=\big((1,b_1),(1, b_2),(1,b_3),(1, b_4),(-1,c_1),(-1,c_2),(0,d)\big)$$ 
satisfying $b_1=c_1$ and $b_2,b_3,b_4\neq c_2$. 
%
%
If $\Delta_{\bs{j}}:=b_1+b_2+b_3+b_4-c_1-c_2\neq b_2,b_3,b_4$, then 
$$Irr(\bs{j})=\big((1, b_2),(1, b_3),(1, b_4),(-1,c_2)\big)\quad\text{and}\quad\zeta_{Irr(\bs{j})}=z^2_{b_2}z^2_{b_3}z^2_{b_4}\bar{z}^2_{c_2};$$
and if $\Delta_{\bs{j}}=b_2$, then 
$$Irr(\bs{j})=\big((1, b_3),(1,b_4),(-1,c_2)\big)\quad\text{and}\quad\zeta_{Irr(\bs{j})}=z^2_{b_3}z^2_{b_4}\bar{z}^2_{c_2}.$$

Remark that 
$\Omega_{\bs{j}}^{(2)}(I)=\Omega_{Irr(\bs{j})}^{(2)}(I)$, $\Omega_{\bs{j}}^{(4)}(I)=\Omega_{Irr(\bs{j})}^{(4)}(I)$. In particular, for $\bs{j}\in\mc{I}^0$ or $\mc{I}^a$, one has $Irr(\bs{j})=\emptyset$ and $\Omega_{\bs{j}}^{(2)}(I)=\Omega_{\bs{j}}^{(4)}(I)=0$. 
Let
$$Irr(\mc{R})=\{Irr(\bs{j})\mid\bs{j}\in\mc{R}^b\;\text{with}\; b\geq0\}.$$
Denote $\#\bs{j}$ as the length of $\bs{j}$. Remark that for any $\bs{j}\in Irr(\mc{R})$, one has $\#\bs{j}\geq2$. 
Especially, if $\bs{j}\in Irr(\mc{R})\cap\mc{R}^{0}$, then $\#\bs{j}\geq3$.
Moreover, let
$$Irr(\mc{R}_{\leq N})=\{\bs{j}\in Irr(\mc{R}) \mid j_{1}^{*}\leq N\;\text{and}\;\Delta_{\bs{j}}\leq N\}.$$
Then for $\bs{j}\in Irr(\mc{R}_{\leq N})$,  $\Omega_{\bs{j}}^{(2)}(I)$ and $\Omega_{\bs{j}}^{(4)}(I)$ only depend on the action $\{I_d\}_{d\leq N}$.

%

For $r\geq2$, $N\geq1$ and $\gamma>0$, we say that $z\in \ell_{s}^2$ belongs to the open set $\mc{U}_{\gamma}^{N}$, if for any $\bs{j}\in Irr(\mc{R}_{\leq N})$ with $\#\bs{j}=l\leq r$, one has
\begin{align}
\label{form49}
&|\Omega_{\bs{j}}^{(2)}(I)|>\gamma\|z\|_{s}^{2}N^{-4l-2}\kappa_{\bs{j}}^{-2s},
\\\label{form410}
&|\Omega_{\bs{j}}^{(4)}(I)|>\gamma\|z\|_{s}^{2}N^{-4l-2}\max\{\kappa_{\bs{j}}^{-2s},\gamma\|z\|_{s}^{2}\},
\end{align}
where 
$$\kappa_{\bs{j}}=\left\{\begin{aligned}
\;\mu_{\min}(\bs{j}),\qquad&\;\text{if}\quad \Delta_{\bs{j}}=0, 
\\\min\{\mu_{\min}(\bs{j}),\Delta_{\bs{j}}\},\;&\;\text{if}\quad \Delta_{\bs{j}}>0.
\end{aligned}\right.$$
%

The following lemma shows that the set $\mc{U}_{\gamma}^{N}$ is stable with respect to the action $I$.
\begin{lemma}
\label{le41}
Fix $s\geq0$ and $r\geq2$. For $N\geq1$ and $0<\gamma<1$, let $z\in\mc{U}_{\gamma}^{N}$. If $z'\in\ell^2_{s}$ satisfies the conditions
\begin{equation}
\label{form411}
\|z'\|_{s}\leq4\|z\|_{s}\quad\text{and}\quad
\sup_{a\leq N}|a|^{2s}|I'_{a}-I_{a}|\leq\frac{\gamma^{2}\|z\|_{s}^{2}}{288(r+1)N^{4r+3}},
\end{equation}
then $z'\in\mc{U}_{\gamma/2}^{N}$.
\end{lemma}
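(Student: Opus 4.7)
The plan is a direct perturbation argument based on the triangle inequality. For any $\bs{j}\in Irr(\mc{R}_{\leq N})$ with $\#\bs{j}=l\leq r$ and $b=\Delta_{\bs{j}}$, I would write
$$
|\Omega^{(\bullet)}_{\bs{j}}(I')|\geq |\Omega^{(\bullet)}_{\bs{j}}(I)|-|\Omega^{(\bullet)}_{\bs{j}}(I')-\Omega^{(\bullet)}_{\bs{j}}(I)|,
$$
and show the difference is much smaller than the lower bound that \eqref{form49}--\eqref{form410} guarantees for the first term. The key preliminary observation is that, because $\bs{j}\in Irr(\mc{R}_{\leq N})$, every index $a_k$ in $\bs{j}$ satisfies $a_k\leq N$ and $a_k\geq\kappa_{\bs{j}}$, and likewise $b\leq N$ with $b\geq\kappa_{\bs{j}}$ (or $b=0$, in which case $\omega^{(\bullet)}_b=0$). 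Consequently the hypothesis in \eqref{form411} controls precisely the values $|I'_a-I_a|$ that enter $\Omega^{(\bullet)}_{\bs{j}}$.

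For $\Omega^{(2)}$ the calculation is immediate since $\omega^{(2)}_a=I_a/4$ (for $a\leq N$) is linear in $I$. Bounding each $|I'_{a_k}-I_{a_k}|\leq a_k^{-2s}\cdot\frac{\gamma^{2}\|z\|_{s}^{2}}{288(r+1)N^{4r+3}}$ by $\kappa_{\bs{j}}^{-2s}\cdot\frac{\gamma^{2}\|z\|_{s}^{2}}{288(r+1)N^{4r+3}}$ and summing the $l+1$ contributions yields
$$
|\Omega^{(2)}_{\bs{j}}(I')-\Omega^{(2)}_{\bs{j}}(I)|\leq \tfrac{l+1}{576(r+1)}\,\gamma^{2}\|z\|_{s}^{2} N^{-4r-3}\kappa_{\bs{j}}^{-2s}\leq\tfrac{\gamma^{2}}{576}\|z\|_{s}^{2}N^{-4l-3}\kappa_{\bs{j}}^{-2s},
$$
since $l\leq r$. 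This is strictly smaller than the margin required to pass from \eqref{form49} to the analogous bound for $I'$ with $\gamma/2$, after invoking the comparison $\|z'\|_{s}\leq 4\|z\|_{s}$ on the conclusion side; the extra negative power of $N$ absorbs the resulting universal constant.

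For $\Omega^{(4)}$, the only new contribution comes from the quadratic modification $\omega^{(4)}_a$ in \eqref{form44}. I would expand $\omega^{(4)}_a(I')-\omega^{(4)}_a(I)$ as a finite sum of terms $c_{a,d_1,d_2}(I'_{d_1}I'_{d_2}-I_{d_1}I_{d_2})$ with coefficients $|c_{a,d_1,d_2}|$ controlled via $1/|d^2-a^2|\leq 1/|d-a|$, apply the polarization
$$
I'_{d_1}I'_{d_2}-I_{d_1}I_{d_2}=(I'_{d_1}-I_{d_1})I'_{d_2}+I_{d_1}(I'_{d_2}-I_{d_2}),
$$
and use the bounds $I_a\leq a^{-2s}\|z\|_s^2$, $I'_a\leq 16\,a^{-2s}\|z\|_s^2$ together with $a^{2s}|I'_a-I_a|\leq\delta:=\gamma^{2}\|z\|_s^2/[288(r+1)N^{4r+3}]$. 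This gives $|\omega^{(4)}_{a_k}(I')-\omega^{(4)}_{a_k}(I)|\leq C_s N^{c}\,\delta\,\|z\|_s^2$ for a small exponent $c$, and summing the $l+1$ contributions (together with the $\Omega^{(2)}$-piece above) produces a total difference bounded by a universal constant times $\gamma^{2}\|z\|_s^{4}N^{-4l-2}$. This is strictly smaller than half the lower bound in \eqref{form410} in both regimes of the maximum therein; the triangle inequality then yields $z'\in\mc{U}^{N}_{\gamma/2}$.

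The main obstacle will be the bookkeeping for the quadratic term $\omega^{(4)}$: the sums $\sum_{d\neq a,\,d\leq N}(\cdots)I_d^{2}$ contain up to $O(N)$ summands with coefficients that can grow polynomially in $a\leq N$, so a careless application of the pointwise estimates picks up several positive powers of $N$. The $\gamma^{2}$ (rather than $\gamma$) smallness and the $N^{4r+3}$ denominator in \eqref{form411} are precisely tuned to absorb these polynomial losses while still leaving the factor-two gain in the non-resonance modulus; in the $\Omega^{(4)}$ case, the room provided by the extra $\gamma\|z\|_s^2$ factor inside the $\max$ in \eqref{form410} is exactly what matches the $\gamma^{2}\|z\|_s^{4}$ scale of the perturbation.
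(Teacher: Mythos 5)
Your proposal follows essentially the same route as the paper's own proof: a triangle-inequality perturbation argument, with a linear estimate for $\Omega^{(2)}$ obtained from the weighted sup bound on $|I'_a-I_a|$ (using $l\le r$ to gain room in the powers of $N$), and a separate estimate of the quadratic correction to $\Omega^{(4)}$ via polarization, the comparison $\|z'\|_{s}\le4\|z\|_{s}$ and the $O(N)$ summands, matched against the $\gamma\|z\|_{s}^{2}$ regime of the maximum in \eqref{form410}. The one aside you should drop is the claim that an extra negative power of $N$ "absorbs the resulting universal constant" when passing from $\|z\|_{s}$ to $\|z'\|_{s}$ on the conclusion side: no such absorption is possible at a fixed exponent, and the paper does not perform it either—its final lower bounds are stated in terms of $\|z\|_{s}$, exactly as your computation produces them.
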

\begin{proof}
In view of the definition \eqref{formomega2}, for any $\bs{j}=(\delta_k,a_k)_{k=1}^l\in Irr(\mc{R}_{\leq N})$ with $l\leq r$, if $\Delta_{\bs{j}}=0$, then
\begin{align}
\label{form412}
\big|\Omega^{(2)}_{\bs{j}}(I')-\Omega^{(2)}_{\bs{j}}(I)\big|
&\leq\frac{1}{2}\sum_{k=1}^l|I'_{a_{k}}-I_{a_{k}}|
\\\notag&\leq \frac{l}{2}\mu_{\min}^{-2s}(\bs{j})\sup_{a\leq N}a^{2s}|I'_{a}-I_{a}|;
\end{align}
and if $b:=\Delta_{\bs{j}}>0$, then
\begin{align}
\label{form412-12}
\big|\Omega^{(2)}_{(\bs{j})}(I')-\Omega^{(2)}_{(\bs{j})}(I)\big|
\leq&\frac{1}{2}\Big(\sum_{k=1}^l|I'_{a_{k}}-I_{a_{k}}|+|I'_{b}-I_{b}|\Big)
\\\notag\leq&\frac{l+1}{2}\big(\min\{\mu_{\min}(\bs{j}),b\}\big)^{-2s}\sup_{a\leq N}a^{2s}|I'_{a}-I_{a}|.
\end{align}
%
Then for any  $\bs{j}\in Irr(\mc{R}_{\leq N})$, by \eqref{form412}, \eqref{form412-12} and 
$$\sup_{a\leq N}a^{2s}|I'_{a}-I_{a}|\leq\frac{\gamma^{2}\|z\|_{s}^{2}}{288(r+1)N^{4r+3}}\leq\frac{\gamma\|z\|_{s}^{2}}{2(l+1)N^{4l+2}},$$
we have
\begin{equation}
\label{form317-12-29}
\big|\Omega^{(2)}_{\bs{j}}(I')-\Omega^{(2)}_{\bs{j}}(I)\big|\leq\frac{\gamma}{4}\|z\|_{s}^{2}N^{-4l-2}\kappa^{-2s}_{\bs{j}},
\end{equation}
and thus
\begin{align}
\label{32-form316}
|\Omega_{\bs{j}}^{(2)}(I')|\geq&|\Omega_{\bs{j}}^{(2)}(I)|-\big|\Omega^{(2)}_{\bs{j}}(I')-\Omega^{(2)}_{\bs{j}}(I)\big|
\\\notag>&\frac{\gamma}{2}\|z\|_{s}^{2}N^{-4l-2}\kappa^{-2s}_{\bs{j}}.
\end{align}

For any $\bs{j}\in Irr(\mc{R}_{\leq N})$, write $\Omega^{(4,4)}_{\bs{j}}:=\Omega^{(4)}_{\bs{j}}-\Omega^{(2)}_{\bs{j}}$, and then $\Omega_{\bs{j}}^{(4,4)}(I)$ only depends on $\{\omega^{(4)}_a\}_{a\leq N}$, which is a quadratic homogenous polynomial with respect to $\{I_a\}_{a\leq N}$.
By a direct calculation and the condition $\|z'\|_{s}\leq4\|z\|_{s}$, we have
\begin{small}
\begin{align*}
&|\omega^{(4)}_a(I)-\omega^{(4)}_a(I')|
\\\leq&\frac{27}{64}|I^2_a-I'^2_a|+\frac{1}{2}\Big(I_a\sum_{a\neq d\leq N}|I_d-I'_d|+|I_a-I'_a|\sum_{a\neq d\leq N}I'_d\Big)+\frac{1}{16}\sum_{a\neq d\leq N}|I^2_d-I'^2_d|
\\\leq&\Big(\frac{27(\|z\|_{s}^{2}+\|z'\|_{s}^{2})}{64}+\frac{N\|z\|_{s}^{2}+\|z'\|_{s}^{2}}{2}+\frac{\|z\|_{s}^{2}+\|z'\|_{s}^{2}}{16}\Big)\sup_{a\leq N}|I'_{a}-I_{a}|
\\\leq&\Big(\frac{1039}{64}+\frac{N}{2}\Big)\|z\|_{s}^{2}\sup_{a\leq N}|I'_{a}-I_{a}|
\\<&18N\|z\|_{s}^{2}\sup_{a\leq N}|I'_{a}-I_{a}|.
\end{align*}
\end{small}%
Then in view of \eqref{formomega2} and \eqref{formomega4}, for any $\bs{j}\in Irr(\mc{R}_{\leq N})$, one has
\begin{align}
\label{form413}
&\big|\Omega^{(4,4)}_{\bs{j}}(I')-\Omega^{(4,4)}_{\bs{j}}(I)\big|
\\\notag\leq&2\Big(\sum_{k=1}^{l}|\omega^{(4)}_{a_{k}}(I)-\omega^{(4)}_{a_{k}}(I')|+|\omega^{(4)}_{b}(I)-\omega^{(4)}_{b}(I')|\Big)
\\\notag\leq&36(l+1)N\|z\|_{s}^{2}\sup_{a\leq N}|I'_{a}-I_{a}|
\\\notag\leq&\frac{\gamma^2}{8}\|z\|_{s}^{4}N^{-4l-2},
\end{align}
where $b=\Delta_{\bs{j}}\geq0$ and the last inequality follows from 
$$\sup_{a\leq N}|I'_{a}-I_{a}|\leq\frac{\gamma^{2}\|z\|_{s}^{2}}{288(r+1)N^{4r+3}}\leq\frac{\gamma^{2}\|z\|_{s}^{2}}{288(l+1)N^{4l+3}}.$$
By \eqref{form317-12-29} and  \eqref{form413} , one has
\begin{align*}
&\big|\Omega_{\bs{j}}^{(4)}(I')-\Omega_{\bs{j}}^{(4)}(I)\big|
\\\leq&2\max\Big\{\big|\Omega_{\bs{j}}^{(2)}(I')-\Omega_{\bs{j}}^{(2)}(I)\big|,\big|\Omega_{\bs{j}}^{(4,4)}(I')-\Omega_{\bs{j}}^{(4,4)}(I)\big|\Big\}
\\\leq&\frac{\gamma}{2}\|z\|_{s}^{2}N^{-4l-2}\max\{\kappa^{-2s}_{\bs{j}},\frac{\gamma}{2}\|z\|_{s}^{2}\},
\end{align*}
and then
\begin{align}
\label{32-form319}
|\Omega_{\bs{j}}^{(4)}(I')|
\geq&|\Omega_{\bs{j}}^{(4)}(I)|-\big|\Omega_{\bs{j}}^{(4)}(I')-\Omega_{\bs{j}}^{(4)}(I)\big|
\\\notag>&\frac{\gamma}{2}\|z\|_{s}^{2}N^{-4l-2}\max\{\kappa^{-2s}_{\bs{j}},\frac{\gamma}{2}\|z\|_{s}^{2}\}.
\end{align}
To sum up, we conclude that $z'\in\mc{U}_{\gamma/2}^{N}$ by \eqref{32-form316} and \eqref{32-form319}.
\end{proof}

\subsection{Rational vector fields}
\label{sec32}

We want to find a transformation to eliminate the non-integrable part of $K^{\leq N}_{5}$, which is the truncation of $K_5$ in \Cref{th31}. This transformation arises from a vector field $\chi'_{3}$ associated with the small divisor $\Omega^{(2)}$ in \eqref{formomega2}. 
Then we need eliminate the non-integrable part of higher order truncated resonant vector fields $K^{\leq N}_{2l+1}$ by using $Z_3^{\leq N}$ or $Z_3^{\leq N}+Z^{\leq N}_5$, where the small divisors $\Omega^{(2)}$ and $\Omega^{(4)}$ in \eqref{formomega2} and \eqref{formomega4} are encountered. To better characterize the class of vector fields, we introduce rational vector fields.

In the following, we use the notation $a\lesssim b$ to mean that $a\leq Cb$ with a positive constant $C$. 
%
For any $b\in\mb{N}$, denote 
$\mc{R}^b_{\leq N}=\bigcup_{\alpha\in\mb{N}_*}\mc{R}^b_{\alpha,N}$.
Firstly, we define a family of index sets to control the rational fractions.
\begin{definition}
\label{def41}
For any $l\in\mb{N}_*$ and $N\geq1$, denote by $\mc{H}_{l,N}$ the family of all the elements $\Gamma_{l}=\bigcup_{a\in\mb{N}_*}\Gamma^{a}_{l}$ and $\Gamma'_{l}=\bigcup_{a\leq N}\Gamma'^{a}_{l}$ with
\begin{eqnarray}
\label{form414}
\Gamma_{l}^a & \subseteq & \{a\}\times\mc{R}^0_{\leq N}\times\bigcup_{\beta\geq0}\big(Irr(\mc{R}_{\leq N})\big)^{\beta}\times\bigcup_{\beta'\geq0}\big(Irr(\mc{R}_{\leq N})\big)^{\beta'}\times\mb{N}, 
\\\label{form415}
\Gamma_{l}'^a & \subseteq & \{a\}\times\mc{R}^a_{\leq N}\times\bigcup_{\beta\geq0}\big(Irr(\mc{R}_{\leq N})\big)^{\beta}\times\bigcup_{\beta'\geq0}\big(Irr(\mc{R}_{\leq N})\big)^{\beta'}\times\mb{N}, 
\end{eqnarray}
where for any $(a,\bs{j},\bs{h},\bs{k},n)\in\Gamma_{l}$ or $\Gamma'_{l}$, 
\begin{enumerate}
\item\label{def41-1} one has 
\begin{itemize}
\item[(1-1)] the order $l$, i.e., $\#\bs{j}-\#\bs{h}-2\#\bs{k}=l$,
\item[(1-2)] $\bs{j}\in\mc{R}^0_{\leq N}$ or $\mc{R}^a_{\leq N}$ with $\#\bs{j}\geq2$,
\item[(1-3)] $\bs{h}=(\bs{h}_{m})_{m=1}^{\#\bs{h}}$ with $\bs{h}_{m}\in Irr(\mc{R}_{\leq N})$ and $2\leq\#\bs{h}_{m}\leq\#\bs{j}$,
\item[(1-4)] $\bs{k}=(\bs{k}_{m})_{m=1}^{\#\bs{k}}$ with $\bs{k}_{m}\in Irr(\mc{R}_{\leq N})$ and $2\leq\#\bs{k}_{m}\leq\#\bs{j}$,
\item[(1-5)] $n\leq\#\bs{h}$;
\end{itemize}
\item\label{def41-2} if $a$ and $\bs{j}$ are fixed, then there is a finite number of indices $(a,\bs{j},\bs{h},\bs{k},n)$ and this number is controlled by $\alpha:=\#\bs{j}$, i.e.,
\begin{align*}
\max_{a,\; \bs{j}\in\mc{R}_{\alpha,N}^{0}\cup\mc{R}_{\alpha,N}^{a}}\#(\Gamma_{l}\cup\Gamma'_{l};\ a, \bs{j})\leq(2\alpha)^{2\alpha-3}
\end{align*}
with the notation $\#(\Gamma_{l}\cup\Gamma'_{l};\ a, \bs{j}):=\#\big\{(\tilde{a},\tilde{\bs{j}},\bs{h},\bs{k},n)\in\Gamma_{l}\cup\Gamma'_{l} \;\mid\; \tilde{a}=a,  \tilde{\bs{j}}=\bs{j}\big\}$;
\item\label{def41-3} one has the control condition
\begin{equation}
\label{form416}
\prod_{m=1}^{\#\bs{h}}\kappa_{\bs{h}_{m}}\leq\prod_{m=1}^{\#\bs{j}}j_{m}^*.
\end{equation}
\end{enumerate}
\end{definition}

Then we define the family of resonant rational vector fields.

\begin{definition}
\label{def42}
Being given $\Gamma_{l}, \Gamma'_{l}\in\mc{H}_{l,N}$, define the rational vector field of order $2l+1$:
\begin{equation}
\label{form418}
Q_{\Gamma_{l},\Gamma'_{l}}(\bs{z})=Q_{\Gamma_{l},\Gamma'_{l}}^{(z)}(\bs{z})\pa_{z}+Q_{\Gamma_{l},\Gamma'_{l}}^{(\bar{z})}(\bs{z})\pa_{\bar{z}}
\end{equation}
with the $z$-component 
\begin{align}
\label{form419}
Q_{\Gamma_{l},\Gamma'_l}^{(z)}(\bs{z})\pa_{z}=&Q_{\Gamma_{l}}^{(z)}(\bs{z})\pa_{z}+Q_{\Gamma'_{l}}^{(z)}(\bs{z})\pa_{z}
\\\notag=&\sum_{J:=(a,\bs{j},\bs{h},\bs{k},n)\in\Gamma_{l}}z_aQ_{J}^{(z_a)}(\bs{z})\pa_{z_a}+\sum_{J:=(a,\bs{j},\bs{h},\bs{k},n)\in\Gamma'_{l}}\bar{z}_aQ_{J}^{(z_a)}(\bs{z})\pa_{z_a},
\end{align}
\begin{equation}
\label{form419-10-3-2}
Q_{J}^{(z_a)}(\bs{z})=\tilde{Q}^{(z_a)}_{J}\frac{\zeta_{\bs{j}}}{\prod\limits_{m=1}^{n}\Omega^{(2)}_{\bs{h}_{m}}\prod\limits_{m=n+1}^{\#\bs{h}}\Omega^{(4)}_{\bs{h}_{m}}\prod\limits_{m=1}^{\#\bs{k}}\Omega^{(4)}_{\bs{k}_{m}}},
\end{equation}
and the $\bar{z}$-component
\begin{align}
\label{form420}
Q_{\Gamma_{l},\Gamma'_l}^{(\bar{z})}(\bs{z})\pa_{\bar{z}}=&Q_{\Gamma_{l}}^{(\bar{z})}(\bs{z})\pa_{\bar{z}}+Q_{\Gamma'_{l}}^{(\bar{z})}(\bs{z})\pa_{\bar{z}}
\\\notag=&\sum_{J:=(a,\bs{j},\bs{h},\bs{k},n)\in\Gamma_{l}}\bar{z}_aQ_{J}^{(\bar{z}_a)}(\bs{z})\pa_{\bar{z}_a}
+\sum_{J:=(a,\bs{j},\bs{h},\bs{k},n)\in\Gamma'_{l}}z_aQ_{J}^{(\bar{z}_a)}(\bs{z})\pa_{\bar{z}_a},
\end{align}
\begin{equation}
\label{form420-10-3-2}
Q_{J}^{(\bar{z}_a)}(\bs{z})=\tilde{Q}^{(\bar{z}_a)}_{J}\frac{\zeta_{\bar{\bs{j}}}}{\prod\limits_{m=1}^{n}\Omega^{(2)}_{\bs{h}_{m}}\prod\limits_{m=n+1}^{\#\bs{h}}\Omega^{(4)}_{\bs{h}_{m}}\prod\limits_{m=1}^{\#\bs{k}}\Omega^{(4)}_{\bs{k}_{m}}},
\end{equation}
where
the coefficients $\tilde{Q}^{(z_a)}_{J},\tilde{Q}^{(\bar{z}_a)}_{J}\in\mb{C}$ satisfy $\tilde{Q}^{(z_a)}_{J}=\overline{\tilde{Q}^{(\bar{z}_a)}_{J}}$,
\begin{equation}
\label{form421}
\|Q_{\Gamma_{l},\Gamma'_{l}}\|_{\ell^{\infty}}:=\sup_{J\in\Gamma_{l}\bigcup\Gamma'_{l}}|\tilde{Q}^{(z_a)}_{J}|<+\infty,
\end{equation}
and for any permutation $\sigma_1, \sigma_2, \sigma_3$,
\begin{align*}
\tilde{Q}^{(z_a)}_J=\tilde{Q}^{(z_a)}_{(a,\bs{j},\bs{h},\bs{k},n)}=\tilde{Q}^{(z_a)}_{(a,\sigma_{1}(\bs{j}),\sigma_{2}(\bs{h}),\sigma_{3}(\bs{k}),n)},
\\\tilde{Q}^{(\bar{z}_a)}_J=\tilde{Q}^{(\bar{z}_a)}_{(a,\bs{j},\bs{h},\bs{k},n)}=\tilde{Q}^{(\bar{z}_a)}_{(a,\sigma_{1}(\bs{j}),\sigma_{2}(\bs{h}),\sigma_{3}(\bs{k}),n)}.
\end{align*} 
Denote by $\ms{H}_{l,N}$ the set of the above rational vector fields.
Especially, for any $J\in\Gamma_{l}\bigcup\Gamma'_{l}$, if the coefficients satisfy the reversible condition
\begin{equation}
\label{formrev-1}
\tilde{Q}^{(z_a)}_{J}=-\tilde{Q}^{(\bar{z}_a)}_J,
\end{equation}
then we say that the rational vector field $Q_{\Gamma_{l},\Gamma'_{l}}$ belongs to the set $\ms{H}_{l, N}^{{\rm rev}}$;
and if the coefficients satisfy the anti-reversible condition
\begin{equation}
\label{formanrev-1}
\tilde{Q}^{(z_a)}_{J}=\tilde{Q}^{(\bar{z}_a)}_J,
\end{equation}
then we say that the rational vector field $Q_{\Gamma_{l},\Gamma'_{l}}$ belongs to the set $\ms{H}_{l, N}^{{\rm an-rev}}$.
%
\end{definition}

The following three remarks help us better understand the above two definitions.
\begin{remark}
\label{re41}
There are some properties of rational vector fields.
\begin{enumerate}[(1)]
\item\label{re41-1} The condition $\bs{h}_{m},\bs{k}_{m}\in Irr(\mc{R}_{\leq N})$ ensures that the denominators in \Cref{def42} are not equal to zero and only depend on the action $\{I_d\}_{d\leq N}$.
\item \label{re41-0} For any $Q_{\Gamma_{l},\Gamma'_{l}}\in\ms{H}_{l,N}$, one has 
$$\overline{Q^{(z)}_{\Gamma_{l}}(\bs{z})}=Q^{(\bar{z})}_{\Gamma_{l}}(\bs{z}),\quad \overline{Q^{(z)}_{\Gamma'_{l}}(\bs{z})}=Q^{(\bar{z})}_{\Gamma'_{l}}(\bs{z}),$$ 
and thus the $\bar{z}$-component $Q^{(\bar{z})}_{\Gamma_{l},\Gamma'_{l}}\pa_{\bar{z}}$ is uniquely determined by the $z$-component $Q^{(z)}_{\Gamma_{l},\Gamma'_{l}}\pa_{z}$. 
Especially, the coefficients of $Q_{\Gamma_{l},\Gamma'_{l}}\in \ms{H}^{{\rm rev}}_{l,N}$ are purely imaginary, and the coefficients of $Q_{\Gamma_{l},\Gamma'_{l}}(\bs{z})\in \ms{H}^{{\rm an-rev}}_{l,N}$ are real.
\item \label{re41-2} 
 The $z$-component \eqref{form419} is rewritten as
\begin{equation}
\label{formre12-31}
Q_{\Gamma_{l},\Gamma'_l}^{(z)}(\bs{z})\pa_{z}=\sum_{a\in\mb{N}_*}z_aQ_{\Gamma^a_{l}}^{(z_a)}(\bs{z})\pa_{z_a}+\sum_{a\leq N}\bar{z}_aQ_{\Gamma'^a_{l}}^{(z_a)}(\bs{z})\pa_{z_a}
\end{equation}
with 
\begin{align}
\label{form423}
Q^{(z_a)}_{\Gamma_{l}^a}(\bs{z})=\sum_{(\alpha,\beta,\beta')\in\mc{F}_{l}}\sum_{(a,\bs{j},\bs{h},\bs{k},n)\in\Gamma_{l}^{(a,\alpha,\beta,\beta')}}\tilde{Q}^{(z_a)}_{J}\frac{\zeta_{\bs{j}}}{\prod\limits_{m=1}^{n}\Omega^{(2)}_{\bs{h}_{m}}\prod\limits_{m=n+1}^{\beta}\Omega^{(4)}_{\bs{h}_{m}}\prod\limits_{m=1}^{\beta'}\Omega^{(4)}_{\bs{k}_{m}}},
\\\label{form424}
Q^{(z_a)}_{\Gamma'^a_{l}}(\bs{z})=\sum_{(\alpha,\beta,\beta')\in\mc{F}_{l}}\sum_{(a,\bs{j},\bs{h},\bs{k},n)\in\Gamma_{l}'^{(a,\alpha,\beta,\beta')}}\tilde{Q}^{(z_a)}_{J}\frac{\zeta_{\bs{j}}}{\prod\limits_{m=1}^{n}\Omega^{(2)}_{\bs{h}_{m}}\prod\limits_{m=n+1}^{\beta}\Omega^{(4)}_{\bs{h}_{m}}\prod\limits_{m=1}^{\beta'}\Omega^{(4)}_{\bs{k}_{m}}},
\end{align}
where  $\mc{F}_{l}\subseteq\mb{N}^{3}$ is a set with its elements satisfying $\alpha-\beta-2\beta'=l$, and
\begin{align*}
\Gamma^{(a,\alpha, \beta, \beta')}_{l}=&\Gamma_{l}^a\bigcap\Big(\{a\}\times\mc{R}^0_{\alpha,N}\times\big(Irr(\mc{R}_{\leq N})\big)^{\beta}\times\big(Irr(\mc{R}_{\leq N})\big)^{\beta'}\times\mb{N}\Big), 
\\\Gamma'^{(a,\alpha, \beta, \beta')}_{l}=&\Gamma_{l}'^a\bigcap\Big(\{a\}\times\mc{R}_{\alpha,N}^a\times\big(Irr(\mc{R}_{\leq N})\big)^{\beta}\times\big(Irr(\mc{R}_{\leq N})\big)^{\beta'}\times\mb{N}\Big).
\end{align*} 
In particular, by assuming that 
$$\mc{J}_{\Gamma_l,\Gamma'_l}:=\sup_{(a,\bs{j},\bs{h},\bs{k},n)\in\Gamma_{l}\bigcup\Gamma'_{l}}\{\#\bs{j}\}<+\infty,$$
it follows that $\mc{F}_{l}\subseteq\mb{N}^{3}$ is a finite set with $\#\mc{F}_{l}\leq\mc{J}_{\Gamma_l,\Gamma'_l}^3$. 
It is similar for the $\bar{z}$-component \eqref{form420}.
%
\end{enumerate}
\end{remark}

\begin{remark}
\label{re41-6} 
There are several notes for small divisors in \Cref{def42}.
\begin{enumerate}[(1)]
\item\label{re41-3} The denominators in the solutions of homological equations are constituted by the denominators from the preceding terms, along with newly introduced small divisors. These small divisors are determined by the numerators in the previous terms. 
\item \label{re41-7} For two rational vector fields, the denominators of their commutator are composed of the denominators from each of the vector fields, supplemented by additional small divisors. These additional small divisors are also derived from the denominators of the vector fields. 
\end{enumerate}
In summary, during the iterative process, the small divisors in the denominators of rational vector field \eqref{form418} arise from solving homological equations and commutator operations. Thus it is natural to propose the conditions (1-3),  (1-4) and \eqref{def41-2} in \Cref{def41}, where the first two conditions imply that the lengths of all indices $\bs{h}_{m},\bs{k}_{m}$ in the denominator are controlled by the length of $\bs{j}$ in the numerator.
\end{remark}

\begin{remark}
There are some notes for the control condition \eqref{form416} in \Cref{def41}.
\begin{enumerate}[(1)]
\item\label{re41-4} When we use the small divisor conditions \eqref{form49} and \eqref{form410} to estimate $\Omega_{\bs{h}_m}^{(2)}$, $\Omega_{\bs{h}_m}^{(4)}$ and $\Omega_{\bs{k}_m}^{(4)}$ in \eqref{form419-10-3-2} and \eqref{form420-10-3-2}, this control condition  is adequate to bound the rational vector fields in the iterative process. 
\item\label{re41-5} 
This control condition ensures that the solution of the homological equation is also well defined by \Cref{def42}.
\item\label{re41-5-420} This control condition is well kept in the commutator of rational vector fields.
\end{enumerate}
\end{remark}

Then we estimate resonant rational vector fields.
\begin{lemma}
\label{le42}
Fix $r\geq2$, $s\geq0$, $N\geq1$ and $\gamma\in(0,1)$. 
Being given $\Gamma_{l}, \Gamma'_{l}\in\mc{H}_{l,N}$ satisfying $\mc{J}_{\Gamma_l,\Gamma'_l}<+\infty$,
consider the rational vector field $Q_{\Gamma_{l},\Gamma'_{l}}\in\ms{H}_{l,N}$.
%
 For $z\in\mc{U}_{\gamma}^{N}$, one has
\begin{equation}
\label{form425}
\|Q_{\Gamma_l,\Gamma'_l}(\bs{z})\|_s\leq 6\sqrt{2}\mc{J}_{\Gamma_l,\Gamma'_l}^3\max_{(\alpha,\beta,\beta')\in\mc{F}_{l}}\frac{(12\alpha^2)^{\alpha-1}N^{(4\alpha+2)(\beta+\beta')}}{\gamma^{\beta+2\beta'}}\|Q_{\Gamma_{l},\Gamma'_{l}}\|_{\ell^{\infty}}\|z\|_{s}^{2l+1}.
\end{equation}
%
\end{lemma}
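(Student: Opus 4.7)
The strategy is to decompose $Q_{\Gamma_{l},\Gamma'_{l}}$ along the fibers indexed by $(\alpha,\beta,\beta')\in\mc{F}_{l}$ from \Cref{re41}, estimate each fiber uniformly in the base index $a$ by combining the non-resonance conditions \eqref{form49} and \eqref{form410} with the control condition \eqref{form416}, and finally sum over $\mc{F}_{l}$ using the finiteness $\#\mc{F}_{l}\leq\mc{J}_{\Gamma_{l},\Gamma'_{l}}^{3}$.

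Fix $(\alpha,\beta,\beta')\in\mc{F}_{l}$. Since every $\bs{h}_{m},\bs{k}_{m}\in Irr(\mc{R}_{\leq N})$ appearing in the denominator has length at most $\alpha$ by conditions (1-3) and (1-4) of \Cref{def41}, the non-resonance conditions give
\begin{equation*}
\frac{1}{|\Omega^{(2)}_{\bs{h}_{m}}|},\ \frac{1}{|\Omega^{(4)}_{\bs{h}_{m}}|}<\frac{\kappa_{\bs{h}_{m}}^{2s}N^{4\alpha+2}}{\gamma\|z\|_{s}^{2}},\qquad\frac{1}{|\Omega^{(4)}_{\bs{k}_{m}}|}<\frac{N^{4\alpha+2}}{\gamma^{2}\|z\|_{s}^{4}}.
\end{equation*}
Multiplying across all $\beta+\beta'$ denominators produces a factor $\prod_{m=1}^{\beta}\kappa_{\bs{h}_{m}}^{2s}$ which, by the control condition \eqref{form416}, is dominated by $\prod_{k=1}^{\alpha}a_{k}^{2s}$, i.e. the Sobolev weight naturally associated with $\zeta_{\bs{j}}$, where $j_{k}=(\delta_{k},a_{k})$.

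For fixed $(a,\bs{j})$, property (2) of \Cref{def41} bounds the number of compatible $(\bs{h},\bs{k},n)$ by $(2\alpha)^{2\alpha-3}$, and clearly $|\zeta_{\bs{j}}|\leq\prod_{k}|z_{a_{k}}|^{2}$. Dropping the resonance constraint on $\bs{j}$ and factorizing coordinate by coordinate gives
\begin{equation*}
\sum_{\bs{j}}|\zeta_{\bs{j}}|\prod_{k=1}^{\alpha}a_{k}^{2s}\leq\prod_{k=1}^{\alpha}\Big(3\sum_{a_{k}\in\mb{N}_{*}}a_{k}^{2s}|z_{a_{k}}|^{2}\Big)=3^{\alpha}\|z\|_{s}^{2\alpha}.
\end{equation*}
Combining the three estimates produces a uniform-in-$a$ bound on the inner sum $\sum_{J}|\tilde{Q}^{(z_{a})}_{J}\zeta_{\bs{j}}/\prod\Omega|$. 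Multiplying by $|z_{a}|$ (or $|\bar{z}_{a}|$) and taking the $\ell^{2}_{s}$-norm in $a$ contributes an extra $\|z\|_{s}$, and the identity $\alpha=l+\beta+2\beta'$ reconciles the exponents into $\|z\|_{s}^{2l+1}$. The combinatorial prefactor simplifies via $(2\alpha)^{2\alpha-3}3^{\alpha}=\frac{3}{2\alpha}(12\alpha^{2})^{\alpha-1}\leq(12\alpha^{2})^{\alpha-1}$ for $\alpha\geq 2$, while the factor $\sqrt{2}$ from $\overline{Q^{(z)}_{\Gamma_{l},\Gamma'_{l}}}=Q^{(\bar{z})}_{\Gamma_{l},\Gamma'_{l}}$ together with the splitting between the $\Gamma_{l}^{a}$ and $\Gamma'^{a}_{l}$ contributions, plus the sum $\#\mc{F}_{l}\leq\mc{J}_{\Gamma_{l},\Gamma'_{l}}^{3}$, absorb into the claimed prefactor $6\sqrt{2}\mc{J}_{\Gamma_{l},\Gamma'_{l}}^{3}$.

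The main obstacle is the exact compensation between the factor $\prod_{m}\kappa_{\bs{h}_{m}}^{2s}$ generated by the small-divisor estimates and the factor $\prod_{k}a_{k}^{2s}$ absorbed into the Sobolev weighting of the numerator monomial $\zeta_{\bs{j}}$. This matching is precisely the content of the control condition \eqref{form416} in \Cref{def41}: without it, $\prod\kappa_{\bs{h}_{m}}$ could be much larger than $\prod j_{m}^{*}$ and the uniform-in-$a$ bound on the inner sum would blow up, preventing the $\|z\|_{s}^{2l+1}$ scaling. The other combinatorial ingredients, namely the multiplicity bound $(2\alpha)^{2\alpha-3}$ and the finiteness $\#\mc{F}_{l}\leq\mc{J}_{\Gamma_{l},\Gamma'_{l}}^{3}$, are hard-wired into \Cref{def41,def42}, so once the absorption in Step 2 is established, the remainder of the argument is a bookkeeping application of Cauchy--Schwarz with careful power counting.
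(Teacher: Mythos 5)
Your proposal is correct and follows essentially the same route as the paper: decompose along the fibers $(\alpha,\beta,\beta')\in\mc{F}_{l}$ as in \Cref{re41}, bound $|f_J(I)|$ via \eqref{form49}--\eqref{form410} and absorb $\prod_m\kappa_{\bs{h}_m}^{2s}$ into $\prod_m (j_m^*)^{2s}$ through the control condition \eqref{form416}, then use the multiplicity bound from \Cref{def41}\eqref{def41-2} and the coordinatewise factorization $\sum_{\bs{j}}\prod_m|j_m|^{2s}|\zeta_{\bs{j}}|=(3\|z\|_s^2)^{\alpha}$ together with $\alpha-\beta-2\beta'=l$ and $\#\mc{F}_l\leq\mc{J}_{\Gamma_l,\Gamma'_l}^3$. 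Your bookkeeping of the prefactor is in fact slightly tighter than the paper's (which uses the cruder $(2\alpha)^{2\alpha-2}$ multiplicity), so the stated bound follows.
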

\begin{proof}
In view of \eqref{re41-2} in \Cref{re41},  rewrite the $z$-component \eqref{formre12-31} in the form
$$Q_{\Gamma_{l},\Gamma'_l}^{(z)}(\bs{z})\pa_z=\sum_{a\in\mb{N}_*}\sum_{(\alpha,\beta,\beta')\in\mc{F}_{l}}\big(Q_1^{(a,\alpha,\beta,\beta')}(\bs{z})+Q_2^{(a,\alpha,\beta,\beta')}(\bs{z})\big)\pa_{z_a}$$ 
with
\begin{align}
\label{form339-1-2-1}
&Q_1^{(a,\alpha,\beta,\beta')}(\bs{z})=z_a\sum_{J:=(a,\bs{j},\bs{h},\bs{k},n)\in\Gamma_{l}^{(a,\alpha,\beta,\beta')}}\tilde{Q}^{(z_a)}_{J}\zeta_{\bs{j}}f_{J}(I),
\\&Q_2^{(a,\alpha,\beta,\beta')}(\bs{z})=\left\{\begin{aligned}
\bar{z}_a\sum_{J:=(a,\bs{j},\bs{h},\bs{k},n)\in\Gamma_{l}'^{(a,\alpha,\beta,\beta')}}\tilde{Q}^{(z_a)}_{J}\zeta_{\bs{j}}f_{J}(I), &\quad\text{for}\;a\leq N,
\\0,\qquad\qquad\qquad&\quad\text{for}\;a>N,\end{aligned}\right.
\end{align}
where
\begin{equation}
\label{form426}
f_{J}(I)=\frac{1}{\prod\limits_{m=1}^{n}\Omega^{(2)}_{\bs{h}_{m}}(I)\prod\limits_{m=n+1}^{\beta}\Omega^{(4)}_{\bs{h}_{m}}(I)\prod\limits_{m=1}^{\beta'}\Omega^{(4)}_{\bs{k}_{m}}(I)}.
\end{equation}
According to \eqref{re41-0} in \Cref{re41} and $\#\mc{F}_{l}\leq\mc{J}_{\Gamma_l,\Gamma'_l}^3$, one has
\begin{align}
\label{form427}
&\|Q_{\Gamma_{l},\Gamma'_l}(\bs{z})\|_{s}=\sqrt{2}\|Q_{\Gamma_{l},\Gamma'_l}^{(z)}(\bs{z})\|_s
\\\notag\leq&\sqrt{2}\mc{J}_{\Gamma_l,\Gamma'_l}^3\max_{(\alpha,\beta,\beta')\in\mc{F}_{l}}\Big(\big(\sum_{a\in\mb{N}_*}a^{2s}\big|Q^{(a,\alpha,\beta,\beta')}_1(\bs{z})\big|^2_{s}\big)^{\frac{1}{2}}+\big(\sum_{a\in\mb{N}_*}a^{2s}\big|Q^{(a,\alpha,\beta,\beta')}_2(\bs{z})\big|^2_{s}\big)^{\frac{1}{2}}\Big).
\end{align}

In view of \eqref{form339-1-2-1}, one has
\begin{align}
\label{form428}
&\Big(\sum_{a\in\mb{N}_*}a^{2s}\big|Q^{(a,\alpha,\beta,\beta')}_1(\bs{z})\big|^2_{s}\Big)^{\frac{1}{2}}
\\\notag=&\Big(\sum_{a\in\mb{N}_*}a^{2s}\big|z_a\sum_{J\in\Gamma_{l}^{(a,\alpha,\beta,\beta')}}\tilde{Q}^{(z_a)}_{J}\zeta_{\bs{j}}f_{J}(I)\big|^2\Big)^{\frac{1}{2}}
\\\notag\leq&\|z\|_{s}\|Q_{\Gamma_{l},\Gamma'_{l}}\|_{\ell^{\infty}}\sup_{a\in\mb{N}_*}\sum_{J\in\Gamma_{l}^{(a,\alpha,\beta,\beta')}}|\zeta_{\bs{j}}||f_{J}(I)|.
\end{align}
By the small divisor conditions \eqref{form49} and \eqref{form410}, one has
\begin{equation}
\label{form430}
|\Omega^{(2)}_{\bs{h}_{m}}|,|\Omega^{(4)}_{\bs{h}_{m}}|>\gamma\|z\|_{s}^{2}N^{-4\alpha-2}\kappa_{\bs{h}_{m}}^{-2s}
\quad\text{and}\quad
|\Omega^{(4)}_{\bs{k}_{m}}|>\gamma^2\|z\|_{s}^{4}N^{-4\alpha-2}.
\end{equation}
Hence, by \eqref{form426}, \eqref{form430} and the control condition \eqref{form416}, one has
\begin{align}
\label{form431}
|f_{J}(I)|&\leq\Big(\prod_{m=1}^{\beta}\kappa^{2s}_{\bs{h}_{m}}\Big)\Big(\frac{N^{4\alpha+2}}{\gamma\|z\|_{s}^{2}}\big)^{\beta}\Big(\frac{N^{4\alpha+2}}{\gamma^{2}\|z\|_{s}^{4}}\Big)^{\beta'}
\\\notag&\leq\big(\prod_{m=1}^{\alpha}j_m^*\big)^{2s}\frac{N^{(4\alpha+2)(\beta+\beta')}}{\gamma^{\beta+2\beta'}\|z\|_{s}^{2\beta+4\beta'}}
\end{align}
with $\#\bs{j}=\alpha$.
By \eqref{form431}, the condition \eqref{def41-2} of \eqref{form414} and \eqref{formzetanorm}, one has
\begin{align}
\label{form432}
&\sum_{J\in\Gamma_{l}^{(a,\alpha,\beta,\beta')}}|\zeta_{\bs{j}}||f_{J}(I)|
\\\notag\leq&\frac{N^{(4\alpha+2)(\beta+\beta')}}{\gamma^{\beta+2\beta'}\|z\|_{s}^{2\beta+4\beta'}}\sum_{J\in\Gamma_{l}^{(a,\alpha,\beta,\beta')}}\big(\prod_{m=1}^{\alpha}j_m^*\big)^{2s}|\zeta_{\bs{j}}|
\\\notag\leq&\frac{N^{(4\alpha+2)(\beta+\beta')}}{\gamma^{\beta+2\beta'}\|z\|_{s}^{2\beta+4\beta'}}(2\alpha)^{2\alpha-2}\sum_{\bs{j}\in(\mb{U}_3\times\mb{N}_*)^{\alpha}}\big(\prod_{m=1}^{\alpha}j_m^*\big)^{2s}|\zeta_{\bs{j}}|
\\\notag\leq&(2\alpha)^{2\alpha-2}\frac{N^{(4\alpha+2)(\beta+\beta')}}{\gamma^{\beta+2\beta'}\|z\|_{s}^{2\beta+4\beta'}}\big(3\|z\|_s^2\big)^{\alpha}
\\\notag=&3(12\alpha^2)^{\alpha-1}\frac{N^{(4\alpha+2)(\beta+\beta')}}{\gamma^{\beta+2\beta'}}\|z\|_{s}^{2l},
\end{align}
where the last inequality follows from the fact $\alpha-\beta-2\beta'=l$.
By \eqref{form428} and \eqref{form432}, one has 
\begin{align}
\label{form433}
\big(\sum_{a\in\mb{N}_*}a^{2s}\big|Q^{(a,\alpha,\beta,\beta')}_1(\bs{z})\big|^2_{s}\big)^{\frac{1}{2}}
\leq3(12\alpha^2)^{\alpha-1}\|Q_{\Gamma_{l},\Gamma'_{l}}\|_{\ell^{\infty}}\frac{N^{(4\alpha+2)(\beta+\beta')}}{\gamma^{\beta+2\beta'}}\|z\|_{s}^{2l+1}.
\end{align}
Similarly, one has
\begin{align}
\label{form434}
\big(\sum_{a\in\mb{N}_*}a^{2s}\big|Q^{(a,\alpha,\beta,\beta')}_2(\bs{z})\big|^2_{s}\big)^{\frac{1}{2}}
\leq3(12\alpha^2)^{\alpha-1}\|Q_{\Gamma_{l},\Gamma'_{l}}\|_{\ell^{\infty}}\frac{N^{(4\alpha+2)(\beta+\beta')}}{\gamma^{\beta+2\beta'}}\|z\|_{s}^{2l+1}.
\end{align}
To sum up, the estimate \eqref{form425} follows from \eqref{form427}, \eqref{form433} and \eqref{form434}.
\end{proof}

\subsection{Commutator of rational vector fields}
\label{sec33}
The following lemma shows that the structure of reversible rational vector field is well kept in the commutator of a reversible rational vector field and an anti-reversible rational vector field.

\begin{lemma}
\label{le43}
For $Q_{\Gamma_{l_1},\Gamma'_{l_1}}\in\ms{H}^{{\rm rev}}_{l_{1},N}$ and $Q_{\Gamma_{l_2},\Gamma'_{l_2}}\in\ms{H}^{{\rm an-rev}}_{l_{2},N}$, there exists $Q_{\Gamma_{l},\Gamma'_{l}}\in\ms{H}^{{\rm rev}}_{l,N}$ with $l:=l_{1}+l_{2}$ and $\mc{J}_{\Gamma_{l},\Gamma'_{l}}\leq \mc{J}_{\Gamma_{l_1},\Gamma'_{l_1}}+\mc{J}_{\Gamma_{l_2},\Gamma'_{l_2}}+2$, such that
\begin{equation}
\label{form435}
[Q_{\Gamma_{l_1},\Gamma'_{l_1}},Q_{\Gamma_{l_2},\Gamma_{l_2}}]=Q_{\Gamma_{l},\Gamma'_{l}}.
\end{equation}
Moreover, by assuming that $\mc{J}_{\Gamma_{l_1},\Gamma'_{l_1}}\lesssim l_1,\mc{J}_{\Gamma_{l_2},\Gamma'_{l_2}}\lesssim l_2$,  there exists a positive constant $C_2$ such that
\begin{equation}
\label{form436}
\|Q_{\Gamma_{l},\Gamma'_{l}}\|_{\ell^{\infty}}\leq   C_2l^4\|Q_{\Gamma_{l_1},\Gamma'_{l_1}}\|_{\ell^{\infty}}\|Q_{\Gamma_{l_2},\Gamma'_{l_2}}\|_{\ell^{\infty}}.
\end{equation}
\end{lemma}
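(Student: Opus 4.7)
\textbf{Proof proposal for Lemma \ref{le43}.}

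The plan is to compute the commutator $[Q_{\Gamma_{l_1},\Gamma'_{l_1}},Q_{\Gamma_{l_2},\Gamma'_{l_2}}]$ componentwise using \eqref{form26'}, and then to verify that each resulting summand fits the rational structure of Definition \ref{def42}. Write a generic monomial of $Q_{\Gamma_{l_1},\Gamma'_{l_1}}$ as $\bs{z}_a^{\sharp}\tilde Q_{J_1}^{(\bs{z}_a)}\zeta_{\bs{j}_1}f_{J_1}(I)\partial_{\bs{z}_a}$ (with $\bs{z}_a^{\sharp}\in\{z_a,\bar z_a\}$) and similarly for $Q_{\Gamma_{l_2},\Gamma'_{l_2}}$. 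When we apply $\partial_{z_b}$ or $\partial_{\bar z_b}$ to such a monomial and multiply by the other vector field, the derivative splits into a polynomial part (derivative of $\bs{z}_a^{\sharp}\zeta_{\bs{j}_1}$) and a rational part (derivative of $f_{J_1}(I)$). The polynomial part is handled exactly as in the proof of \Cref{le22-12}: the combinatorial bookkeeping of how the $z_b$ or $\bar z_b$ from the other component recombines with the factor released by the derivative (to form a new $z_b^2$, $\bar z_b^2$ or $I_b$) shows that the result still has the outer $\bs{z}_c^{\sharp}$ factor and a $\zeta_{\bs{j}}$-numerator with $\#\bs{j}=\#\bs{j}_1+\#\bs{j}_2$, while the denominator is simply the product of the two original denominators. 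The control condition \eqref{form416} for this part follows by multiplying the two control conditions of $J_1$ and $J_2$.

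For the rational part, differentiating $1/\Omega^{(2)}_{\bs{h}_m}$ (respectively $1/\Omega^{(4)}_{\bs{h}_m}$ or $1/\Omega^{(4)}_{\bs{k}_m}$) with respect to $z_b$ produces a factor $-(\partial_{z_b}\Omega)/\Omega^2$. Since $\omega_a^{(2)}=I_a/4\cdot\mathbbm{1}_{a\le N}$ and $\omega_a^{(4)}$ is a quadratic polynomial in the actions $\{I_d\}_{d\le N}$, such a derivative vanishes unless $b$ is one of the indices appearing in $\bs{h}_m$ (or $\bs{k}_m$), and then it yields $\bar z_b$ times either a constant (for $\Omega^{(2)}$) or a linear polynomial in $I$ (for $\Omega^{(4)}$). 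After recombining $\bar z_b$ with the $z_b$ or $\bar z_b$ brought in by the other vector field, we either get a new $I_b$ (and in the $\Omega^{(4)}$ case one additional $I_d$), so $\#\bs{j}$ grows by $1$ or $2$, and the denominator gains one extra copy of the same $\bs{h}_m$ (or $\bs{k}_m$). This is exactly why $\mc{J}_{\Gamma_l,\Gamma'_l}\le\mc{J}_{\Gamma_{l_1},\Gamma'_{l_1}}+\mc{J}_{\Gamma_{l_2},\Gamma'_{l_2}}+2$ and why the number of $\Omega$'s in the denominator is at most one more than the sum of the two original numbers.

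The main obstacle is the preservation of the control condition \eqref{form416} for the rational part. The key observation is that the new small divisor in the denominator is a repetition of some $\bs{h}_m$ (or $\bs{k}_m$) that was already present, and this repetition is only generated when the differentiation index $b$ belongs to $\bs{h}_m$ (or $\bs{k}_m$). Hence, by the very definition of $\kappa_{\bs{h}_m}=\min\{\mu_{\min}(\bs{h}_m),\Delta_{\bs{h}_m}\}$, we have $\kappa_{\bs{h}_m}\le|b|$. The new index $(0,b)$ produced in $\bs{j}$ contributes $|b|$ to the right-hand side of \eqref{form416}, which compensates exactly for the extra $\kappa_{\bs{h}_m}$ on the left-hand side. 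In the $\omega^{(4)}$ case, the additional $I_d$ contributes a further factor $|d|\ge 1$, which is harmless. Combining this with the original control conditions for $J_1$ and $J_2$ yields \eqref{form416} for the new index set.

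The reversibility of the commutator is inherited exactly as in \Cref{le22-12}: writing out the components using \eqref{formrev-1} for $Q_{\Gamma_{l_1},\Gamma'_{l_1}}\in\ms{H}^{\rm rev}_{l_1,N}$ and \eqref{formanrev-1} for $Q_{\Gamma_{l_2},\Gamma'_{l_2}}\in\ms{H}^{\rm an-rev}_{l_2,N}$, every term in $[Q_{\Gamma_{l_1},\Gamma'_{l_1}},Q_{\Gamma_{l_2},\Gamma'_{l_2}}]^{(z_a)}$ picks up a single minus sign upon conjugation and interchange $z\leftrightarrow\bar z$, which gives the opposite sign of the $(\bar z_a)$-component, hence the commutator lies in $\ms{H}^{\rm rev}_{l,N}$. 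Finally, for the coefficient bound \eqref{form436}: each new monomial in the commutator is obtained by choosing which of the finitely many factors (either one of the $\le\mc{J}_{\Gamma_{l_1},\Gamma'_{l_1}}$ indices in $\bs{j}_1$ or one of the $\le\mc{J}_{\Gamma_{l_1},\Gamma'_{l_1}}+\mc{J}_{\Gamma_{l_1},\Gamma'_{l_1}}$ factors of the denominator) we differentiate, and then how the released $z$ or $\bar z$ recombines with the other vector field (a fixed finite number of possibilities). Under the hypothesis $\mc{J}_{\Gamma_{l_i},\Gamma'_{l_i}}\lesssim l_i$, the total number of possibilities is bounded by $O(l^4)$, producing the constant $C_2$ and the factor $l^4$ in \eqref{form436}.
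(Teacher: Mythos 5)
There is a genuine gap in your treatment of the rational part, precisely at the point you identify as ``the main obstacle.'' Your argument rests on two claims: that the derivative of a small divisor with respect to $z_b$ vanishes unless $b$ appears among the indices of $\bs{h}_m$ (or $\bs{k}_m$), and that consequently $\kappa_{\bs{h}_m}\le|b|$, so the new index in the numerator compensates for the repeated divisor in the control condition \eqref{form416}. Both claims are fine for $\Omega^{(2)}_{\bs{h}_m}$, but they fail for the quadratic (in $I$) part of $\Omega^{(4)}_{\bs{h}_m}$: by \eqref{form44} and \eqref{formomega4}, each $\omega^{(4)}_a$ contains sums over \emph{all} $d\le N$, so $\partial_{z_b}\Omega^{(4)}_{\bs{h}_m}$ is generically nonzero for every $b\le N$, and it produces terms of the form $\bar z_b I_b$ (e.g.\ from $\tfrac{a}{16}\sum_{d}\tfrac{I_d^2}{d^2-a^2}$) in which both new numerator indices equal $b$. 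Taking $b$ small and $\bs{h}_m$ supported on large modes, $\kappa_{\bs{h}_m}\le|b|$ is simply false, and if you keep the repeated $\bs{h}_m$ in the $\bs{h}$-list the product $\prod\kappa_{\bs{h}_m}$ on the left of \eqref{form416} grows while the right-hand side gains only a factor $b^2$, so the control condition is violated. The paper's proof avoids this by exploiting the $\bs{h}$/$\bs{k}$ dichotomy of \Cref{def41}--\Cref{def42}: whenever the derivative hits the quadratic part of an $\Omega^{(4)}$, the repeated divisor is recorded in $\bs{k}$ (and the order bookkeeping $\#\bs{j}-\#\bs{h}-2\#\bs{k}=l$ still works because $\#\bs{j}$ increases by $2$), and divisors in $\bs{k}$ are exempt from \eqref{form416} because in \Cref{le42} they are estimated by the crude bound $|\Omega^{(4)}_{\bs{k}_m}|>\gamma^2\|z\|_s^4N^{-4\alpha-2}$ coming from \eqref{form410} rather than by the $\kappa^{-2s}$-weighted bound. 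Your proposal never uses this mechanism, and only when the derivative hits the \emph{linear} part of $\Omega^{(4)}$ (or an $\Omega^{(2)}$) does your $\kappa_{\bs{h}_m}\le|b|$ compensation apply, which is exactly how the paper handles those subcases.

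A secondary omission: membership in $\ms{H}^{\rm rev}_{l,N}$ requires verifying condition \eqref{def41-2} of \Cref{def41}, i.e.\ the bound $\#(\Gamma_l\cup\Gamma'_l;\,a,\bs{j})\le(2\alpha)^{2\alpha-3}$ on how many tuples share a given $(a,\bs{j})$; this is what makes the norm estimate of \Cref{le42} usable later, and the paper spends a nontrivial combinatorial argument on it in each of the four cases. Your counting discussion only addresses the coefficient bound $C_2l^4$ of \eqref{form436} (which is in the right spirit, matching the paper's $\mc{J}^3$ or $\mc{J}^4$ counts under $\mc{J}\lesssim l$), but it does not establish the $(2\alpha)^{2\alpha-3}$ condition, so as written the conclusion that the commutator lies in $\ms{H}^{\rm rev}_{l,N}$ is not fully justified.
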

\begin{proof}
Similarly with the commutator of polynomial vector fields in \Cref{le22-12}, the commutator of a reversible vector field and an anti-reversible vector field is a reversible vector field. In the following,  we show that the structure of rational fraction is well kept in commutator of rational vector fields.

In view of \Cref{def42}, rewrite \eqref{form418} in the form
$$Q_{\Gamma_{l},\Gamma'_{l}}(\bs{z})=\sum_{a\in\mb{N}_*}\Big(Q_{\Gamma_{l}^a,\Gamma'^a_{l}}^{(z_a)}(\bs{z})\pa_{z_a}+\overline{Q_{\Gamma_{l}^a,\Gamma'^a_{l}}^{(z_a)}(\bs{z})}\pa_{\bar{z}_a}\Big)$$
where for $a\leq N$, the $z_a$-component
\begin{align*}
Q_{\Gamma^a_{l},\Gamma'^a_{l}}^{(z_a)}(\bs{z})=z_a\sum_{J:=(a,\bs{j},\bs{h},\bs{k},n)\in\Gamma^a_{l}}\tilde{Q}^{(z_a)}_{J}f_J(I)\zeta_{\bs{j}}+\bar{z}_a\sum_{J:=(a,\bs{j},\bs{h},\bs{k},n)\in\Gamma'^a_{l}}\tilde{Q}^{(z_a)}_{J}f_J(I)\zeta_{\bs{j}}
\end{align*}
and for $a>N$, the $z_a$-component
\begin{align*}
Q_{\Gamma^a_{l},\Gamma'^a_{l}}^{(z_a)}(\bs{z})=z_a\sum_{J:=(a,\bs{j},\bs{h},\bs{k},n)\in\Gamma^a_{l}}\tilde{Q}^{(z_a)}_{J}f_J(I)\zeta_{\bs{j}},
\end{align*}
with 
$$f_J=\frac{1}{\prod\limits_{m=1}^{n}\Omega^{(2)}_{\bs{h}_{m}}}\frac{1}{\prod\limits_{m=n+1}^{\#\bs{h}}\Omega^{(4)}_{\bs{h}_{m}}}\frac{1}{\prod\limits_{m=1}^{\#\bs{k}}\Omega^{(4)}_{\bs{k}_{m}}}:=f^{(2)}_Jf^{(4,2)}_Jf^{(4,4)}_J.$$
We discuss the $z_a$-component $[Q_{\Gamma_{l_1},\Gamma'_{l_1}}, Q_{\Gamma_{l_2},\Gamma'_{l_2}}]^{(z_a)}$ in four different cases.

1) Consider the first case that denominators do not appear in the commutator. Then $Q^{(z_a)}_{\Gamma_{l}^{(1)},\Gamma_{l}'^{(1)}}$ is of the form
\begin{align}
\label{formq1}
&\sum_{J'\in\Gamma^a_{l_1}}
\tilde{Q}^{(z_a)}_{J'}f_{J'}(I)\frac{\pa(z_a\zeta_{\bs{j}'})}{\pa\bs{z}}Q_{\Gamma_{l_2},\Gamma'_{l_2}}^{(\bs{z})}+\sum_{J'\in\Gamma'^a_{l_1}}
\tilde{Q}^{(z_a)}_{J'}f_{J'}(I)\frac{\pa(\bar{z}_a\zeta_{\bs{j}'})}{\pa\bs{z}}Q_{\Gamma_{l_2},\Gamma'_{l_2}}^{(\bs{z})}
\\\notag-&\sum_{J''\in\Gamma^a_{l_2}}\tilde{Q}^{(z_a)}_{J''}f_{J''}(I)\frac{\pa(z_a\zeta_{\bs{j}''})}{\pa\bs{z}}Q_{\Gamma_{l_1},\Gamma'_{l_1}}^{(\bs{z})}-\sum_{J''\in\Gamma'^a_{l_2}}
\tilde{Q}^{(z_a)}_{J''}f_{J''}(I)\frac{\pa(\bar{z}_a\zeta_{\bs{j}''})}{\pa\bs{z}}Q_{\Gamma_{l_1},\Gamma'_{l_1}}^{(\bs{z})}.
\end{align}
For any $J\in\Gamma_{l}^{(1)}\bigcup\Gamma_{l}'^{(1)}$, there exist $J'\in\Gamma_{l_1}\bigcup\Gamma_{l_1}'$ and $J''\in\Gamma_{l_2}\bigcup\Gamma_{l_2}'$ such that 
\begin{equation}
\label{form7-10-1}
\#\bs{j}=\#\bs{j}'+\#\bs{j}'',\quad \bs{h}=(\bs{h}',\bs{h}''),\quad \bs{k}=(\bs{k}',\bs{k}''),\quad n=n_1+n_2.
\end{equation}
Hence,
\begin{align}
\label{form7-12-2}
\#\bs{j}-\#\bs{h}-2\#\bs{k}&=\#\bs{j}'+\#\bs{j}''-(\#\bs{h}'+\#\bs{h}'')-2(\#\bs{k}'+\#\bs{k}'')
\\\notag&=l_{1}+l_{2}=l.
\end{align}

Similarly with the proof of \Cref{le22-12}, there are at most one different index in $\bs{j}$ with $(\bs{j}',\bs{j}'')$, and the difference is in the set $\mb{U}_3$. 
Then in view of \eqref{form7-10-1},

$\star$ if $\bs{j}$  and $\#\bs{j}'=\alpha_1$ are fixed, then $\#\bs{j}''=\alpha-\alpha_1$, and there are at most $3\alpha$ different $(\bs{j}',\bs{j}'')$; 

$\star$ fixing $a$, if $a'=a$, then there are at most $\alpha_1+1$ different $a''$, and if $a''=a$, then there are at most $\alpha-\alpha_1+1$ different $a'$, i.e., there are at most $\alpha+2$ different $(a',a'')$;

$\star$ if $J'$, $J''$ are fixed, then $\bs{h},\bs{k},n$ are determined. Hence, one has
\begin{align}
\label{form7-12-1}
&\max_{a,\; \bs{j}\in\mc{R}_{\alpha,N}^{0}\cup\mc{R}_{\alpha,N}^{a}}\#(\Gamma_{l}\cup\Gamma'_{l};\ a, \bs{j})
\\\notag\leq&\sum_{\alpha_1=2}^{\alpha-2}3\alpha(\alpha+2)\max_{a',\; \bs{j}'\in\mc{R}_{\alpha_1,N}^{0}\cup\mc{R}_{\alpha_1,N}^{a'}}\#(\Gamma_{l_1}\cup\Gamma'_{l_1};\ a', \bs{j}')
\\\notag&\quad\qquad\qquad\max_{a'',\; \bs{j}''\in\mc{R}_{\alpha-\alpha_1,N}^{0}\cup\mc{R}_{\alpha-\alpha_1,N}^{a''}}\#(\Gamma_{l_2}\cup\Gamma'_{l_2};\ a'', \bs{j}'')
\\\notag\leq&3\alpha(\alpha+2)\sum_{\alpha_1=2}^{\alpha-2}(2\alpha_1)^{2\alpha_1-3}(2\alpha-2\alpha_1)^{2\alpha-2\alpha_1-3}
\\\notag<& \frac{3}{8}(2\alpha)^{2\alpha-3},
\end{align}
where the last inequality follows from 
$$(\alpha+2)(\alpha-3)<\alpha^2\quad\text{and}\quad (2\alpha_1)^{2\alpha_1-3}(2\alpha-2\alpha_1)^{2\alpha-2\alpha_1-3}<(2\alpha)^{2\alpha-6}.$$

%
%
In view of \eqref{form7-10-1}, one has
\begin{align*}
\prod_{m=1}^{\#\bs{h}}\kappa_{\bs{h}_{m}}&=\Big(\prod_{m=1}^{\#\bs{h}'}\kappa_{\bs{h}'_{m}}\Big)\Big(\prod_{m=1}^{\#\bs{h}''}\kappa_{\bs{h}''_{m}}\Big)
\\&\leq\prod_{m=1}^{\#\bs{j}'}(j'_{m})^*\prod_{m=1}^{\#\bs{j}''}(j''_{m})^*
\\&=\prod_{m=1}^{\#\bs{j}}j^*_{m},
\end{align*}
which is \eqref{form416} in \Cref{def41}. Hence, one has $\Gamma_{l}^{(1)},\Gamma_{l}'^{(1)}\in\mc{H}_{l,N}$ with $l=l_1+l_2$.

Rewrite \eqref{formq1} in the form \eqref{form418}--\eqref{form420-10-3-2}. In view of \eqref{form7-10-1}, for any given $J\in\Gamma^{(1)}_l\bigcup\Gamma'^{(1)}_l$ , if $n_1,\#\bs{h}',\#\bs{k}'$ are fixed, then $n_2, \bs{h}',\bs{h}'',\bs{k}',\bs{k}'',\#\bs{j}',\#\bs{j}''$ are determined, and similarly with the proof of \Cref{le22-12},  there are at most $3\#\bs{j}+1$ different $((a',\bs{j}'), (a'',\bs{j}''))$. By $n_1,\#\bs{h}',\#\bs{k}',\#\bs{j}\leq \mc{J}_{\Gamma_{l},\Gamma'_{l}}$, one has 
\begin{align}
\label{form7-14-1}
\|Q_{\Gamma_{l}^{(1)},\Gamma_{l}'^{(1)}}\|_{\ell^{\infty}}\lesssim \mc{J}_{\Gamma_{l},\Gamma'_{l}}^4\|Q_{\Gamma_{l_1},\Gamma_{l_1}'}\|_{\ell^{\infty}}\|Q_{\Gamma_{l_2},\Gamma_{l_2}'}\|_{\ell^{\infty}},
\end{align}
and thus $Q_{\Gamma_{l}^{(1)},\Gamma_{l}'^{(1)}}\in\ms{H}_{l,N}^{{\rm rev}}$.
%

2) Consider the second case that $\Omega_{\bs{h}}^{(2)}$ appears in the commutator. Without loss of generality, let $n_{1},n_2\geq1$ and then $Q^{(z_a)}_{\Gamma_{l}^{(2)},\Gamma_{l}'^{(2)}}$ is of the form
\begin{align}
\label{formq2}
&z_a\sum_{J'\in\Gamma^a_{l_1}}
\tilde{Q}^{(z_a)}_{J'}f^{(4,2)}_{J'}f^{(4,4)}_{J'}\zeta_{\bs{j}'}\frac{\pa f^{(2)}_{J'}}{\pa\bs{z}}Q_{\Gamma_{l_2},\Gamma'_{l_2}}^{(\bs{z})}
+\bar{z}_a\sum_{J'\in\Gamma'^a_{l_1}}
\tilde{Q}^{(z_a)}_{J'}f^{(4,2)}_{J'}f^{(4,4)}_{J'}\zeta_{\bs{j}'}\frac{\pa f^{(2)}_{J'}}{\pa\bs{z}}Q_{\Gamma_{l_2},\Gamma'_{l_2}}^{(\bs{z})}
\\\notag-&z_a\sum_{J''\in\Gamma^a_{l_2}}\tilde{Q}^{(z_a)}_{J''}f^{(4,2)}_{J''}f^{(4,4)}_{J''}\zeta_{\bs{j}''}\frac{\pa f^{(2)}_{J''}}{\pa\bs{z}}Q_{\Gamma_{l_1},\Gamma'_{l_1}}^{(\bs{z})}
-\bar{z}_a\sum_{J''\in\Gamma'^a_{l_2}}
\tilde{Q}^{(z_a)}_{J''}f^{(4,2)}_{J''}f^{(4,4)}_{J''}\zeta_{\bs{j}''}\frac{\pa f^{(2)}_{J''}}{\pa\bs{z}}Q_{\Gamma_{l_1},\Gamma'_{l_1}}^{(\bs{z})}.
\end{align}
Then for any $J\in\Gamma_{l}^{(2)}\bigcup\Gamma_{l}'^{(2)}$, there exist $J'\in\Gamma_{l_1}\bigcup\Gamma_{l_1}'$ and $J''\in\Gamma_{l_2}\bigcup\Gamma_{l_2}'$ such that  
\begin{equation}
\label{form7-10-2}
\begin{array}{l}
\displaystyle \#\bs{j}=\#\bs{j}'+\#\bs{j}''+1,
\\\displaystyle \bs{h}=(\bs{h}_i,\bs{h}',\bs{h}'') \quad \text{with}\quad\bs{h}_i\in\{\bs{h}'_1,\cdots,\bs{h}'_{n_1},\bs{h}''_1,\cdots,\bs{h}''_{n_2}\},
\\\displaystyle \bs{k}=(\bs{k}',\bs{k}''),
\\\displaystyle n=n_1+n_2+1. 
\end{array}
\end{equation}
Hence,
\begin{align*}
\#\bs{j}-\#\bs{h}-2\#\bs{k}&=\#\bs{j}'+\#\bs{j}''+1-(\#\bs{h}'+\#\bs{h}''+1)-2(\#\bs{k}'+\#\bs{k}'')
\\&=l_{1}+l_{2}=l.
\end{align*}

In view of \eqref{form7-10-2}, write $\bs{j}=(j,\bs{j}',\bs{j}'')$ with $j\in\mb{U}_3\times\mb{N}_*$.

$\star$ If $\bs{j}$ and $\#\bs{j}'=\alpha_1$ are fixed, then $\bs{j}',\bs{j}''$ are determined and $\#\bs{j}''=\alpha-\alpha_1-1$; 

$\star$ fixing $a$, one has $(a',a'')=(a,|j|)$ or $(a',a'')=(|j|,a)$; 

$\star$ if $J'$, $J''$ are fixed, then $\bs{k},n$ are determined and there are at
most $n_1+n_2$ different $\bs{h}$. Hence, by the fact $n_1+n_2\leq\#\bs{h}'+\#\bs{h}''<\alpha$, one has
\begin{align}
\label{form7-12-2'}
&\max_{a,\; \bs{j}\in\mc{R}_{\alpha,N}^{0}\cup\mc{R}_{\alpha,N}^{a}}\#(\Gamma_{l}^{(2)}\cup\Gamma'^{(2)}_{l};\ a, \bs{j})
\\\notag<&\sum_{\alpha_1=2}^{\alpha-2}2\alpha\max_{a',\; \bs{j}'\in\mc{R}_{\alpha_1,N}^{0}\cup\mc{R}_{\alpha_1,N}^{a'}}\#(\Gamma_{l_1}\cup\Gamma'_{l_1};\ a', \bs{j}')
\\\notag&\;\quad\max_{a'',\; \bs{j}''\in\mc{R}_{\alpha-\alpha_1-1,N}^{0}\cup\mc{R}_{\alpha-\alpha_1-1,N}^{a''}}\#(\Gamma_{l_2}\cup\Gamma'_{l_2};\ a'', \bs{j}'')
\\\notag\leq&2\alpha\sum_{\alpha_1=2}^{\alpha-2}(2\alpha_1)^{2\alpha_1-3}(2\alpha-2\alpha_1-2)^{2(\alpha-\alpha_1-1)-3}
\\\notag<& \frac{1}{2}(2\alpha)^{2\alpha-6},
\end{align}
where the last inequality follows from 
$(2\alpha_1)^{2\alpha_1-3}(2\alpha-2\alpha_1-2)^{2\alpha-2\alpha_1-5}<(2\alpha)^{2\alpha-8}$.

Now, we will prove the control condition \eqref{form416}. In view of \eqref{formq2}, if $\Delta_{\bs{h}_i}=0$, then  $|j|\geq\mu_{\min}(\bs{h}_i)=\kappa_{\bs{h}_i}$, and if $\Delta_{\bs{h}_i}^{(0)}>0$, then  $|j|\geq\min\{\mu_{\min}(\bs{h}_i),\Delta_{\bs{h}_i}\}=\kappa_{\bs{h}_i}$. Thus $\kappa_{\bs{h}_i}\leq j$, and then
\begin{align*}
\prod_{m=1}^{\#\bs{h}}\kappa_{\bs{h}_{m}}&=\kappa_{\bs{h}_{i}}\Big(\prod_{m=1}^{\#\bs{h}'}\kappa_{\bs{h}'_{m}}\Big)\Big(\prod_{m=1}^{\#\bs{h}''}\kappa_{\bs{h}''_{m}}\Big)
\\&\leq |j|\prod_{m=1}^{\#\bs{j}'}(j'_{m})^*\prod_{m=1}^{\#\bs{j}''}(j''_{m})^*
\\&=\prod_{m=1}^{\#\bs{j}}j_{m}^*.
\end{align*}
Hence, one has $\Gamma_{l}^{(2)},\Gamma_{l}'^{(2)}\in\mc{H}_{l,N}$ with $l=l_1+l_2$. 

Rewrite \eqref{formq2} in the form \eqref{form418}--\eqref{form420-10-3-2}. 
In view of \eqref{form7-10-2}, for any given $J\in\Gamma_{l}^{(2)}\bigcup\Gamma_l'^{(2)}$, if $\#\bs{j}',\#\bs{k}',n_1$ are fixed, then $J',J''$ are determined. 
Notice that the coefficient $\tilde{Q}^{(z_a)}_{J}$ depends on the derivate of $\Omega_{\bs{h}_i}^{(2)}$, which is less than $\|\Omega_{\bs{h}_i}^{(2)}\|_{\ell^{\infty}}$ up to a constant multiply. In view of the definitions \eqref{form43} and \eqref{formomega2}, the coefficients of $\Omega^{(2)}$ satisfy $\|\Omega^{(2)}_{\bs{h}_i}\|_{\ell^{\infty}}\leq\frac{1}{2}$. By $\#\bs{j}',\#\bs{k}',n_1\leq \mc{J}_{\Gamma_{l},\Gamma'_{l}}$, one has 
\begin{align}
\label{form7-14-2}
\|Q_{\Gamma_{l}^{(1)},\Gamma_{l}'^{(1)}}\|_{\ell^{\infty}}\lesssim \mc{J}_{\Gamma_{l},\Gamma'_{l}}^3\|Q_{\Gamma_{l_1},\Gamma_{l_1}'}\|_{\ell^{\infty}}\|Q_{\Gamma_{l_2},\Gamma_{l_2}'}\|_{\ell^{\infty}},
\end{align}
and then $Q_{\Gamma_{l}^{(2)},\Gamma_{l}'^{(2)}}\in\ms{H}_{l, N}^{{\rm rev}}$.

3) Consider the third case that $\Omega^{(4)}_{\bs{h}}$ appears in the commutator. Then $Q^{(z_a)}_{\Gamma_{l}^{(3)},\Gamma_{l}'^{(3)}}$ is of the form
\begin{align}
\label{formq3}
&z_a\sum_{J'\in\Gamma^a_{l_1}}
\tilde{Q}^{(z_a)}_{J'}f^{(2)}_{J'}f^{(4,4)}_{J'}\zeta_{\bs{j}'}\frac{\pa f^{(4,2)}_{J'}}{\pa\bs{z}}Q_{\Gamma_{l_2},\Gamma'_{l_2}}^{(\bs{z})}
+\bar{z}_a\sum_{J'\in\Gamma'^a_{l_1}}
\tilde{Q}^{(z_a)}_{J'}f^{(2)}_{J'}f^{(4,4)}_{J'}\zeta_{\bs{j}'}\frac{\pa f^{(4,2)}_{J'}}{\pa\bs{z}}Q_{\Gamma_{l_2},\Gamma'_{l_2}}^{(\bs{z})}
\\\notag-&z_a\sum_{J''\in\Gamma^a_{l_2}}\tilde{Q}^{(z_a)}_{J''}f^{(2)}_{J''}f^{(4,4)}_{J''}\zeta_{\bs{j}''}\frac{\pa f^{(4,2)}_{J''}}{\pa\bs{z}}Q_{\Gamma_{l_1},\Gamma'_{l_1}}^{(\bs{z})}
-\bar{z}_a\sum_{J''\in\Gamma'^a_{l_2}}
\tilde{Q}^{(z_a)}_{J''}f^{(2)}_{J''}f^{(4,4)}_{J''}\zeta_{\bs{j}''}\frac{\pa f^{(4,2)}_{J''}}{\pa\bs{z}}Q_{\Gamma_{l_1},\Gamma'_{l_1}}^{(\bs{z})}.
\end{align}
We divide $Q^{(z_a)}_{\Gamma_{l}^{(3)},\Gamma_{l}'^{(3)}}$ into the following two parts:

$\bullet$ One part $Q^{(z_a)}_{\Gamma_{l}^{(3,1)},\Gamma_{l}'^{(3,1)}}$, where $\Gamma_{l}^{(3,1)}, \Gamma_{l}'^{(3,1)}$ and the coefficient $\tilde{Q}^{(z_a)}_{J}$ depend on the derivate of the linear part of $\Omega_{\bs{h}}^{(4)}$. 
Then for any $J\in\Gamma_{l}^{(3,1)}\bigcup\Gamma_{l}'^{(3,1)}$, there exist $J'\in\Gamma_{l_1}\bigcup\Gamma_{l_1}'$ and $J''\in\Gamma_{l_2}\bigcup\Gamma_{l_2}'$ such that 
\begin{equation}
\label{form7-10-3}
\begin{array}{l}
\displaystyle \#\bs{j}=\#\bs{j}'+\#\bs{j}''+1,
\\\displaystyle \bs{h}=(\bs{h}_i,\bs{h}',\bs{h}'') \quad \text{with}\;\bs{h}_i\in\{\bs{h}'_{n_1+1},\cdots,\bs{h}'_{\#\bs{h}'},\bs{h}''_{n_2+1},\cdots,\bs{h}''_{\#\bs{h}''}\},
\\\displaystyle \bs{k}=(\bs{k}',\bs{k}''),
 \\\displaystyle n=n_1+n_2. 
\end{array}
\end{equation}
It is similar with the case 2).

$\bullet$ The other part $Q^{(z_a)}_{\Gamma_{l}^{(3,2)},\Gamma_{l}'^{(3,2)}}$, where $\Gamma_{l}^{(3,2)}, \Gamma_{l}'^{(3,2)}$ and the coefficient $\tilde{Q}^{(z_a)}_{J}$ depend on the derivate of the quadratic part of $\Omega_{\bs{h}}^{(4)}$. Then for any $J\in\Gamma_{l}^{(3,2)}\bigcup\Gamma_{l}'^{(3,2)}$, there exist $J'\in\Gamma_{l_1}\bigcup\Gamma_{l_1}'$ and $J''\in\Gamma_{l_2}\bigcup\Gamma_{l_2}'$ such that  
\begin{equation}
\label{form7-10-4}
\begin{array}{l}
\displaystyle \#\bs{j}=\#\bs{j}'+\#\bs{j}''+2,
\\\displaystyle\bs{h}=(\bs{h}',\bs{h}''),
\\\displaystyle \bs{k}=(\bs{h}_i,\bs{k}',\bs{k}'') \quad \text{with}\;\bs{h}_i\in\{\bs{h}'_{n_1+1},\cdots,\bs{h}'_{\#\bs{h}'},\bs{h}''_{n_2+1},\cdots,\bs{h}''_{\#\bs{h}''}\},
\\\displaystyle n=n_1+n_2.
\end{array}
\end{equation}
%
Hence,
\begin{align*}
\#\bs{j}-\#\bs{h}-2\#\bs{k}=&\#\bs{j}'+\#\bs{j}''+2-(\#\bs{h}'+\#\bs{h}'')-2(\#\bs{k}'+\#\bs{k}''+1)
\\=&l_{1}+l_{2}=l.
\end{align*}

In view of \eqref{form7-10-4}, write $\bs{j}=(j_1, j_2,\bs{j}',\bs{j}'')$ with $j_1, j_2\in\mb{U}_3\times\mb{N}_*$.

$\star$ if $\bs{j}$ and $\#\bs{j}'=\alpha_1$ are fixed, then  $\bs{j}',\bs{j}''$ are determined and $\#\bs{j}''=\alpha-\alpha_1-2$;

$\star$ fixing $a$, if $a'=a$, then $a''=|j_1|$ or $|j_2|$, and if $a''=a$, then $a'=|j_1|$ or $|j_2|$; 

$\star$ if $J'$, $J''$ are fixed, then $\bs{h},n$ are determined, and there are at
most $(\#\bs{h}'+\#\bs{h}'')$ different $\bs{k}$. Hence, by the fact $\#\bs{h}'+\#\bs{h}''<\alpha$, one has
\begin{align}
\label{form7-12-3}
&\max_{a,\; \bs{j}\in\mc{R}_{\alpha,N}^{0}\cup\mc{R}_{\alpha,N}^{a}}\#(\Gamma_{l}^{(3,2)}\cup\Gamma'^{(3,2)}_{l};\ a, \bs{j})
\\\notag<&\sum_{\alpha_1=2}^{\alpha-2}4\alpha\max_{a',\;\bs{j}'\in\mc{R}_{\alpha_1,N}^{0}\cup\mc{R}_{\alpha_1,N}^{a'}}\#(\Gamma_{l_1}\cup\Gamma'_{l_1};\ a', \bs{j}')
\\\notag&\;\quad\max_{a'',\; \bs{j}''\in\mc{R}_{\alpha-\alpha_1-2,N}^{0}\cup\mc{R}_{\alpha-\alpha_1-2,N}^{a''}}\#(\Gamma_{l_2}\cup\Gamma'_{l_2};\ a'', \bs{j}'')
\\\notag\leq&4\alpha\sum_{\alpha_1=2}^{\alpha-2}(2\alpha_1)^{2\alpha_1-3}(2\alpha-2\alpha_1-4)^{2(\alpha-\alpha_1-2)-3}
\\\notag<&(2\alpha)^{2\alpha-8},
\end{align}
where the last inequality follows from $(2\alpha_1)^{2\alpha_1-3}(2\alpha-2\alpha_1-4)^{2\alpha-2\alpha_1-7}<(2\alpha)^{2\alpha-10}$.

Notice that
\begin{align*}
\prod_{m=1}^{\#\bs{h}}\kappa_{\bs{h}_{m}}&=\Big(\prod_{m=1}^{\#\bs{h}'}\kappa_{\bs{h}'_{m}}\Big)\Big(\prod_{m=1}^{\#\bs{h}''}\kappa_{\bs{h}''_{m}}\Big)
\\&\leq\prod_{m=1}^{\#\bs{j}'}(j'_{m})^*\prod_{m=1}^{\#\bs{j}''}(j''_{m})^*
\\&\leq\prod_{m=1}^{\#\bs{j}}j_{m}^*,
\end{align*}
which is the control condition \eqref{form416}. Hence, one has $\Gamma_{l}^{(3,2)},\Gamma_{l}'^{(3,2)}\in\mc{H}_{l,N}$ with $l=l_1+l_2$.

Rewrite \eqref{formq3} in the form  \eqref{form418}--\eqref{form420-10-3-2}. In view of \eqref{form7-10-2}, for any given $J\in\Gamma_{l}^{(3,2)}\bigcup\Gamma_l'^{(3,2)}$, if $\#\bs{j}',\#\bs{h}',n_1$ are fixed, then $J',J''$ are determined. 
Notice that the coefficient $\tilde{Q}^{(z_a)}_{J}$ depends on the derivate of $\Omega_{\bs{h}_i}^{(4)}-\Omega_{\bs{h}_i}^{(2)}$, , which is less than $\|\Omega_{\bs{h}_i}^{(4)}-\Omega_{\bs{h}_i}^{(2)}\|_{\ell^{\infty}}$ up to a constant multiply. 
In view of the definitions \eqref{form43} and \eqref{form44}--\eqref{formomega4}, the coefficients of $\Omega_{\bs{h}_i}^{(4)}-\Omega_{\bs{h}_i}^{(2)}$ satisfy $\|\Omega_{\bs{h}_i}^{(4)}-\Omega_{\bs{h}_i}^{(2)}\|_{\ell^{\infty}}\leq1$. Similarly with \eqref{form7-14-2}, by $\#\bs{j}',\#\bs{h}',n_1\leq\mc{J}_{\Gamma_{l},\Gamma'_{l}}$, one has
\begin{align}
\label{form7-14-3}
\|Q_{\Gamma_{l}^{(1)},\Gamma_{l}'^{(1)}}\|_{\ell^{\infty}}\lesssim \mc{J}_{\Gamma_{l},\Gamma'_{l}}^3\|Q_{\Gamma_{l_1},\Gamma_{l_1}'}\|_{\ell^{\infty}}\|Q_{\Gamma_{l_2},\Gamma_{l_2}'}\|_{\ell^{\infty}},
\end{align}
and thus $Q_{\Gamma_{l}^{(3,2)},\Gamma_{l}'^{(3,2)}}\in\ms{H}_{l,N}^{{\rm rev}}$.

\indent 4) Consider the fourth case that $\Omega^{(4)}_{\bs{k}}$ appears in  the commutator. Then $Q^{(z_a)}_{\Gamma_{l}^{(4)},\Gamma_{l}'^{(4)}}$ is of the form
\begin{align}
\label{formq4}
&z_a\sum_{J'\in\Gamma^a_{l_1}}
\tilde{Q}^{(z_a)}_{J'}f^{(2)}_{J'}f^{(4,2)}_{J'}\zeta_{\bs{j}'}\frac{\pa f^{(4,4)}_{J'}}{\pa\bs{z}}Q_{\Gamma_{l_2},\Gamma'_{l_2}}^{(\bs{z})}
+\bar{z}_a\sum_{J'\in\Gamma'^a_{l_1}}
\tilde{Q}^{(z_a)}_{J'}f^{(2)}_{J'}f^{(4,2)}_{J'}\zeta_{\bs{j}'}\frac{\pa f^{(4,4)}_{J'}}{\pa\bs{z}}Q_{\Gamma_{l_2},\Gamma'_{l_2}}^{(\bs{z})}
\\\notag-&z_a\sum_{J''\in\Gamma^a_{l_2}}\tilde{Q}^{(z_a)}_{J''}f^{(2)}_{J''}f^{(4,2)}_{J''}\zeta_{\bs{j}''}\frac{\pa f^{(4,4)}_{J''}}{\pa\bs{z}}Q_{\Gamma_{l_1},\Gamma'_{l_1}}^{(\bs{z})}
-\bar{z}_a\sum_{J''\in\Gamma'^a_{l_2}}
\tilde{Q}^{(z_a)}_{J''}f^{(2)}_{J''}f^{(4,2)}_{J''}\zeta_{\bs{j}''}\frac{\pa f^{(4,4)}_{J''}}{\pa\bs{z}}Q_{\Gamma_{l_1},\Gamma'_{l_1}}^{(\bs{z})}.
\end{align}
We divide $Q^{(z_a)}_{\Gamma_{l}^{(4)},\Gamma_{l}'^{(4)}}$ into the following two parts:
\\ $\bullet$ One part $Q^{(z_a)}_{\Gamma_{l}^{(4,1)},\Gamma_{l}'^{(4,1)}}$, where $\Gamma_{l}^{(4,1)}, \Gamma_{l}'^{(4,1)}$ and the coefficient $\tilde{Q}^{(z_a)}_{J}$ depend on the derivate of the linear part of $\Omega_{\bs{k}}^{(4)}$. 
Then for any $J\in\Gamma_{l}^{(4,1)}\bigcup\Gamma_{l}'^{(4,1)}$, there exist $J'\in\Gamma_{l_1}\bigcup\Gamma_{l_1}'$ and $J''\in\Gamma_{l_2}\bigcup\Gamma_{l_2}'$ such that 
\begin{equation}
\label{form7-10-5}
\begin{array}{l}
\displaystyle \#\bs{j}=\#\bs{j}'+\#\bs{j}''+1,
\\\displaystyle \bs{h}=(\bs{k}_i,\bs{h}',\bs{h}'') \quad \text{with}\;\bs{k}_i\in\{\bs{k}'_{1},\cdots,\bs{k}'_{\#\bs{k}'},\bs{k}''_{1},\cdots,\bs{k}''_{\#\bs{k}''}\},
\\\displaystyle \bs{k}=(\bs{k}',\bs{k}'')
 \\\displaystyle n=n_1+n_2. 
\end{array}
\end{equation}
\\$\bullet$ The other part $Q^{(z_a)}_{\Gamma_{l}^{(4,2)},\Gamma_{l}'^{(4,2)}}$, where $\Gamma_{l}^{(3,2)}, \Gamma_{l}'^{(3,2)}$ and the coefficient $\tilde{Q}^{(z_a)}_{J}$ depend on the derivate of the quadratic part of $\Omega_{\bs{k}}^{(4)}$. Then for any $J\in\Gamma_{l}^{(4,2)}\bigcup\Gamma_{l}'^{(4,2)}$, there exist $J'\in\Gamma_{l_1}\bigcup\Gamma_{l_1}'$ and $J''\in\Gamma_{l_2}\bigcup\Gamma_{l_2}'$ such that  
\begin{equation}
\label{form7-10-6}
\begin{array}{l}
\displaystyle \#\bs{j}=\#\bs{j}'+\#\bs{j}''+2,
\\\displaystyle\bs{h}=(\bs{h}',\bs{h}''),
\\\displaystyle \bs{k}=(\bs{k}_i,\bs{k}',\bs{k}'') \quad \text{with}\;\bs{k}_i\in\{\bs{k}'_{1},\cdots,\bs{k}'_{\#\bs{k}'},\bs{k}''_{1},\cdots,\bs{k}''_{\#\bs{k}''}\},
\\\displaystyle n=n_1+n_2.
\end{array}
\end{equation}
The proof is parallel to the case 3) with $\bs{k}_i$ in place of $\bs{h}_i$.
Then the details are omitted.
\\\indent To sum up, let $\Gamma_{l}=\Gamma_{l}^{(1)}\cup\Gamma_{l}^{(2)}\cup\Gamma_{l}^{(3)}\cup\Gamma_{l}^{(4)}$ and $\Gamma'_{l}=\Gamma_{l}'^{(1)}\cup\Gamma_{l}'^{(2)}\cup\Gamma_{l}'^{(3)}\cup\Gamma_{l}'^{(4)}$, then the control condition \eqref{form416}  hold for any $J\in\Gamma_{l}\bigcup\Gamma'_l$. 
By \eqref{form7-12-1}, \eqref{form7-12-2'} and \eqref{form7-12-3}, we conclude that 
\begin{align*}
\max_{a,\; \bs{j}\in\mc{R}_{\alpha,N}^{0}\cup\mc{R}_{\alpha,N}^{a}}\#(\Gamma_{l}\cup\Gamma'_{l};\ a, \bs{j})
<& \frac{3}{8}(2\alpha)^{2\alpha-3}+ \frac{3}{2}(2\alpha)^{2\alpha-6}+2(2\alpha)^{2\alpha-8}
\\<&(2\alpha)^{2\alpha-3}.
\end{align*}
%
Hence, $\Gamma_{l},\Gamma'_l\in\mc{H}_{l,N}$ with $l=l_1+l_2$.
Moreover, the coefficient estimate \eqref{form436} follows from \eqref{form7-14-1}, \eqref{form7-14-2} and \eqref{form7-14-3}, and thus $Q_{\Gamma_{l},\Gamma_{l}'}\in\ms{H}_{l,N}^{{\rm rev}}$.
\end{proof}
According to the above proof, we get more relationships between indexes of the family $\{\mc{H}_{l,N}\}_{l\in\mb{N}_*}$ in \Cref{le43}.
\begin{remark}
\label{re42}
For any $(a,\bs{j},\bs{h},\bs{k},n)\in\Gamma_{l}\bigcup\Gamma'_l$, one has
\begin{equation}
\label{form425-10-1}
\#\bs{h}\leq\max_{J'\in\Gamma_{l_1}\bigcup\Gamma'_{l_1}}\{\#\bs{h}'\}+\max_{J''\in\Gamma_{l_2}\bigcup\Gamma'_{l_2}}\{\#\bs{h}''\}+1,
\end{equation}
\begin{equation}
\label{form425'-10-1}
\#\bs{k}\leq\max_{J'\in\Gamma_{l_1}\bigcup\Gamma'_{l_1}}\{\#\bs{k}'\}+\max_{J''\in\Gamma_{l_2}\bigcup\Gamma'_{l_2}}\{\#\bs{k}''\}+1,
\end{equation}
\begin{equation}
\label{form426-10-1}
\#\bs{h}+\#\bs{k}\leq\max_{J'\in\Gamma_{l_1}\bigcup\Gamma'_{l_1}}\{\#\bs{h}'+\#\bs{k}'\}+\max_{J''\in\Gamma_{l_2}\bigcup\Gamma'_{l_2}}\{\#\bs{h}''+\#\bs{k}''\}+1,
\end{equation}
with $J':=(a',\bs{j}',\bs{h}',\bs{k}',n_1)\in\Gamma_{l_1}\bigcup\Gamma'_{l_1}$ and $J'':=(a'',\bs{j}'',\bs{h}'',\bs{k}'',n_2)\in\Gamma_{l_2}\bigcup\Gamma'_{l_2}$.
\end{remark}

\section{Rational normal form and homological equations}
\label{sec4}
At first, we introduce integrable rational vector fields.
\begin{definition}
\label{defintrat} 
Being given $\Gamma_{l}, \Gamma'_{l}\in\mc{H}_{l,N}$, 
define the integrable rational vector field of order $2l+1$:
\begin{equation}
\label{formintrat}
Z_{\Gamma_l,\Gamma'_l}(\bs{z})=Z_{\Gamma_l}(\bs{z})+Z_{\Gamma'_l}(\bs{z})\in\ms{H}_{l, N}^{{\rm rev}}
\end{equation}
with
\begin{equation}
Z_{\Gamma'_l}(\bs{z})=\sum_{\substack{J:=(a,\bs{j},\bs{h},\bs{k},n)\in\Gamma_{l}\\\bs{j}\in\mc{I}^{0}}}\tilde{Q}^{(z_a)}_{J}f_{(\bs{h},\bs{k},n)}(I)\big(z_a\zeta_{\bs{j}}\pa_{z_a}-\bar{z}_a\zeta_{\bar{\bs{j}}}\pa_{\bar{z}_a}\big),
\end{equation}
\begin{equation}
Z_{\Gamma'_l}(\bs{z})=\sum_{\substack{J:=(a,\bs{j},\bs{h},\bs{k},n)\in\Gamma'_{l}\\\bs{j}\in\mc{I}^{a}}}\tilde{Q}^{(z_a)}_{J}f_{(\bs{h},\bs{k},n)}(I)\big(\bar{z}_a\zeta_{\bs{j}}\pa_{z_a}-z_a\zeta_{\bar{\bs{j}}}\pa_{\bar{z}_a}\big),
\end{equation}
where ${\rm i}\tilde{Q}^{(z_a)}_{J}\in\mb{R}$ and 
\begin{equation}
\label{form43-1-17}
f_{(\bs{h},\bs{k},n)}(I)=\frac{1}{\prod\limits_{m=1}^{n}\Omega^{(2)}_{\bs{h}_{m}}}\frac{1}{\prod\limits_{m=n+1}^{\#\bs{h}}\Omega^{(4)}_{\bs{h}_{m}}}\frac{1}{\prod\limits_{m=1}^{\#\bs{k}}\Omega^{(4)}_{\bs{k}_{m}}}.
\end{equation}
\end{definition}

More generally, it is crucial to define the rational normal form.
\begin{definition}
\label{def43} 
Being given $\Gamma_{l}\in\mc{H}_{l, N}$, define the rational normal form of order $2l+1$:
\begin{equation}
\label{formrnf}
\tilde{Z}_{\Gamma_{l}}(\bs{z})=\sum_{J:=(a,\bs{j},\bs{h},\bs{k},n)\in\Gamma_{l}}\tilde{Q}_{J}^{(z_a)}f_{(\bs{h},\bs{k},n)}(I)\big(z_a\zeta_{\bs{j}}\pa_{z_a}-\bar{z}_a\zeta_{\bar{\bs{j}}}\pa_{\bar{z}_a})\in\ms{H}_{l, N}^{{\rm rev}},
\end{equation}
where ${\rm i}\tilde{Q}^{(z_a)}_{J}\in\mb{R}$ satisfies
$$\tilde{Q}^{(z_a)}_{(a,\bs{j},\bs{h},\bs{k},n)}=\tilde{Q}^{(z_a)}_{(a,\bar{\bs{j}},\bs{h},\bs{k},n)}.$$
%
\end{definition}

\begin{remark} 
\label{renf}
There are two notes for rational normal form.
\begin{enumerate}[(1)]
\item The rational norma form \eqref{formrnf} is rewritten as 
\begin{equation}
\label{nf}
\tilde{Z}_{\Gamma_{l}}(\bs{z})=\sum_{J:=(a,\bs{j},\bs{h},\bs{k},n)\in\Gamma_{l}}\frac{\tilde{Q}^{(z_a)}_{J}}{2}f_{(\bs{h},\bs{k},n)}(I)(\zeta_{\bs{j}}+\zeta_{\bar{\bs{j}}})\big(z_a\pa_{z_a}-\bar{z}_a\pa_{\bar{z}_a}\big).
\end{equation}

\item\label{renf-2} 
The integrable rational vector field $Z_{\Gamma_l,\Gamma'_l}(\bs{z})$ in \eqref{formintrat} is a rational normal form. Concretely, for $\bs{j}\in\mc{I}^0$ in $J\in\Gamma_{l}$, there exists a permutation $\sigma$ such that $\sigma(\bs{j})=\bar{\bs{j}}$, and thus 
$$\tilde{Q}^{(z_a)}_{(a,\bs{j},\bs{h},\bs{k},n)}=\tilde{Q}^{(z_a)}_{(a,\sigma(\bs{j}),\bs{h},\bs{k},n)}=\tilde{Q}^{(z_a)}_{(a,\bar{\bs{j}},\bs{h},\bs{k},n)};$$
%
for $\bs{j}\in\mc{I}^a$ in $J\in\Gamma'_{l}$, one has 
$\bar{z}_a\zeta_{\bs{j}}=z_a\zeta_{\tilde{\bs{j}}}I_a$ for some $\tilde{\bs{j}}\in\mc{I}^0$, and thus there exists $\tilde{\Gamma}_l\in\mc{H}_{l,N}$ such that $Z_{\Gamma'_l}$ is rewritten as an integrable rational vector field $Z_{\tilde{\Gamma}_l}$, which implies it is also a rational normal form.
\end{enumerate}
\end{remark}

\begin{remark}
\label{re41-12}
The above rational normal form $\tilde{Z}_{\Gamma_{l}}(\bs{z})$ has no effect on the action $I_a$ for any $a\in\mb{N}_*$, i.e., 
\begin{equation}
DI_a[\tilde{Z}_{\Gamma_{l}}]=\bar{z}_a\tilde{Z}^{(z_a)}_{\Gamma_{l}}+z_a\tilde{Z}^{(\bar{z}_a)}_{\Gamma_{l}}=0.
\end{equation}
Concretely, for the system $\pa_t\bs{z}=Z_{\Gamma_{l}}(\bs{z})$, one has
\begin{align*}
\pa_t I_a=&\bar{z}_a\pa_tz_a+z_a\pa_t\bar{z}_a
\\=&\sum_{J:=(a,\bs{j},\bs{h},\bs{k},n)\in\Gamma_{l}^a}\frac{\tilde{Q}^{(z_a)}_{J}}{2}f_{(\bs{h},\bs{k},n)}(I)(\zeta_{\bs{j}}+\zeta_{\bar{\bs{j}}})(\bar{z}_az_a-z_a\bar{z}_a)
\\=&0.
\end{align*} 
%
As a corollary, one has $\pa_t(\|z\|^2_s)=0$.
\end{remark}

The following lemma solve the homological equation associated with the integrable polynomial vector field
\begin{equation}
\label{formz3z5}
Z^{\leq N}_3(\bs{z})+Z_5^{\leq N}(\bs{z})=-{\rm i}\sum_{a\in\mb{N}_*}(\omega_a^{(2)}+\omega_a^{(4)})(z_a\pa_{z_a}-\bar{z}_a\pa_{\bar{z}_a}),
\end{equation} 
which will be used to construct the nearly identity coordinate transformation to eliminate the non-normal form terms in $K_{2l+1}^{\leq N}$.

\begin{lemma}
\label{le44}
For $l\geq3$ and $Q_{\Gamma_l,\Gamma'_l}\in\ms{H}^{{\rm rev}}_{l,N}$, there exist a rational vector field $\chi_{\Gamma_{l-2},\Gamma'_{l-2}}\in\ms{H}^{{\rm an-rev}}_{l-2,N}$, an integrable rational vector field $Z_{\Gamma_l,\Gamma'_l}\in\ms{H}_{l,N}^{{\rm rev}}$,  and two  rational normal forms $\tilde{Z}_{\tilde{\Gamma}_{l}}\in\ms{H}_{l,N}^{{\rm rev}}$,  $\tilde{Z}_{\tilde{\Gamma}_{l-1}}\in\ms{H}_{l-1,N}^{{\rm rev}}$, such that
\begin{equation}
\label{form450}
[Z_3^{\leq N}+Z_5^{\leq N},\chi_{\Gamma_{l-2},\Gamma'_{l-2}}]+Q_{\Gamma_l,\Gamma'_l}=Z_{\Gamma_l,\Gamma'_l}+\tilde{Z}_{\tilde{\Gamma}_l}+\tilde{Z}_{\tilde{\Gamma}_{l-1}}.
\end{equation}
Moreover, one has the coefficient estimates
\begin{equation}
 \label{form454}
\|\chi_{\Gamma_{l-2},\Gamma'_{l-2}}\|_{\ell^{\infty}}\leq\|Q_{\Gamma_l,\Gamma'_l}\|_{\ell^{\infty}},
\end{equation}
\begin{equation}
\label{formcoe1}
\|Z_{\Gamma_{l},\Gamma'_l}\|_{\ell^{\infty}}\leq\|Q_{\Gamma_l,\Gamma'_l}\|_{\ell^{\infty}},
\end{equation}
\begin{equation}
\label{form456-1-19}
\|\tilde{Z}_{\tilde{\Gamma}_{l}}\|_{\ell^{\infty}}\leq2\|Q_{\Gamma_l,\Gamma'_l}\|_{\ell^{\infty}}, 
\end{equation}
\begin{equation}
\label{formcoe2}
\|\tilde{Z}_{\tilde{\Gamma}_{l-1}}\|_{\ell^{\infty}}\leq\frac{1}{2}\|Q_{\Gamma_l,\Gamma'_l}\|_{\ell^{\infty}}.
\end{equation}
\end{lemma}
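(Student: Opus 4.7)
The plan is to mimic the standard homological equation procedure while tracking the two extra terms $\tilde{Z}_{\tilde{\Gamma}_l}$ and $\tilde{Z}_{\tilde{\Gamma}_{l-1}}$ which are specific to the vector field setting. Decompose $Q_{\Gamma_l,\Gamma'_l}=Z_{\Gamma_l,\Gamma'_l}+Q^{{\rm nr}}$, where $Z_{\Gamma_l,\Gamma'_l}$ collects the integrable monomials (those with $\bs{j}\in\mc{I}^0$ or $\mc{I}^a$, for which $\Omega^{(4)}_{\bs{j}}\equiv 0$), and $Q^{{\rm nr}}$ collects the rest. Since $\bs{j}\in Irr(\mc{R}_{\leq N})$, $\Omega^{(4)}_{\bs{j}}(I)\neq 0$ on $\mc{U}^{N}_{\gamma}$ for every remaining monomial $u_a\tilde{Q}^{(z_a)}_{J}\zeta_{\bs{j}} f_{(\bs{h},\bs{k},n)}(I)$ of $Q^{{\rm nr},(z_a)}$ (with $u_a\in\{z_a,\bar{z}_a\}$), and we set
\begin{equation*}
\tilde{\chi}^{(z_a)}_{(a,\bs{j},\bs{h},\bs{k}\cup\{\bs{j}\},n)}:=-{\rm i}\,\tilde{Q}^{(z_a)}_{J},
\end{equation*}
appending the new divisor $\Omega^{(4)}_{\bs{j}}$ to the $\bs{k}$-list. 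The identity $\#\bs{j}-\#\bs{h}-2(\#\bs{k}+1)=l-2$ places $\chi$ in $\ms{H}_{l-2,N}$; the control condition \eqref{form416} is inherited unchanged since $\bs{h}$ is untouched; and $\tilde{Q}^{(z_a)}_{J}\in {\rm i}\mb{R}$ combined with the prefactor $-{\rm i}$ yields $\tilde{\chi}^{(z_a)}\in\mb{R}$, so $\chi\in\ms{H}^{{\rm an-rev}}_{l-2,N}$.

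A direct computation using $Y\cdot F(I)\equiv 0$ for any function $F$ of the actions alone and $Y\cdot\zeta_{\bs{j}}=-2{\rm i}\bigl(\sum_k\delta_k\omega_{a_k}\bigr)\zeta_{\bs{j}}$ (with $Y:=Z_3^{\leq N}+Z_5^{\leq N}$ and $\omega_c:=\omega^{(2)}_c+\omega^{(4)}_c$) yields the master identity
\begin{equation*}
[Y,\chi]^{(z_a)}=-{\rm i}\,u_a\,\Omega^{(4)}_{\bs{j}}\zeta_{\bs{j}} F(I)+{\rm i}z_a\sum_{b\leq N}\frac{\partial(\omega^{(2)}_a+\omega^{(4)}_a)}{\partial I_b}\,DI_b[\chi]
\end{equation*}
for any monomial $\chi^{(z_a)}=u_a\zeta_{\bs{j}} F(I)$. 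The prefactor $z_a$ in the second summand arises because that term originates solely from $\chi\cdot Y^{(z_a)}$ with $Y^{(z_a)}=-{\rm i}\omega_a z_a$. Summing over the defining monomials of $\chi$, the first summand cancels $Q^{{\rm nr},(z_a)}$ exactly (the factor $\Omega^{(4)}_{\bs{j}}$ in the numerator kills the divisor just introduced in $\chi$), yielding \eqref{form450} with the second summand as the cross contribution.

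The cross term splits into $\tilde{Z}_{\tilde{\Gamma}_l}+\tilde{Z}_{\tilde{\Gamma}_{l-1}}$ along the degree of $\omega_a$ in $I$: the $\omega^{(4)}$-part keeps order $2l+1$ and becomes $\tilde{Z}_{\tilde{\Gamma}_l}$, while the $\omega^{(2)}$-part collapses via $\partial\omega^{(2)}_a/\partial I_b=\delta_{ab}/4$ to $({\rm i}/4)z_a\,DI_a[\chi]$ of order $2l-1$, giving $\tilde{Z}_{\tilde{\Gamma}_{l-1}}$. Both belong to the rational normal form class of \Cref{def43}: since $DI_b[\chi]=\bar{z}_b\chi^{(z_b)}+z_b\overline{\chi^{(z_b)}}$ equals its complex conjugate, its $\zeta$-basis expansion automatically has the symmetry $\tilde{Q}^{(z_a)}_{(a,\bs{j}',\bs{h},\bs{k},n)}=\tilde{Q}^{(z_a)}_{(a,\bar{\bs{j}}',\bs{h},\bs{k},n)}$, and $\#\bs{j}'\geq 2$ is inherited from $\#\bs{j}\geq 2$ in $\chi$; moreover \eqref{form416} is preserved because $DI_b$ only enlarges the numerator. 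The coefficient estimates follow by bookkeeping: $|\tilde{\chi}^{(z_a)}_J|=|\tilde{Q}^{(z_a)}_J|$ gives \eqref{form454}; the decomposition $Q=Z_{\Gamma_l,\Gamma'_l}+Q^{{\rm nr}}$ gives \eqref{formcoe1}; the coefficients in $\omega^{(4)}_a$ visible in \eqref{form44} are uniformly bounded (by $1$), which combined with the multiplicity $2$ from the two summands in $DI_b$ yields \eqref{form456-1-19}; and the factor $1/4$ from $\omega^{(2)}$ combined with the same multiplicity gives \eqref{formcoe2}. The main technical difficulty is precisely the lower-order term $\tilde{Z}_{\tilde{\Gamma}_{l-1}}$: it retains the freshly introduced divisor $\Omega^{(4)}_{\bs{j}}$ yet lives at order $2l-1$, the very novel feature flagged in the introduction and requiring careful combinatorial bookkeeping of the $(\alpha,\beta,\beta')$ counters to confirm $\tilde{\Gamma}_{l-1}\in\mc{H}_{l-1,N}$.
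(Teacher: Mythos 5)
Your construction is essentially the paper's proof: you divide the non-integrable monomials by $\Omega^{(4)}_{\bs{j}}$ (appending one $\Omega^{(4)}$-type divisor to the $\bs{k}$-list, which drops the order by two and leaves the control condition \eqref{form416} untouched), and you identify the two cross terms produced by the $I$-dependence of $\omega^{(2)}_a$ and $\omega^{(4)}_a$ as the rational normal forms $\tilde{Z}_{\tilde{\Gamma}_{l-1}}$ and $\tilde{Z}_{\tilde{\Gamma}_{l}}$, with the same coefficient bookkeeping ($2\times\tfrac14$ and $2\times 1$) that the paper carries out in \eqref{formmod1}--\eqref{formmod2}. Two small fixes: with the paper's bracket convention $[X,Y]=DX[Y]-DY[X]$ from \eqref{form26'} the master identity is $[Y,\chi]^{(z_a)}={\rm i}\,u_a\,\Omega^{(4)}_{\bs{j}}\zeta_{\bs{j}}F(I)-{\rm i}z_a\sum_{b}\bigl(\pa\omega_a/\pa I_b\bigr)DI_b[\chi]$, so the solution must carry the coefficient $+{\rm i}\tilde{Q}^{(z_a)}_{J}$ as in \eqref{form455} (your $-{\rm i}\tilde{Q}^{(z_a)}_{J}$ matches the opposite convention and under the paper's convention would double rather than cancel $Q^{{\rm nr}}$, though all estimates are unaffected), and the index appended to $\bs{k}$ must be $Irr(\bs{j})$ rather than $\bs{j}$ itself, using $\Omega^{(4)}_{\bs{j}}=\Omega^{(4)}_{Irr(\bs{j})}$, since \Cref{def41} requires $\bs{k}_m\in Irr(\mc{R}_{\leq N})$ — exactly as done in \eqref{form457}.
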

\begin{proof}
For convenience, we write $\Gamma'^a_l=\emptyset$ for $a>N$.
Then in view of \Cref{def42}, rewrite $Q_{\Gamma_l,\Gamma'_l}(\bs{z})$ in the form 
\begin{equation}
\label{form46-1-18}
Q_{\Gamma_{l},\Gamma'_{l}}(\bs{z})=\sum_{a\in\mb{N}_*}\big(Q_{\Gamma_{l},\Gamma'_{l}}^{(z_a)}(\bs{z})\pa_{z_a}+\overline{Q_{\Gamma_{l},\Gamma'_{l}}^{(z_a)}(\bs{z})}\pa_{\bar{z}_a}\big)
\end{equation}
with the $z_a$-component 
$Q_{\Gamma_{l},\Gamma'_{l}}^{(z_a)}(\bs{z})=z_aQ_{\Gamma^a_{l}}^{(z_a)}(\bs{z})+\bar{z}_aQ_{\Gamma'^a_{l}}^{(z_a)}(\bs{z})$,
where
\begin{align}
\label{form47-1-18}
Q_{\Gamma^a_{l}}^{(z_a)}(\bs{z})=&\sum_{J:=(a,\bs{j},\bs{h},\bs{k},n)\in\Gamma^a_{l}}\tilde{Q}^{(z_a)}_{J}f_{(\bs{h},\bs{k},n)}(I)\zeta_{\bs{j}},
\\\label{form48-1-18}
Q_{\Gamma'^a_{l}}^{(z_a)}(\bs{z})=&\sum_{J:=(a,\bs{j},\bs{h},\bs{k},n)\in\Gamma'^a_{l}}\tilde{Q}^{(z_a)}_{J}f_{(\bs{h},\bs{k},n)}(I)\zeta_{\bs{j}}
\end{align}
with ${\rm i}\tilde{Q}^{(z_a)}_{J}\in\mb{R}$ and $f_{(\bs{h},\bs{k},n)}(I)$ defined in \eqref{form43-1-17}.
Then we are going to solve the homological equation \eqref{form450}.

Let $\chi(\bs{z})=\sum_{a\in\mb{N}_*}\chi^{(z_a)}(\bs{z})\pa_{z_a}+\overline{\chi^{(z_a)}(\bs{z})}\pa_{\bar{z}_a}$ with the $z_a$-component 
\begin{align}
 \label{form455}
\chi^{(z_a)}(\bs{z})=&z_a\sum_{\substack{J:=(a,\bs{j},\bs{h},\bs{k},n)\in\Gamma^a_{l}\\\bs{j}\notin\mc{I}^0}}\frac{{\rm i}\tilde{Q}^{(z_a)}_{J}f_{(\bs{h},\bs{k},n)}(I)\zeta_{\bs{j}}}{\Omega^{(4)}_{\bs{j}}}
+\bar{z}_a\sum_{\substack{J:=(a,\bs{j},\bs{h},\bs{k},n)\in\Gamma'^a_{l}\\\bs{j}\notin\mc{I}^a}}\frac{{\rm i}\tilde{Q}^{(z_a)}_{J}f_{(\bs{h},\bs{k},n)}(I)\zeta_{\bs{j}}}{\Omega^{(4)}_{\bs{j}}}.
\end{align}
%
%
Then there exist $\Gamma_{l-2}, \Gamma'_{l-2}\in\mc{H}_{l-2,N}$ such that $\chi=\chi_{\Gamma_{l-2},\Gamma'_{l-2}}\in\ms{H}_{l-2,N}^{{\rm an-rev}}$ satisfying the estimate \eqref{form454}. Precisely, for any $J'\in\Gamma_{l-2}\cup\Gamma'_{l-2}$, there exists $(a,\bs{j},\bs{h},\bs{k},n)\in\Gamma_{l}\cup\Gamma'_l$ such that 
\begin{equation}
\label{form457}
J':=(a,\bs{j}',\bs{h}',\bs{k}',n')=\big(a,\bs{j},\bs{h}, \big(\bs{k},Irr(\bs{j})\big), n\big),
\end{equation}
%
and thus
\begin{equation*}
\prod_{m=1}^{\#\bs{h}'}\kappa_{\bs{h}'_{m}}=\prod_{m=1}^{\#\bs{h}}\kappa_{\bs{h}_{m}}\leq \prod_{m=1}^{\#\bs{j}}j^*_{m}=\prod_{m=1}^{\#\bs{j}'}(j'_{m})^*,
\end{equation*}
which is the control condition \eqref{form416}.

By a straightforward calculation, one has the equation
\begin{equation}
\label{form451}
[Z_3^{\leq N}+Z_{5}^{\leq N},\chi]^{(z_a)}+Q^{(z_a)}_{\Gamma_l,\Gamma'_l}=Z_{\Gamma_l,\Gamma'_l}^{(z_a)}+R_1^{(z_a)}+R_2^{(z_a)},
\end{equation}
where $Z_{\Gamma_l,\Gamma'_l}^{(z_a)}$ is the integrable part of $Q^{(z_a)}_{\Gamma_l,\Gamma'_l}$, i.e.,
\begin{align}
\label{formz}
Z_{\Gamma_l,\Gamma'_l}^{(z_a)}=z_a\sum_{\substack{J:=(a,\bs{j},\bs{h},\bs{k},n)\in\Gamma^a_{l}\\\bs{j}\in\mc{I}^{0}}}\tilde{Q}^{(z_a)}_{J}\zeta_{\bs{j}}f_{(\bs{h},\bs{k},n)}(I)+\bar{z}_a\sum_{\substack{J:=(a,\bs{j},\bs{h},\bs{k},n)\in\Gamma'^a_{l}\\\bs{j}\in\mc{I}^{a}}}\tilde{Q}^{(z_a)}_{J}\zeta_{\bs{j}}f_{(\bs{h},\bs{k},n)}(I),
\end{align} 
and $R_1^{(z_a)}$, $R_2^{(z_a)}$ are two new remainder terms, i.e.,
\begin{equation}
\label{formres}
R_1^{(z_a)}=-{\rm i}z_a\Big(\frac{\pa \omega_a^{(2)}}{\pa z_a}\chi^{(z_a)}+\frac{\pa \omega_a^{(2)}}{\pa \bar{z}_a}\chi^{(\bar{z}_a)}\Big),
\end{equation}
\begin{equation}
\label{formres2}
R_2^{(z_a)}=-{\rm i}z_a\Big(\frac{\pa \omega_a^{(4)}}{\pa z_a}\chi^{(z_a)}+\frac{\pa \omega_a^{(4)}}{\pa \bar{z}_a}\chi^{(\bar{z}_a)}\Big).
\end{equation}
%
%
Let $Z_{\Gamma_l,\Gamma'_l}(\bs{z})=\sum_{a\in\mb{N}_*}\big(Z_{\Gamma_l,\Gamma'_l}^{(z_a)}(\bs{z})\pa_{z_a}+\overline{Z^{(z_a)}_{\Gamma_l,\Gamma'_l}(\bs{z})}\pa_{\bar{z}_a}\big)$, and thus $Z_{\Gamma_l,\Gamma'_l}$ is an integrable rational vector field of order $2l+1$ satisfying the estimate \eqref{formcoe1}.

%

For any $a\in\mb{N}_*$, one has 
\begin{align}
\label{form413-1-20}
DI_a[\chi]:=&\Big(\frac{\pa I_a}{\pa z_a}\chi^{(z_a)}+\frac{\pa I_a}{\pa \bar{z}_a}\chi^{(\bar{z}_a)}\Big)
\\\notag=&\sum_{\substack{J:=(a,\bs{j},\bs{h},\bs{k},n)\in\Gamma^a_{l}\\\bs{j}\notin\mc{I}^0}}\frac{{\rm i}\tilde{Q}^{(z_a)}_{J}f_{(\bs{h},\bs{k},n)}(I)I_a(\zeta_{\bs{j}}+\zeta_{\bar{\bs{j}}})}{\Omega^{(4)}_{\bs{j}}}
\\\notag&+\sum_{\substack{J:=(a,\bs{j},\bs{h},\bs{k},n)\in\Gamma'^a_{l}\\\bs{j}\notin\mc{I}^a}}\frac{{\rm i}\tilde{Q}^{(z_a)}_{J}f_{(\bs{h},\bs{k},n)}(I)(\bar{z}_a^2\zeta_{\bs{j}}+z_a^2\zeta_{\bar{\bs{j}}})}{\Omega^{(4)}_{\bs{j}}}.
\end{align}
In view of \eqref{form43} and \eqref{formres}, for $a\leq N$, one has
\begin{align}
\label{formmod1}
R_1^{(z_a)}(\bs{z})=-{\rm i}\frac{z_a}{4}DI_a[\chi],
\end{align}
and for $a>N$, one has $R_1^{(z_a)}=0$.
%
Notice that for any $J\in\Gamma'^a_{l}$, one has $(\bs{j},(-1,a))\in\mc{R}^0$. 
Then there exists $\tilde{\Gamma}_{l-1}\in\mc{H}_{l-1,N}$ such that 
\begin{equation}
\label{form415-1-18-23}
R_1^{(z_a)}(\bs{z})=z_a\sum_{J':=(a,\bs{j}',\bs{h}',\bs{k}',n')\in\tilde{\Gamma}^{a}_{l-1}}\frac{\tilde{Q}^{(z_a)}_{J'}}{2}f_{(\bs{h}',\bs{k}',n')}(I)(\zeta_{\bs{j}'}+\zeta_{\bar{\bs{j}}'}),
\end{equation}
where for any $J'\in\tilde{\Gamma}_{l-1}$, there exists $J\in\Gamma_{l}$ such that 
 \begin{equation}
\label{form422-1-18-2}
(a,\bs{j}',\bs{h}',\bs{k}',n')=\big(a,\big((0,a),\bs{j}\big),\bs{h}, \big(\bs{k},Irr(\bs{j})\big), n\big),
\end{equation}
 or there exists $J\in\Gamma'_l$ such that
\begin{equation}
\label{form423-1-18-2}
(a,\bs{j}',\bs{h}',\bs{k}',n')=\big(a,\big((-1,a),\bs{j}\big),\bs{h}, \big(\bs{k},Irr(\bs{j})\big), n\big).
\end{equation}
Thus $R_1(\bs{z}):=\tilde{Z}_{\tilde{\Gamma}_{l-1}}(\bs{z})\in\ms{H}^{\rm rev}_{l-1,N}$ is a rational normal form of order $2l-1$ satisfying the estimate \eqref{formcoe2}.
In view of  \eqref{form44},  \eqref{formres2} and \eqref{form413-1-20}, for $a\leq N$, one has
\begin{align}
\label{formmod2}
R_2^{(z_a)}(\bs{z})=&{\rm i}z_a\Big(\frac{27}{32a}I_a+\frac{1}{8}\sum_{a\neq d\leq N}\Big(\frac{3}{d}+\frac{d}{d^2-a^2}\Big)I_d\Big)DI_a[\chi]
\\\notag&{\rm i}\frac{z_a}{8}\sum_{a\neq d\leq N}\Big(\Big(\frac{3}{d}+\frac{d}{d^2-a^2}\Big)I_a-\frac{a}{d^2-a^2}I_d\Big)DI_d[\chi],
\end{align}
and for $a>N$, one has
\begin{equation}
\label{formresN}
R_2^{(z_a)}(\bs{z})=-{\rm i}\frac{z_a}{8}\sum_{a\neq d\leq N}\frac{a}{d^2-a^2}I_dDI_d[\chi].
\end{equation}
%
Similarly with \eqref{formres}, there exists $\tilde{\Gamma}_{l}\in\mc{H}_{l,N}$ such that 
\begin{equation}
\label{form427-1-18-23}
R_2^{(z_a)}(\bs{z})=z_a\sum_{J':=(a,\bs{j}',\bs{h}',\bs{k}',n')\in\tilde{\Gamma}^{a}_{l}}\frac{\tilde{Q}^{(z_a)}_{J'}}{2}f_{(\bs{h}',\bs{k}',n')}(I)(\zeta_{\bs{j}'}+\zeta_{\bar{\bs{j}}'}),
\end{equation}
 where for any $J'\in\tilde{\Gamma}_{l}$, there exists $J\in\Gamma_{l}$ such that 
 \begin{equation}
\label{form428-1-18-2}
(a,\bs{j}',\bs{h}',\bs{k}',n')=\big(a,\big(j_1,j_2,\bs{j}\big),\bs{h}, \big(\bs{k},Irr(\bs{j})\big), n\big)
\end{equation}
 with $j_1=(0,a)$ or $(0,d)$ and $j_2=(0,a)$ or $(0,d)$;
 or there exists $J\in\Gamma'_{l}$ such that
\begin{equation}
\label{form429-1-18-2}
(a,\bs{j}',\bs{h}',\bs{k}',n')=\big(a,\big(j_3,j_4,\bs{j}\big),\bs{h}, \big(\bs{k},Irr(\bs{j})\big), n\big)
\end{equation}
with $j_3=(0,a)$ or $(0,d)$ and $j_4=(-1,a)$ or $(-1,d)$.
Thus $R_2(\bs{z}):=\tilde{Z}_{\tilde{\Gamma}_{l}}(\bs{z})\in\ms{H}^{\rm rev}_{l,N}$ is a rational normal form of order $2l+1$ satisfying the estimate \eqref{form456-1-19}.
\end{proof}
%

According to the proof, we have more relationships between indexes of the family $\{\mc{H}_{l,N}\}_{l\geq N}$ in \Cref{le44}.
\begin{remark}
\label{re43}
 Consider these elements $\Gamma_{l},\Gamma'_{l}, \Gamma_{l-2},\Gamma'_{l-2}, \tilde{\Gamma}_{l}, \tilde{\Gamma}_{l-1}$ in \Cref{le44}. 

$\bullet$ For any $(a',\bs{j}',\bs{h}',\bs{k}',n')\in\Gamma_{l-2}\bigcup\Gamma'_{l-2}$, there exists $(a,\bs{j},\bs{h},\bs{k},n)\in\Gamma_{l}\bigcup\Gamma'_{l}$ such that 
\begin{align}
\label{form491}
\bs{h}'=\bs{h}, \qquad\#\bs{k}'=\#\bs{k}+1, \quad n'=n.
\end{align}

$\bullet$ For any $(a',\bs{j}',\bs{h}',\bs{k}',n')\in\tilde{\Gamma}_{l}$, there exists $(a,\bs{j},\bs{h},\bs{k},n)\in\Gamma_{l}\bigcup\Gamma'_{l}$ such that 
\begin{equation}
\label{form489}
\bs{h}'=\bs{h}, \qquad\#\bs{k}'=\#\bs{k}+1, \quad n'=n.
\end{equation}

$\bullet$ For any $(a',\bs{j}',\bs{h}',\bs{k}',n')\in\tilde{\Gamma}_{l-1}$, there exists $(a,\bs{j},\bs{h},\bs{k},n)\in\Gamma_{l}\bigcup\Gamma'_{l}$ such that 
\begin{equation}
\label{form490}
\bs{h}'=\bs{h}, \qquad\#\bs{k}'=\#\bs{k}+1, \quad n'=n.
\end{equation}
\end{remark}

For $l=2$,  we solve the homological equation associated solely with $Z_3^{\leq N}$ in the following lemma.
\begin{lemma}
\label{le51}
Consider the vector field $K(\bs{z})\in\ms{R}^{{\rm rev}}_{5}$ with the $z_a$-component
\begin{equation}
\label{form59}
K^{(z_a)}(\bs{z})=\left\{
\begin{aligned}{\rm i}\frac{3a}{32}\bar{z}_a\Big(\sum_{\substack{b_1+b_2=a\\b_1,b_2\leq N}}\frac{z_{b_1}^2z_{b_2}^2}{b_1b_2}+\sum_{\substack{b-c=a\\b,c\leq N}}\frac{2z_{b}^2\bar{z}_{c}^2}{bc}\Big),\quad&\text{for}\;\;a\leq N,
\\0,\qquad\qquad\qquad\qquad&\text{for}\;\;a>N.
\end{aligned}\right.
\end{equation}
Then there exist $\Gamma_1,\Gamma'_1\in\mc{H}_{1,N}$ such that the rational vector field $\chi_{\Gamma_1,\Gamma'_1}\in\ms{H}^{{\rm an-rev}}_{1,N}$ is the solution of the homological equation
\begin{equation}
\label{form510}
[Z_3^{\leq N},\chi_{\Gamma_1,\Gamma'_{1}}]+K=0,
\end{equation}
where for any $(a,\bs{j},\bs{h},\bs{k},n)\in\Gamma_{1}\bigcup\Gamma'_{1}$, one has
\begin{equation}
\label{form511}
a\leq N,\quad n=\#\bs{h}=1 \;\text{or}\; 2, \quad \bs{k}=\emptyset, 
\end{equation}
and the control condition 
\begin{equation}
\label{form512}
\prod_{m=1}^{\#\bs{h}}\kappa_{\bs{h}_{m}}\leq\prod_{m=2}^{\#\bs{j}}\bs{j}_m^*.
\end{equation}
Moreover, one has the coefficient estimate
\begin{equation}
 \label{form513}
\|\chi_{\Gamma_l,\Gamma'_{1}}\|_{\ell^{\infty}}\leq\|K\|_{\ell^{\infty}}.
\end{equation}
\end{lemma}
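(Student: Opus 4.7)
The plan is to construct $\chi_{\Gamma_1,\Gamma'_1}$ as the sum $\chi+M$ of two anti-reversible rational vector fields of order $3$, in two sub-steps as outlined around \eqref{form128}--\eqref{form135}. First, in the style of \Cref{le31} but with small divisors in the denominators, define the rational vector field $\chi$ by setting its $z_a$-component (for $a\le N$) to
\begin{equation*}
\chi^{(z_a)}(\bs{z})={\rm i}\,\bar{z}_{a}\sum_{\bs{j}}\frac{\tilde{K}^{(z_a,\bar{z}_a)}_{\bs{j}}\,\zeta_{\bs{j}}}{\Omega^{(2)}_{\bs{j}}(I)},
\end{equation*}
with the sum running over the $\bs{j}\in\mc{R}^{a}_{2,N}$ supporting $K^{(z_a)}$ in \eqref{form59}. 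The resonance relations $b_1+b_2=a$ and $b-c=a$ force $\bs{j}\in Irr(\mc{R}_{\leq N})$ and $\Omega^{(2)}_{\bs{j}}\neq0$ on $\mc{U}^{N}_{\gamma}$. A direct computation of $[Z_3^{\leq N},\chi]^{(z_a)}$, using $(Z_3^{\leq N})^{(z_a)}=-{\rm i}\omega^{(2)}_{a}z_a$ with $\omega^{(2)}_{a}=I_a/4$ quadratic in $(z_a,\bar{z}_a)$, splits into a diagonal part that cancels $K^{(z_a)}$ and a residual term proportional to $z_a\,DI_a[\chi]$ (coming from $\partial_{I_a}\omega^{(2)}_a=\tfrac{1}{4}$). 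This residual is a rational normal form $\tilde{Z}_{5}$ of the type \eqref{form130-4-22}, i.e.\ of the form $z_a(G-\overline{G})$ multiplied by an action factor.

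Second, following \eqref{form133}--\eqref{form135}, solve $[Z_3^{\leq N},M]+\tilde{Z}_{5}=0$ with the ansatz $M^{(z_a)}=z_a\bigl(F^{a}(\bs{z})-\overline{F^{a}(\bs{z})}\bigr)$, where $F^{a}$ is a rational function whose denominator contains the original $\Omega^{(2)}$ together with one additional $\Omega^{(2)}$ small divisor. Such $M$ automatically satisfies $DI_a[M]=0$ by the identity \eqref{form131}, so the residual term generated by $[Z_3^{\leq N},M]$ that parallels the one above vanishes, and the equation closes without further corrections --- this is precisely the crucial observation $\tilde{\tilde{Z}}_{5}=0$ of \eqref{form134}. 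Equating coefficients monomial by monomial determines $F^{a}$ uniquely, giving $\#\bs{j}=3$ and $\#\bs{h}=2$ for $M$, whereas $\chi$ has $\#\bs{j}=2$ and $\#\bs{h}=1$. In both pieces $\#\bs{k}=0$, since only $\Omega^{(2)}$ divisors appear, and $a\le N$ because $K^{(z_a)}$ vanishes for $a>N$.

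Set $\chi_{\Gamma_1,\Gamma'_1}:=\chi+M$. By construction $[Z_3^{\leq N},\chi_{\Gamma_1,\Gamma'_1}]+K=0$, which is \eqref{form510}, and anti-reversibility is preserved throughout. It remains to verify the inclusion $\Gamma_1,\Gamma'_1\in\mc{H}_{1,N}$: the length identity $\#\bs{j}-\#\bs{h}-2\#\bs{k}=1$ and the structural requirements (1-1)--(1-5) of \Cref{def41} are immediate from the construction, so the only nontrivial point is the \emph{strong} control condition \eqref{form512}. For the $\chi$-piece with $\#\bs{j}=2$, the single small divisor index $\bs{h}_{1}=\bs{j}$ satisfies $\kappa_{\bs{h}_{1}}=\min\{\mu_{\min}(\bs{j}),\Delta_{\bs{j}}\}\le j^{*}_{2}$ since $\Delta_{\bs{j}}=a=b_1+b_2>\min\{b_1,b_2\}=j^{*}_{2}$. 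For the $M$-piece with $\#\bs{j}=3$, the two small divisors inherit, respectively, the original $\bs{h}_{1}$-bound $\kappa_{\bs{h}_{1}}\le j^{*}_{2}$ and a new bound $\kappa_{\bs{h}_{2}}\le j^{*}_{3}$ coming from the action index added to produce order $7$, giving $\kappa_{\bs{h}_{1}}\kappa_{\bs{h}_{2}}\le j^{*}_{2}j^{*}_{3}$.

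The coefficient estimate \eqref{form513} follows directly because in each sub-step the coefficient $\tilde{Q}^{(z_a)}_{J}$ is a linear combination of the $\tilde{K}^{(z_a,\bar{z}_a)}_{\bs{j}}$ (the $\Omega^{(2)}$ factors enter the rational structure, not the $\ell^{\infty}$ norm), and a careful accounting shows the constants are at most $1$. The step I expect to be the main obstacle is the explicit verification that the residual $\tilde{Z}_{5}$ produced after the first sub-step has exactly the structure $z_a\cdot(\text{action})\cdot DI_a[\chi]$ required so that an $M$ of the form $M^{(z_a)}=z_a(F^{a}-\overline{F^{a}})$ kills it without regenerating further non-integrable terms; bookkeeping the sign of each piece of $[Z_3^{\leq N},\chi]^{(z_a)}$ and confirming the identity $\tilde{\tilde{Z}}_{5}=0$ via \eqref{form131} is where the computation is most delicate.
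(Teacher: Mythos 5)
Your proposal follows essentially the same route as the paper's proof: your first piece is exactly the paper's $S$ (the coefficients of $K$ divided by $\Omega^{(2)}_{\bs{j}}$), the residual is $-\tfrac{{\rm i}}{4}z_aDI_a[S]$, and it is eliminated by an $M$ with $M^{(z_a)}=z_a(M^a-\overline{M^a})$ carrying one extra $\Omega^{(2)}$ divisor, where $DI_a[M]=0$ closes the equation and your index/divisor bookkeeping ($\#\bs{h}=1$ or $2$, $\bs{k}=\emptyset$, $\kappa_{\bs{h}_1}\kappa_{\bs{h}_2}\le j_2^*j_3^*$) matches \eqref{form515} and \eqref{form519}. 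The only slip is cosmetic: the residual is of the plain type \eqref{form130}, i.e.\ $z_aDI_a[\chi]$ with no extra action factor, rather than \eqref{form130-4-22}, which does not affect the argument.
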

\begin{proof}
We will construct the solution $\chi_{\Gamma_l,\Gamma'_1}$ in two steps.

{\bf{Step 1}}:
Let $S(\bs{z})=\sum_{a\leq N}S^{(z_a)}(\bs{z})\pa_{z_a}+\overline{S^{(z_a)}(\bs{z})}\pa_{\bar{z}_a}$ with the $z_a$-component 
\begin{equation}
 \label{form514}
S^{(z_a)}(\bs{z})=-\frac{3a}{32}\bar{z}_a\Big(\sum_{\substack{b_1+b_2=a\\b_1,b_2\leq N}}\frac{z_{b_1}^2z_{b_2}^2}{b_1b_2\Omega^{(2)}_{((1,b_1),(1,b_2))}}+\sum_{\substack{b-c=a\\b,c\leq N}}\frac{2z_{b}^2\bar{z}_{c}^2}{bc\Omega^{(2)}_{((1,b)(-1,c))}}\Big).
\end{equation}
Then there exists $\Gamma'_1\in\mc{H}_{1,N}$ such that $S=Q_{\Gamma'_1}\in\ms{H}^{{\rm an-rev}}_{1,N}$, where for $(a,\bs{j},\bs{h},\bs{k},n)\in\Gamma'_{1}$, one has 
\begin{equation}
\label{form515}
\begin{aligned}
\bs{j}\in\mc{R}^a_{2,N}, &\quad \bs{h}=(\bs{h}_1)=(\bs{j}),\quad \bs{k}=\emptyset, \quad n=\#\bs{h}=1,
\\&\kappa_{\bs{h}_1}=\min\{\mu_{\min}(\bs{j}),\Delta_{\bs{j}}\}\leq j_2^*.
\end{aligned}
\end{equation}
In view of  \eqref{form33-1-20}, one has 
\begin{equation}
\label{form516}
 [Z_3^{\leq N},S]^{(z_a)}+K^{(z_a)}=-{\rm i}\frac{z_a}{4}DI_a[S].
\end{equation}

Remark that $DI_a[S]=\bar{z}_aS^{(z_a)}+z_a\overline{S^{(z_a)}}$ is  the sum of two conjugate rational polynomials.
To eliminate the remainder term $-{\rm i}\frac{z_a}{4}DI_a[S]$ in \eqref{form516}, in the next step, we will construct a rational vector field $M(\bs{z})=\sum_{a\in\mb{N}_*}(M^{(z_a)}(\bs{z})\pa_{z_a}+\overline{M^{(z_a)}(\bs{z})}\pa_{\bar{z}_a})\in\ms{H}_{1,N}^{{\rm an-rev}}$ 
with its component of the form
$$M^{(z_a)}(\bs{z})=z_a(M^a(\bs{z})-\overline{M^a(\bs{z})}).$$
%
Fortunately, for rational vector fields of this type, one has
\begin{align}
\label{formmod3}
DI_a[M]=0.
\end{align}

{\bf{Step 2}}: 
Let $M(\bs{z})=\sum_{a\leq N}(M^a(\bs{z})-\overline{M^a(\bs{z})})(z_a\pa_{z_a}-\bar{z}_a\pa_{\bar{z}_a})$ 
with 
\begin{align}
 \label{form517}
M^a(\bs{z})=&-\frac{3a}{128}\Big(\sum_{\substack{b_1+b_2=a\\b_1,b_2\leq N}}\frac{\bar{z}^2_az_{b_1}^2z_{b_2}^2}{b_1b_2\Omega^{(2)}_{((1,b_1),(1,b_2))}\Omega^{(2)}_{((-1,a),(1,b_1),(1,b_2))}}
\\\notag&\qquad\qquad+\sum_{\substack{b-c=a\\b,c\leq N}}\frac{2\bar{z}^2_az_{b}^2\bar{z}_{c}^2}{bc\Omega^{(2)}_{((1,b),(-1,c))}\Omega^{(2)}_{((-1,a),(1,b),(-1,c))}}\Big).
\end{align} 
Then there exists $\Gamma_1\in\mc{H}_{1,N}$ such that $M=Q_{\Gamma_1}\in\ms{H}^{{\rm an-rev}}_{1,N}$ with the $z_a$-component 
\begin{align}
 \label{form518}
M^{(z_a)}=&z_a(M^a(\bs{z})-\overline{M^a(\bs{z})})
\\\notag=&-\frac{3a}{128}\Big(\sum_{\substack{b_1+b_2=a\\b_1,b_2\leq N}}\frac{z_a(\bar{z}^2_az_{b_1}^2z_{b_2}^2-z_a^2\bar{z}_{b_1}^2\bar{z}_{b_2}^2)}{b_1b_2\Omega^{(2)}_{((1,b_1),(1,b_2))}\Omega^{(2)}_{((-1,a),(1,b_1),(1,b_2))}}
\\\notag&\qquad\qquad+\sum_{\substack{b-c=a\\b,c\leq N}}\frac{2z_a(\bar{z}_a^2z_{b}^2\bar{z}_{c}^2-z^2_a\bar{z}_{b}^2z_{c}^2)}{bc\Omega^{(2)}_{((1,b),(-1,c))}\Omega^{(2)}_{((-1,a),(1,b),(-1,c))}}\Big),
\end{align}
 where for $(a,\bs{j},\bs{h},\bs{k},n)\in\Gamma_{1}$, one has 
 \begin{equation}
\label{form519}
\bs{j}\in\mc{R}^0_{3,N}, \quad n=\#\bs{h}=2, \quad \bs{k}=\emptyset\quad \text{and}\quad \kappa_{\bs{h}_1}=\kappa_{\bs{h}_2}=\mu_{\min}(\bs{j}).
\end{equation}
Thus 
\begin{equation}
\label{formrelation}
\kappa_{\bs{h}_1}\kappa_{\bs{h}_2}=\mu^2_{\min}(\bs{j})\leq j_2^*j_3^*.
\end{equation}
In view of  \eqref{form514}, \eqref{formmod3} and \eqref{form518}, one has 
\begin{equation}
\label{form520}
 [Z_3^{\leq N},M]^{(z_a)}-{\rm i}\frac{z_a}{4}\Big(\bar{z}_aS^{(z_a)}+z_a\overline{S^{(z_a)}}\Big)=0.
\end{equation}

To sum up, by the equations \eqref{form516} and \eqref{form520}, one has the homological equation
\begin{equation}
\label{form510-1-20-1}
[Z_3^{\leq N},S+M]^{(z_a)}+K^{(z_a)}=0.
\end{equation}
Let $\chi_{\Gamma_l,\Gamma'_{1}}(\bs{z})=S(\bs{z})+M(\bs{z})$. Then according to \Cref{def41} and \Cref{def42},  $\chi_{\Gamma_l,\Gamma'_{1}}(\bs{z})\in\ms{H}_{1,N}^{{\rm an-rev}}$ satisfies the homological equation \eqref{form510}. The relationship \eqref{form511} and the control condition \eqref{form512} follow from \eqref{form515}, \eqref{form519} and \eqref{formrelation}. Moreover, \eqref{form514} and \eqref{form518} implies the coefficient estimate \eqref{form513}. 
\end{proof}

However, if we use \Cref{le44} for $l=3$ to eliminate septic terms, then according to the homological equation \eqref{form450}, there exists a remaining rational normal form $\tilde{Z}_{\tilde{\Gamma}_2}$, which is of the same order as $Z_{5}^{\leq N}$. 
Thus we will not employ $Z_{3}^{\leq N}+Z_{5}^{\leq N}$ to deal with septic terms. Instead, we will use the following lemma, which involves the homological equation associated solely with $Z_3^{\leq N}$.

\begin{lemma}
\label{le52}
Consider $Q_{\Gamma_l,\Gamma'_l}\in\ms{H}^{{\rm rev}}_{l,N}$ with $l\geq3$, where for any $(a,\bs{j},\bs{h},\bs{k},n)\in\Gamma_l\bigcup\Gamma'_{l}$, one has
\begin{equation}
\label{form522}
n=\#\bs{h}, \quad \bs{k}=\emptyset, 
\end{equation}
and the control condition 
\begin{equation}
\label{form523}
\prod_{m=1}^{\#\bs{h}}\kappa_{\bs{h}_{m}}\leq\prod_{m=2}^{\#\bs{j}}j_m^*.
\end{equation}
Then there exist a rational vector field $\chi_{\Gamma_{1-1},\Gamma'_{1-1}}\in\ms{H}^{{\rm an-rev}}_{l-1,N}$,  an integrable rational vector field $Z_{\Gamma_l,\Gamma'_l}\in\ms{H}_{l,N}^{{\rm rev}}$  and a rational normal form $\tilde{Z}_{\tilde{\Gamma}_l}\in\ms{H}^{{\rm rev}}_{l,N}$  such that
\begin{equation}
\label{form524}
[Z_3^{\leq N},\chi_{\Gamma_{l-1},\Gamma'_{l-1}}]+Q_{\Gamma_{l},\Gamma'_{l}}=Z_{\Gamma_{l},\Gamma'_{l}}+\tilde{Z}_{\tilde{\Gamma}_l},
\end{equation}
where for any $(a,\bs{j}',\bs{h}',\bs{k}',n')\in\Gamma_{l-1}\bigcup\Gamma'_{1-1}$, there exists $(a,\bs{j},\bs{h},\bs{k},n)\in\Gamma_l\bigcup\Gamma'_{l}$ such that 
\begin{equation}
\label{form525}
n'=\#\bs{h}'=\#\bs{h}+1\quad \text{and}\quad \bs{k}'=\bs{k}=\emptyset;
\end{equation}
for any $(a,\bs{j}',\bs{h}',\bs{k}',n')\in\tilde{\Gamma}_{l}$, there exists $(a,\bs{j},\bs{h},\bs{k},n)\in\Gamma_l\bigcup\Gamma'_{l}$ such that 
\begin{equation}
\label{form526}
n'=\#\bs{h}'=\#\bs{h}+1 \quad \text{and}\quad \bs{k}'=\bs{k}=\emptyset.
\end{equation}
Moreover, we have the coefficient estimates
\begin{equation}
 \label{form528}
\|\chi_{\Gamma_{l-1},\Gamma'_{l-1}}\|_{\ell^{\infty}}\leq\|Q_{\Gamma_l,\Gamma'_l}\|_{\ell^{\infty}},
\end{equation}
\begin{equation}
\label{formz1coe}
\|Z_{\Gamma_{l},\Gamma'_l}\|_{\ell^{\infty}}\leq\|Q_{\Gamma_l,\Gamma'_l}\|_{\ell^{\infty}},
\end{equation}
\begin{equation}
\label{formz2coe}
\|\tilde{Z}_{\tilde{\Gamma}_{l}}\|_{\ell^{\infty}}\leq\frac{1}{2}\|Q_{\Gamma_l,\Gamma'_l}\|_{\ell^{\infty}}.
\end{equation}
\end{lemma}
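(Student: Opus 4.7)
The strategy closely parallels that of \Cref{le44}, with the essential difference that the homological equation \eqref{form524} is solved against $Z_3^{\leq N}$ alone. Consequently, only small divisors of type $\Omega^{(2)}$ are introduced into the denominators of the solution, and these enter the $\bs{h}$-slot (with the counter $n$ simultaneously incrementing) rather than the $\bs{k}$-slot. Writing $Q_{\Gamma_l,\Gamma'_l}$ in the standard form \eqref{form47-1-18}--\eqref{form48-1-18}, I will set
\begin{equation*}
\chi^{(z_a)}(\bs{z}) = z_a\sum_{\substack{J\in \Gamma_l^a\\\bs{j}\notin\mc{I}^0}}\frac{{\rm i}\tilde Q^{(z_a)}_J f_{(\bs{h},\emptyset,n)}(I)\,\zeta_{\bs{j}}}{\Omega^{(2)}_{\bs{j}}} + \bar{z}_a\sum_{\substack{J\in \Gamma'^a_l\\\bs{j}\notin\mc{I}^a}}\frac{{\rm i}\tilde Q^{(z_a)}_J f_{(\bs{h},\emptyset,n)}(I)\,\zeta_{\bs{j}}}{\Omega^{(2)}_{\bs{j}}},
\end{equation*}
and to each source index $J = (a,\bs{j},\bs{h},\emptyset,n)$ I will associate $J' = (a,\bs{j},(\bs{h},Irr(\bs{j})),\emptyset,n+1)$. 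Since both $\#\bs{h}'$ and $n'$ increase by one while $\bs{k}'$ stays empty, the relations $n' = \#\bs{h}'$ and $\bs{k}' = \emptyset$ of \eqref{form525} are preserved. The control condition \eqref{form416} is then the first genuine check: it is furnished precisely by the \emph{strengthened} hypothesis \eqref{form523}, which provides the extra factor $j_1^* \geq \kappa_{Irr(\bs{j})}$ needed to absorb the new divisor. This yields $\chi_{\Gamma_{l-1},\Gamma'_{l-1}} \in \ms{H}^{{\rm an-rev}}_{l-1,N}$ together with the estimate \eqref{form528}.

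Computing the commutator $[Z_3^{\leq N},\chi]^{(z_a)}$ then mirrors \eqref{form451} in \Cref{le44}: the phase contribution generates the factor $\Omega^{(2)}_{\bs{j}}$ that cancels the denominator of $\chi$, exactly annihilating the non-resonant part of $Q^{(z_a)}_{\Gamma_l,\Gamma'_l}$ and leaving the resonant residue $Z^{(z_a)}_{\Gamma_l,\Gamma'_l}$ indexed by $\bs{j} \in \mc{I}^0 \cup \mc{I}^a$; this residue is integrable, hence a rational normal form by \eqref{renf-2} in \Cref{renf}, and trivially satisfies \eqref{formz1coe}. The complementary differential contribution, arising from $\partial\omega_a^{(2)}/\partial z_a = \bar z_a/4$ and $\partial\omega_a^{(2)}/\partial\bar z_a = z_a/4$, produces the remainder $R_1^{(z_a)} = -\frac{{\rm i}}{4}z_a DI_a[\chi]$ for $a \leq N$ and vanishes for $a > N$. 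Crucially, no analogue of the quintic $R_2$ of \eqref{formres2} arises here, since only $\omega^{(2)}$ (not $\omega^{(4)}$) contributes to $Z_3^{\leq N}$.

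The final step is to recast $R_1$ as a rational normal form $\tilde{Z}_{\tilde\Gamma_l} \in \ms{H}^{{\rm rev}}_{l,N}$ with structure \eqref{form526}. Using $\chi^{(\bar z_a)} = \overline{\chi^{(z_a)}}$, each $J \in \Gamma_l^a$-summand contributes $I_a(\zeta_{\bs{j}} + \zeta_{\bar{\bs{j}}})$ to $DI_a[\chi]$ and each $J \in \Gamma'^a_l$-summand contributes $\bar z_a^2\zeta_{\bs{j}} + z_a^2\zeta_{\bar{\bs{j}}}$. Under the relabelling $\bs{j}' = ((0,a),\bs{j})$ or $\bs{j}' = ((-1,a),\bs{j})$ respectively---in each case $\Delta_{\bs{j}'} = 0$ so $\bs{j}' \in \mc{R}^0_{\leq N}$---these fit the template \eqref{nf} with $\bs{h}' = (\bs{h},Irr(\bs{j}))$, $\bs{k}' = \emptyset$, and $n' = n+1 = \#\bs{h}'$, matching \eqref{form526}. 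The control condition \eqref{form416} for $\tilde\Gamma_l$ is slack because $(j'_m)^*$ gains an extra entry of size $\geq 1$, and \eqref{formz2coe} follows from the prefactor $1/4$ combined with the halving built into \eqref{nf}. I expect the main bookkeeping obstacle to lie in verifying the cardinality constraint \eqref{def41-2} in \Cref{def41} for $\tilde\Gamma_l$: a given pair $(a,\bs{j}')$ may arise from multiple source indices $J \in \Gamma_l \cup \Gamma'_l$, but since only one entry is appended to $\bs{j}$ (not two, as in the commutator analysis of \Cref{le43}), the required bound $(2\#\bs{j}')^{2\#\bs{j}'-3}$ will be met with a benign constant overhead.
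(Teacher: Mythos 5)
Your proposal is correct and follows essentially the same route as the paper's proof: the same choice of $\chi$ with only $\Omega^{(2)}_{\bs{j}}$ added to the denominator, the same index bookkeeping $(a,\bs{j},(\bs{h},Irr(\bs{j})),\emptyset,n+1)$ verified via \eqref{form523} and $\kappa_{Irr(\bs{j})}\leq j_1^*$, and the same identification of the remainder $-\frac{{\rm i}}{4}z_aDI_a[\chi]$ as the rational normal form $\tilde{Z}_{\tilde{\Gamma}_l}$ with the factor $\frac{1}{2}$ coefficient bound. The only cosmetic difference is the ordering of the appended index in $\bs{j}'$, which is immaterial by the permutation symmetry of the coefficients.
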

\begin{proof}
In view of \Cref{def42}, write 
$$Q_{\Gamma_l,\Gamma'_l}=\sum_{a\in\mb{N}_*}\big(Q_{\Gamma_{l},\Gamma'_{l}}^{(z_a)}(\bs{z})\pa_{z_a}+\overline{Q_{\Gamma_{l},\Gamma'_{l}}^{(z_a)}(\bs{z})}\pa_{\bar{z}_a}\big)\in\ms{H}^{{\rm rev}}_{l,N}$$
 with the $z_a$-component
\begin{equation}
\label{form530}
Q_{\Gamma_{l},\Gamma'_{l}}^{(z_a)}=z_a\sum_{J:=(a,\bs{j},\bs{h},\emptyset,\#\bs{h})\in\Gamma^a_{l}}\frac{\tilde{Q}^{(z_a)}_{J}\zeta_{\bs{j}}}{\prod\limits_{m=1}^{\#\bs{h}}\Omega^{(2)}_{\bs{h}_{m}}}+\bar{z}_a\sum_{J:=(a,\bs{j},\bs{h},\emptyset,\#\bs{h})\in\Gamma'^a_{l}}\frac{\tilde{Q}^{(z_a)}_{J}\zeta_{\bs{j}}}{\prod\limits_{m=1}^{\#\bs{h}}\Omega^{(2)}_{\bs{h}_{m}}},
\end{equation}
where $\Gamma'^a_l=\emptyset$ for $a>N$. 

Let $\chi(\bs{z})=\sum_{a\in\mb{N}_*}\chi^{(z_a)}(\bs{z})\pa_{z_a}+\overline{\chi^{(z_a)}(\bs{z})}\pa_{\bar{z}_a}$ with the $z_a$-component 
\begin{align}
 \label{form532}
\chi^{(z_a)}(\bs{z})=z_a\sum_{\substack{J:=(a,\bs{j},\bs{h},\emptyset,\#\bs{h})\in\Gamma^a_{l}\\\bs{j}\notin\mc{I}^0}}\frac{{\rm i}\tilde{Q}^{(z_a)}_{J}\zeta_{\bs{j}}}{\Omega^{(2)}_{\bs{j}}\prod\limits_{m=1}^{\#\bs{h}}\Omega^{(2)}_{\bs{h}_{m}}}
+\bar{z}_a\sum_{\substack{J:=(a,\bs{j},\bs{h},\emptyset,\#\bs{h})\in\Gamma'^a_{l}\\\bs{j}\notin\mc{I}^a}}\frac{{\rm i}\tilde{Q}^{(z_a)}_{J}\zeta_{\bs{j}}}{\Omega^{(2)}_{\bs{j}}\prod\limits_{m=1}^{\#\bs{h}}\Omega^{(2)}_{\bs{h}_{m}}}.
\end{align}
Then there exist $\Gamma_{l-1},\Gamma'_{l-1}\in\mc{H}_{l-1,N}$ such that $\chi=\chi_{\Gamma_{l-1},\Gamma'_{l-1}}\in\ms{H}^{{\rm an-rev}}_{l-1,N}$ satisfying the estimate \eqref{form528},
where for any $J'\in\Gamma_{l-1}\bigcup\Gamma'_{l-1}$, there exists $(a,\bs{j},\bs{h},\bs{k},n)\in\Gamma_{l}\bigcup\Gamma'_{l}$ such that 
\begin{equation}
\label{form533}
J':=(a,\bs{j}',\bs{h}',\bs{k}',n')=\big(a,\bs{j},\big(\bs{h},Irr(\bs{j})\big),\bs{k},n+1\big).
\end{equation}
Thus the relationship \eqref{form525} holds.
In view of \eqref{form533}, by the fact $\kappa_{Irr(\bs{j})}\leq j_1^*$ and  the control condition \eqref{form523}, one has
$$\prod_{m=1}^{\#\bs{h}'}\kappa_{\bs{h}'_{m}}=\kappa_{Irr(\bs{j})}\prod_{m=1}^{\#\bs{h}}\kappa_{\bs{h}_{m}}\leq j_1^*\prod_{m=2}^{\#\bs{j}}j_m^*=\prod_{m=1}^{\#\bs{j}'}(j'_m)^*.$$
By \eqref{form34}, \eqref{form530} and \eqref{form532}, one has 
\begin{equation}
\label{form534}
 [Z_3^{\leq N},\chi]^{(z_a)}+Q_{\Gamma_{l},\Gamma'_{l}}^{(z_a)}=Z_{\Gamma_l,\Gamma'_l}^{(z_a)}+R^{(z_a)},
\end{equation}
where $Z_{\Gamma_l,\Gamma'_l}^{(z_a)}$ is the integrable part of $Q^{(z_a)}_{\Gamma_l,\Gamma'_l}$, i.e.,
\begin{equation}
\label{form531}
Z_{\Gamma_l,\Gamma'_l}^{(z_a)}(\bs{z})=z_a\sum_{\substack{J:=(a,\bs{j},\bs{h},\emptyset,\#\bs{h})\in\Gamma^a_{l}\\\bs{j}\in\mc{I}^0}}\frac{\tilde{Q}^{(z_a)}_{J}\zeta_{\bs{j}}}{\prod\limits_{m=1}^{\#\bs{h}}\Omega^{(2)}_{\bs{h}_{m}}}
+\bar{z}_a\sum_{\substack{J:=(a,\bs{j},\bs{h},\emptyset,\#\bs{h})\in\Gamma'^a_{l}\\\bs{j}\in\mc{I}^a}}\frac{\tilde{Q}^{(z_a)}_{J}\zeta_{\bs{j}}}{\prod\limits_{m=1}^{\#\bs{h}}\Omega^{(2)}_{\bs{h}_{m}}},
\end{equation}
and $R^{(z_a)}$ is a new remainder term, i.e.,
\begin{equation}
 \label{form535}
R^{(z_a)}=-{\rm i}\frac{z_a}{4}DI_a[\chi].
\end{equation}

In view of \eqref{form531}, let $Z_{\Gamma_l,\Gamma'_l}(\bs{z})=\sum_{a\in\mb{N}_*}\big(Z_{\Gamma_l,\Gamma'_l}^{(z_a)}(\bs{z})\pa_{z_a}+\overline{Z^{(z_a)}_{\Gamma_l,\Gamma'_l}(\bs{z})}\pa_{\bar{z}_a}\big)$, and thus $Z_{\Gamma_l,\Gamma'_l}$ is an integrable rational vector field of order $2l+1$ satisfying the estimate 
\eqref{formz1coe}.
%
%
In view of \eqref{form535}, by a straightforward calculation, one has 
\begin{align}
\label{form461-1-19}
R^{(z_a)}=&\frac{z_a}{4}\Big(\sum_{\substack{J:=(a,\bs{j},\bs{h},\emptyset,\#\bs{h})\in\Gamma^a_{l}\\\bs{j}\notin\mc{I}^0}}\frac{\tilde{Q}^{(z_a)}_{J}(\zeta_{\bs{j}}+\zeta_{\bar{\bs{j}}})I_a}{\Omega^{(2)}_{\bs{j}}\prod\limits_{m=1}^{\#\bs{h}}\Omega^{(2)}_{\bs{h}_{m}}}
+\sum_{\substack{J:=(a,\bs{j},\bs{h},\emptyset,\#\bs{h})\in\Gamma'^a_{l}\\\bs{j}\notin\mc{I}^a}}\frac{\tilde{Q}^{(z_a)}_{J}(\bar{z}_a^2\zeta_{\bs{j}}+z_a^2\zeta_{\bar{\bs{j}}})}{\Omega^{(2)}_{\bs{j}}\prod\limits_{m=1}^{\#\bs{h}}\Omega^{(2)}_{\bs{h}_{m}}}\Big).
\end{align}
Then there exists $\tilde{\Gamma}_{l}\in\mc{H}_{l,N}$ such that
\begin{equation}
\label{form462-1-19}
R^{(z_a)}(\bs{z})=z_a\sum_{J':=(a,\bs{j}',\bs{h}',\bs{k}',n')\in\tilde{\Gamma}^a_l}\frac{\tilde{Q}^{(z_a)}_{J'}}{2}f_{(\bs{h}',\bs{k}',n')}(I)(\zeta_{\bs{j}'}+\zeta_{\bar{\bs{j}}'}),
\end{equation}
 where for any $J'\in\tilde{\Gamma}_{l}$, there exists $J\in\Gamma_{l}$ such that  
 \begin{equation}
 \label{form463-1-19}
(a,\bs{j}',\bs{h}',\bs{k}',n')=\big(a,\big(\bs{j},(0,a)\big),\big(\bs{h},Irr(\bs{j})\big),\bs{k},n+1\big);
\end{equation}
or there exits $J\in\Gamma'_{l}$ such that  
 \begin{equation}
  \label{form464-1-19}
(a,\bs{j}',\bs{h}',\bs{k}',n')=\big(a,\big(\bs{j},(-1,a)\big),\big(\bs{h},Irr(\bs{j})\big),\bs{k},n+1\big).
\end{equation}
%
Thus $R_2(\bs{z}):=Z_{\tilde{\Gamma}_{l}}(\bs{z})\in\ms{H}^{{\rm rev}}_{l,N}$ is a rational normal form of order $2l+1$ with the relationship \eqref{form526} and the estimate \eqref{formz2coe}.
\end{proof}

If we rely solely on the integrable vector field $Z_3^{\leq N}$ to eliminate $K_{2l+1}$ in \Cref{th31}, then the control condition \eqref{form416} in \Cref{def41} could not be  preserved  after several iterations.  To address this issue, we will employ $Z^{\leq N}_3$ specifically to deal with $K_5$ (via \Cref{le51}) and $K_7$ (via \Cref{le52}), while using  $Z^{\leq N}_3+Z_5^{\leq N}$ to deal with the higher order terms $K_{2l+1}$ for $l\geq4$ (via \Cref{le44}). This strategy  will be further elaborated in the next section. 

\section{Rational normal form theorems}
\label{sec5}
In this section, we are going to normalize the vector field \eqref{form33} in \Cref{th31}. 
Rewrite the vector field \eqref{form33} as
$$X'= Z_1+Z_3^{\leq N}+\sum_{l=2}^{r}K^{\leq N}_{2l+1}+Z_3^{>N}+\sum_{l=2}^{r}K^{>N}_{2l+1}+R'_{\geq2r+3},$$
where the $z_a$-component of truncated resonant vector field $K_{2l+1}^{\leq N}$ is of the form
  \begin{align}
    \label{form52-3-17}
    K^{(z_a)}_{2l+1, \leq N}(\bs{z})=
\left\{\begin{aligned}
z_a\sum_{\bs{j}\in\mc{R}^0_{l,N}}\tilde{K}^{(z_a,z_a)}_{\bs{j}}\zeta_{\bs{j}}+\bar{z}_a\sum_{\bs{j}\in\mc{R}^a_{l,N}}\tilde{K}^{(z_a,\bar{z}_a)}_{\bs{j}}\zeta_{\bs{j}},\; &\text{for}\;a\leq N,
\\z_a\sum_{\bs{j}\in\mc{R}^0_{l,N}}\tilde{K}^{(z_a,z_a)}_{\bs{j}}\zeta_{\bs{j}},\qquad\qquad\quad  &\text{for}\; a>N,
\end{aligned}\right.
    \end{align}
the $z_a$-component of remainder term $K^{>N}_{2l+1}$ is of the form
    \begin{align}
    \label{form52}
    K^{(z_a)}_{2l+1, >N}(\bs{z})=
\left\{\begin{aligned}
z_a\sum_{\bs{j}\in\mc{R}^0_{l}\backslash\mc{R}^0_{l,N}}\tilde{K}^{(z_a,z_a)}_{\bs{j}}\zeta_{\bs{j}}+\bar{z}_a\sum_{\bs{j}\in\mc{R}^a_{l}\backslash\mc{R}^a_{l,N}}\tilde{K}^{(z_a,\bar{z}_a)}_{\bs{j}}\zeta_{\bs{j}},\; &\text{for}\;a\leq N,
\\z_a\sum_{\bs{j}\in\mc{R}^0_{l}\backslash\mc{R}^0_{l,N}}\tilde{K}^{(z_a,z_a)}_{\bs{j}}\zeta_{\bs{j}}+\bar{z}_a\sum_{\bs{j}\in\mc{R}^a_{l}}\tilde{K}^{(z_a,\bar{z}_a)}_{\bs{j}}\zeta_{\bs{j}},\quad  &\text{for}\; a>N.
\end{aligned}\right.
    \end{align}
%
Concretely, we will eliminate the non-normal form parts of the truncated resonant vector fields $K_{2l+1}^{\leq N}$ with $l=2,\cdots,r$ by the integrable polynomial vector fields $Z^{\leq N}_3$ and $Z^{\leq N}_3+Z_5^{\leq N}$.
\subsection{Elimination of quintic and septic terms}
\label{sec51}
The following theorem implies that after two rational normal form steps,  the non-integrable parts of $K_5^{\leq N}$ and   the non-normal form parts of $K_7^{\leq N}$ are eliminated.
\begin{theorem}
\label{th51}
Fix an integer $r\geq4$.
For any $s\geq0$ and $0<\varepsilon\ll 1$, $N\geq r$, $\gamma\in(0,1)$ satisfying $\varepsilon\leq\gamma^{2}(2C_3N)^{-29r-7}$ with some constant $C_3:=\max\{C_1^2, C_2^2\}$, there exists a nearly identity coordinate transformation 
$$\phi^{(2)}:\Big(\mc{U}^{N}_{\frac{\gamma}{2^{r-2}}}\times\mc{U}^{N}_{\frac{\gamma}{2^{r-2}}}\Big)\bigcap B_{s}(2\varepsilon)\to\Big(\mc{U}^{N}_{\frac{\gamma}{2^{r}}}\times\mc{U}^{N}_{\frac{\gamma}{2^{r}}}\Big)\bigcap B_{s}(3\varepsilon)$$ 
such that the vector field \eqref{form33} is transformed into 
\begin{align}
\label{form51}
X''= &Z_1+Z_3^{\leq N}+Z^{\leq N}_5+Z_{\Gamma_3,\Gamma'_3}+\tilde{Z}_{\tilde{\Gamma}_3}+\sum_{l=4}^rQ_{\Gamma_{l},\Gamma'_l}+R''_{\geq2r+3}
\\\notag&+(D\phi^{(2)})^{-1}\big(Z_3^{>N}+\sum_{l=2}^{r}K^{>N}_{2l+1}+R'_{\geq2r+3}\big)\circ\phi^{(2)},
\end{align}
where
\begin{enumerate}[(i)]
\item  the transformation $\phi^{(2)}$ satisfies the estimate
    \begin{equation}
    \label{form53}
    \|\bs{z}-\phi^{(2)}(\bs{z})\|_{s}\leq\frac{3^{11}\times4^{r}N^{28}}{\gamma^{2}}\|z\|_{s}^{3},
    \end{equation}
    and the same estimate is fulfilled by the inverse transformation;
\item $Z_{\Gamma_{3},\Gamma'_3}\in\ms{H}_{3,N}^{{\rm rev}}$ is an integrable rational vector field satisfying the coefficient estimate 
\begin{equation}
\label{formth51-1-25-1}
\|Z_{\Gamma_3,\Gamma'_3}\|_{\ell^{\infty}}\leq C_3^5,
\end{equation}
 and for any $(a,\bs{j},\bs{h},\bs{k},n)\in\tilde{\Gamma}_{3}$, one has
 \begin{equation}
 \label{form55-1-23}
  n=\#\bs{h}\leq3,\; \bs{k}=\emptyset;
  \end{equation}
\item $\tilde{Z}_{\tilde{\Gamma}_{3}}\in\ms{H}_{3,N}^{{\rm rev}}$ is a rational normal form defined by \Cref{def43} satisfying the coefficient estimate 
\begin{equation}
\label{formth51-1-25-2}
\|\tilde{Z}_{\tilde{\Gamma}_{3}}\|_{\ell^{\infty}}\leq C_3^5,
\end{equation}
 and for any $(a,\bs{j},\bs{h},\bs{k},n)\in\tilde{\Gamma}_{3}$, one has
 \begin{equation}
 \label{form55}
  n=\#\bs{h}\leq4,\; \bs{k}=\emptyset;
  \end{equation}
\item $Q_{\Gamma_{l},\Gamma'_l}\in\ms{H}_{l,N}^{{\rm rev}}$ is defined by \Cref{def42} satisfying the coefficient estimate 
\begin{equation}
\label{form56}
\|Q_{\Gamma_{l},\Gamma'_l}\|_{\ell^{\infty}}\leq  C_3^{l^2},
\end{equation}
and for any $(a,\bs{j},\bs{h},\bs{k},n)\in\Gamma_{l}\bigcup\Gamma'_{l}$, one has
 \begin{equation}
 \label{form57}
  n=\#\bs{h}\leq3l-5,\; \bs{k}=\emptyset;
  \end{equation}
\item the remainder term $R''_{\geq2r+3}$ satisfies the estimate
  \begin{equation}
  \label{form58}
  \|R''_{\geq2r+3}(\bs{z})\|_{s}\leq\frac{(2C_3N)^{48r^{2}}}{\gamma^{3r-2}}\|z\|_{s}^{2r+3}.
  \end{equation}
\end{enumerate}
\end{theorem}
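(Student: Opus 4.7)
The plan is to perform two successive rational normal form sub-steps, both driven by the cubic integrable polynomial vector field $Z_3^{\leq N}$ alone. I deliberately avoid using $Z_5^{\leq N}$ at this stage because the homological equation of \Cref{le44} would regenerate a fifth-order non-integrable rational normal form, thereby spoiling the structure required in \eqref{form51}. Starting from $X'$ of \Cref{th31}, I split each $K_{2l+1}$ into $K^{\leq N}_{2l+1}+K^{>N}_{2l+1}$ via \eqref{form41} and target only the truncated parts, leaving the tails to be absorbed into the pull-back correction appearing in \eqref{form51}.

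\textbf{Step 1 (elimination of the non-integrable quintic).} Thanks to \eqref{form35}, $K_5^{\leq N}=Z_5^{\leq N}+(K_5^{\leq N}-Z_5^{\leq N})$ with the non-integrable remainder matching the hypothesis \eqref{form59} of \Cref{le51}. Apply \Cref{le51} to obtain $\chi^{(1)}:=\chi_{\Gamma_1,\Gamma'_1}\in\ms{H}^{{\rm an-rev}}_{1,N}$ (with $\bs{k}=\emptyset$, $n=\#\bs{h}\leq 2$ and the strong control \eqref{form512}) solving $[Z_3^{\leq N},\chi^{(1)}]+(K_5^{\leq N}-Z_5^{\leq N})=0$. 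Pushing forward $X'$ by $\Phi^1_{\chi^{(1)}}$ and expanding via the Lie series, the quintic non-integrable term is cancelled while $Z_3^{\leq N}$ and $Z_5^{\leq N}$ survive unchanged. Because $\chi^{(1)}$ has only $\Omega^{(2)}$-denominators and the original $K^{\leq N}_{2m+1}$ have none, iterating \Cref{le43} (only its cases (1)--(2) are triggered) produces rational vector fields with $\bs{k}=\emptyset$ and $n=\#\bs{h}$; moreover the stronger condition \eqref{form523} is inherited because every $\bs{h}_m$ introduced during a commutator comes with a new numerator index whose $j^*_m$-factor absorbs the new $\kappa_{\bs{h}_m}$.

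\textbf{Step 2 (elimination of the non-normal-form septic).} After Step 1 the septic contribution is collected into a rational vector field $Q^{(1)}_{\Gamma_3,\Gamma'_3}\in\ms{H}^{{\rm rev}}_{3,N}$ satisfying exactly the hypotheses of \Cref{le52}. Applying \Cref{le52} yields $\chi^{(2)}\in\ms{H}^{{\rm an-rev}}_{2,N}$ and the decomposition $Q^{(1)}_{\Gamma_3,\Gamma'_3}=Z_{\Gamma_3,\Gamma'_3}+\tilde{Z}_{\tilde{\Gamma}_3}-[Z_3^{\leq N},\chi^{(2)}]$, giving the required integrable piece and rational normal form in \eqref{form51}. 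Setting $\phi^{(2)}:=\Phi^1_{\chi^{(1)}}\circ\Phi^1_{\chi^{(2)}}$, all higher-order terms $Q_{\Gamma_l,\Gamma'_l}$ (for $l\geq 4$) arise as collected brackets from both Lie series. The structural bounds \eqref{form55-1-23}, \eqref{form55}, \eqref{form57} on $n=\#\bs{h}$ follow by induction on the number of commutators using \eqref{form425-10-1} from \Cref{re42}: each bracket with $\chi^{(1)}$ adds at most $3$ to $\#\bs{h}$ and each bracket with $\chi^{(2)}$ adds at most $4$, so the linear dependence of the number of brackets on $l$ yields the $3l-5$ bound.

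\textbf{Estimates and the hard part.} The coefficient bounds \eqref{formth51-1-25-1}, \eqref{formth51-1-25-2}, \eqref{form56} are proved by iterating \eqref{form436} of \Cref{le43} (each commutator produces a factor $\lesssim C_2l^4$) against the input \eqref{form37} of \Cref{th31}, with $C_3\geq\max\{C_1^2,C_2^2\}$ absorbing polynomial growth. The transformation estimate \eqref{form53} and the domain-preservation chain $\mc{U}^N_{\gamma/2^{r-2}}\to\mc{U}^N_{\gamma/2^r}$ follow by applying \Cref{le41} twice, using the $\ms{P}_s$-bound of \Cref{le42} on the generators together with the smallness hypothesis $\varepsilon\leq\gamma^2(2C_3N)^{-29r-7}$, which is precisely tuned so that along each flow $\sup_{a\leq N}a^{2s}|I'_a-I_a|$ stays within the tolerance of \Cref{le41}. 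The hard part is the remainder bound \eqref{form58}: one must collect all Lie-series tails from both sub-steps at orders $\geq 2r+3$ and show they sum to no worse than $(2C_3N)^{48r^2}\gamma^{-(3r-2)}\|z\|_s^{2r+3}$. Each such tail is a long commutator chain whose coefficient grows as $C_3^{l^2}$ and whose $\ms{P}_s$-norm via \Cref{le42} contributes $N^{O(l^2)}\gamma^{-O(l)}$; summing over all contributing chains cut at order $2r+3$, while keeping the $N$-exponent at $48r^2$ and the $\gamma$-exponent at $3r-2$, is the main combinatorial obstacle.
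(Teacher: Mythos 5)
Your proposal follows essentially the same route as the paper's proof: a first sub-step using \Cref{le51} to remove the non-integrable part of $K_5^{\leq N}$ by $Z_3^{\leq N}$ alone (precisely to avoid the quintic rational normal form that \Cref{le44} would regenerate), a second sub-step applying \Cref{le52} to the collected septic term, composition of the two time-1 flows to define $\phi^{(2)}$, and the same machinery (\Cref{le41}, \Cref{le42}, \Cref{le43} with \Cref{re42}) for the structural conditions, coefficient bounds, domain preservation and Lie-tail remainder estimate. The only small slip is the increment count: a bracket with $\chi^{(2)}$ (which carries $\#\bs{h}\le 4$) raises $\#\bs{h}$ by at most $5$, not $4$, by \eqref{form552}, but the resulting count $5(\frac{l-1}{2}-1)+4\le 3l-5$ still yields \eqref{form57} exactly as in the paper.
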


\begin{proof}
We are going to apply \Cref{le51} and \Cref{le52} to perform two steps of rational normal form process.

{\bf Step 1: Elimination of the quintic term.}
In view of \eqref{form35}, by \Cref{le51}, there exists a rational vector field $\chi_{\Gamma_1, \Gamma'_1}\in\ms{H}^{{\rm an-rev}}_{1,N}$ such that
\begin{equation}
 \label{form539}
[Z_3^{\leq N},\chi_{\Gamma_1,\Gamma'_1}]+K_{5}^{\leq N}=Z_{5}^{\leq N}
\end{equation}
with the coefficient estimate
\begin{align}
\label{form540}
\|\chi_{\Gamma_1,\Gamma'_1}\|_{\ell^{\infty}}\leq\|K_5-Z_5\|_{\ell^{\infty}}\leq \frac{3}{32},
\end{align}
where for any $(a,\bs{j},\bs{h},\bs{k},n)\in\Gamma_1\bigcup\Gamma'_{1}$, one has
\begin{equation}
\label{form541}
n=\#\bs{h}=1 \;\text{or}\; 2, \quad \bs{k}=\emptyset, 
\end{equation}
and the control condition 
\begin{equation}
\label{form542}
\prod_{m=1}^{\#\bs{h}}\kappa_{\bs{h}_{m}}\leq\prod_{m=2}^{\#\bs{j}}j^*_m.
\end{equation}

In view of \eqref{re41-2} in \Cref{re41} and \eqref{form541}, there exists a finite set $\mc{F}_1\subset\mb{N}^3$ such that 
$$\Gamma_1=\bigcup_{(\alpha,\beta,\beta')\in\mc{F}_1}\Gamma^{(\alpha,\beta,\beta')}_1\quad\text{and}\quad \Gamma'_1=\bigcup_{(\alpha,\beta,\beta')\in\mc{F}_1}\Gamma'^{(\alpha,\beta,\beta')}_1$$ 
with $\beta\leq 2$, $\beta'=0$ and $\alpha=1+\beta+2\beta'\leq3$. 
Hence, by \Cref{le42}  and  \eqref{form540}, for any $z\in\mc{U}^{N}_{\frac{\gamma}{2^{r-1}}}$, one has
\begin{align}
\label{form543}
\|\chi_{\Gamma_1,\Gamma'_1}(\bs{z})\|_s\leq&6\sqrt{2}\times3^3\max_{(\alpha,\beta,\beta')\in\mc{F}_1}\frac{(12\alpha^2)^{\alpha-1}N^{(4\alpha+2)(\beta+\beta')}}{(\gamma/2^{r-1})^{\beta+2\beta'}}\|\chi_{\Gamma_1,\Gamma'_1}\|_{\ell^{\infty}}\|z\|_s^{3}
\\\notag\leq&3^4\times2\sqrt{2}\frac{(12\times3^2)^2N^{(4\times3+2)\times2}}{(\gamma/2^{r-1})^2}\frac{3}{32}\|z\|_s^{3}
\\\notag=&3^{11}\sqrt{2}\frac{4^{r-1}N^{28}}{\gamma^2}\|z\|_s^{3}.
\end{align}
Denote by the flow $\Phi_{\chi_{\Gamma_1,\Gamma'_1}}^{t}$ generated by $\chi_{\Gamma_1,\Gamma'_1}$. Then for any $|t|\leq 1$, as long as $\|\Phi_{\chi_{\Gamma_1,\Gamma'_1}}^{t}(\bs{z})\|_{s}\leq\frac{3}{2}\|\bs{z}\|_{s}$, one has
\begin{equation}
\label{form544}
\|\bs{z}-\Phi_{\chi_{\Gamma_1,\Gamma'_1}}^{t}(\bs{z})\|_{s}\leq\|\chi_{\Gamma_1,\Gamma'_1}(\bs{z})\|_{s}\leq3^{11}\sqrt{2}\frac{4^{r-1}N^{28}}{\gamma^{2}}\|z\|_{s}^{3}.
\end{equation}
Now, we need to make sure the condition \eqref{form411} in \Cref{le41}.
Notice that
\begin{align}
\label{form545}
\sup_{a\in\mb{N}_*}|a|^{2s}\big||\Phi_{\chi_{\Gamma_1,\Gamma'_1}}^{t}(z)_{a}|^{2}-I_{a}\big|
&\leq\|\Phi_{\chi_{\Gamma_1,\Gamma'_1}}^{t}(z)-z\|_{s}(\|\Phi_{\chi_{\Gamma_1,\Gamma'_1}}^{t}(z)\|_{s}+\|z\|_{s})
\\\notag&\leq\frac{5}{2}\|\chi_{\Gamma_1,\Gamma'_1}(z)\|_{s}\|z\|_{s}.
\end{align}
For $\bs{z}\in B_{s}(\frac{5}{2}\varepsilon)$, one has $\|z\|_s\leq\frac{5\varepsilon}{2\sqrt{2}}$. 
By \eqref{form543}--\eqref{form545} and the fact 
\begin{equation}
\label{forme11-10-1}
\varepsilon\leq\frac{\gamma^{2}}{(2C_3N)^{29r+7}}<\Big(\frac{(\gamma/2^{r-1})^{4}\sqrt{2}}{160\times3^{13}(r+1)N^{4r+31}}\Big)^{\frac{1}{2}},
\end{equation}
the transformation
\begin{equation}
\label{form516-1-23}
\Phi_{\chi_{\Gamma_1,\Gamma'_1}}^{t}:\Big(\mc{U}^{N}_{\frac{\gamma}{2^{r-1}}}\times\mc{U}^{N}_{\frac{\gamma}{2^{r-1}}}\Big)\bigcap B_{s}(\frac{5}{2}\varepsilon)\to\Big(\mc{U}^{N}_{\frac{\gamma}{2^{r}}}\times\mc{U}^{N}_{\frac{\gamma}{2^{r}}}\Big)\bigcap B_{s}(3\varepsilon)
\end{equation}
 is well defined for $|t|\leq 1$ satisfying the estimate \eqref{form544}.    

Similarly with the proof \Cref{le43} and \Cref{re42}, consider $\Gamma_{l_1}, \Gamma'_{l_1}\in\mc{H}_{l_1,N}$ and $ \Gamma_{l_2},\Gamma'_{l_2}\in\mc{H}_{l_2,N}$ with their elements satisfying the relationships
\begin{equation}
\label{form550}
n=\#\bs{h} \quad\text{and}\quad \quad \bs{k}=\emptyset, 
\end{equation}
for any $(a,\bs{j},\bs{h},\bs{k},n)\in\Gamma_{l_1}\bigcup\Gamma'_{l_1}$ or $\Gamma_{l_2}\bigcup\Gamma'_{l_2}$. 
Then for $Q_{\Gamma_{l_1},\Gamma'_{l_1}}\in\ms{H}^{{\rm rev}}_{l_1,N}$ and $Q_{\Gamma_{l_2},\Gamma'_{l_2}}\in\ms{H}^{{\rm an-rev}}_{l_2,N}$, there exists $Q_{\Gamma_{l_1+l_2},\Gamma'_{l_1+l_2}}\in\ms{H}^{{\rm rev}}_{l_1+l_2,N}$ such that 
\begin{equation}
\label{form551}
[Q_{\Gamma_{l_1},\Gamma_{l_1}}, Q_{\Gamma_{l_2},\Gamma'_{l_2}}]=Q_{\Gamma_{l_1+l_2},\Gamma'_{l_1+l_2}},
\end{equation}
where for any $(a',\bs{j}',\bs{h}',\bs{k}',n')\in\Gamma_{l_1+l_2}\bigcup\Gamma'_{l_1+l_2}$, we still have the relationships \eqref{form550}, i.e.,
\begin{align}
\label{form552}
n'=\#\bs{h}'\leq&\max_{(a,\bs{j},\bs{h},\bs{k},n)\in\Gamma_{l_1}\bigcup\Gamma'_{l_1}}\{\#\bs{h}\}+\max_{(a,\bs{j},\bs{h},\bs{k},n)\in\Gamma_{l_2}\bigcup\Gamma'_{l_2}}\{\#\bs{h}\}+1,\quad \bs{k}'=\emptyset.
\end{align}
Moreover, the coefficient estimate \eqref{form436} still holds.
%

By the fact $[Z_1, \chi_{\Gamma_1,\Gamma'_1}]=0$, the homological equation \eqref{form539} and \eqref{form550}--\eqref{form552},  the vector field \eqref{form33} is transformed into
\begin{align}
\label{form547}
e^{ad_{\chi_{\Gamma_1,\Gamma'_1}}}X'
=&Z_1+Z_3^{\leq N}+Z_5^{\leq N}+\sum_{l=3}^{r}K_{\Gamma_{l},\Gamma'_{l}}+R^{(1)}_{\geq 2r+3}
\\\notag&+(D\Phi^1_{\chi_{\Gamma_1,\Gamma'_1}})^{-1}\big(Z_3^{>N}+\sum_{l=2}^{r}K^{>N}_{2l+1}+R'_{\geq2r+3}\big)\circ\Phi^1_{\chi_{\Gamma_1,\Gamma'_1}},
\end{align}
where for $l=3,\dots,r$, one has $\Gamma_l,\Gamma'_l\in\mc{H}_{l,N}$, 
\begin{equation}
\label{form548}
K_{\Gamma_{l},\Gamma'_{l}}=\frac{1}{(l-1)!}ad_{\chi_{\Gamma_1,\Gamma'_1}}^{l-1}Z_3^{\leq N}+\sum_{\substack{m+k=l\\k\geq0, 2\leq m\leq r}}\frac{1}{k!}ad_{\chi_{\Gamma_1,\Gamma'_1}}^kK_{2m+1}^{\leq N},
\end{equation}
and 
\begin{align}
\label{form549}
R^{(1)}_{\geq2r+3}(\bs{z})=&\int_0^1\frac{(1-t)^{r-1}}{(r-1)!}(D\Phi^t_{\chi_{\Gamma_1,\Gamma'_1}})^{-1}ad_{\chi_{\Gamma_1,\Gamma'_1}}^rZ_3^{\leq N}\circ\Phi^t_{\chi_{\Gamma_1,\Gamma'_1}}dt
\\\notag&+\sum_{m=2}^{r}\int_0^1\frac{(1-t)^{r-l}}{(r-m)!}(D\Phi^t_{\chi_{\Gamma_1,\Gamma'_1}})^{-1}ad_{\chi_{\Gamma_1,\Gamma'_1}}^{r-m+1}K_{2m+1}^{\leq N}\circ\Phi^t_{\chi_{\Gamma_1,\Gamma'_1}}dt.
\end{align}
Moreover, for any $(a,\bs{j},\bs{h},\bs{k},n)\in\Gamma_l\bigcup\Gamma'_l$, one has
\begin{equation}
\label{form553}
n=\#\bs{h}\leq 3(l-2),\quad \bs{k}=\emptyset, 
\end{equation}
and thus
\begin{equation}
\label{form553-1-23}
\#\bs{j}=l+\#\bs{h}+2\#\bs{k}\leq 4l-6<4l.
\end{equation}
In view of \eqref{form539} and \eqref{form548}, by the coefficient estimates \eqref{form37},  \eqref{form436}, \eqref{form540} and \eqref{form553-1-23}, one has
\begin{align}
\label{form554}
\|K_{\Gamma_{l},\Gamma'_{l}}\|_{\ell^{\infty}}\leq&\|K_{2l+1}^{\leq N}\|_{\ell^{\infty}}+\frac{l^{4}\cdots4^{4}\times3^{4}}{(l-1)!}\big(C_2\|\chi_{\Gamma_1,\Gamma'_1}\|_{\ell^{\infty}}\big)^{l-2}\|Z_{5}^{\leq N}-K_{5}^{\leq N}\|_{\ell^{\infty}}
\\\notag&+\sum_{m=2}^{l-1}\frac{l^{4}\cdots(m+2)^{4}(m+1)^{4}}{(l-m)!}\big(C_2\|\chi_{\Gamma_1,\Gamma'_1}\|_{\ell^{\infty}}\big)^{l-m}\|K_{2m+1}^{\leq N}\|_{\ell^{\infty}}
\\\notag\leq&C_1^{l^2}+l^{4(l-2)}\big(\frac{3}{32}C_2\big)^{l-2}C_{1}^{4}+\sum_{m=2}^{l-1}l^{4(l-m)}\big(\frac{3}{32}C_2\big)^{l-m}C_{1}^{m^2}
\\\notag\leq&C^{l^2}+C^{l(l-2)}\frac{C^{2l}}{l}+\sum_{m=2}^{l-1}C^{l(l-m)}\frac{C^{lm}}{l}
\\\notag<&2C^{l^2}
\end{align}
with taking $C=\max\{C_2, C_1\}$, where the penultimate inequality follows from 
$$\frac{3}{32}C_2l^4\leq C^l\quad \text{and}\quad C_1^{m^2}\leq \frac{C^{lm}}{l} \;\text{for}\; m=2,\cdots, l-1.$$

{\bf Step 2: Elimination of the septic term.}
By \eqref{form539}, \eqref{form548} and  \eqref{form553} for $l=3$, one has
\begin{align}
\label{form555}
K_{\Gamma_{3},\Gamma'_{3}}=&\frac{1}{2}[[Z_3^{\leq N},\chi_{\Gamma_1,\Gamma'_1}],\chi_{\Gamma_1,\Gamma'_1}]+[K_5^{\leq N},\chi_{\Gamma_1,\Gamma'_1}]+K_7^{\leq N}
\\\notag=&\frac{1}{2}[Z_{5}^{\leq N}+K_5^{\leq N},\chi_{\Gamma_1,\Gamma'_1}]+K_7^{\leq N},
\end{align}
where for any $(a,\bs{j},\bs{h},\bs{k},n)\in\Gamma_3\bigcup\Gamma'_3$, one has
\begin{equation}
\label{form526-1-23}
n=\#\bs{h}\leq 3,\quad \bs{k}=\emptyset, \quad\#\bs{j}=3+\#\bs{h}+2\#\bs{k}\leq 6.
\end{equation}
Notice that by \eqref{form542}, the control condition \eqref{form523} is satisfied by $K_{\Gamma_{3},\Gamma'_{3}}$.
Then by \Cref{le52}, there exists a rational vector field $\chi_{\Gamma_2,\Gamma'_2}\in\ms{H}^{{\rm an-rev}}_{2,N}$, an integrable rational vector field $Z_{\Gamma_3,\Gamma'_3}\in\ms{H}_{3,N}^{{\rm rev}}$  and a rational normal form $\tilde{Z}_{\tilde{\Gamma}_3}\in\ms{H}^{{\rm rev}}_{3,N}$  such that
\begin{equation}
\label{form556}
[Z_3^{\leq N},\chi_{\Gamma_{2},\Gamma'_{2}}]+K_{\Gamma_{3},\Gamma'_{3}}=Z_{\Gamma_3,\Gamma'_3}+\tilde{Z}_{\tilde{\Gamma}_3},
\end{equation}
where for any $(a,\bs{j},\bs{h},\bs{k},n)\in\Gamma_{2}\bigcup\Gamma'_{2}$, one has
\begin{equation}
\label{form557}
n=\#\bs{h}\leq3+1=4\quad \text{and}\quad \bs{k}=\emptyset;
\end{equation}
for any $(a,\bs{j},\bs{h},\bs{k},n)\in\tilde{\Gamma}_{3}$, one has 
\begin{equation}
\label{form558}
n=\#\bs{h}\leq3+1=4 \quad \text{and}\quad \bs{k}=\emptyset.
\end{equation}
Moreover, by \eqref{form528}--\eqref{formz2coe} and \eqref{form554}, one has the coefficient estimates
\begin{align}
 \label{form559}
\|\chi_{\Gamma_{2},\Gamma'_{2}}\|_{\ell^{\infty}}\leq\|K_{\Gamma_3,\Gamma'_3}\|_{\ell^{\infty}}\leq2C^{9},
\\\label{form559-1-23-1}
\|Z_{\Gamma_{3},\Gamma'_{3}}\|_{\ell^{\infty}}\leq\|K_{\Gamma_3,\Gamma'_3}\|_{\ell^{\infty}}\leq2C^{9},
\\\label{form559-1-23-2}
\|\tilde{Z}_{\tilde{\Gamma}_{3}}\|_{\ell^{\infty}}\leq\frac{1}{2}\|K_{\Gamma_3,\Gamma'_3}\|_{\ell^{\infty}}\leq C^{9}.
\end{align}
By the fact $2C^9<C_3^5$, the estimates \eqref{form559-1-23-1} and \eqref{form559-1-23-2} imply \eqref{formth51-1-25-1} and \eqref{formth51-1-25-2}, respectively.

In view of \eqref{re41-2} in \Cref{re41} and \eqref{form557}, 
 there exists a finite set $\mc{F}_2\subset\mb{N}^3$ such that
$$\Gamma_2=\bigcup_{(\alpha,\beta,\beta')\in\mc{F}_2}\Gamma^{(\alpha,\beta,\beta')}_2\quad\text{and}\quad \Gamma'_2=\bigcup_{(\alpha,\beta,\beta')\in\mc{F}_2}\Gamma'^{(\alpha,\beta,\beta')}_2$$
with $\beta\leq 4$, $\beta'=0$ and $\alpha=2+\beta+2\beta'\leq6$. 
Hence, by \Cref{le42}  and  \eqref{form559}, for any $\bs{z}\in\mc{U}^{N}_{\frac{\gamma}{2^{r-2}}}$, one has
\begin{align}
\label{form561}
\|\chi_{\Gamma_{2},\Gamma'_2}(\bs{z})\|_s\leq&6\sqrt{2}\times6^3\max_{(\alpha,\beta,\beta')\in\mc{F}_2}\frac{(12\alpha^2)^{\alpha-1}N^{(4\alpha+2)(\beta+\beta')}}{(\gamma/2^{r-2})^{\beta+2\beta'}}\|\chi_{\Gamma_2,\Gamma'_2}\|_{\ell^{\infty}}\|z\|_s^{5}
\\\notag\leq&6^4\sqrt{2}\frac{(12\times6^2)^5N^{(4\times6+2)\times4}}{(\gamma/2^{r-2})^4}2C^{9}\|z\|_s^{5}
\\\notag=&3^{19}\sqrt{2}C^9\frac{2^{4r+17}N^{104}}{\gamma^4}\|z\|_s^{5}.
\end{align}
Denote by the flow $\Phi_{\chi_{\Gamma_{2},\Gamma'_2}}^{t}$ generated by $\chi_{\Gamma_{2},\Gamma'_2}$. Then for any $|t|\leq 1$, as long as $\|\Phi_{\chi_{\Gamma_{2},\Gamma'_2}}^{t}(z)\|_{s}\leq\frac{3}{2}\|z\|_{s}$, one has
\begin{equation}
\label{form562}
\|\Phi_{\chi_{\Gamma_{2},\Gamma'_2}}^{t}(\bs{z})-\bs{z}\|_{s}\leq\|\chi_{\Gamma_{2},\Gamma'_2}(\bs{z})\|_{s}\leq3^{19}\sqrt{2}C^9\frac{2^{4r+17}N^{104}}{\gamma^4}\|z\|_s^{5}.
\end{equation}
Now, we make sure the condition \eqref{form411} in \Cref{le41}.
Notice that
\begin{align}
\label{form563}
\sup_{a\in\mb{N}_*}|a|^{2s}\big||\Phi_{\chi_{\Gamma_{2},\Gamma'_2}}^{t}(z)_{a}|^{2}-I_{a}\big|
&\leq\|\Phi_{\chi_{\Gamma_{2},\Gamma'_2}}^{t}(z)-z\|_{s}(\|\Phi_{\chi_{\Gamma_{2},\Gamma'_2}}^{t}(z)\|_{s}+\|z\|_{s})
\\\notag&\leq\frac{5}{2}\|\chi_{\Gamma_{2},\Gamma'_2}(z)\|_{s}\|z\|_{s}.
\end{align}
For $\bs{z}\in B_{s}(2\varepsilon)$, one has $\|z\|_s\leq\sqrt{2}\varepsilon$. 
By \eqref{form561}--\eqref{form563} and the fact
\begin{equation}
\label{form564}
\varepsilon\leq\frac{\gamma^{2}}{(2C_3N)^{29r+7}}<\Big(\frac{(\gamma/2^{r-2})^{6}\sqrt{2}}{5\times3^{21}\times2^{32}(r+1)C^9N^{4r+107}}\Big)^{\frac{1}{4}},
\end{equation}
the transformation
\begin{equation}
\label{form539-1-23}
\Phi_{\chi_{\Gamma_{2},\Gamma'_2}}^{t}:\Big(\mc{U}^{N}_{\frac{\gamma}{2^{r-2}}}\times\mc{U}^{N}_{\frac{\gamma}{2^{r-2}}}\Big)\bigcap B_{s}(2\varepsilon)\to\Big(\mc{U}^{N}_{\frac{\gamma}{2^{r-1}}}\times\mc{U}^{N}_{\frac{\gamma}{2^{r-1}}}\Big)\bigcap B_{s}(\frac{5}{2}\varepsilon)
\end{equation}
 is well defined for  $|t|\leq 1$ satisfying the estimate \eqref{form562}.

By \eqref{form516-1-23} and \eqref{form539-1-23}, the transformation $\phi^{(2)}:=\Phi_{\chi_{\Gamma_1, \Gamma'_1}}^1\circ\Phi_{\chi_{\Gamma_2, \Gamma'_2}}^1$ is well defined in $\Big(\mc{U}^{N}_{\frac{\gamma}{2^{r-2}}}\times\mc{U}^{N}_{\frac{\gamma}{2^{r-2}}}\Big)\bigcap B_{s}(2\varepsilon)$. 
By \eqref{form544}, \eqref{form562} and the fact $\varepsilon\leq\frac{\gamma^{2}}{(C_2N)^{29r+7}}$, one has
\begin{align*}
 \|\bs{z}-\phi^{(2)}(\bs{z})\|_{s}\leq&\|\bs{z}-\Phi_{\chi_{\Gamma_{2},\Gamma'_2}}^{1}(\bs{z})\|_{s}+ \|\Phi_{\chi_{\Gamma_{2},\Gamma'_2}}^{1}(\bs{z})-\Phi_{\chi_{\Gamma_1,\Gamma'_1}}^{1}(\Phi_{\chi_{\Gamma_{2},\Gamma'_2}}^{1}(\bs{z}))\|_{s}
 \\\leq&3^{19}\sqrt{2}C^9\frac{2^{4r+17}N^{104}}{\gamma^4}\|z\|_s^{5}+3^{11}\sqrt{2}\frac{4^{r-1}N^{28}}{\gamma^{2}}\|\Phi_{\chi_{\Gamma_{2},\Gamma'_2}}^{1}(z)\|_{s}^{3}
 \\\leq&\frac{3^{11}\times4^{r}N^{28}}{\gamma^{2}}\|z\|_{s}^{3},
\end{align*}
which is the estimate \eqref{form53}. 

By the fact $[Z_1, \chi_{\Gamma_{2},\Gamma'_2}]=0$, the homological equation \eqref{form556} and \Cref{le43}, the vector field \eqref{form547} is transformed into  
\begin{align}
\label{form567}
X'':=&e^{ad_{\chi_{\Gamma_2,\Gamma'_2}}}e^{ad_{\chi_{\Gamma_1,\Gamma'_1}}}X'
\\\notag=&Z_1+Z_3^{\leq N}+Z^{\leq N}_5+Z_{\Gamma_3,\Gamma'_3}+\tilde{Z}_{\tilde{\Gamma}_3}+\sum_{l=4}^rQ_{\Gamma_{l},\Gamma'_l}+R''_{\geq2r+3}
\\\notag&+(D\Phi^1_{\chi_{\Gamma_2,\Gamma'_2}})^{-1}(D\Phi^1_{\chi_{\Gamma_1,\Gamma'_1}})^{-1}\big(Z_3^{>N}+\sum_{l=2}^{r}K^{>N}_{2l+1}+R'_{\geq2r+3}\big)\circ\Phi^1_{\chi_{\Gamma_1,\Gamma'_1}}\circ\Phi^1_{\chi_{\Gamma_2, \Gamma'_2}},
\end{align}
where for $l=4,\dots,r$, one has $\Gamma_l,\Gamma'_l\in\mc{H}_{l,N}$, 
\begin{align}
\label{form568}
Q_{\Gamma_{l},\Gamma'_{l}}=&\sum_{\substack{m+2k=l\\m=1,2}}\frac{1}{k!}ad_{\chi_{\Gamma_2,\Gamma'_2}}^{k}Z_{2m+1}^{\leq N}
+\sum_{\substack{m+2k=l\\k\geq0, 3\leq m\leq r}}\frac{1}{k!}ad_{\chi_{\Gamma_2,\Gamma'_2}}^kK_{\Gamma_m,\Gamma'_m},
\end{align}
\begin{equation}
\label{form542-1-23}
R''_{\geq2r+3}=R^{(2)}_{\geq2r+3}+(D\Phi^1_{\chi_{\Gamma_2, \Gamma'_2}})^{-1}R^{(1)}_{\geq 2r+3}\circ\Phi^1_{\chi_{\Gamma_2, \Gamma'_2}}
\end{equation}
with
\begin{align}
\label{form569}
R^{(2)}_{\geq2r+3}=&\sum_{m=1}^{2}\int_0^1\frac{(1-t)^{[\frac{l-m}{2}]}}{[\frac{l-m}{2}]!}(D\Phi^t_{\chi_{\Gamma_2, \Gamma'_2}})^{-1}ad_{\chi_{\Gamma_2,\Gamma'_2}}^{[\frac{r-m}{2}]+1}Z_{2m+1}^{\leq N}\circ\Phi^t_{\chi_{\Gamma_2,\Gamma'_2}}dt
\\\notag&+\sum_{m=3}^{r}\int_0^1\frac{(1-t)^{[\frac{r-m}{2}]}}{[\frac{r-m}{2}]!}(D\Phi^t_{\chi_{\Gamma_2, \Gamma'_2}})^{-1}ad_{\chi_{\Gamma_2,\Gamma'_2}}^{[\frac{r-m}{2}]+1}K_{\Gamma_{m},\Gamma'_{m}}\circ\Phi^t_{\chi_{\Gamma_2, \Gamma'_2}}dt.
\end{align}
Moreover, by \eqref{form550}--\eqref{form552}, \eqref{form553} and \eqref{form556}--\eqref{form558}, for any $(a,\bs{j},\bs{h},\bs{k},n)\in\Gamma_l\bigcup\Gamma'_l$, one has
\begin{equation}
\label{form570}
n=\#\bs{h}\leq 5(\frac{l-1}{2}-1)+4\leq3l-5,\quad \bs{k}=\emptyset,
\end{equation}
and thus 
\begin{equation}
\label{form5701-1-23}
\#\bs{j}=l+\#\bs{h}+2\#\bs{k}\leq 4l-5<4l.
\end{equation}
Next, we will prove the coefficient estimate \eqref{form56}. 
In view of \eqref{form556} and \eqref{form568}, by \Cref{le43}, \eqref{form554}, \eqref{form559}--\eqref{form559-1-23-2} and \eqref{form5701-1-23}, one has
\begin{align}
\label{form570'}
\|Q_{\Gamma_{l},\Gamma'_{l}}\|_{\ell^{\infty}}\leq&\|K_{\Gamma_{l},\Gamma'_{l}}\|_{\ell^{\infty}}+\frac{l^{4}\cdots6^{4}\times4^{4}}{(\frac{l-2}{2})!}\big(C_2\|\chi_{\Gamma_2, \Gamma'_2}\|_{\ell^{\infty}}\big)^{\frac{l-2}{2}}\|Z_{5}^{\leq N}\|_{\ell^{\infty}}
\\\notag&+\frac{l^{4}\cdots7^{4}\times5^{4}}{(\frac{l-1}{2})!}\big(C_2\|\chi_{\Gamma_2, \Gamma'_2}\|_{\ell^{\infty}}\big)^{\frac{l-3}{2}}\|Z_{\Gamma_3,\Gamma'_3}+\tilde{Z}_{\tilde{\Gamma}_3}-K_{\Gamma_{3},\Gamma'_{3}}\|_{\ell^{\infty}}
\\\notag&+\sum_{m=3}^{l-1}\frac{l^{4}\cdots(m+4)^{4}(m+2)^{4}}{(\frac{l-m}{2})!}\big(C_2\|\chi_{\Gamma_2, \Gamma'_2}\|_{\ell^{\infty}}\big)^{\frac{l-m}{2}}\|K_{\Gamma_m,\Gamma'_m}\|_{\ell^{\infty}}
\\\notag\leq&2C^{l^2}+l^{2(l-3)}(2C_2C^{9})^{\frac{l-3}{2}}(3C^{9})+\sum_{m=2}^{l-1}l^{2(l-m)}(2C_2C^{9})^{\frac{l-m}{2}}(2C^{m^2})
\\\notag\leq&\frac{1}{3}C_3^{l^2}+C_3^{l(l-3)}\frac{C_3^{3l}}{3}+\sum_{m=2}^{l-1}C_3^{l(l-m)}\frac{C_{3}^{lm}}{3l}
\\\notag\leq&C_3^{l^2},
\end{align}
where the penultimate inequality follows from 
$$(2C_2C^9)^{\frac{1}{2}}l^2\leq C_3^l,\quad 3C^9\leq\frac{C_3^{3l}}{3}\quad\text{and}\quad 2C^{m^2}\leq \frac{C_3^{lm}}{3l} \;\text{for}\; m=2,\cdots, l-1.$$

{\bf Step 3:  Estimation the remainder term $R''_{\geq2r+3}$ in \eqref{form542-1-23}.}
Consider the remainder term of the form
\begin{equation}
\label{form571}
\tilde{R}:=\int_{0}^{1}g(t)(D\Phi^t_{\chi})^{-1}Q_{\Gamma_{l},\Gamma'_{l}}\circ\Phi_{\chi}^{t}dt
\end{equation}
with $\|g(t)\|_{L^{\infty}}\leq1$. Then one has
\begin{align}
\label{form572}
\|\tilde{R}(\bs{z})\|_{s}\leq&\|g(t)\|_{L^{\infty}}\|(D\Phi_{\chi}^{t})^{-1}Q_{\Gamma_{l},\Gamma'_{l}}\circ\Phi_{\chi}^{t}(\bs{z})\|_{s}
\\\notag\leq&\|(D\Phi^t_{\chi})^{-1}\|_{\ms{P}_{s}\mapsto\ms{P}_{s}}\|Q_{\Gamma_{l},\Gamma'_{l}}(\Phi_{\chi}^{t}(\bs{z}))\|_{s}.
\end{align}

Now, we estimate $(D\Phi^1_{\chi_{\Gamma_2, \Gamma'_2}})^{-1}R^{(1)}_{\geq 2r+3}\circ\Phi^1_{\chi_{\Gamma_2, \Gamma'_2}}$. Similarly with the proof of \eqref{form553}--\eqref{form554}, there exist $\Gamma_{r+1},\Gamma'_{r+1}\in\mc{H}_{r+1,N}$ 
such that
 $$\frac{1}{(r-1)!}ad_{\chi_{\Gamma_1,\Gamma'_1}}^{r}Z_3^{\leq N}+\sum_{m=2}^{r}\frac{1}{(r-m)!}ad_{\chi_{\Gamma'_1}}^{r-m+1}K_{2m+1}^{\leq N}=Q_{\Gamma_{r+1},\Gamma'_{r+1}}$$
with the coefficient estimate
\begin{equation}
\label{form573}
\|Q_{\Gamma_{r+1},\Gamma'_{r+1}}\|_{\ell^{\infty}}\leq 2C^{(r+1)^2},
\end{equation}
where for any $(a,\bs{j},\bs{h},\bs{k},n)\in\Gamma_{r+1}\bigcup\Gamma'_{r+1}$, one has
\begin{equation}
\label{form574}
n=\#\bs{h}\leq 3(r-1),\quad \bs{k}=\emptyset, \end{equation}
and thus
\begin{equation}
\label{form574-1-23}
\#\bs{j}=r+1+\#\bs{h}+2\#\bs{k}\leq 2(2r-1).
\end{equation}
%
%
%
In view of \eqref{re41-2} in \Cref{re41}, by \Cref{le42}, the estimates \eqref{form573}--\eqref{form574-1-23} and the fact $\|\Phi_{\chi_{\Gamma_1,\Gamma'_1}}^{t}(\bs{z}))\|_{s}<2\|z\|_{s}$, one has
\begin{align} 
\label{form575}
&\|Q_{\Gamma_{r+1},\Gamma'_{r+1}}(\Phi_{\chi_{\Gamma_1,\Gamma'_1}}^{t}(\bs{z}))\|_{s}
\\\notag\leq&6\sqrt{2}(4r-2)^3\frac{(12(4r-2)^2)^{4r-3}N^{6(8r-3)(r-1)}}{(\gamma/2^{r-1})^{3(r-1)}}\|Q_{\Gamma_{r+1},\Gamma'_{r+1}}\|_{\ell^{\infty}}\|\Phi_{\chi_{\Gamma_1,\Gamma'_1}}^{t}(z)\|_{s}^{2r+3}
\\\notag\leq&6\sqrt{2}\frac{8^{(r-1)^2}12^{4r-3}(4r-2)^{8r-3}N^{6(8r-3)(r-1)}}{\gamma^{3(r-1)}}2C^{(r+1)^2}(2\|z\|_{s})^{2r+3}
\\\notag=&\sqrt{2}\frac{2^{3r^2+20r-4}3^{4r-2}C^{(r+1)^2}(r-\frac{1}{2})^{8r-3}N^{6(8r-3)(r-1)}}{\gamma^{3(r-1)}}\|z\|_{s}^{2r+3}
\\\notag\leq&\frac{(2CN)^{48r(r-1)}}{\gamma^{3(r-1)}}\|z\|_{s}^{2r+3},
\end{align}
where the last inequality follows from $C^{(r+1)^2}< C^{48r(r-1)}$ and
$$2^{3r^2+20r-4}3^{4r-2}\sqrt{2}(r-\frac{1}{2})^{8r-3}<2^{3r^2+20r-4+(8r-4)+1+(8r-3)r}<2^{48r(r-1)}.$$
Hence, in view of \eqref{form549}, by \eqref{form572},  \eqref{form575} and the fact $\|(D\Phi^t_{\chi_{\Gamma_1,\Gamma'_1}})^{-1}\|_{\ms{P}_{s}\mapsto\ms{P}_{s}}<2$, one has
\begin{align}
\label{form576}
\|R^{(1)}_{\geq2r+3}(\bs{z})\|_{s}\leq&\|(D\Phi^t_{\chi_{\Gamma_1,\Gamma'_1}})^{-1}\|_{\ms{P}_{s}\mapsto\ms{P}_{s}}\|Q_{\Gamma_{r+1},\Gamma'_{r+1}}(\Phi_{\chi_{\Gamma_1,\Gamma'_1}}^{t}(\bs{z}))\|_{s}
\\\notag\leq&\frac{2(2CN)^{48r(r-1)}}{\gamma^{3(r-1)}}\|z\|_{s}^{2r+3}.
\end{align}
By \eqref{form576}, the facts $\|\Phi_{\chi_{\Gamma_2, \Gamma'_2}}^{1}(\bs{z})\|_{s}<2\|z\|_{s}$ and  $\|(D\Phi^t_{\chi_{\Gamma_2,\Gamma'_2}})^{-1}\|_{\ms{P}_{s}\mapsto\ms{P}_{s}}<2$, one has
\begin{align}
\label{form577}
&\|(D\Phi^1_{\chi_{\Gamma_2, \Gamma'_2}})^{-1}R^{(1)}_{\geq 2r+3}\circ\Phi^1_{\chi_{\Gamma_2, \Gamma'_2}}(\bs{z})\|_{s}
\\\notag\leq&\|(D\Phi^t_{\chi_{\Gamma_2,\Gamma'_2}})^{-1}\|_{\ms{P}_{s}\mapsto\ms{P}_{s}}\|R^{(1)}_{\geq2r+3}(\Phi^1_{\chi_{\Gamma_2, \Gamma'_2}}(\bs{z}))\|_{s}
\\\notag\leq&\frac{4(2CN)^{48r(r-1)}}{\gamma^{3(r-1)}}\|\Phi^1_{\chi_{\Gamma_2, \Gamma'_2}}(\bs{z})\|_{s}^{2r+3}
\\\notag\leq&\frac{2^{2r+5}(2CN)^{48r(r-1)}}{\gamma^{3(r-1)}}\|z\|_{s}^{2r+3}.
\end{align}

Next, we will estimate $R^{(2)}_{\geq2r+3}$ in \eqref{form569}.
Similarly with the proof of \eqref{form570} and \eqref{form570'}, there exist $\Gamma_{r+1},\Gamma'_{r+1}\in\mc{H}_{r+1,N}$ and $\Gamma_{r+2},\Gamma'_{r+2}\in\mc{H}_{r+2,N}$ such that
\begin{align}
&\sum_{m=1}^{2}\frac{1}{[\frac{l-m}{2}]!}(D\Phi^t_{\chi_{\Gamma_2, \Gamma'_2}})^{-1}ad_{\chi_{\Gamma_2,\Gamma'_2}}^{[\frac{r-m}{2}]+1}Z_{2m+1}^{\leq N}+\sum_{m=3}^{r}\frac{1}{[\frac{r-l}{2}]!}(d\Phi^t_{\chi_{\Gamma_2, \Gamma'_2}})^{-1}ad_{\chi_{\Gamma_2,\Gamma'_2}}^{[\frac{r-m}{2}]+1}K_{\Gamma_{m},\Gamma'_{m}}
\\\notag=&
Q_{\Gamma_{r+1},\Gamma'_{r+1}}+Q_{\Gamma_{r+2},\Gamma'_{r+2}},
\end{align}
with the coefficient estimates
\begin{equation}
\label{form579}
\|Q_{\Gamma_{r+1},\Gamma'_{r+1}}\|_{\ell^{\infty}}\leq C_3^{(r+1)^2}\quad\text{and}\quad\|Q_{\Gamma_{r+2},\Gamma'_{r+2}}\|_{\ell^{\infty}}\leq C_3^{(r+2)^2},
\end{equation}
where for any $(a,\bs{j},\bs{h},\bs{k},n)\in\Gamma_{r+1}\bigcup\Gamma'_{r+1}$, 
\begin{equation}
\label{form580'}
n=\#\bs{h}\leq 3r-2,\quad \bs{k}=\emptyset, \quad\#\bs{j}=r+1+\#\bs{h}+2\#\bs{k}\leq 4r-1;
\end{equation}
and for any $(a,\bs{j},\bs{h},\bs{k},n)\in\Gamma_{r+2}\bigcup\Gamma'_{r+2}$, one has
\begin{equation}
\label{form580}
n=\#\bs{h}\leq 3r+1,\quad \bs{k}=\emptyset, \quad\#\bs{j}=r+2+\#\bs{h}+2\#\bs{k}\leq 4r+3.
\end{equation}
In view of \eqref{form539-1-23}, by \eqref{re41-2} in \Cref{re41}, \Cref{le42}, the estimates \eqref{form579}--\eqref{form580} and the fact $\|\Phi_{\chi_{\Gamma_2, \Gamma'_2}}^{t}(\bs{z}))\|_{s}<2\|z\|_{s}$ for $|t|\leq1$, one has
\begin{align} 
\label{form581}
&\|Q_{\Gamma_{r+1},\Gamma'_{r+1}}(\Phi_{\chi_{\Gamma_2, \Gamma'_2}}^{t}(\bs{z}))\|_{s}
\\\notag\leq&6\sqrt{2}(4r-1)^3\frac{(12(4r-1)^2)^{4r-1}N^{2(8r-1)(3r-2)}}{(\gamma/2^{r-2})^{3r-2}}\|Q_{\Gamma_{r+1},\Gamma'_{r+1}}\|_{\ell^{\infty}}\|\Phi_{\chi_{\Gamma_2, \Gamma'_2}}^{t}(z)\|_{s}^{2r+3}
\\\notag\leq&\sqrt{2}\frac{2^{(r-2)(3r-2)+24r+1}3^{4r}r^{8r+1}N^{2(8r-1)(3r-2)}}{\gamma^{3r-2}}C_3^{(r+1)^2}(2\|z\|_{s})^{2r+3}
\\\notag\leq&\frac{(2C_3N)^{48r^2-38r+4}}{8\gamma^{3r-2}}\|z\|_{s}^{2r+3},
\end{align}
where the last inequality follows form $C_3^{(r+1)^2}\leq C_3^{48r^2-38r+4}$ and 
$$2^{(r-2)(3r-2)+24r+1}3^{4r}\sqrt{2}r^{8r+1}<2^{(r-2)(3r-2)+24r+1+8r+1+r(8r+1)}<2^{48r^2-38r+4}.$$
Similarly, one has
\begin{align} 
\label{form582}
&\|Q_{\Gamma_{r+2},\Gamma'_{r+2}}(\Phi_{\chi_{\Gamma_2, \Gamma'_2}}^{t}(\bs{z}))\|_{s}
\\\notag\leq&6\sqrt{2}(4r+3)^3\frac{(12(4r+3)^2)^{4r+3}N^{2(8r+7)(3r+1)}}{(\gamma/2^{r-2})^{3r+1}}\|Q_{\Gamma_{r+2},\Gamma'_{r+2}}\|_{\ell^{\infty}}\|\Phi_{\chi_{\Gamma_2, \Gamma'_2}}^{t}(z)\|_{s}^{2r+5}
\\\notag\leq&\sqrt{2}\frac{2^{(r-2)(3r+1)+24(r+1)+1}3^{4(r+1)}(r+1)^{8r+9}N^{2(8r+7)(3r+1)}}{\gamma^{3r+1}}C_3^{(r+2)^2}(2\|z\|_{s})^{2r+5}
\\\notag\leq&\frac{(2C_3N)^{48r^2+58r+14}}{8\gamma^{3r+1}}\|z\|_{s}^{2r+5},
\end{align}
where the last inequality follows form $C_3^{(r+2)^2}\leq C_3^{48r^2+58r+14}$ and 
\begin{align*}
2^{(r-2)(3r+1)+24(r+1)+1}3^{4(r+1)}\sqrt{2}(r+1)^{8r+9}<&2^{(r-2)(3r+1)+24(r+1)+1+8(r+1)+1+r(8r+9)}
\\<&2^{48r^2+58r+14}.
\end{align*}
Hence, by \eqref{form572}, \eqref{form581}, \eqref{form582} and the fact 
$\varepsilon\leq\gamma^{2}(2C_3N)^{-29r-7}$, one has
\begin{align}
\label{form583}
\|R^{(2)}_{\geq2r+3}(\bs{z})\|_{s}\leq&2\big(\|Q_{\Gamma_{r+1},\Gamma'_{r+1}}(\Phi_{\chi_{\Gamma_2, \Gamma'_2}}^{t}(\bs{z}))\|_{s}+\|Q_{\Gamma_{r+2},\Gamma'_{r+2}}(\Phi_{\chi_{\Gamma_2, \Gamma'_2}}^{t}(\bs{z}))\|_{s}\big)
\\\notag\leq&\frac{(2C_3N)^{48r^2-38r+4}}{4\gamma^{3r-2}}\|z\|_{s}^{2r+3}+\frac{(2C_3N)^{48r^2+58r+14}}{4\gamma^{3r+1}}\|z\|_{s}^{2r+5}
\\\notag\leq&\frac{(2C_3N)^{48r^2}}{2\gamma^{3r-2}}\|z\|_{s}^{2r+3}.
\end{align}
In view of \eqref{form542-1-23}, by \eqref{form577} and \eqref{form583}, one has
\begin{align*}
\|R''_{\geq2r+3}(\bs{z})\|_{s}\leq&\|(D\Phi^1_{\chi_{\Gamma_2, \Gamma'_2}})^{-1}R^{(1)}_{\geq 2r+3}\circ\Phi^1_{\chi_{\Gamma_2, \Gamma'_2}}(\bs{z})\|_{s}+\|R^{(2)}_{\geq2r+3}(\bs{z})\|_{s}
\\\leq&\frac{2^{2r+5}(2CN)^{48r(r-1)}}{\gamma^{3(r-1)}}\|z\|_{s}^{2r+3}+\frac{(2C_3N)^{48r^2}}{2\gamma^{3r-2}}\|z\|_{s}^{2r+3}
\\\leq&\frac{(2C_3N)^{48r^2}}{\gamma^{3r-2}}\|z\|_{s}^{2r+3},
\end{align*}
where the last inequality follows from the fact $C_3=C^2>8C$.
\end{proof}

\subsection{Elimination of higher order terms}
\label{sec52}
This subsection aims at eliminating the non-normal form parts of the rational 
vector field $Q_{\Gamma_{l},\Gamma_{l}'}$ in \Cref{th51} by using the integrable vector field $Z_{3}^{\leq N}+Z_{5}^{\leq N}$.

\begin{theorem}
\label{th52}
Fix an integer $r\geq4$. For any $s\geq0$ and $0<\varepsilon\ll 1$, $N\geq r$, $\gamma\in(0,1)$ satisfying $\varepsilon\leq \gamma^{\frac{13}{2}}(C_3N)^{-1128(r-1)}$,
there exists a nearly identity coordinate transformation $$\phi^{(3)}:\Big(\mc{U}^{N}_{\frac{\gamma}{2}}\times\mc{U}^{N}_{\frac{\gamma}{2}}\Big)\bigcap B_{s}(\frac{3}{2}\varepsilon)\to\Big(\mc{U}^{N}_{\frac{\gamma}{2^{r-2}}}\times\mc{U}^{N}_{\frac{\gamma}{2^{r-2}}}\Big)\bigcap B_{s}(2\varepsilon)$$ such that the vector field \eqref{form51} is transformed into 
\begin{align}
\label{form584}
X'''= &Z_1+Z_3^{\leq N}+Z_{5}^{\leq N}+\sum_{l=3}^{r}Z_{\Gamma_{l},\Gamma'_{l}}+\sum_{l=3}^{r}\tilde{Z}_{\tilde{\Gamma}_{l}}+\sum_{l=3}^{r-1}\tilde{Z}_{\tilde{\tilde{\Gamma}}_{l}}+R'''_{\geq2r+3}
\\\notag&+(D\phi^{(3)})^{-1}(D\phi^{(2)})^{-1}\big(Z_3^{>N}+\sum_{l=2}^{r}K^{>N}_{2l+1}+R'_{\geq2r+3}\big)\circ\phi^{(2)}\circ\phi^{(3)},
\end{align}
 where 
\begin{enumerate}[(i)]
\item  the transformation $\phi^{(3)}$ satisfies the estimate
    \begin{equation}
    \label{form585}
    \|\bs{z}-\phi^{(3)}(\bs{z})\|_{s}\leq\frac{2^{9(r-2)}(C_3N)^{2256}}{\gamma^{9}}\|z\|_s^{5},
    \end{equation}
and the same estimate is fulfilled by the inverse transformation;
  \item the integrable rational vector field $Z_{\Gamma_{l}, \Gamma'_l}\in\ms{H}_{l,N}^{{\rm rev}}$ is defined by \Cref{defintrat} satisfying the coefficient estimate
\begin{equation}
\label{form566-1-26}
 \|Z_{\Gamma_{l},\Gamma'_l}\|_{\ell^{\infty}}\leq(C_3r)^{4(l-1)(l-2)};
 \end{equation}
\item the rational normal form terms $\tilde{Z}_{\tilde{\Gamma}_{l}},\tilde{Z}_{\tilde{\tilde{\Gamma}}_{l}}\in\ms{H}_{l,N}^{{\rm rev}}$ are defined by \Cref{def43} satisfying the coefficient estimates
\begin{align}
\label{form586}
\|Z_{\tilde{\Gamma}_{l}}\|_{\ell^{\infty}}\leq(C_3r)^{4(l-1)(l-2)}\quad\text{and}\quad
\|\tilde{Z}_{\tilde{\tilde{\Gamma}}_{l}}\|_{\ell^{\infty}}\leq(C_3r)^{4(l-1)l};
\end{align}
\item the remainder term $R'''_{\geq2r+3}$ satisfies the estimate
  \begin{equation}
  \label{form587}
  \|R'''_{\geq2r+3}(\bs{z})\|_{s}\leq\frac{4^{(r-3)(r+3)}(C_3N)^{564(r-1)^2}}{\gamma^{13r-17}}\|z\|_{s}^{2r+3}.
  \end{equation}
\end{enumerate}
\end{theorem}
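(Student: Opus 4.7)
The plan is to mirror the structure of Lemma 3.2, establishing an iterative lemma (this is Lemma 5.3 referenced by \eqref{form596} in the outline) that at step $i \in \{1,\dots,r-3\}$ eliminates the non-normal form part of $Q_{\Gamma_{i+3},\Gamma'_{i+3}}$ by invoking \Cref{le44} with the integrable polynomial vector field $Z_{3}^{\leq N}+Z_{5}^{\leq N}$. Starting from the output $X''$ of \Cref{th51}, the induction hypothesis is that after step $i-1$ the vector field has the form
$Z_1+Z_3^{\leq N}+Z_5^{\leq N}+\sum_{l=3}^{i+2}(Z_{\Gamma_l,\Gamma'_l}+\tilde Z_{\tilde\Gamma_l})+\sum_{l=3}^{i+1}\tilde Z_{\tilde{\tilde\Gamma}_l}+\sum_{l=i+3}^{r}Q_{\Gamma_l,\Gamma'_l}+R^{(i)}_{\geq 2r+3}+\text{(tail)}$,
with coefficient bounds of the announced size. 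At step $i$, \Cref{le44} produces $\chi_{\Gamma_{i+1},\Gamma'_{i+1}}\in\ms{H}^{{\rm an-rev}}_{i+1,N}$ solving
\begin{equation*}
[Z_3^{\leq N}+Z_5^{\leq N},\chi_{\Gamma_{i+1},\Gamma'_{i+1}}]+Q_{\Gamma_{i+3},\Gamma'_{i+3}}=Z_{\Gamma_{i+3},\Gamma'_{i+3}}+\tilde Z_{\tilde\Gamma_{i+3}}+\tilde Z_{\tilde{\tilde\Gamma}_{i+2}};
\end{equation*}
the time-$1$ flow of $\chi_{\Gamma_{i+1},\Gamma'_{i+1}}$ is then composed, and expansion via \eqref{form26-9-26} together with \Cref{le43} redistributes the higher-order $Q_{\Gamma_l,\Gamma'_l}$ for $l>i+3$ and feeds the Taylor tail into $R^{(i+1)}_{\geq 2r+3}$.

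The \emph{main obstacle} is the twin bookkeeping of (a) the global control condition \eqref{form416}, and (b) the number of small divisors $\#\bs{h}$, $\#\bs{k}$ appearing in the denominators. Point (a) is preserved by \Cref{le44} and \Cref{le43}, but (b) is delicate: at each homological step \Cref{le44} adds one denominator factor of type $\Omega^{(4)}$ to $\bs{k}$, and the commutator in \Cref{le43} can add a further factor (\Cref{re42}). Worse still, the term $\tilde Z_{\tilde{\tilde\Gamma}_{i+2}}$ reinjects a lower-order rational normal form into the tower, so a naive count of small divisors grows geometrically with the iteration. The novel device (announced as point (iii) of the introduction and implemented in \eqref{form596}) is to classify each rational monomial by the iteration layer at which it was created, and to bound $\#\bs{h}+\#\bs{k}$ segment-wise by a quantity that is linear in both the order $l$ and the step $i$; this keeps $\mc{F}_l$ in \Cref{le42} finite with polynomial cardinality and the factor $(N^{4\alpha+2}/\gamma)^{\beta+\beta'}$ under control.

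Given this segmented bound on small-divisor counts, the quantitative work is routine. \Cref{le42} applied with weight $\gamma/2^{r-i-2}$ produces the pointwise bound
\begin{equation*}
\|\chi_{\Gamma_{i+1},\Gamma'_{i+1}}(\bs{z})\|_{s}\lesssim \frac{(C_3 r)^{c(i)}N^{c'(i)}}{\gamma^{c''(i)}}\|z\|_{s}^{2i+3},
\end{equation*}
which, under the smallness hypothesis $\varepsilon\leq\gamma^{13/2}(C_3N)^{-1128(r-1)}$, is well below the gap $\varepsilon_{i-1}-\varepsilon_{i}$ between the iterative balls, so that each time-$1$ flow $\Phi^{1}_{\chi_{\Gamma_{i+1},\Gamma'_{i+1}}}$ is well defined on a ball slightly smaller than the previous one and maps $\mc{U}^N_{\gamma/2^{r-i-2}}$ into $\mc{U}^N_{\gamma/2^{r-i-1}}$ by \Cref{le41} applied with the displacement estimate $|I'_a-I_a|\leq \frac{5}{2}\|\chi\|_s\|z\|_s$. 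The coefficient bounds \eqref{form566-1-26}--\eqref{form586} then propagate inductively through \Cref{le43} (with its $l^{4}$ factor) and the estimates \eqref{form454}--\eqref{formcoe2} of \Cref{le44}, yielding the displayed powers $(C_3r)^{4(l-1)(l-2)}$ and $(C_3r)^{4(l-1)l}$.

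Finally, $\phi^{(3)}:=\Phi^{1}_{\chi_{\Gamma_{2},\Gamma'_{2}}}\circ\cdots\circ\Phi^{1}_{\chi_{\Gamma_{r-2},\Gamma'_{r-2}}}$; summing the individual flow displacements and using that the dominant term is the last step gives \eqref{form585}. For the remainder \eqref{form587}, one collects (i) the Taylor tails from each conjugation, which are rational vector fields of order $\geq 2r+3$ built from at most $r-3$ commutators with $\chi_{\Gamma_{i+1},\Gamma'_{i+1}}$, estimated via \Cref{le42} exactly as in \eqref{form581}--\eqref{form582}, and (ii) the pullback of $R''_{\geq 2r+3}$ from \Cref{th51} through the composition, bounded by a factor of $2^{r-3}$ times \eqref{form58}. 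Both contributions fit under the stated bound once the segmented small-divisor count is inserted into the exponents of $N$ and $\gamma^{-1}$.
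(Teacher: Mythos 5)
Your proposal follows essentially the same route as the paper: an iterative lemma (the paper's \Cref{le53}) which at each step solves the homological equation of \Cref{le44} with $Z_3^{\leq N}+Z_5^{\leq N}$, redistributes higher-order terms through \Cref{le43}, bounds the generators and flows via \Cref{le42} and preserves the non-resonant sets via \Cref{le41}, with the layered small-divisor count of \eqref{form596} playing exactly the role you describe, and finally composes the time-$1$ flows into $\phi^{(3)}$ and collects Taylor tails plus the pullback of $R''_{\geq 2r+3}$ for \eqref{form587}. The only slip is cosmetic: in the displacement sum for \eqref{form585} the dominant contribution comes from the lowest-order generator $\chi_{\Gamma_{2},\Gamma'_{2}}$ (the first iterative step, which produces the $\|z\|_s^{5}$ term with exponents $2256$ and $\gamma^{-9}$), not from the last step.
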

We prove this theorem via an iterative process comprising $(r-3)$ steps. In order to state the iterative lemma, we need some notations. For $i=4,\cdots, r$, denote
\begin{equation}
\label{form588}
\varepsilon_i=\varepsilon\big(2-\frac{i-3}{2(r-3)}\big).
\end{equation}
We proceed by induction from $i-1$ to $i$. 
Initially, write
\begin{equation}
\label{form589}
X^{(3)}=Z_1+Z_3^{\leq N}+Z^{\leq N}_5+Z_{\Gamma_3,\Gamma'_3}+\tilde{Z}_{\tilde{\Gamma}_3}+\sum_{l=4}^rQ_{\Gamma_{l},\Gamma'_l}+R''_{\geq2r+3},
\end{equation}
with $Q_{\Gamma_{l},\Gamma'_l}$ and $R''_{\geq2r+3}$ in \Cref{th51}.
\begin{lemma}
\label{le53}
Assume that the vector field
\begin{align}
\label{form590-1-24}
X^{(i-1)}=&Z_1+Z_3^{\leq N}+Z_{5}^{\leq N}+\sum_{l=3}^{i-1}Z_{\Gamma_{l}^{(l-1)},\Gamma'^{(l-1)}_{l}}+\sum_{l=3}^{i-1}\tilde{Z}_{\tilde{\Gamma}_{l}}+\sum_{l=3}^{i-2}\tilde{Z}_{\tilde{\tilde{\Gamma}}_{l}}
\\\notag&+\sum_{l=i}^{r}Q_{\Gamma^{(i-1)}_{l},\Gamma'^{(i-1)}_{l}}+R^{(i-1)}_{\geq2r+3}
\end{align}
is defined in $\big(\mc{U}^{N}_{\frac{\gamma}{2^{r+2-i}}}\times\mc{U}^{N}_{\frac{\gamma}{2^{r+2-i}}}\big)\bigcap B_{s}(\varepsilon_{i-1})$, 
where
\begin{enumerate}[(i)]
 \item the integrable rational vector field $Z_{\Gamma_{l}^{(l-1)},\Gamma'^{(l-1)}_l}\in\ms{H}_{l,N}^{{\rm rev}}$ is defined by \Cref{defintrat} satisfying the coefficient estimate
\begin{equation}
\label{form593-1}
 \|Z_{\Gamma_{l}^{(l-1)},\Gamma'^{(l-1)}_l}\|_{\ell^{\infty}}\leq C_3(C_3r)^{4(l-2)(l-1)-4}.
 \end{equation}
Moreover, for any $(a,\bs{j},\bs{h},\bs{k},n)\in\Gamma_{l}^{(l-1)}\bigcup\Gamma'^{(l-1)}_l$, when  $l=3$, one has $\#\bs{h}\leq3$, $\#\bs{k}=0$, and when $l\geq4$, one has
  \begin{equation}
  \label{form594-1}
  \#\bs{k}\leq3(l-4), \quad\#\bs{h}+\#\bs{k}\leq10l-33;
  \end{equation}
 \item the rational normal form $\tilde{Z}_{\tilde{\Gamma}_{l}}\in\ms{H}_{l,N}^{{\rm rev}}$ is defined by \Cref{def43} satisfying the coefficient estimate
\begin{equation}
\label{form593-1-24}
 \|\tilde{Z}_{\tilde{\Gamma}_{l}}\|_{\ell^{\infty}}\leq2C_3(C_3r)^{4(l-2)(l-1)-4}.
 \end{equation}
Moreover, for any $(a,\bs{j},\bs{h},\bs{k},n)\in\tilde{\Gamma}_{l}$, when  $l=3$, one has $\#\bs{h}\leq4$, $\#\bs{k}=0$, and when  $l\geq4$, one has
  \begin{equation}
  \label{form594-1-24}
 \#\bs{k}\leq3(l-4)+1, \quad \#\bs{h}+\#\bs{k}\leq10l-32;
  \end{equation}
 \item the rational normal form $\tilde{Z}_{\tilde{\tilde{\Gamma}}_{l}}\in\ms{H}_{l,N}^{{\rm rev}}$ is defined by \Cref{def43} satisfying the coefficient estimate
\begin{equation}
\label{form593}
 \|\tilde{Z}_{\tilde{\tilde{\Gamma}}_{l}}\|_{\ell^{\infty}}\leq\frac{C_3}{2}(C_3r)^{4(l-1)l-4}.
 \end{equation}
Moreover, for any $(a,\bs{j},\bs{h},\bs{k},n)\in\tilde{\tilde{\Gamma}}_{l}$, one has
  \begin{equation}
  \label{form594}
\#\bs{k}\leq3(l-3)+1,\quad \#\bs{h}\leq\#\bs{h}+\#\bs{k}\leq10l-22;
  \end{equation}
\item the resonant rational vector field $Q_{\Gamma^{(i-1)}_{l},\Gamma'^{(i-1)}_{l}}\in\ms{H}_{l,N}^{{\rm rev}}$ is defined by \Cref{def42} satisfying the coefficient estimate
 \begin{equation}
 \label{form595}
 \|Q_{\Gamma^{(i-1)}_{l},\Gamma'^{(i-1)}_{l}}\|_{\ell^{\infty}}\leq C_3(C_3r)^{2(l-2)(l+i-2)-4}.
 \end{equation}
Moreover, for any $(a,\bs{j},\bs{h},\bs{k},n)\in\Gamma^{(i-1)}_{l}\bigcup\Gamma'^{(i-1)}_{l}$, when $i=4$, one has $\#\bs{h}\leq3l-5$, $\bs{k}=\emptyset$, and when $i\geq5$, rewriting uniquely as $l=(i-3)n_1+n_2$ with $n_1\in\mb{N}_*$, $n_2\in\{2,1,0,-1,\cdots,-(i-6)\}$, one has
  \begin{equation}
  \label{form596}
 \#\bs{k}\leq3l-4(n_1+1),\quad\#\bs{h}+\#\bs{k}\leq10l-11(n_1+1);
  \end{equation} 
%
 \item the remainder term $R^{(i-1)}_{\geq2r+3}$ satisfies the estimate
   \begin{equation}
   \label{form597}
   \|R^{(i-1)}_{\geq2r+3}(\bs{z})\|_{s}\leq\frac{4^{(i-4)(r+3)}(C_3N)^{564(r-1)^2}}{\gamma^{13r-17}}\|z\|_{s}^{2r+3}.
    \end{equation}
\end{enumerate}
Then there exists a rational vector field $\chi_{\Gamma_{i-2},\Gamma'_{i-2}}\in\ms{H}_{i-2,N}^{{\rm an-rev}}$
such that  the transformed field under the time 1 flow generated by $\chi_{\Gamma_{i-2},\Gamma'_{i-2}}$ is
\begin{align}
\label{form590}
X^{(i)}:=&e^{ad_{\chi_{2i+1}}}X^{(i-1)}
\\\notag=&Z_1+Z_3^{\leq N}+Z_{5}^{\leq N}+\sum_{l=3}^{i}Z_{\Gamma_{l}^{(l-1)},\Gamma'^{(l-1)}_{l}}+\sum_{l=3}^{i}\tilde{Z}_{\tilde{\Gamma}_{l}}+\sum_{l=3}^{i-1}\tilde{Z}_{\tilde{\tilde{\Gamma}}_{l}}
\\\notag&+\sum_{l=i+1}^{r}Q_{\Gamma^{(i)}_{l},\Gamma'^{(i)}_{l}}+R^{(i)}_{\geq2r+3},
\end{align}
where the transformation 
$$\Phi_{\chi_{\Gamma_{i-2},\Gamma'_{i-2}}}^{1}:\big(\mc{U}^{N}_{\frac{\gamma}{2^{r+1-i}}}\times\mc{U}^{N}_{\frac{\gamma}{2^{r+1-i}}}\big)\bigcap B_{s}(\varepsilon_i)\to\big(\mc{U}^{N}_{\frac{\gamma}{2^{r+2-i}}}\times\mc{U}^{N}_{\frac{\gamma}{2^{r+2-i}}}\big)\bigcap B_{s}(\varepsilon_{i-1})$$
 satisfies the estimate
    \begin{equation}
    \label{form591}
    \|\bs{z}-\Phi^{1}_{\chi_{\Gamma_{i-2},\Gamma'_{i-2}}}(\bs{z})\|_{s}\leq\frac{2^{(13i-43)(r+1-i)}(C_3N)^{564(i-2)^2}}{\gamma^{13i-43}}\|z\|_s^{2i-3},
    \end{equation}
and the same estimate is fulfilled by the inverse transformation.
Moreover, the integrable rational vector field $Z_{\Gamma_{l}^{(l-1)},\Gamma'^{(l-1)}_l}\in\ms{H}_{l,N}^{{\rm rev}}$, the rational normal form terms  $\tilde{Z}_{\tilde{\Gamma}_{l}}, \tilde{Z}_{\tilde{\tilde{\Gamma}}_{l}}\in\ms{H}_{l,N}^{{\rm rev}}$, the resonant rational vector field $Q_{\Gamma^{(i)}_{l},\Gamma'^{(i)}_{l}}\in\ms{H}_{l,N}^{{\rm rev}}$ and the remainder term $R^{(i)}_{\geq2r+3}$ satisfy the same estimates \eqref{form593-1}--\eqref{form597} with $i$ in place of $i-1$.
\end{lemma}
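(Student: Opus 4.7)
The plan is to apply Lemma \ref{le44} with the rational vector field $Q_{\Gamma_{i}^{(i-1)},\Gamma'^{(i-1)}_{i}}$ of order $2i+1$ as the input, so as to produce a generator $\chi_{\Gamma_{i-2},\Gamma'_{i-2}}\in\ms{H}_{i-2,N}^{{\rm an-rev}}$ satisfying the homological equation
\begin{equation*}
[Z_3^{\leq N}+Z_5^{\leq N},\chi_{\Gamma_{i-2},\Gamma'_{i-2}}]+Q_{\Gamma^{(i-1)}_{i},\Gamma'^{(i-1)}_{i}}=Z_{\Gamma_{i}^{(i-1)},\Gamma'^{(i-1)}_{i}}+\tilde{Z}_{\tilde{\Gamma}_{i}}+\tilde{Z}_{\tilde{\tilde{\Gamma}}_{i-1}},
\end{equation*}
with coefficient estimates \eqref{form454}--\eqref{formcoe2} giving the desired bounds in \eqref{form593-1}, \eqref{form593-1-24} and \eqref{form593}. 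Then first I would use Lemma \ref{le42} together with the hypothesis $\varepsilon\leq\gamma^{13/2}(C_3N)^{-1128(r-1)}$ to show that $\|\chi_{\Gamma_{i-2},\Gamma'_{i-2}}(\bs{z})\|_s$ is sufficiently small on $B_s(\varepsilon_{i-1})$; combining this with Lemma \ref{le41}, the time-$1$ flow $\Phi^1_{\chi_{\Gamma_{i-2},\Gamma'_{i-2}}}$ is well defined as a map between the prescribed non-resonant sets, shrinking the measure-parameter from $\gamma/2^{r+2-i}$ to $\gamma/2^{r+1-i}$ and satisfying \eqref{form591}.

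Next I would expand $X^{(i)}=e^{ad_{\chi_{\Gamma_{i-2},\Gamma'_{i-2}}}}X^{(i-1)}$ using \eqref{form26-9-26}. The homological equation cancels $Q_{\Gamma^{(i-1)}_{i},\Gamma'^{(i-1)}_{i}}$ and produces the three new rational-normal-form contributions $Z_{\Gamma_{i}^{(i-1)},\Gamma'^{(i-1)}_{i}}$, $\tilde{Z}_{\tilde{\Gamma}_{i}}$, $\tilde{Z}_{\tilde{\tilde{\Gamma}}_{i-1}}$. The remaining iterated commutators are grouped by the total order $l=i+1,\ldots,r$ into the new $Q_{\Gamma^{(i)}_{l},\Gamma'^{(i)}_{l}}$, and everything of order $>2r+1$ is absorbed into the remainder $R^{(i)}_{\geq 2r+3}$. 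The coefficient estimate \eqref{form595} for $Q_{\Gamma^{(i)}_{l},\Gamma'^{(i)}_{l}}$ is obtained from Lemma \ref{le43} term-by-term: each application of $ad_{\chi_{\Gamma_{i-2},\Gamma'_{i-2}}}$ costs a factor $\lesssim C_2\mc{J}^4\|\chi_{\Gamma_{i-2},\Gamma'_{i-2}}\|_{\ell^{\infty}}$, and after summing the combinatorial factors one obtains the claimed power $(C_3r)^{2(l-2)(l+i-2)-4}$. For the remainder estimate \eqref{form597}, I would argue exactly as in the proof of Theorem \ref{th51}: truncate the Taylor series of the exponential at the appropriate level, rewrite the tail as $\int_0^1 g(t)(D\Phi^t_\chi)^{-1}Q_{\Gamma,\Gamma'}\circ\Phi^t_\chi\,dt$ with $Q_{\Gamma,\Gamma'}\in\ms{H}^{{\rm rev}}_{l,N}$ of order $\geq 2r+3$, and apply Lemma \ref{le42}; here the smallness assumption on $\varepsilon$ exactly compensates the growth of small-divisor products.

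The main obstacle will be the combinatorial bookkeeping of small divisors, i.e. verifying the sharp counts \eqref{form594-1}, \eqref{form594-1-24}, \eqref{form594} and particularly \eqref{form596}. This is where the novel decomposition $l=(i-3)n_1+n_2$ enters: each time we perform the commutator $ad_{\chi_{\Gamma_{i-2},\Gamma'_{i-2}}}^k$ on a term of the previous stage, Remark \ref{re42} tells us $\#\bs{h}+\#\bs{k}$ and $\#\bs{k}$ increase in a controlled way (by at most $3+$ a permitted shift coming from Remark \ref{re43}); and then tracking how many generators of order $i-2$ can enter a term of total order $l$ gives precisely the quotient and remainder structure in $n_1,n_2$. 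I would verify the recursion by comparing $Q_{\Gamma^{(i)}_l,\Gamma'^{(i)}_l}$ built from $ad_{\chi}^{n_1}$ acting on $Q_{\Gamma^{(i-1)}_{l-(i-2)n_1},\Gamma'^{(i-1)}_{l-(i-2)n_1}}$ (and on the normal-form pieces already present) to the predicted bound, using that the generator itself satisfies $\#\bs{k}'=\#\bs{k}+1$ relative to its input (Remark \ref{re43}) while the commutator operation adds only $1$ more to $\#\bs{h}+\#\bs{k}$ by Remark \ref{re42}. The control condition \eqref{form416} survives throughout because it is preserved under both Lemma \ref{le44} (via \eqref{form533}-type relations) and Lemma \ref{le43}. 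Once these structural counts are in hand, all coefficient and norm estimates follow by straightforward (if tedious) multiplication of the bounds from Lemmas \ref{le42}--\ref{le44}, completing the induction.
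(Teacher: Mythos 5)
Your proposal is correct and follows essentially the same route as the paper's proof: solve the homological equation for $Q_{\Gamma^{(i-1)}_i,\Gamma'^{(i-1)}_i}$ via \Cref{le44}, bound the generator and its time-$1$ flow with \Cref{le42} and \Cref{le41}, expand $e^{ad_{\chi}}X^{(i-1)}$ and estimate the new terms with \Cref{le43}, treat the remainder through the integral form of the Taylor tail, and verify the small-divisor counts \eqref{form596} by tracking the increments from \Cref{re42} and \Cref{re43} through the quotient-remainder decomposition of $l$ by the order of the generator. The only cosmetic point is that for the output terms $Q_{\Gamma^{(i)}_l,\Gamma'^{(i)}_l}$ the relevant decomposition is $l=(i-2)n_1+n_2$ (as you implicitly note when counting generators of order $i-2$), which is exactly the bookkeeping the paper carries out.
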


\begin{proof}
We are going to eliminate the non-normal form parts of the resonant  rational vector field  $Q_{\Gamma_{i}^{(i-1)},\Gamma_{i}'^{(i-1)}}$. 
By \Cref{le44}, there exist a rational vector field $\chi_{\Gamma_{i-2},\Gamma'_{i-2}}\in\ms{H}^{{\rm an-rev}}_{i-2,N}$, an integrable rational vector field $Z_{\Gamma_i^{(i-1)},\Gamma_{i}'^{(i-1)}}\in\ms{H}^{{\rm rev}}_{i,N}$ and two rational normal form terms $\tilde{Z}_{\tilde{\Gamma}_{i}}\in\ms{H}_{i,N}^{{\rm rev}}$,  $\tilde{Z}_{\tilde{\tilde{\Gamma}}_{i-1}}\in\ms{H}_{i-1,N}^{{\rm rev}}$ such that
\begin{equation}
\label{form598}
[Z_3^{\leq N}+Z_5^{\leq N},\chi_{\Gamma_{i-2},\Gamma'_{i-2}}]+Q_{\Gamma^{(i-1)}_i,\Gamma'^{(i-1)}_i}=Z_{\Gamma_i^{(i-1)},\Gamma_{i}'^{(i-1)}}+\tilde{Z}_{\tilde{\Gamma}_i}+\tilde{Z}_{\tilde{\tilde{\Gamma}}_{i-1}},
\end{equation}
with the coefficient estimates
\begin{equation}
\label{form5101}
\|\chi_{\Gamma_{i-2},\Gamma'_{i-2}}\|_{\ell^{\infty}}\leq\|Q_{\Gamma^{(i-1)}_i,\Gamma'^{(i-1)}_i}\|_{\ell^{\infty}}\leq C_3(C_3r)^{4(i-2)(i-1)-4},
\end{equation}
\begin{equation}
 \label{form599}
\|Z_{\Gamma^{(i-1)}_{i},\Gamma'^{(i-1)}_i}\|_{\ell^{\infty}}\leq\|Q_{\Gamma^{(i-1)}_i,\Gamma'^{(i-1)}_i}\|_{\ell^{\infty}}\leq C_3(C_3r)^{4(i-2)(i-1)-4},
\end{equation}
\begin{equation}
 \label{form5100}
 \|\tilde{Z}_{\tilde{\Gamma}_{i}}\|_{\ell^{\infty}}\leq2\|Q_{\Gamma^{(i-1)}_i,\Gamma'^{(i-1)}_i}\|_{\ell^{\infty}}\leq2C_3(C_3r)^{4(i-2)(i-1)-4},
 \end{equation}
\begin{equation}
 \label{form5100-1-24}
 \|\tilde{Z}_{\tilde{\tilde{\Gamma}}_{i-1}}\|_{\ell^{\infty}}\leq\frac{1}{2}\|Q_{\Gamma^{(i-1)}_i,\Gamma'^{(i-1)}_i}\|_{\ell^{\infty}}\leq\frac{C_3}{2}(C_3r)^{4(i-2)(i-1)-4}.
 \end{equation}
Notice that for any $(a,\bs{j},\bs{h},\bs{k},n)\in\Gamma^{(i-1)}_{i}\bigcup\Gamma'^{(i-1)}_{i}$, when $i=4$, one has
\begin{equation}
\label{form5103-3-7-1}
 \#\bs{k}=0, \quad\#\bs{h}+\#\bs{k}\leq3i-5=7;
  \end{equation}
when $i\geq5$, one has $i=(i-3)n_1+n_2$ with $n_1=2$, $n_2=-(i-6)$, and thus
\begin{equation}
\label{form5103}
\#\bs{k}\leq3i-12,\quad \#\bs{h}+\#\bs{k}\leq10i-33.
 \end{equation}
Thus by \Cref{re43},  for any $(a,\bs{j},\bs{h},\bs{k},n)\in\Gamma_{i-2}\bigcup\Gamma'_{i-2}$ or $\tilde{\Gamma}_{i}$ or $\tilde{\tilde{\Gamma}}_{i-1}$ with $i\geq4$, one has
\begin{equation}
\label{form5104}
\#\bs{k}\leq3i-11\quad\text{and} \quad\#\bs{h}+\#\bs{k}\leq10i-32.
\end{equation}

{\bf{Step 1: The transformation generated by $\chi_{\Gamma_{i-2},\Gamma'_{i-2}}$. }}
In view of \eqref{re41-2} in \Cref{re41} and \eqref{form5104},  there exists a finite set $\mc{F}_{i-2}\subset\mb{N}^3$ such that
$$\Gamma_{i-2}=\bigcup_{(\alpha,\beta,\beta')\in\mc{F}_{i-2}}\Gamma^{(\alpha,\beta,\beta')}_{i-2}\quad\text{and}\quad\Gamma'_{i-2}=\bigcup_{(\alpha,\beta,\beta')\in\mc{F}_{i-2}}\Gamma'^{(\alpha,\beta,\beta')}_{i-2}$$
with 
\begin{equation}
\label{form3-7-592}
\begin{aligned}
&\beta'=\#\bs{k}\leq3i-11,
\\&\beta+\beta'=\#\bs{h}+\#\bs{k}\leq10i-32,
\\&\alpha=i-2+\beta+2\beta'\leq14i-45.
\end{aligned}
\end{equation}
Hence, by \Cref{le42}, \eqref{form3-7-592}, the inequalities $\sqrt{12}(14i-45)<50i$ and  the coefficient estimate \eqref{form5101},  for any $\bs{z}\in\mc{U}^{N}_{\frac{\gamma}{2^{r+1-i}}}$, one has
\begin{align}
\label{form5105}
&\|\chi_{\Gamma_{i-2},\Gamma'_{i-2}}(\bs{z})\|_s
\\\notag<&6\sqrt{2}\times(14i-45)^3\max_{(\alpha,\beta,\beta')\in\mc{F}_{i-2}}\frac{(\sqrt{12}\alpha)^{2(\alpha-1)}N^{(4\alpha+2)(\beta+\beta')}}{(\gamma/2^{r+1-i})^{\beta+2\beta'}}\|\chi_{\Gamma_{i-2},\Gamma'_{i-2}}\|_{\ell^{\infty}}\|z\|_s^{2i-3}
\\\notag\leq&6\sqrt{2}\times(14i)^3\frac{(50i)^{4(7i-23)}N^{2(28i-89)(10i-32)}}{(\gamma/2^{r+1-i})^{13i-43}}C_3(C_3r)^{4(i-2)(i-1)-4}\|z\|_s^{2i-3}
\\\notag\leq&\frac{2^{(13i-43)(r+1-i)}(C_3N)^{564(i-2)^2}}{\gamma^{13i-43}}\|z\|_s^{2i-3},
\end{align}
where the last inequality follows form the inequalities
\begin{align*}
N^{2(28i-89)(10i-32)}r^{4(i-2)(i-1)-4}\leq&N^{2(28i-89)(10i-32)+4(i-2)(i-1)-4}
\\\leq&N^{564(i-2)^2}
\end{align*}
and 
\begin{align*}
6\sqrt{2}\times(14i)^3(50i)^{4(7i-23)}C_3C_3^{4(i-2)(i-1)-4}\leq&(50i)^{28i-89}C_3^{4(i-2)(i-1)-3}
\\\leq&C_3^{(i-2)(28i-89)+4(i-2)(i-1)-3}
\\<&C_3^{564(i-2)^2}.
\end{align*}
Denote by the flow $\Phi_{\chi_{\Gamma_{i-2},\Gamma'_{i-2}}}^{t}$ generated by $\chi_{\Gamma_{i-2},\Gamma'_{i-2}}$. Then for any $|t|\leq 1$, as long as $\|\Phi_{\chi_{\Gamma_{i-2},\Gamma'_{i-2}}}^{t}(z)\|_{s}\leq\frac{3}{2}\|z\|_{s}$, one has
\begin{align}
\label{form5106}
\|\Phi_{\chi_{\Gamma_{i-2},\Gamma'_{i-2}}}^{t}(\bs{z})-\bs{z}\|_{s}\leq&\|\chi_{\Gamma_{i-2},\Gamma'_{i-2}}(\bs{z})\|_{s}
\\\notag\leq&\frac{2^{(13i-43)(r+1-i)}(C_3N)^{564(i-2)^2}}{\gamma^{13i-43}}\|z\|_s^{2i-3}.
\end{align}
Now, we make sure the condition \eqref{form411} in \Cref{le41}.
Notice that
\begin{align}
\label{form5107}
&\sup_{a\in\mb{N}_*}|a|^{2s}\big||\Phi_{\chi_{\Gamma_{i-2},\Gamma'_{i-2}}}^{t}(z)_{a}|^{2}-I_{a}\big|
\\\notag&\leq\|\Phi_{\chi_{\Gamma_{i-2},\Gamma'_{i-2}}}^{t}(z)-z\|_{s}(\|\Phi_{\chi_{\Gamma_{i-2},\Gamma'_{i-2}}}^{t}(z)\|_{s}+\|z\|_{s})
\\\notag&\leq\frac{5}{2}\|\chi_{\Gamma_{i-2},\Gamma'_{i-2}}(z)\|_{s}\|z\|_{s}.
\end{align}
For $\bs{z}\in B_{s}(\varepsilon_i)$, one has $\|z\|_s\leq\frac{\varepsilon_i}{\sqrt{2}}<\sqrt{2}\varepsilon$. 
By \eqref{form5105} and \eqref{form5107} and a bootstrap argument, for any
$$\varepsilon\leq\frac{\gamma^{\frac{13}{2}}}{(C_3N)^{1128(r-1)}}<\frac{1}{\sqrt{2}}\Big(\frac{(\gamma/2^{r-l+1})^{13i-41}}{144\times5(r+1)N^{4r+3}(C_3N)^{564(i-2)^2}}\Big)^{\frac{1}{2(i-2)}},$$
the transformation
$$\Phi_{\chi_{\Gamma_{i-2},\Gamma'_{i-2}}}^{t}:\big(\mc{U}^{N}_{\frac{\gamma}{2^{r+1-i}}}\times\mc{U}^{N}_{\frac{\gamma}{2^{r+1-i}}}\big)\bigcap B_{s}(\varepsilon_i)\to \big(\mc{U}^{N}_{\frac{\gamma}{2^{r+2-i}}}\times\mc{U}^{N}_{\frac{\gamma}{2^{r+2-i}}}\big)\bigcap B_{s}(\varepsilon_{i-1})$$
 is well defined for any $|t|\leq 1$ satisfying the estimate \eqref{form5106}.

By the fact $[Z_1, \chi_{\Gamma_{i-2},\Gamma'_{i-2}}]=0$, the homological equation \eqref{form598} and \Cref{le43}, the vector field \eqref{form590-1-24} is transformed into the vector field \eqref{form590}, where for $l=i+1,\cdots, r$, one has
\begin{align}
\label{form5113}
Q_{\Gamma^{(i)}_{l},\Gamma'^{(i)}_{l}}=&\sum_{i+k(i-2)=l}\frac{d_1(k)}{(k+1)!}+\sum_{i-1+k(i-2)=l}\frac{d_2(k)}{(k+1)!}+\sum_{\substack{m+k(i-2)=l\\k\geq1,3\leq m\leq i-1}}\frac{d_3(k,m)}{k!}
\\\notag&+\sum_{\substack{m+k(i-2)=l\\k\geq1,3\leq m\leq i-2}}\frac{d_4(k,m)}{k!}+\sum_{\substack{m+k(i-2)=l\\k\geq0, i\leq m\leq r}}\frac{d_5(k,m)}{k!}
\end{align}
with $\Gamma^{(i)}_{l},\Gamma'^{(i)}_{l}\in\mc{H}_{l,N}$ and 
\begin{align}
\label{form5113-1-25-1}
d_1(k)=&ad_{\chi_{\Gamma_{i-2},\Gamma'_{i-2}}}^{k}\big(Z_{\Gamma^{(i-1)}_i,\Gamma'^{(i-1)}_i}+\tilde{Z}_{\tilde{\Gamma}_i}-Q_{\Gamma^{(i-1)}_i,\Gamma'^{(i-1)}_i}\big)
\\\label{form5113-1-25-2}
d_2(k)=&ad_{\chi_{\Gamma_{i-2},\Gamma'_{i-2}}}^{k}\tilde{Z}_{\tilde{\tilde{\Gamma}}_{i-1}}
\\\label{form5113-1-25-3}
d_3(k,m)=&ad_{\chi_{\Gamma_{i-2},\Gamma'_{i-2}}}^{k}\big(Z_{\Gamma^{(m-1)}_m,\Gamma'^{(m-1)}_m}+\tilde{Z}_{\tilde{\Gamma}_m}\big)
\\\label{form5113-1-25-4}
d_4(k,m)=&ad_{\chi_{\Gamma_{i-2},\Gamma'_{i-2}}}^{k}\tilde{Z}_{\tilde{\tilde{\Gamma}}_m}
\\\label{form5113-1-25-5}
d_5(k,m)=&ad_{\chi_{\Gamma_{i-2},\Gamma'_{i-2}}}^kQ_{\Gamma^{(i-1)}_{m},\Gamma'^{(i-1)}_{m}};
\end{align}
and the remainder term 
\begin{equation}
\label{form5114}
R^{(i)}_{\geq2r+3}=R_1+R_2
\end{equation}
with
\begin{align}
\label{form5123}
R_1&=\int_0^1\frac{(1-t)^{[\frac{r-2}{i-2}]}}{[\frac{r-2}{i-2}]!}(D\Phi^t_{\chi_{\Gamma_{i-2},\Gamma'_{i-2}}})^{-1}d_1\Big([\frac{r-i}{i-2}]+1\Big)\circ\Phi^t_{\chi_{\Gamma_{i-2},\Gamma'_{i-2}}}dt
\\\notag&+\int_0^1\frac{(1-t)^{[\frac{r-1}{i-2}]}}{[\frac{r-1}{i-2}]!}(D\Phi^t_{\chi_{\Gamma_{i-2},\Gamma'_{i-2}}})^{-1}
d_2\Big([\frac{r-i+1}{i-2}]+1\Big)\circ\Phi^t_{\chi_{\Gamma_{i-2},\Gamma'_{i-2}}}dt
\\\notag&+\sum_{m=3}^{i-1}\int_0^1\frac{(1-t)^{[\frac{r-i+1}{i-2}]}}{[\frac{r-m}{i-2}]!}(D\Phi^t_{\chi_{\Gamma_{i-2},\Gamma'_{i-2}}})^{-1}d_3\Big([\frac{r-m}{i-2}]+1,m\Big)\circ\Phi^t_{\chi_{\Gamma_{i-2},\Gamma'_{i-2}}}dt
\\\notag&+\sum_{m=3}^{i-2}\int_0^1\frac{(1-t)^{[\frac{r-m}{i-2}]}}{[\frac{r-m}{i-2}]!}(D\Phi^t_{\chi_{\Gamma_{i-2},\Gamma'_{i-2}}})^{-1}d_4\Big([\frac{r-m}{i-2}]+1,m\Big)\circ\Phi^t_{\chi_{\Gamma_{i-2},\Gamma'_{i-2}}}dt
\\\notag&+\sum_{m=i}^{r}\int_0^1\frac{(1-t)^{[\frac{r-m}{i-2}]}}{[\frac{r-m}{i-2}]!}(D\Phi^t_{\chi_{\Gamma_{i-2},\Gamma'_{i-2}}})^{-1}d_5\Big([\frac{r-m}{i-2}]+1,m\Big)\circ\Phi^t_{\chi_{\Gamma_{i-2},\Gamma'_{i-2}}}dt,
\end{align}
\begin{equation}
\label{form5123-1-26}
R_2=(D\Phi^1_{\chi_{\Gamma_{i-2},\Gamma'_{i-2}}})^{-1}R^{(i-1)}_{\geq 2r+3}\circ\Phi^1_{\chi_{\Gamma_{i-2},\Gamma'_{i-2}}}.
\end{equation}

{\bf{Step 2: Estimation the number of small divisors in the new rational vector field  $Q_{\Gamma^{(i)}_{l},\Gamma'^{(i)}_{l}}$.}}
%
We will show that for any $(a,\bs{j},\bs{h},\bs{k},n)\in\Gamma^{(i)}_{l}\bigcup\Gamma'^{(i)}_{l}$ with $l\geq i+1$, rewriting as $l=(i-2)n_1+n_2$ with $n_1\in\mb{N}_*$ and $n_2\in\{2,1,0,-1,\cdots,-(i-5)\}$, one has
 $$\#\bs{k}\leq3l-4(n_1+1),\quad\#\bs{h}+\#\bs{k}\leq10l-11(n_1+1),$$
which is the relationship \eqref{form596} for the  resonant rational vector field  $Q_{\Gamma^{(i)}_{l},\Gamma'^{(i)}_{l}}$.
In view of \eqref{form5113}, we divide it into the following three parts:

\noindent $\bullet$ the part of $\Gamma^{(i)}_{l}\bigcup\Gamma'^{(i)}_{l}$ determined by the rational normal form termxs $\{\tilde{Z}_{\tilde{\tilde{\Gamma}}_{m}}\}_{m=3}^{i-1}$ and the rational vector field $\chi_{\Gamma_{i-2},\Gamma'_{i-2}}$.

Notice that $l=k(i-2)+m$ with $k=n_1-1$ and $m=i-2+n_2$. Then by \Cref{re42} and the relationships \eqref{form594}, \eqref{form5104}, one has 
\begin{align}
\label{form5117}
\#\bs{k}\leq&3(m-3)+1+(3i-11+1)k
\\\notag=&3l-4(n_1+1),
\end{align}
\begin{align}
\label{form5118}
\#\bs{h}+\#\bs{k}\leq&10m-22+(10i-32+1)k
\\\notag=&10l-11(n_1+1).
\end{align}
$\bullet$ the part of $\Gamma^{(i)}_{l}\bigcup\Gamma'^{(i)}_{l}$ determined by the rational normal form terms $\{Z_{\Gamma^{(m-1)}_m,\Gamma'^{(m-1)}_m}+\tilde{Z}_{\tilde{\Gamma}_{m}}\}_{m=3}^{i}$ and the rational vector field $\chi_{\Gamma_{i-2},\Gamma'_{i-2}}$.

Notice that  $l=k(i-2)+m$ with $k=n_1-1$ and $m=i-2+n_2$.  Then by \Cref{re42} and the relationships \eqref{form594-1}, \eqref{form594-1-24}, \eqref{form5104}, when $m=3$, one has
\begin{align}
\label{form-3-18-1}
\#\bs{k}\leq&0+(3i-11+1)k
\\\notag=&3l-4n_1-5
\\\notag<&3l-4(n_1+1),
\end{align}
\begin{align}
\label{form-3-18-2}
\#\bs{h}+\#\bs{k}\leq&4+(10i-32+1)k
\\\notag=&10l-11n_1-15
\\\notag<&10l-11(n_1+1);
\end{align}
when $m\neq3$, one has
\begin{align}
\label{form-3-7-1}
\#\bs{k}\leq&3(m-4)+1+(3i-11+1)k
\\\notag=&3l-4n_1-7
\\\notag<&3l-4(n_1+1),
\end{align}
\begin{align}
\label{form-3-7-2}
\#\bs{h}+\#\bs{k}\leq&10m-32+(10i-32+1)k
\\\notag=&10l-11n_1-21
\\\notag<&10l-11(n_1+1).
\end{align}
$\bullet$ the part of $\Gamma^{(i)}_{l}\bigcup\Gamma'^{(i)}_{l}$ determined by the rational vector fields $\{Q_{\Gamma^{(i-1)}_{m},\Gamma'^{(i-1)}_{m}}\}_{m=i}^{l}$ and the rational vector field $\chi_{\Gamma_{i-2},\Gamma'_{i-2}}$.

For $i=4$, by \Cref{re42}, the relationship \eqref{form5104}, the fact $2k=l-m\leq l-4$ and the inequality $l>2n_1$, one has
\begin{align}
\label{form5117-3-18}
\#\bs{k}\leq0+2k\leq l-4<3l-4(n_1+1),
\end{align}
\begin{align}
\label{form5118-3-18}
\#\bs{h}+\#\bs{k}\leq&3m-5+9k\leq\frac{9}{2}l-11
<10l-11(n_1+1).
\end{align}

For $i\geq5$, one has $l=k(i-2)+m$, where $m=(i-3)n_1'+n_2'$ with $n'_1\in\mb{N}_*$ and $n_2'\in\{2,1,0,-1,\cdots,-(i-6)\}$. 
Hence, one has
$k(i-2)+(i-3)n_1'+n_2=(i-2)n_1+n_2$, i.e., 
\begin{equation}
\label{form3-19-1}
k+n_1'-n_1=\frac{n_2-n_2'+n_1'}{i-2}.
\end{equation}
By the fact $n_2\geq-(i-5)$, $n_2\leq2$ and $n_1'\geq1$, one has 
\begin{equation}
\label{form3-19-2}
n_2-n_2'+n_1'\geq 4-i>2-i.
\end{equation}
By \eqref{form3-19-1} and \eqref{form3-19-2}, we have $k+n_1'-n_1>-1$, which implies $k+n_1'-n_1\geq 0$. Thus by \Cref{re42} and the relationships \eqref{form596},  \eqref{form5104}, one has
%
\begin{align}
\label{form5117-3-7}
\#\bs{k}\leq&3m-4(n_1'+1)+(3i-11+1)k
\\\notag=&3l-4(k+n_1'+1)
\\\notag\leq&3l-4(n_1+1),
\end{align}
\begin{align}
\label{form5118-3-7}
\#\bs{h}+\#\bs{k}\leq&10m-11(n'_1+1)+(10i-32+1)k
\\\notag=&10l-11(k+n'_1+1)
\\\notag\leq&10l-11(n_1+1).
\end{align}
%

{\bf{Step 3: Estimation the coefficient of rational vector field $Q_{\Gamma^{(i)}_{l},\Gamma'^{(i)}_{l}}$ in \eqref{form5113}.}} 
Notice that for any $(a,\bs{j},\bs{h},\bs{k},n)\in\Gamma^{(i-1)}_{m}\bigcup\Gamma'^{(i-1)}_{m}\bigcup\tilde{\Gamma}_m\bigcup\tilde{\tilde{\Gamma}}_m\bigcup\Gamma^{(i)}_{m}\bigcup\Gamma'^{(i)}_{m}$,  one has 
\begin{align*}
\#\bs{h}+\#\bs{k}\leq10m-22, \quad \#\bs{k}\leq3m-8,
\end{align*}
and thus
\begin{align}
\label{form5103-1-25}
\#\bs{j}=m+(\#\bs{h}+\#\bs{k})+\#\bs{k}\leq14m-30
<14(m-2).
\end{align}
Then in view of \eqref{form5113-1-25-3} and \eqref{form5103-1-25}, by \Cref{le43} and the coefficient estimates \eqref{form593-1}, \eqref{form593-1-24}, \eqref{form5101},
one has
\begin{align}
\label{form5104-1-25}
\big|d_3(k,m)\big|
\leq&\big(C_2l^4\|\chi_{\Gamma_{i-2},\Gamma'_{i-2}}\|_{\ell^{\infty}}\big)^k\big(\|Z_{\Gamma^{(m-1)}_{m},\Gamma'^{(m-1)}_m}+\tilde{Z}_{\tilde{\Gamma}_{m}}\|_{\ell^{\infty}}\big)
\\\notag\leq&3C_3\Big(C_2l^4C_3(C_3r)^{4(i-2)(i-1)-4}\Big)^k(C_3r)^{4(m-2)(m-1)-4}
\\\notag=&3C_3\Big(\frac{C_2l^4}{C_3^3r^4}\Big)^k(C_3r)^{4(i-2)(i-1)k+4(m-2)(m-1)-4}.
\end{align}
By $(i-2)k+m=l$, $m\leq i$ and $l\geq i+1$, one has
\begin{align}
\label{form3-19-3}
4(i-2)(i-1)k+4(m-2)(m-1)\leq&4(l-m)(i-1)+4(m-2)(i-1)
\\\notag=&4(l-2)(i-1)
\\\notag<&2(l-2)(l+i-1).
\end{align}
For any $k\geq1$, one has
\begin{equation}
\label{form3-19-c}
3\Big(\frac{C_2l^4}{C_3^3r^4}\Big)^k<\frac{1}{7},
\end{equation}
and thus, by \eqref{form5104-1-25}--\eqref{form3-19-c}, one has
\begin{align}
\label{form5104-1-25-d3}
\big|d_3(k,m)\big|
\leq&\frac{C_3}{7}(C_3r)^{2(l-2)(l+i-1)-4}.
\end{align}

Similarly, in view of \eqref{form5113-1-25-1}, by the coefficient estimates \eqref{form5101}--\eqref{form5100} and the fact $(i-2)k+i=l$, one has
\begin{align}
\label{form5104-1-25-d1}
\big|d_1(k)\big|\leq&\big(C_2l^4\|\chi_{\Gamma_{i-2},\Gamma'_{i-2}}\|_{\ell^{\infty}}\big)^k\big(\|Z_{\Gamma^{(i-1)}_{i},\Gamma'^{(i-1)}_i}+\tilde{Z}_{\tilde{\Gamma}_{i}}-Q_{\Gamma^{(i-1)}_i,\Gamma'^{(i-1)}_i}\|_{\ell^{\infty}}\big)
\\\notag\leq&3C_3\Big(C_2l^4C_3(C_3r)^{4(i-2)(i-1)-4}\Big)^k(C_3r)^{4(i-2)(i-1)-4}
\\\notag=&3C_3\Big(\frac{C_2l^4}{C_3^3r^4}\Big)^k(C_3r)^{4(l-2)(i-1)-4}
\\\notag\leq&\frac{C_3}{7}(C_3r)^{2(l-2)(l+i-1)-4},
\end{align}
where the last inequality follows from the inequalities \eqref{form3-19-c} and $i-1\leq l-2$.

In view of \eqref{form5113-1-25-4} and \eqref{form5103-1-25}, by \Cref{le43} and the coefficient estimates \eqref{form593} and \eqref{form5101}, one has
\begin{align}
\label{form5104-1-25-2}
\big|d_4(k,m)\big|
\leq&\big(C_2l^4\|\chi_{\Gamma_{i-2},\Gamma'_{i-2}}\|_{\ell^{\infty}}\big)^k \|\tilde{Z}_{\tilde{\tilde{\Gamma}}_{m}}\|_{\ell^{\infty}}
\\\notag\leq&\frac{C_3}{2}\Big(C_2l^4C_3(C_3r)^{4(i-2)(i-1)-4}\Big)^k(C_3r)^{4(m-1)m-4}
\\\notag=&\frac{C_3}{2}\Big(\frac{C_2l^4}{C_3^3r^4}\Big)^k(C_3r)^{4(i-2)(i-1)k+4(m-1)m-4}.
\end{align}
By $(i-2)k+m=l$, $m\leq i-1$ and $l\geq i+1$, one has
\begin{align}
\label{form3-19-4}
4(i-2)(i-1)k+4(m-1)m\leq&4(i-1)(l-m)+4(m-1)(i-1)
\\\notag=&4(i-1)(l-1)
\\\notag\leq&2(i-1)(l-1)+2(l-2)i
\\\notag\leq&2(l-2)(l+i-1).
\end{align}
Thus, by \eqref{form5104-1-25-2}, \eqref{form3-19-4} and the inequality \eqref{form3-19-c}, one has
\begin{align}
\label{form5104-1-25-d4}
\big|d_4(k,m)\big|
\leq\frac{C_3}{7}(C_3r)^{2(l-2)(l+i-1)-4}.
\end{align}
In view of \eqref{form5113-1-25-2},  by the estimate \eqref{form5104-1-25-d4}, one has
\begin{equation}
\label{form5104-1-25-d2}
\big|d_2(k)\big|=\big|d_4(k,i-1)\big|\leq\frac{C_3}{7}(C_3r)^{2(l-2)(l+i-1)-4}.
\end{equation}

In view of \eqref{form5113-1-25-5}, by \Cref{le43} and the coefficient estimates \eqref{form595} and \eqref{form5101}, one has
\begin{align}
\label{form5104-1-25-3}
\big|d_5(k,m)\big|
\leq&\big(C_2l^4\|\chi_{\Gamma_{i-2},\Gamma'_{i-2}}\|_{\ell^{\infty}}\big)^k \|Q_{\Gamma^{(i-1)}_{m},\Gamma'^{(i-1)}_{m}}\|_{\ell^{\infty}}
\\\notag\leq&\Big(C_2l^4C_3(C_3r)^{4(i-2)(i-1)-4}\Big)^kC_3(C_3r)^{2(m-2)(m+i-2)-4}
\\\notag=&\Big(\frac{C_2l^4}{C_3^3r^4}\Big)^kC_3(C_3r)^{4(i-2)(i-1)k+2(m-2)(m+i-2)-4}.
\end{align}
By $(i-2)k+m=l$ and $m\geq i$, one has
\begin{align}
\label{form3-19-5}
4(i-2)(i-1)k+2(m-2)(m+i-2)=&4(i-2)(l-2)+2(m-2)(m-i)
\\\notag\leq&4(i-1)(l-2)+2(l-2)(l-i)
\\\notag<&2(l-2)(l+i-1).
\end{align}
%
Then by \eqref{form5104-1-25-3}, \eqref{form3-19-5} and  the inequality \eqref{form3-19-c}, one has
\begin{align}
\label{form5104-1-25-d5}
\big|d_5(k,m)\big|\leq\frac{C_3}{7}(C_3r)^{2(l-2)(l+i-1)-4}.
\end{align}

Hence, in view of \eqref{form5113}, by the estimates \eqref{form5104-1-25-d3}, \eqref{form5104-1-25-d1}, \eqref{form5104-1-25-d4}, \eqref{form5104-1-25-d2} and \eqref{form5104-1-25-d5}, one has
\begin{align*}
\|Q_{\Gamma^{(i)}_{l},\Gamma'^{(i)}_{l}}\|_{\ell^{\infty}}
\leq&\sum_{i+k(i-2)=l}\frac{|d_1(k)|}{(k+1)!}+\sum_{i-1+k(i-2)=l}\frac{|d_2(k)|}{(k+1)!}+\sum_{\substack{m+k(i-2)=l\\k\geq1,3\leq m\leq i-1}}\frac{|d_3(k,m)|}{k!}
\\&+\sum_{\substack{m+k(i-2)=l\\k\geq1,3\leq m\leq i-2}}\frac{|d_4(k,m)|}{k!}+\sum_{\substack{m+k(i-2)=l\\k\geq0, i\leq m\leq r}}\frac{|d_5(k,m)|}{k!}
\\\notag\leq&\frac{C_3}{7}(C_3r)^{2(l-2)(l+i-1)-4}\Big(\sum_{i+k(i-2)=l}1+\sum_{i-1+k(i-2)=l}1+\sum_{\substack{m+k(i-2)=l\\k\geq1,3\leq m\leq i-1}}1
\\\qquad&+\sum_{\substack{m+k(i-2)=l\\k\geq1,3\leq m\leq i-2}}1+\sum_{\substack{m+k(i-2)=l\\k\geq0, i\leq m\leq r}}\frac{1}{k!}\Big).
\end{align*}
Denote by $M$ the part in parentheses of the above inequality. Then
\begin{align*}
M\leq1+1+1+1+\sum_{k\geq0}\frac{1}{k!}
<7,
\end{align*}
and thus
\begin{equation}
\label{form5122}
\|Q_{\Gamma^{(i)}_{l},\Gamma'^{(i)}_{l}}\|_{\ell^{\infty}}
\leq C_3(C_3r)^{2(l-2)(l+i-1)-4}.
\end{equation}

{\bf{Step 4: Estimation the new remainder term $R^{(i)}_{\geq2r+3}$ in \eqref{form5114}.}} 
%
Firstly, we estimate the reminder term $R_1$ in \eqref{form5123}. Similarly with $Q_{\Gamma^{(i)}_{l},\Gamma'^{(i)}_{l}}$ in \eqref{form5113} satisfying the relationship \eqref{form596} and the estimate \eqref{form5122}, 
there exist $\{\Gamma_{m},\Gamma'_{m}\}_{m=r+1}^{r+i-2}$ with $\Gamma_{m},\Gamma'_{m}\in\mc{H}_{m,N}$ such that
\begin{align}
\label{form5124}
\sum_{m=r+1}^{r+i-2}Q_{\Gamma_{m},\Gamma'_{m}}=&d_1\Big([\frac{r-i}{i-2}]+1\Big)+d_2\Big([\frac{r-i+1}{i-2}]+1\Big)+\sum_{m=3}^{i-1}d_3\Big([\frac{r-m}{i-2}]+1,m\Big)
\\\notag&+\sum_{m=3}^{i-2}d_4\Big([\frac{r-m}{i-2}]+1,m\Big)+\sum_{m=i}^{r}d_5\Big([\frac{r-m}{i-2}]+1,m\Big)
\end{align}
with the coefficient estimate
\begin{equation}
\label{form5125}
\|Q_{\Gamma_{m},\Gamma'_{m}}\|_{\ell^{\infty}}\leq C_3(C_3r)^{2(m-2)(m+i-1)-4},
\end{equation}
%
where for any $(\bs{j},\bs{h},\bs{k},n)\in\Gamma_{m}\bigcup\Gamma'_{m}$, one has 
\begin{equation}
\label{form5126}
\begin{aligned}
&\#\bs{k}\leq3m-8,
\\&\#\bs{h}+\#\bs{k}\leq10m-22, 
\\&\#\bs{j}=\#\bs{h}+2\#\bs{k}+m\leq14m-30.
\end{aligned}
\end{equation}
In view of \eqref{re41-2} in \Cref{re41} and \eqref{form5126}, there exists a finite set $\mc{F}_m\subset\mb{N}^3$ such that 
$$\Gamma_m=\bigcup_{(\alpha,\beta,\beta')\in\mc{F}_m}\Gamma^{(\alpha,\beta,\beta')}_m\quad\text{and}\quad \Gamma'_m=\bigcup_{(\alpha,\beta,\beta')\in\mc{F}_m}\Gamma'^{(\alpha,\beta,\beta')}_m$$ 
with 
$$\beta'<3m-8,\quad \beta+\beta'<10m-22, \quad\alpha<14m-30.$$ 
Thus by \Cref{le42}, the fact $\sqrt{12}\alpha<50m$, \eqref{form5125} and $\|\Phi_{\chi_{\Gamma_{i-2},\Gamma'_{i-2}}}^{t}(\bs{z}))\|_{s}\leq2\|z\|_{s}$, one has
\begin{align*} 
&\|Q_{\Gamma_{m},\Gamma'_{m}}(\Phi_{\chi_{\Gamma_{i-2},\Gamma'_{i-2}}}^{t}(\bs{z}))\|_{s}
\\\notag\leq&6\sqrt{2}\times(14m)^3\max_{(\alpha,\beta,\beta')\in\mc{F}_{m}}\frac{(\sqrt{12}\alpha)^{2(\alpha-1)}N^{(4\alpha+2)(\beta+\beta')}}{(\gamma/2^{r+1-i})^{\beta+2\beta'}}\|Q_{\Gamma_{m},\Gamma'_{m}}\|_{\ell^{\infty}}\|\Phi_{\chi_{\Gamma_{i-2},\Gamma'_{i-2}}}^{t}(z)\|_{s}^{2m+1}
\\\notag\leq&6\sqrt{2}\times(14m)^3\frac{(50m)^{2(14m-31)}N^{2(28m-59)(10m-22)}}{(\gamma/2^{r+1-i})^{13m-30}}C_3(C_3r)^{2(m-2)(m+i-1)-4}\big(2\|z\|_{s}\big)^{2m+1}.
\end{align*}
Remark that for $4\leq i\leq r\leq m-1$, one has 
\begin{align*}
N^{2(28m-59)(10m-22)}r^{2(m-2)(m+i-1)-4}\leq&N^{2(28m-59)(10m-22)+4(m-2)(m-1)-4}
\\\leq&N^{564(m-2)^2};
\end{align*}
%
and by the fact $C_3\geq400$, one has
\begin{align*}
&6\sqrt{2}\times(14m)^3(50m)^{2(14m-31)}2^{(r+1-i)(13m-30)}C_3C_3^{2(m-2)(m+i-1)-4}2^{2m+1}
\\\leq&(50m)^{28m-59}2^{(m-4)(13m-30)+2m+1}C_3^{4(m-2)(m-1)-3}
\\\leq&C_3^{(m-2)(28m-59)+(m-4)(13m-30)+2m+1+4(m-2)(m-1)-3}
\\\leq&C_3^{564(m-2)^2}.
\end{align*}
Hence, one has
\begin{align}
\label{form5127}
\|Q_{\Gamma_{m},\Gamma'_{m}}(\Phi_{\chi_{\Gamma_{i-2},\Gamma'_{i-2}}}^{t}(\bs{z}))\|_{s}\leq\frac{(C_3N)^{564(m-2)^2}}{\gamma^{13m-30}}\|z\|_{s}^{2m+1}.
\end{align}
In view of \eqref{form5123} and \eqref{form5124}, by \eqref{form572}, \eqref{form5127} and $\|(D\Phi_{\chi_{\Gamma_{i-2},\Gamma'_{i-2}}}^{t})^{-1}\|_{\ms{P}_{s}\mapsto\ms{P}_{s}}\leq2$, for any $\varepsilon\leq \gamma^{\frac{13}{2}}(C_3N)^{-1128(r-1)}$, one has
\begin{align} 
\label{form5128}
\|R_1(\bs{z})\|_s\leq&\|(D\Phi_{\chi_{\Gamma_{i-2},\Gamma'_{i-2}}}^{t})^{-1}\|_{\ms{P}_{s}\mapsto\ms{P}_{s}}\sum_{m=r+1}^{r+i-2}\|Q_{\Gamma_{m},\Gamma'_{m}}(\Phi_{\chi_{\Gamma_{i-2},\Gamma'_{i-2}}}^{t}(\bs{z}))\|_{s}
\\\notag\leq&2\sum_{m=r+1}^{r+i-2}\frac{(C_3N)^{564(m-2)^2}}{\gamma^{13m-30}}\|z\|_{s}^{2m+1}
\\\notag\leq&\frac{4(C_3N)^{564(r-1)^2}}{\gamma^{13r-17}}\|z\|_{s}^{2r+3}.
\end{align}

In view of \eqref{form5123-1-26}, by \eqref{form597} and the fact $\|\Phi_{\chi_{\Gamma_{i-2},\Gamma'_{i-2}}}^{1}(\bs{z}))\|_{s}\leq2\|z\|_{s}$, one has
\begin{align}
\label{form5129}
\|R_2(\bs{z})\|_{s}\leq&\|(D\Phi_{\chi_{\Gamma_{i-2},\Gamma'_{i-2}}}^{t})^{-1}\|_{\ms{P}_{s}\mapsto\ms{P}_{s}}\|R^{(i-1)}_{\geq2r+3}(\Phi^1_{\chi_{\Gamma_{i-2},\Gamma'_{i-2}}}(\bs{z}))\|_{s}
\\\notag\leq&2\frac{4^{(i-4)(r+3)}(C_3N)^{564(r-1)^2}}{\gamma^{13r-17}}\big(2\|z\|_{s}\big)^{2r+3}
\\\notag=&\frac{4^{(i-4)(r+3)+r+2}(C_3N)^{564(r-1)^2}}{\gamma^{13r-17}}\|z\|_{s}^{2r+3}.
\end{align}
In view of \eqref{form5114}, by \eqref{form5128} and \eqref{form5129}, one has
\begin{align*}
\|R^{(i)}_{\geq2r+3}(\bs{z})\|_{s}\leq&\|R_1(\bs{z})\|_{s}+\|R_2(\bs{z})\|_{s}
\\\leq&\big(4+4^{(i-4)(r+3)+r+2}\big)\frac{(C_3N)^{564(r-1)^2}}{\gamma^{13r-17}}\|z\|_{s}^{2r+3}
\\\leq&\frac{4^{(i-3)(r+3)}(C_3N)^{564(r-1)^2}}{\gamma^{13r-17}}\|z\|_{s}^{2r+3}.
\end{align*}
This completes the proof of \Cref{le53}.
\end{proof}

\begin{proof}[Proof of Theorem {\sl\ref{th52}}]
Initially, in view of \eqref{form589}, by  \eqref{formth51-1-25-1}--\eqref{form58} in \Cref{th51}, the assumptions \eqref{form593-1}-- \eqref{form597} in \Cref{le53} are satisfied.
After $r$ iterations, one has
$$X^{(r)}=Z_1+Z_3^{\leq N}+Z_{5}^{\leq N}+\sum_{l=3}^{r}Z_{\Gamma_{l}^{(l-1)},\Gamma'^{(l-1)}_{l}}+\sum_{l=3}^{r}\tilde{Z}_{\tilde{\Gamma}_{l}}+\sum_{l=3}^{r-1}\tilde{Z}_{\tilde{\tilde{\Gamma}}_{l}}+R^{(r)}_{\geq2r+3}.$$
Let $\phi^{(3)}:=\Phi_{\chi_{\Gamma_{2},\Gamma'_{2}}}^{1}\circ\cdots\circ\Phi_{\chi_{\Gamma_{r-2},\Gamma'_{r-2}}}^{1}$, and then we have the transformed vector field  \eqref{form584} with  $Z_{\Gamma_{l},\Gamma'_l}:=Z_{\Gamma_{l}^{(l-1)},\Gamma'^{(l-1)}_{l}}$ and $R'''_{\geq2r+2}:=R^{(r)}_{\geq2r+3}$.
Moreover, the estimates \eqref{form566-1-26} and \eqref{form586} follow from \eqref{form593-1}, \eqref{form593-1-24} and \eqref{form593}; and the estimate \eqref{form587} follows from \eqref{form597} with $r$ in place of $i-1$.
By the estimate \eqref{form591}, for any $\varepsilon\leq \gamma^{\frac{13}{2}}(C_3N)^{-1128(r-1)}$, the transformation $$\phi^{(3)}:\Big(\mc{U}^{N}_{\frac{\gamma}{2}}\times\mc{U}^{N}_{\frac{\gamma}{2}}\Big)\bigcap B_{s}(\frac{3}{2}\varepsilon)\to\Big(\mc{U}^{N}_{\frac{\gamma}{2^{r-2}}}\times\mc{U}^{N}_{\frac{\gamma}{2^{r-2}}}\Big)\bigcap B_{s}(2\varepsilon)$$ 
satisfies the estimate
\begin{align}
\sup_{\bs{z}\in B_{s}(\frac{3}{2}\varepsilon)}\|\bs{z}-\phi^{(3)}(\bs{z})\|_{s}\leq&\sum_{i=4}^{r}\sup_{\bs{z}\in B_{s}(\varepsilon_i)}\|\bs{z}-\Phi^{1}_{\chi_{\Gamma_{i-2},\Gamma'_{i-2}}}(\bs{z})\|_{s}
\\\notag\leq&\sum_{i=4}^{r}\frac{2^{(13i-43)(r+1-i)}(C_3N)^{564(i-2)^2}}{\gamma^{13i-43}}\|z\|_s^{2i-3}
\\\notag\leq& \frac{2^{9(r-3)+1}(C_3N)^{2256}}{\gamma^{9}}\|z\|_s^{5},
\end{align}
which implies the estimate \eqref{form585}.
\end{proof}

\section{Proof of \Cref{th11}}
\label{sec6}
In this section, firstly, we estimate the measure. Then combining with the previous  normal form theorems, we prove the long time stability in \Cref{th11}.

\subsection{Measure estimate}
\label{sec61}
Introduce the Gaussian measure
\begin{equation}
\label{form61}
d\mu=\frac{e^{-\sum_{m\in\mb{N}_{*}}m^{2s}|z_m|^{2}}dzd\bar{z}}{\int_{\sum_{m\in\mb{N}_{*}}m^{2s-2}|z_m|^{2}\leq\frac{1}{2}}e^{-\sum_{m\in\mb{N}_{*}}m^{2s}|z_m|^{2}}dzd\bar{z}}.
\end{equation}
Let $z_{m}=I_{m}^{\frac{1}{2}}e^{{\rm i}\theta_{m}}$ with $I_{m}>0$, $\theta_{m}\in[0,2\pi)$, and thus the measure \eqref{form61} is equivalent to the measure
\begin{equation}
\label{form62}
d\mu=\frac{e^{-\sum_{m\in\mb{N}_{*}}m^{2s}I_{m}}dId\theta}{\int_{\sum_{m\in\mb{N}_{*}}m^{2s-2}I_{m}\leq\frac{1}{2}}e^{-\sum_{m\in\mb{N}_{*}}|m|^{2s}I_{m}}dId\theta}.
\end{equation}
Then we have the following measure estimate.
\begin{lemma}
\label{le61}
Fix $r\geq2$, $s\geq1$, $\gamma>0$ and $N\geq2(r+1)$. If
\begin{equation}
\label{form63}
\varepsilon^{2}\leq\frac{2\gamma}{(r+1)N^{4r+2}},
\end{equation}
then there exists a positive constant $\lambda$ depending on $r,s$ such that
\begin{equation}
\label{form64}
\mu(\varepsilon z\in\mc{U}_{\gamma}^{N})\geq1-\lambda\gamma.
\end{equation}
\end{lemma}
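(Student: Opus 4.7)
The plan is a union bound over resonance conditions. Setting $l=\#\bs{j}$, the exceptional event decomposes as
\[
\{z : \varepsilon z\notin\mc{U}_\gamma^N\}=\bigcup_{\substack{\bs{j}\in Irr(\mc{R}_{\le N})\\ 2\le l\le r}}\bigl(\mc{B}^{(2)}_{\bs{j}}\cup\mc{B}^{(4)}_{\bs{j}}\bigr),
\]
where $\mc{B}^{(2)}_{\bs{j}}$ and $\mc{B}^{(4)}_{\bs{j}}$ are the sets on which the two inequalities defining $\mc{U}_\gamma^N$ fail. Since $\omega^{(2)}_a=I_a/4$ is linear and $\omega^{(4)}_a$ is homogeneous quadratic in the actions, the rescaling gives $\Omega^{(2)}_{\bs{j}}(\varepsilon^2 I)=\varepsilon^2\Omega^{(2)}_{\bs{j}}(I)$ and $\Omega^{(4)}_{\bs{j}}(\varepsilon^2 I)=\varepsilon^2\Omega^{(2)}_{\bs{j}}(I)+\varepsilon^4\widehat\Omega_{\bs{j}}(I)$, where $\widehat\Omega_{\bs{j}}$ is the purely quadratic part and $|\partial_I\widehat\Omega_{\bs{j}}(I)|\lesssim l\,\|z\|_s^2$ on the support of $\mu$.

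For fixed $\bs{j}$, the key observation is that irreducibility forces $\Omega^{(2)}_{\bs{j}}(I)$ to be a nontrivial linear form: no two entries of $\bs{j}$ satisfy $j_i=\bar{j}_k$, so for every index $m$ appearing in $\bs{j}$ the coefficient of $I_m$ in $\tfrac12(\sum_k\delta_k I_{a_k}-I_{\Delta_{\bs{j}}})$ is a strictly positive or strictly negative multiple of $\tfrac12$. Choose one such direction $m^\star\le N$ with coefficient of modulus at least $\tfrac12$, taken to be (or close to) $\kappa_{\bs{j}}$. A one-dimensional slicing in $I_{m^\star}$, freezing all other actions, gives an interval of length at most $4\delta$ on which $|\Omega^{(2)}_{\bs{j}}|\le\delta$; integrating the Gaussian density in that variable yields
\[
\mu\bigl(|\Omega^{(2)}_{\bs{j}}(I)|\le\delta\bigr)\le C_s\,\kappa_{\bs{j}}^{2s}\,\delta.
\]
Taking $\delta=\gamma\|z\|_s^2 N^{-4l-2}\kappa_{\bs{j}}^{-2s}$, and conditioning on the large-$\mu$-measure event where $\|z\|_s^2$ is of order $1$, this produces $\mu(\mc{B}^{(2)}_{\bs{j}})\le C_s\gamma N^{-4l-2}$. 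For $\Omega^{(4)}$, the hypothesis \eqref{form63} is precisely the smallness needed to ensure $|\partial_{I_{m^\star}}(\varepsilon^2\widehat\Omega_{\bs{j}})|\le \tfrac18$ uniformly on the support of $\mu$, so $\partial_{I_{m^\star}}(\Omega^{(2)}_{\bs{j}}+\varepsilon^2\widehat\Omega_{\bs{j}})$ remains bounded below by $\tfrac18$ and the same slicing bound transfers, giving $\mu(\mc{B}^{(4)}_{\bs{j}})\le C_s\gamma N^{-4l-2}$.

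Finally, the plan is to sum over $\bs{j}$. The number of $\bs{j}\in Irr(\mc{R}_{\le N})$ with $\#\bs{j}=l$ is bounded by $(3N)^l$ (three signs times $N$ positive values per slot), whence
\[
\mu(\varepsilon z\notin\mc{U}_\gamma^N)\le C_s\gamma\sum_{l=2}^r(3N)^l N^{-4l-2}=C_s\gamma N^{-2}\sum_{l=2}^r\Bigl(\frac{3}{N^3}\Bigr)^l\le\lambda(r,s)\,\gamma,
\]
the geometric series converging because $N\ge 2(r+1)\ge 6$ gives $3/N^3\le 1/72$. The main technical obstacle I anticipate is the random factor $\|z\|_s^2$ that enters both sides of the non-resonance conditions and makes the threshold $\delta$ itself a function of $I$; I plan to handle this by first restricting to the event $\{c_1\le\|z\|_s^2\le c_2\}$, which has $\mu$-measure $1-O(e^{-c})$ absorbed into $\lambda$, and then applying the deterministic slicing bounds above on this event, together with the verification that the quadratic correction $\varepsilon^2\widehat\Omega_{\bs{j}}$ preserves transversality in the direction $I_{m^\star}$.
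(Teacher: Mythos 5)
Your treatment of the first condition \eqref{form49} is essentially the paper's (a one-dimensional slicing in the action $I_{\kappa_{\bs{j}}}$, whose coefficient in the linear form $\Omega^{(2)}_{\bs{j}}$ is nonzero by irreducibility), and that part is fine. The genuine gap is in your treatment of the second condition \eqref{form410}. Its threshold is $\gamma\|z\|^{2}N^{-4l-2}\max\{\kappa_{\bs{j}}^{-2s},\gamma\|z\|^{2}\}$, and your slicing in the direction $I_{m^\star}$ with $m^\star\approx\kappa_{\bs{j}}$ produces a measure bound proportional to $(m^\star)^{2s}\cdot(\text{threshold})/\varepsilon^{2}$. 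In the regime where the maximum is attained by $\gamma\varepsilon^{2}\|z\|^{2}$ (equivalently $\kappa_{\bs{j}}^{2s}\geq(\gamma\varepsilon^{2}\|z\|^{2})^{-1}$, and $\kappa_{\bs{j}}$ can be as large as $N$), the compensating factor $\kappa_{\bs{j}}^{-2s}$ is absent from the threshold, so your bound is of size $\gamma^{2}\varepsilon^{2}\kappa_{\bs{j}}^{2s}N^{-4l-2}$, which can be as large as $\gamma^{3}N^{2s-4l-4r-4}$ under \eqref{form63}; since $s=O(r^{2})\gg r$, this is unbounded in $N$ and the union bound does not close. You cannot drop the $\max$ either: the $\kappa$-independent lower bound $|\Omega^{(4)}|>\gamma^{2}\|z\|^{4}N^{-4l-2}$ is exactly what is used for the divisors $\Omega^{(4)}_{\bs{k}_m}$ in \eqref{form430}. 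Also, your reading of the role of \eqref{form63} (transversality of the quadratic correction in the $I_{m^\star}$ direction) is not how it is used; and the restriction to $c_1\leq\|z\|_s^{2}\leq c_2$ is neither needed nor safe ($\mu$ is supported on $\|z\|_{s-1}^{2}\leq\tfrac12$, only an upper bound on the norm enters, and $\|z\|_{s}$ need not even be finite $\mu$-a.s.).

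What the paper does to cover the large-$\kappa_{\bs{j}}$ regime is a second, genuinely different estimate (Lemma \ref{le63}), and it then takes the minimum of the two. One replaces $\Omega^{(4)}_{\bs{j}}$ by a modified divisor $\tilde{\Omega}^{(4)}_{\bs{j}}$ whose quadratic part only involves actions $I_d$ with $d\notin\{a_1,\dots,a_l,\Delta_{\bs{j}}\}$; hypothesis \eqref{form63} enters precisely here, to show $|\Omega^{(4)}_{\bs{j}}(\varepsilon^{2}I)-\tilde{\Omega}^{(4)}_{\bs{j}}(\varepsilon^{2}I)|\leq(l+1)\varepsilon^{4}\|z\|^{4}\kappa_{\bs{j}}^{-2s}\leq\gamma\varepsilon^{2}\|z\|^{2}N^{-4l-2}\kappa_{\bs{j}}^{-2s}$, so non-resonance of $\tilde{\Omega}^{(4)}$ transfers to $\Omega^{(4)}$. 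Writing $\tilde{\Omega}^{(4,4)}_{\bs{j}}(\varepsilon^{2}I)=\sum_d\varepsilon^{4}I_d^{2}P_{\bs{j}}(d)/(8Q_{\bs{j}}(d))$ with $P_{\bs{j}},Q_{\bs{j}}\in\mb{Z}[X]$, one uses that $P_{\bs{j}}$ has degree at most $l$ in $d^{2}$ and is not identically zero to find a \emph{small} index $\tilde{a}\in[1,2l+2]\setminus\{a_1,\dots,a_l,\Delta_{\bs{j}}\}$ with $|P_{\bs{j}}(\tilde{a})|\geq1$ (integrality) and $|Q_{\bs{j}}(\tilde{a})|\leq N^{2l+2}$, hence a coefficient of $I_{\tilde{a}}^{2}$ of modulus at least $\tfrac{1}{8N^{2l+2}}$. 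Slicing in this quadratic direction yields $\mu(|\tilde{\Omega}^{(4)}_{\bs{j}}(\varepsilon^{2}I)|\leq\gamma'')\lesssim(2l+2)^{2s}N^{l+1}\sqrt{\gamma''}/\varepsilon^{2}$, whose weight depends only on $r,s$ because $\tilde{a}\leq2l+2$; in the bad regime this is $O(\gamma N^{-l})$ and sums over $\bs{j}$ and $l$. Without an analogue of this quadratic, small-index, integrality-based bound (or some other device for large $\kappa_{\bs{j}}$), your plan cannot yield \eqref{form64}.
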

In order to prove the above measure estimate, we need the next two lemmas.
\begin{lemma}
\label{le62}
Fix an integer $r\geq2$. For $s\geq1$, $N>1$ and $\gamma>0$,  there exists a positive constant $\lambda_{1}$ such that
\begin{equation}
\label{form65}
\mu\big(|\Omega_{\bs{j}}^{(2)}(\varepsilon^{2}I)|>\gamma'\;\text{for any}\;\bs{j}\in Irr(\mc{R}_{\leq N})\;\text{with}\;\#\bs{j}\leq r\big)\geq1-\lambda_{1}\gamma,
\end{equation}
where $\gamma'=\frac{1}{2}\gamma\varepsilon^{2}N^{-4l-2}\kappa_{\bs{j}}^{-2s}$ with $l:=\#\bs{j}$.
\end{lemma}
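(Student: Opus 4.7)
The plan is to apply a union bound over all $\bs{j}\in Irr(\mc{R}_{\leq N})$ with $\#\bs{j}\leq r$. For each such $\bs{j}$, I would establish a per-$\bs{j}$ estimate of the form $\mu(|\Omega^{(2)}_{\bs{j}}(\varepsilon^2 I)|\leq\gamma')\lesssim \gamma N^{-4l-2}$ by freezing all action variables except $I_{\kappa_{\bs{j}}}$ and exploiting the exponential marginal density of $I_{\kappa_{\bs{j}}}$.

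The starting point is the explicit formula
$$
\Omega^{(2)}_{\bs{j}}(I)\;=\;\tfrac{1}{2}\Bigl(\sum_{k=1}^{l}\delta_k I_{a_k}\;-\;I_{\Delta_{\bs{j}}}\mathbf{1}_{\Delta_{\bs{j}}>0}\Bigr),
$$
valid because $\omega^{(2)}_a=I_a/4$ on $\{a\le N\}$. The key structural lemma to establish is that the coefficient $c^*$ of $I_{\kappa_{\bs{j}}}$ in this linear form satisfies $|c^*|\geq 1/2$ for every $\bs{j}\in Irr(\mc{R})$. I would verify this by case analysis. If $\Delta_{\bs{j}}=0$, then $\kappa_{\bs{j}}=\mu_{\min}(\bs{j})$ appears among the $a_k$'s, and by irreducibility all its occurrences carry a common sign, giving coefficient $\pm m/2$ with $m\ge 1$. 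If $\Delta_{\bs{j}}=b>0$ with $\kappa_{\bs{j}}=\mu_{\min}(\bs{j})<b$, a parallel argument applies. The delicate case is $\Delta_{\bs{j}}=b>0$ with $\kappa_{\bs{j}}=b$: here I would use the definition of $Irr$ for $\bs{j}\in\mc{R}^b$, which explicitly removes one index $(1,b)$ when present. Consequently, $\bs{j}\in Irr(\mc{R})$ contains no $(1,b)$ entry, so every occurrence of $b$ in $\bs{j}$ carries sign $-1$. The coefficient of $I_b$ is then $-\tfrac{1}{2}(m_b+1)$ with $m_b\ge 0$, again $\leq -1/2$ in absolute value.

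Once $|c^*|\geq 1/2$ is in hand, the per-$\bs{j}$ estimate is immediate: fixing all other actions, the bad event constrains $I_{\kappa_{\bs{j}}}$ to an interval of Lebesgue length $\leq 4\gamma'/\varepsilon^2$, and the marginal of $I_{\kappa_{\bs{j}}}$ under $\mu$ is (up to the normalization constant on the ball) exponential with rate $\kappa_{\bs{j}}^{2s}$, hence has density bounded by $\kappa_{\bs{j}}^{2s}$. Substituting $\gamma'=\tfrac{1}{2}\gamma\varepsilon^2 N^{-4l-2}\kappa_{\bs{j}}^{-2s}$ yields $\mu(|\Omega^{(2)}_{\bs{j}}(\varepsilon^2 I)|\leq\gamma')\lesssim\gamma N^{-4l-2}$, the factor $\kappa_{\bs{j}}^{2s}$ canceling exactly against $\kappa_{\bs{j}}^{-2s}$. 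Finally, the number of $\bs{j}\in Irr(\mc{R}_{\leq N})$ with $\#\bs{j}=l$ is at most $(2N)^l$, so the union bound gives total bad measure $\lesssim\gamma\sum_{l=2}^{r}2^{l}N^{-3l-2}$, a geometric series bounded by a constant independent of $N$; this yields the desired $\lambda_1$.

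The main obstacle is the verification that $|c^*|\geq 1/2$. This is not automatic from the linear structure of $\Omega^{(2)}_{\bs{j}}$ alone, and fails for indices $\bs{j}$ of the form $((1,b),\bs{j}'')$ with $\Delta_{\bs{j}''}=0$: for such sequences the contribution of $(1,b)$ cancels the $-I_b$ term, and the smallest surviving index can be much larger than $\kappa_{\bs{j}}=b$, which would force an extra factor $(N/b)^{2s}$ in the bound and potentially spoil the $N$-independence of $\lambda_1$. Recognizing that precisely these pathological sequences are excluded by the convention in the definition of $Irr(\bs{j})$ for $\bs{j}\in\mc{R}^b$ (removal of one $(1,b)$ entry) is the crucial observation that makes the estimate clean.
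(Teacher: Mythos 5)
Your proposal is correct and follows essentially the same route as the paper: a union bound over $\bs{j}$, with each per-$\bs{j}$ estimate obtained by freezing all actions except $I_{\kappa_{\bs{j}}}$, confining the weighted variable $x_{\kappa_{\bs{j}}}=\kappa_{\bs{j}}^{2s}I_{\kappa_{\bs{j}}}$ to an interval of length $O(\gamma N^{-4l-2})$, and controlling the restricted Gaussian normalization by the lower bound $\prod_m(1-e^{-A_1\sqrt m})$. The only difference is that you verify explicitly the nonvanishing (indeed $\geq\tfrac12$ in absolute value) of the coefficient of $I_{\kappa_{\bs{j}}}$, using the deletion of one $(1,b)$ entry in the definition of $Irr$; the paper uses this fact implicitly when it writes the constraint $A_2-\gamma N^{-4l-2}\leq x_{\tilde a}\leq A_2+\gamma N^{-4l-2}$, so your argument is a slightly more careful rendering of the same proof.
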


\begin{proof}
Consider the complementary set of \eqref{form65}:
$$\Theta^{(1)}=\{\exists \;\bs{j}\in Irr(\mc{R}_{\leq N})\;\text{with}\;\#\bs{j}\leq r\;\text{such that}\;|\Omega_{\bs{j}}^{(2)}(\varepsilon^{2}I)|\leq\gamma'\}.$$
For any $\bs{j}\in Irr(\mc{R}_{\leq N})$, one has
\begin{align}
\label{form67}
&\mu\big(|\Omega_{\bs{j}}^{(2)}(\varepsilon^{2}I)|\leq\gamma'\big)
\\\notag=&\lim_{M\to\infty}\frac{\int_{\sum_{m\leq M}m^{2s-2}I_{m}\leq\frac{1}{2},|\Omega_{\bs{j}}^{(2)}(\varepsilon^{2}I)|<\gamma'}e^{-\sum_{m\leq M}m^{2s}I_{m}}\prod_{m\leq M}dI_{m}}{\int_{\sum_{m\leq M}m^{2s-2}I_{m}\leq\frac{1}{2}} e^{-\sum_{m\leq M}m^{2s}I_{m}}\prod_{m\leq M}dI_{m}}
\\\notag=&\lim_{M\to\infty}\frac{\int_{\sum_{m\leq M}m^{-2}x_m\leq\frac{1}{2},|\Omega_{\bs{j}}^{(2)}(\varepsilon^{2}I)|<\gamma'}e^{-\sum_{m\leq M}x_{m}}\prod_{m\leq M}dx_{m}}{\int_{\sum_{m\leq M}m^{-2}x_{m}\leq\frac{1}{2}} e^{-\sum_{m\leq M}x_{m}}\prod_{m\leq M}dx_{m}}
\end{align}
with $x_{m}=m^{2s}I_{m}$.  Let $A_1=\frac{1}{2}\big(\sum_{m\in\mb{N}_*}m^{-\frac{3}{2}}\big)^{-1}$ and then
\begin{equation}
\label{form68}
\{\sum_{m\leq M}m^{-2}x_{m}\leq\frac{1}{2}\}\supseteq\{\forall m\leq M, m^{-\frac{1}{2}}x_m\leq A_1\}.
\end{equation}
Hence, the denominator of \eqref{form67} has the lower bound
\begin{equation}
\label{form69}
\prod_{m\leq M}\int_{0}^{A_1\sqrt{m}}e^{-x_m}dx_{m}.
\end{equation}
For $\bs{j}=(\delta_k,a_k)_{k=1}^l$, let $\kappa_{\bs{j}}=\tilde{a}$, and then by $|\Omega_{\bs{j}}^{(2)}(\varepsilon^{2}I)|\leq\gamma'=\frac{1}{2}\gamma\varepsilon^{2}N^{-4l-2}\kappa_{\bs{j}}^{-2s}$, one has
\begin{equation}
\label{form610}
A_2-\gamma N^{-4l-2}\leq x_{\tilde{a}}\leq A_2+\gamma N^{-4l-2}.
\end{equation}
with $A_2=\tilde{a}^{2s}\big|\sum_{a_k\neq \tilde{a}}\delta_kI_{a_k}-I_{\Delta_{\bs{j}}}\big|$ or $\tilde{a}^{2s}\big|\sum_{k=1}^{l}\delta_kI_{a_k}\big|$.
By \eqref{form67}, \eqref{form69} and \eqref{form610}, one has
\begin{align*}
\mu(|\Omega_{\bs{j}}^{(2)}(\varepsilon^{2}I)|\leq\gamma')
&\leq\lim_{M\to\infty}\frac{\prod_{m\neq\tilde{a},m\leq M}(\int_{0}^{+\infty} e^{-x_{m}}dx_{m})\int_{A_2-\gamma N^{-4l-2}}^{A_2+\gamma N^{-4l-2}}dx_{\tilde{a}}}{\prod_{m\leq M}\int_{0}^{A_1\sqrt{m}}e^{-x_m}dx_{m}}
\\&\leq\lim_{M\to\infty}\frac{2\gamma N^{-4l-2}}{\prod_{m\leq M}(1-e^{-A_1\sqrt{m}})}
\\&=\frac{2\gamma}{N^{4l+2}\prod_{m\in\mb{N}_*}(1-e^{-A_1\sqrt{m}})}.
\end{align*}
Hence,  there exists a positive constant $\lambda_1$ such that
\begin{align*}
\mu(\Theta^{(1)})\leq&\sum_{l=2}^{r}\sum_{\substack{\bs{j}\in Irr(\mc{R}_{\leq N})\\\#\bs{j}=l}}\frac{2\gamma}{N^{4l+2}\prod_{m\in\mb{N}_*}(1-e^{-A_1\sqrt{m}})}
\\\leq&\frac{2\gamma}{\prod_{m\in\mb{N}_*}(1-e^{-A_1\sqrt{m}})}\sum_{l=2}^{r}N^{-3l-2}
\\\leq&\lambda_1\gamma,
\end{align*}
which implies the estimate \eqref{form65}. 
\end{proof}

For convenience, we consider $\tilde{\Omega}_{\bs{j}}^{(4)}(I):=\Omega_{\bs{j}}^{(2)}(I)+\tilde{\Omega}_{\bs{j}}^{(4,4)}(I)$ 
with
\begin{align}
\label{form611}
\tilde{\Omega}_{\bs{j}}^{(4,4)}=2\Big(\sum_{k=1}^l\delta_k\tilde{\omega}^{(4)}_{a_k}-\tilde{\omega}^{(4)}_{\Delta_{\bs{j}}}\Big),
\end{align}
where $\tilde{\omega}^{(4)}_0=0$ and for any $a\in\mb{N}_*$,
\begin{align}
\label{form612}
\tilde{\omega}^{(4)}_a=\frac{a}{16}\sum_{\substack{d\leq N\\d\neq a_1,\cdots,a_l,\Delta_{\bs{j}}}}\frac{I^2_d}{d^2-a^2}.
\end{align}

\begin{lemma}
\label{le63}
Fix an integer $r\geq2$. For $s\geq1$, $N\geq2(r+1)$ and $\gamma>0$, there exists a positive constant $\lambda_{2}$ depending on $r,s$ such that
\begin{align}
\label{form613}
\mu\big(|\tilde{\Omega}_{\bs{j}}^{(4)}(\varepsilon^{2}I)|>\gamma''\;\text{for any}\;\bs{j}\in Irr(\mc{R}_{\leq N})\;\text{with}\;\#\bs{j}\leq r\big)\geq1-\lambda_{2}\gamma,
\end{align}
where $\gamma''=\gamma\varepsilon^{2}N^{-4l-2}\max\{\kappa_{\bs{j}}^{-2s},\gamma\varepsilon^{2}\}$ with $l:=\#\bs{j}$.
\end{lemma}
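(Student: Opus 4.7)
The plan is to run the same Fubini argument as in Lemma 6.2, but applied to $\tilde{\Omega}^{(4)}_{\bs{j}}$ instead of $\Omega^{(2)}_{\bs{j}}$. The crucial structural observation I would exploit first is that
\begin{equation*}
\tilde{\Omega}^{(4)}_{\bs{j}}(\varepsilon^{2}I) \;=\; \varepsilon^{2}\,\Omega^{(2)}_{\bs{j}}(I) \;+\; \varepsilon^{4}\,\tilde{\Omega}^{(4,4)}_{\bs{j}}(I),
\end{equation*}
and that the exclusion $d\neq a_1,\ldots,a_l,\Delta_{\bs{j}}$ built into the defining sum \eqref{form612} makes every $\tilde{\omega}^{(4)}_{a_k}$ and $\tilde{\omega}^{(4)}_{\Delta_{\bs{j}}}$ independent of $I_{\kappa_{\bs{j}}}$, since $\kappa_{\bs{j}}\in\{a_1,\ldots,a_l,\Delta_{\bs{j}}\}$ by definition. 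Consequently $\tilde{\Omega}^{(4)}_{\bs{j}}(\varepsilon^{2}I)$ is affine in $I_{\kappa_{\bs{j}}}$ with slope $\pm\varepsilon^{2}/2$, inherited entirely from the linear part $\Omega^{(2)}$.

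Then, for each fixed $\bs{j}\in Irr(\mc{R}_{\leq N})$ with $\#\bs{j}=l\leq r$, I would pass to the coordinate $x_m=m^{2s}I_m$ and apply Fubini in the direction $x_{\kappa_{\bs{j}}}$ exactly as in \eqref{form67}--\eqref{form69}. The fiber $\{|\tilde{\Omega}^{(4)}_{\bs{j}}(\varepsilon^{2}I)|\leq\gamma''\}$ is an interval whose $x_{\kappa_{\bs{j}}}$-length is bounded by
\begin{equation*}
L_{\bs{j}} \;:=\; \frac{4\gamma''\,\kappa_{\bs{j}}^{2s}}{\varepsilon^{2}} \;=\; 4\gamma\,N^{-4l-2}\max\bigl\{1,\;\gamma\varepsilon^{2}\kappa_{\bs{j}}^{2s}\bigr\}.
\end{equation*}
Using the same lower bound $\prod_{m}(1-e^{-A_1\sqrt{m}})$ on the denominator of the normalized Gaussian and the integrability of $e^{-x_{\kappa}}$, this gives the per-$\bs{j}$ estimate $\mu(|\tilde{\Omega}^{(4)}_{\bs{j}}(\varepsilon^{2}I)|\leq\gamma'')\leq C\min\{L_{\bs{j}},1\}$.

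The summation step would split $\bs{j}\in Irr(\mc{R}_{\leq N})$ with $\#\bs{j}=l$ by whether $\gamma\varepsilon^{2}\kappa_{\bs{j}}^{2s}\leq 1$ or not. In the first regime the bound $L_{\bs{j}}=4\gamma N^{-4l-2}$ is exactly the one from Lemma 6.2, so the crude count $\#\{\bs{j}:\#\bs{j}=l\}\leq(3N)^{l}$ gives total contribution $\lesssim\gamma\sum_{l=2}^{r}3^{l}N^{-3l-2}=O(\gamma)$. In the second regime, using $\kappa_{\bs{j}}\leq N$ together with the trivial bound $\min\{L_{\bs{j}},1\}\leq 1$ and the fact that such $\bs{j}$ must have every irreducible index exceeding $(\gamma\varepsilon^{2})^{-1/(2s)}$, I would once more bound the remaining contribution by $O(\gamma)$ (with constant depending on $r,s$), completing \eqref{form613} by taking complements.

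The main obstacle is this second regime: the linearization in $I_{\kappa_{\bs{j}}}$ alone can give $L_{\bs{j}}>1$, so one must trade length against the cardinality of $\bs{j}$ whose smallest irreducible index lies near $N$. The precise form of the max in $\gamma''$---designed so that the threshold never drops below $\gamma^{2}\varepsilon^{4}N^{-4l-2}$---together with the assumption $N\geq 2(r+1)$ (ensuring at least $r+1$ spare indices for the excluded-index bookkeeping in \eqref{form612}) is exactly what makes this trade feasible.
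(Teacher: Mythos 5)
The first half of your argument coincides with the paper's first bound: the observation that the exclusion $d\neq a_1,\dots,a_l,\Delta_{\bs j}$ in \eqref{form612} makes the quartic part independent of $I_{\kappa_{\bs j}}$, followed by a Fubini/linear-in-$x_{\kappa_{\bs j}}$ estimate, is exactly \eqref{form616}. The gap is your second regime, $\gamma\varepsilon^{2}\kappa_{\bs j}^{2s}>1$. There the fiber length produced by the linear estimate is $4\gamma^{2}\varepsilon^{2}\kappa_{\bs j}^{2s}N^{-4l-2}$, which is not controlled (the lemma imposes no upper bound tying $\varepsilon$ to $N^{-s}$, and in the application $s$ can be arbitrarily large), and your fallback --- the trivial bound $\min\{L_{\bs j},1\}\le 1$ combined with the remark that such $\bs j$ have $\kappa_{\bs j}>(\gamma\varepsilon^{2})^{-1/(2s)}$ --- cannot produce a factor of $\gamma$: a per-index bound of $1$, summed over a nonempty (and in general polynomially large in $N$) family of bad $\bs j$, gives $O(1)$, not $O(\gamma)$. ``Trading length against cardinality'' is not a mechanism here because the cardinality carries no smallness in $\gamma$; you never actually use $N\ge2(r+1)$, which you only mention in passing.

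What the paper does to close this regime is qualitatively different: it exploits the \emph{quadratic} dependence of $\tilde{\Omega}^{(4,4)}_{\bs j}$ on one auxiliary mode. Writing $\tilde{\Omega}^{(4,4)}_{\bs j}(\varepsilon^{2}I)=\sum_{d}\varepsilon^{4}I_d^{2}\,P_{\bs j}(d)/(8Q_{\bs j}(d))$ with integer polynomials of degree at most $l$, resp.\ $l+1$, in $d^{2}$, it selects $\tilde a\in[1,2l+2]\setminus\{a_1,\dots,a_l,\Delta_{\bs j}\}$ with $P_{\bs j}(\tilde a)\neq0$ --- this is precisely where $N\ge 2(r+1)$ is used, guaranteeing $\tilde a\le N$ so that $I_{\tilde a}$ genuinely appears in \eqref{form612} --- whence $|d_{\tilde a}|\ge\frac{1}{8N^{2l+2}}$ by \eqref{form617}. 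A one-dimensional measure estimate for the quadratic map $I_{\tilde a}\mapsto d_{\tilde a}\varepsilon^{4}I_{\tilde a}^{2}+A_{\bs j}$, as in \eqref{form618}--\eqref{form619}, yields a bound $\lesssim(2l+2)^{2s}N^{l+1}\sqrt{\gamma''}/\varepsilon^{2}$, and on the branch $\gamma''=\gamma^{2}\varepsilon^{4}N^{-4l-2}$ this is $(2l+2)^{2s}\gamma N^{-l}$ up to constants: an $\varepsilon$-independent, $\gamma$-proportional, $N^{-l}$-summable quantity, which is exactly what your second regime is missing (and is why the max in $\gamma''$ is designed never to drop below $\gamma^{2}\varepsilon^{4}N^{-4l-2}$). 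Taking the minimum of the linear and quadratic bounds as in \eqref{form621} and summing over $\bs j$ finishes the proof; without this quadratic step your argument does not close.
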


\begin{proof}
Consider the complementary set of \eqref{form613}:
$$\Theta^{(2)}=\{\exists \;\bs{j}\in Irr(\mc{R}_{\leq N})\;\text{with}\;\#\bs{j}\leq r\;\text{such that}\;|\tilde{\Omega}_{\bs{j}}^{(4)}(\varepsilon^{2}I)|\leq\gamma''\}.$$
Let $x_{a}=a^{2s}I_{a}$, and then 
\begin{align}
\label{form615}
&\mu\big(|\tilde{\Omega}_{\bs{j}}^{(4)}(\varepsilon^{2}I)|\leq\gamma''\big)
\\\notag=&\lim_{M\to\infty}\frac{\int_{\sum_{a\leq M}m^{2s-2}I_{m}<\frac{1}{2},|\tilde{\Omega}_{\bs{j}}^{(4)}(\varepsilon^{2}I)|\leq\gamma''}e^{-\sum_{m\leq M}m^{2s}I_{m}}\prod_{m\leq M}dI_{m}}{\int_{\sum_{m\leq M}m^{2s-2}I_{m}<\frac{1}{2}} e^{-\sum_{m\leq M}m^{2s}I_{m}}\prod_{m\leq M}dI_{m}}
\\\notag=&\lim_{M\to\infty}\frac{\int_{\sum_{m\leq M}m^{-2}x_{m}\leq\frac{1}{2},|\tilde{\Omega}_{\bs{j}}^{(4)}(\varepsilon^{2}I)|<\gamma''}e^{-\sum_{m\leq M}x_{m}}\prod_{m\leq M}dx_{m}}{\int_{\sum_{m\leq M}m^{-2}x_{m}\leq\frac{1}{2}} e^{-\sum_{m\leq M}x_{m}}\prod_{m\leq M}dx_{m}}.
\end{align}
Now, we are going to estimate \eqref{form613} in two different ways.
\\\indent On one hand, in view of \eqref{form611},  $\tilde{\Omega}_{\bs{j}}^{(4)}(\varepsilon^{2}I)-\Omega_{\bs{j}}^{(2)}(\varepsilon^{2}I)$ is independent of $\{I_{a_1},\cdots,I_{a_l}\}$ and $I_{\Delta_{\bs{j}}}$ if it exists. Similarly with the proof of \eqref{form65} in \Cref{le62}, one has
\begin{equation}
\label{form616}
\mu\big(|\tilde{\Omega}_{\bs{j}}^{(4)}(\varepsilon^{2}I)|\leq\gamma''\big)\leq\frac{4\gamma''\kappa_{\bs{j}}^{2s}}{\varepsilon^{2}\prod_{m\in\mb{N}_*}(1-e^{-A_1\sqrt{m}})}
\end{equation}
with $A_1=\frac{1}{2}(\sum_{m\in\mb{N}_*}m^{-\frac{3}{2}})^{-1}$.

On the other hand, there exist $P_{\bs{j}}(d), Q_{\bs{j}}(d)\in\mb{Z}[X]$ such that
\begin{align*}
\tilde{\Omega}_{\bs{j}}^{(4,4)}(\varepsilon^{2}I)=\sum_{\substack{d\leq N\\d\neq a_1,\cdots,a_l,\Delta_{\bs{j}}}}\varepsilon^{4}I_d^2\frac{P_{\bs{j}}(d)}{8Q_{\bs{j}}(d)},
\end{align*}
where if $\Delta_{\bs{j}}=0$, then
\begin{equation*}
P_{\bs{j}}(d)=\sum_{i=1}^{l}\delta_ia_i\prod_{k\neq i}(d^2-a_k^2),\quad Q_{\bs{j}}(d)=\prod_{k=1}^{l}(d^2-a_k^2),
\end{equation*}
and if $\Delta_{\bs{j}}>0$, then denote $(\delta_{l+1},a_{l+1}):=(-1,\Delta_{\bs{j}})$ and 
\begin{equation*}
P_{\bs{j}}(d)=\sum_{i=1}^{l+1}\delta_ia_i\prod_{k\neq i}(d^2-a_k^2),\quad Q_{\bs{j}}(d)=\prod_{k=1}^{l+1}(d^2-a_k^2).
\end{equation*}
In fact, the order of $P_{\bs{j}}(d)$ and $Q_{\bs{j}}(d)$ are at most $l$ and $l+1$ with respect to $d^2$, respectively. Then there exists a positive integer $\tilde{a}\in[1,2l+2]\backslash\{a_1,\cdots,a_l,\Delta_{\bs{j}}\}$ such that $P_{\bs{j}}(\tilde{a})\neq0$. Hence, one has $|P_{\bs{j}}(\tilde{a})|\geq1$, and by the fact $N\geq2(r+1)\geq2(l+1)$, one has $|Q_{\bs{j}}(\tilde{a})|\leq N^{2l+2}$. 
Thus, rewrite
$\tilde{\Omega}_{\bs{j}}^{(4)}(\varepsilon^{2}I)=d_{\tilde{a}}\varepsilon^{4}I_{\tilde{a}}^{2}+A_{\bs{j}}((\varepsilon^{2}I_{m})_{m\neq\tilde{a}})$, where
\begin{equation}
\label{form617}
|d_{\tilde{a}}|=\Big|\frac{P_{\bs{j}}(\tilde{a})}{8Q_{\bs{j}}(\tilde{a})}\Big|\geq\frac{1}{8N^{2l+2}}.
\end{equation}
%
Notice that
$$\big||d_{\tilde{a}}|\varepsilon^{4}I_{\tilde{a}}^{2}-|A_{\bs{j}}((\varepsilon^{2}I_{m})_{m\neq\tilde{a}})|\big|\leq|\tilde{\Omega}_{\bs{j}}^{(4)}(\varepsilon^{2}I)|\leq\gamma''.$$
When $|A_{\bs{j}}((\varepsilon^{2}I_{m})_{m\neq\tilde{a}})|<3\gamma''$, one has
$$x_{\tilde{a}}\leq\frac{2\tilde{a}^{2s}}{\varepsilon^{2}}\sqrt{\frac{\gamma''}{|d_{\tilde{a}}|}}.$$
 By \eqref{form69} and \eqref{form615}, one has
\begin{align}
\label{form618}
\mu\big(|\tilde{\Omega}_{\bs{j}}^{(4)}(\varepsilon^{2}I)|\leq\gamma''\big)
&\leq\lim_{M\to\infty}\frac{\prod_{m\neq\tilde{a},m\leq M}(\int_{0}^{+\infty} e^{-x_{m}}dx_m)\int_{0}^{\frac{2\tilde{a}^{2s}}{\varepsilon^{2}}\sqrt{\frac{\gamma''}{|d_{\tilde{a}}|}}}dx_{\tilde{a}}}{\prod_{m\leq M}\int_{0}^{A_1\sqrt{m}}e^{-x_m}dx_{m}}
\\\notag&\leq\frac{2\tilde{a}^{2s}}{\varepsilon^{2}\prod_{m\in\mb{N}_*}(1-e^{-A_1\sqrt{m}})}\sqrt{\frac{\gamma''}{|d_{\tilde{a}}|}}.
\end{align}
When $|A_{\bs{j}}((\varepsilon^{2}I_{m})_{m\neq\tilde{a}})|\geq3\gamma''$, one has
$$\frac{\tilde{a}^{2s}}{\varepsilon^{2}}\sqrt{\frac{|A_{\bs{j}}((\varepsilon^{2}I_{m})_{m\neq\tilde{a}})|-\gamma''}{|d_{\tilde{a}}|}}\leq x_{\tilde{a}}\leq\frac{\tilde{a}^{2s}}{\varepsilon^{2}}\sqrt{\frac{|A_{\bs{j}}((\varepsilon^{2}I_{m})_{m\neq\tilde{a}})|+\gamma''}{|d_{\tilde{a}}|}}.$$
 By \eqref{form69} and \eqref{form615}, one has
\begin{align}
\label{form619}
&\mu\big(|\tilde{\Omega}_{\bs{j}}^{(4)}(\varepsilon^{2}I)|\leq\gamma''\big)
\\\notag\leq&\lim_{M\to\infty}\frac{\prod_{m\neq\tilde{a},m\leq M}(\int_{0}^{+\infty} e^{-x_{m}}dx_{m})\int_{\frac{\tilde{a}^{2s}}{\varepsilon^{2}}\sqrt{\frac{|A_{\bs{j}}((\varepsilon^{2}I_{m})_{m\neq\tilde{a}})|-\gamma''}{|d_{\tilde{a}}|}}}^{\frac{\tilde{a}^{2s}}{\varepsilon^{2}}\sqrt{\frac{|A_{\bs{j}}((\varepsilon^{2}I_{m})_{m\neq\tilde{a}})|+\gamma''}{|d_{\tilde{a}}|}}}dx_{\tilde{a}}}{\prod_{m\leq M}\int_{0}^{A_1\sqrt{m}}e^{-x_m}dx_{m}}
\\\notag\leq&\frac{\tilde{a}^{2s}\big(\sqrt{|A_{\bs{j}}((I_{m})_{m\neq\tilde{a}})|+\gamma''}-\sqrt{|A_{\bs{j}}((I_{m})_{m\neq\tilde{a}})|-\gamma''}\big)}{\varepsilon^{2}\prod_{m\in\mb{N}_*}(1-e^{-A_1\sqrt{m}})\sqrt{|d_{\tilde{a}}|}}
\\\notag\leq&\frac{\tilde{a}^{2s}}{\varepsilon^{2}\prod_{m\in\mb{N}_*}(1-e^{-A_1\sqrt{m}})\sqrt{|d_{\tilde{a}}|}}\frac{2\gamma''}{\sqrt{|A_{\bs{j}}((I_{m})_{m\neq\tilde{a}})|-\gamma''}}
\\\notag\leq&\frac{\tilde{a}^{2s}}{\varepsilon^{2}\prod_{m\in\mb{N}_*}(1-e^{-A_1\sqrt{m}})}\sqrt{\frac{2\gamma''}{|d_{\tilde{a}}|}}.
\end{align}
By \eqref{form617}--\eqref{form619}, one has
\begin{align}
\label{form620}
\mu\big(|\tilde{\Omega}_{\bs{j}}^{(4)}(\varepsilon^{2}I)|\leq\gamma''\big)\leq&\frac{2\tilde{a}^{2s}}{\varepsilon^{2}\prod_{m\in\mb{N}_*}(1-e^{-A_1\sqrt{m}})}\sqrt{\frac{\gamma''}{|d_{\tilde{a}}|}}
\\\notag\leq&\frac{4\sqrt{2}(2l+2)^{2s}N^{l+1}\sqrt{\gamma''}}{\varepsilon^{2}\prod_{m\in\mb{N}_*}(1-e^{-A_1\sqrt{m}})}.
\end{align}
\indent To sum up, by the estimates \eqref{form616} and \eqref{form620}, for any $\bs{j}\in Irr(\mc{R}_{\leq N})$ with $\#\bs{j}=l$, one has
\begin{align}
\label{form621}
\mu\big(|\tilde{\Omega}_{\bs{j}}^{(4)}(\varepsilon^{2}I)|\leq\gamma''\big)\leq&\frac{4\min\{\gamma''\kappa_{\bs{j}}^{2s},\sqrt{2}(2l+2)^{2s}N^{l+1}\sqrt{\gamma''}\}}{\varepsilon^{2}\prod_{m\in\mb{N}_*}(1-e^{-A_1\sqrt{m}})}
\\\notag\leq&\frac{4\sqrt{2}(2l+2)^{2s}\gamma}{N^{l}\prod_{m\in\mb{N}_*}(1-e^{-A_1\sqrt{m}})}.
\end{align}
Hence, there exists a positive constant $\lambda_2$ depending on $r,s$ such that
\begin{align}
\label{form622}
\mu(\Theta^{(2)})
&\leq\sum_{l=2}^{r}\sum_{\substack{\bs{j}\in Irr(\mc{R}_{\leq N})\\\#\bs{j}=l}}\frac{4\sqrt{2}(2l+2)^{2s}\gamma}{N^{l}\prod_{m\in\mb{N}_*}(1-e^{-A_1\sqrt{m}})}
\\\notag&\leq\frac{2^{2s+\frac{5}{2}}\gamma}{\prod_{m\in\mb{N}_*}(1-e^{-A_1\sqrt{m}})}\sum_{l=2}^{r}(l+1)^{2s}
\\\notag&\leq\lambda_2\gamma,
\end{align}
which implies the estimate \eqref{form613}.
\end{proof}

\begin{proof}[Proof of $\Cref{le61}$]
Consider the set $\Theta$ defined by
\begin{align*}
\big\{|\Omega_{\bs{j}}^{(2)}(\varepsilon^{2}I)|>\tilde{\gamma}'\;\text{and}\;|\tilde{\Omega}_{\bs{j}}^{(4)}(\varepsilon^{2}I)|>\tilde{\gamma}''\;\text{for any}\;\bs{j}\in Irr(\mc{R}_{\leq N})\;\text{with}\;\#\bs{j}\leq r\big\},
\end{align*}
where
\begin{align*}
&\tilde{\gamma}':=\gamma\varepsilon^{2}\|z\|_{s-1}^{2}N^{-4l-2}\kappa_{\bs{j}}^{-2s},\\&\tilde{\gamma}'':=2\gamma\varepsilon^{2}\|z\|_{s-1}^{2}N^{-4l-2}\max\{\kappa_{\bs{j}}^{-2s},2\gamma\varepsilon^{2}\|z\|_{s-1}^{2}\}
\end{align*}
 with $l:=\#\bs{j}$.
By $\|z\|_{s-1}^2<\frac{1}{2}$, one has $\tilde{\gamma}'<\gamma$ and $\tilde{\gamma}''<\gamma''$. 
Hence, by \Cref{le62} and \Cref{le63}, there exists $\lambda$ depending on $r,s$ such that
\begin{equation}
\label{form623}
\mu(\Theta)\geq1-\lambda\gamma.
\end{equation}

Next, we will show that $\Theta\subseteq\mc{U}_{\gamma}^{N}$, that is to say, we need prove that when $|\tilde{\Omega}_{\bs{j}}^{(4)}(\varepsilon^{2}I)|>\tilde{\gamma}''$, one has 
$$|\Omega_{\bs{j}}^{(4)}(\varepsilon^{2}I)|>\tilde{\gamma}:=\gamma\varepsilon^{2}\|z\|_{s-1}^{2}N^{-4l-2}\max\{\kappa_{\bs{j}}^{-2s},\gamma\varepsilon^{2}\|z\|_{s-1}^{2}\}.$$ 
In view of \eqref{formomega4} and \eqref{form611},  one has
$$|\Omega_{\bs{j}}^{(4)}(\varepsilon^{2}I)-\tilde{\Omega}_{\bs{j}}^{(4)}(\varepsilon^{2}I)|\leq(l+1)\varepsilon^{4}\|z\|_{s-1}^{4}\kappa_{\bs{j}}^{-2s}.$$
Then by the fact $\frac{r+1}{2}N^{4r+2}\varepsilon^{2}\leq\gamma$, one has
$$|\Omega_{\bs{j}}^{(4)}(\varepsilon^{2}I)-\tilde{\Omega}_{\bs{j}}^{(4)}(\varepsilon^{2}I)|\leq\gamma \varepsilon^{2}\|z\|_{s-1}^{2}N^{-4l-2}\kappa_{\bs{j}}^{-2s}.$$
Thus
\begin{align*}
|\Omega_{\bs{j}}^{(4)}(\varepsilon^{2}I)|\geq&|\tilde{\Omega}_{\bs{j}}^{(4)}(\varepsilon^{2}I)|-|\Omega_{\bs{j}}^{(4)}(\varepsilon^{2}I)-\tilde{\Omega}_{\bs{j}}^{(4)}(\varepsilon^{2}I)|
\\>&\gamma\varepsilon^{2}\|z\|_{s-1}^{2}N^{-4l-2}\max\{\kappa_{\bs{j}}^{-2s},\gamma\varepsilon^{2}\|z\|_{s-1}^{2}\}.
\end{align*}
Hence, we finish the proof of \Cref{le61}.
\end{proof}

\subsection{Long time stability}
\label{sec62}
In this subsection, we will prove \Cref{th11}. 

Firstly, we go back to the original system \eqref{form11}.
In view of \eqref{form21}, one has
\begin{equation}
\label{form720}
\|u\|^2_{s+\frac{1}{2}}+\|v\|^2_{s-\frac{1}{2}}=\|\psi\|_s^2+\|\bar{\psi}\|_s^2=2\|\psi\|_s^2.
\end{equation}
In view of \eqref{form22} and \eqref{form23-12}, by the fact $Q(\psi,\overline{\psi})=\frac{1}{4}\|\psi+\bar{\psi}\|_{\frac{1}{2}}^2=\frac{1}{2}\|u\|_1^2$, one has
\begin{equation}
\label{form721}
\left(\begin{array}{c} \eta_a \\ \bar{\eta}_a  \end{array}\right)=
\frac{1}{\sqrt{1-\rho^2\big(\frac{1}{2}\|u\|_1^2\big)}}
\left(\begin{array}{cc}1 & \rho\big(\frac{1}{2}\|u\|_1^2\big) \\\rho\big(\frac{1}{2}\|u\|_1^2\big) & 1 \end{array}\right)
\left(\begin{array}{c} \psi_a \\ \bar{\psi}_a  \end{array}\right),
\end{equation}
\begin{equation}
\frac{d\tau}{dt}=\sqrt{1+2\varphi\Big(\frac{1}{2}\|u\|_1^2\Big)},
\end{equation}
where $\rho(x)=\frac{x}{1+x+\sqrt{1+2x}}$, and $\varphi$ is the inverse of the real map $x\mapsto x\sqrt{1+2x},\; x\geq0$.
%
%
Then assuming $\|u\|_{1}$ is properly small, we have the prior estimates
\begin{equation}
\label{form722}
\frac{2}{\sqrt{6}}\|\psi\|_s\leq\|\eta\|_s\leq\frac{\sqrt{6}}{2}\|\psi\|_s.
\end{equation}
and
\begin{equation}
\label{form730}
\frac{1}{2}t\leq\tau\leq2t.
\end{equation}
By \eqref{form24}, \eqref{form720} and \eqref{form722}, one has
\begin{equation}
\label{form723}
\frac{1}{\sqrt{3}}\big(\|u\|^2_{s+\frac{1}{2}}+\|v\|^2_{s-\frac{1}{2}}\big)^{\frac{1}{2}}\leq\|z\|_{s-1}=\|\eta\|_{s}\leq\frac{\sqrt{3}}{2}\big(\|u\|^2_{s+\frac{1}{2}}+\|v\|^2_{s-\frac{1}{2}}\big)^{\frac{1}{2}}.
\end{equation}

For any $\big(u(0,x),v(0,x)\big)\in B_{s}(\varepsilon)$, by \eqref{form723}, one has $\|z(0)\|_{s-1}<\frac{\sqrt{3}}{2}\varepsilon$.
Consider $N\geq2(r+1)$ and $\gamma\in(0,1)$ satisfying the condition
\begin{equation}
\label{form71}
\Big(\frac{2}{N}\Big)^{\frac{2(s-1)}{r}}\leq \varepsilon\leq\frac{\gamma^{13}}{(C_3N)^{1128(r-1)}}.
\end{equation}
Then the condition \eqref{form63} is satisfied, and thus by \Cref{le61}, there exists a positive constant $\lambda$ such that
\begin{equation}
\label{form724}
\mu\Big(\frac{\sqrt{3}}{2}\varepsilon z(0)\in\mc{U}_{\gamma}^{N}\Big)\geq1-\lambda\gamma.
\end{equation}
Next, we will show that for any $|\tau|\leq2\varepsilon^{-r}$, one has
\begin{equation}
\label{form726}
\|z(\tau)\|_{s-1}\leq\frac{2}{\sqrt{3}}\varepsilon,
\end{equation}
\begin{equation}
\label{form727}
\sup_{a\in\mb{N}_*}a^{2s-2}\big||z_{a}(\tau)|^2-|z_{a}(0)|^2\big|\leq\frac{2}{\sqrt{6}}\varepsilon^{3}.
\end{equation}

By \Cref{th31}, \Cref{th51} and \Cref{th52}, for $\varepsilon\leq \gamma^{13}(C_3N)^{-1128(r-1)}$, there exists a nearly identity coordinate transformation $\phi=\phi^{(1)}\circ\phi^{2}\circ\phi^{(3)}$ such that  $\bs{z'}=\phi^{-1}(\bs{z})$ satisfies the following estimates
\begin{align}
\label{form74}
&\sup_{\|z\|_{s-1}\leq \varepsilon}\|\bs{z}-\phi^{-1}(\bs{z})\|_{s-1}\leq\frac{3^{12}\times4^{r}N^{28}}{\gamma^{2}}\|z\|_{s-1}^{3},
\\\label{form75}
&\sup_{\|z'\|_{s-1}\leq \varepsilon}\|\bs{z}'-\phi(\bs{z}')\|_{s-1}\leq\frac{3^{12}\times4^{r}N^{28}}{\gamma^{2}}\|z'\|_{s-1}^{3},
\end{align}
 and then
\begin{equation}
\label{form76}
\|z'(0)\|_{s-1}\leq\|z(0)\|_{s-1}+\|z(0)-\phi^{-1}(z(0))\|_{s-1}\leq\sqrt{\frac{5}{6}}\varepsilon.
\end{equation}
Denote the escape time of $z'$ by $T:=\inf\{\tau\mid \|z'(\tau)\|_{s-1}=\varepsilon\}$. For any $|\tau|\leq T$, by \Cref{re41-12} and \Cref{th52}, one has
\begin{equation}
\label{form77}
\pa_{\tau}(\|z'(\tau)\|^2_{s-1})=\sum_{a\in\mb{N}_*}|a|^{2s-2}(\bar{z}'_a\pa_{\tau}z'_a+z'_a\pa_{\tau}\bar{z}'_a)\leq 2(F_1+F_2+F_3+F_4)
\end{equation}
with
\begin{align}
\label{form78}
F_1=&\sup_{\|z'\|_{s-1}\leq\varepsilon}\|R'''_{\geq2r+3}(\bs{z}')\|_{s-1}\|z'\|_{s-1},
\\\label{form79-1}
F_2=&\sup_{\|z'\|_{s-1}\leq\varepsilon}\Big\|(D\phi^{(3)})^{-1}(D\phi^{(2)})^{-1}Z_3^{>N}\circ\phi^{(2)}\circ\phi^{(3)}(\bs{z}')\Big\|_{s-1}\|z'\|_{s-1},
\\\label{form79}
F_3=&\sup_{\|z'\|_{s-1}\leq\varepsilon}\Big\|(D\phi^{(3)})^{-1}(D\phi^{(2)})^{-1}\Big(\sum_{l=2}^{r}K^{>N}_{2l+1}\Big)\circ\phi^{(2)}\circ\phi^{(3)}(\bs{z}')\Big\|_{s-1}\|z'\|_{s-1},
\\\label{form710}
F_4=&\sup_{\|z'\|_{s-1}\leq\varepsilon}\Big\|(D\phi^{(3)})^{-1}(D\phi^{(2)})^{-1}R'_{\geq2r+3}\circ\phi^{(2)}\circ\phi^{(3)}(\bs{z}')\Big\|_{s-1}\|z'\|_{s-1},
\end{align}
where $K^{>N}_{2l+1}$ is found in \eqref{form52}.

By \eqref{form587}, for any $|\tau|\leq T$ and $\varepsilon\leq\gamma^{13}(C_3N)^{-1128(r-1)}$, one has
\begin{equation}
\label{form711}
F_1\leq
\frac{4^{(r-3)(r+3)}(C_3N)^{564(r-1)^2}}{\gamma^{13r-17}}\|z'\|_{s-1}^{2r+4}\leq\varepsilon^{r+4}.
\end{equation}
In view of \eqref{form53} and \eqref{form585}, by \Cref{le30} and the coefficient estimate $\|Z_{3}\|_{\ell^{\infty}}\leq\frac{1}{4}$, for $\varepsilon^{r}\geq(\frac{2}{N})^{2(s-1)}$, one has
\begin{align}
\label{form712-12}
F_2&\leq\|(D\phi^{(3)})^{-1}(D\phi^{(2)})^{-1}\|_{\ms{P}_{s-1}\mapsto\ms{P}_{s-1}}\big\|Z^{>N}_{3}\circ\phi^{(2)}\circ\phi^{(3)}(z')\big\|_{s-1}\|z'\|_{s-1}
\\\notag&\leq2\frac{9}{N^{2(s-1)}}\|Z_{3}\|_{\ell^{\infty}}\big\|\phi^{(2)}\circ\phi^{(3)}(\bs{z}')\big\|_{s-1}^{3}\|z'\|_{s-1}
\\\notag&\leq\frac{36}{N^{2(s-1)}}\varepsilon^{4}
\\\notag&<36\varepsilon^{r+4};
\end{align}
by the coefficient estimate \eqref{form37}, one has
\begin{align}
\label{form712}
F_3\leq&\|(D\phi^{(3)})^{-1}(D\phi^{(2)})^{-1}\|_{\ms{P}_{s-1}\mapsto\ms{P}_{s-1}}\sum_{l=2}^{r}\big\|K^{>N}_{2l+1}\circ\phi^{(2)}\circ\phi^{(3)}(\bs{z}')\big\|_{s-1}\|z'\|_{s-1}
\\\notag\leq&2\sum_{l=2}^{r}\frac{3^{l+1}l^{2(s-1)}}{N^{2(s-1)}}\|K_{2l+1}\|_{\ell^{\infty}}\big\|\phi^{(2)}\circ\phi^{(3)}(\bs{z}')\big\|_{s-1}^{2l+1}\|z'\|_{s-1}
\\\notag\leq&\sum_{l=2}^{r}\frac{2^{2l+2}\times3^{l+1}l^{2(s-1)}}{N^{2(s-1)}}C_1^{l^2}\|z'\|_{s-1}^{2l+2}
\\\notag\leq&\frac{2^{5+2s}\times3^3C_1^4}{N^{2(s-1)}}\varepsilon^{6}
\\\notag\leq&3456C_1^4\varepsilon^{r+6};
\end{align}
and by the estimate \eqref{form38} and $\varepsilon\leq\gamma^{13}(C_3N)^{-1128(r-1)}$, one has
\begin{align}
\label{form713}
F_4&\leq\|(D\phi^{(3)})^{-1}(D\phi^{(2)})^{-1}\|_{\ms{P}_{s-1}\mapsto\ms{P}_{s-1}}\|R'_{\geq2r+3}\circ\phi^{(2)}\circ\phi^{(3)}(\bs{z}')\|_{s-1}\|z'\|_{s-1}
\\\notag&\leq2(3C_1)^{r(r+1)}\big\|\phi^{(2)}\circ\phi^{(3)}(\bs{z}')\big\|_{s-1}^{2r+3}\|z'\|_{s-1}
\\\notag&\leq\varepsilon^{r+4}.
\end{align}
Hence, by \eqref{form77}--\eqref{form713}, for any $|\tau|\leq T$, one has
\begin{align}
\label{form714}
\Big|\|z'(\tau)\|^2_{s-1}-\|z'(0)\|^2_{s-1}\Big|\leq&T\sup_{\tau\leq T}\pa_{\tau}(\|z'(\tau)\|^2_{s-1})
\\\notag\leq&2(1+36+3456C_1^4\varepsilon^{2}+1)T\varepsilon^{r+4}.
\end{align}
If $T\leq2\varepsilon^{-r}$, then by \eqref{form76} and \eqref{form714}, one has
\begin{align*}
\varepsilon^{2}=\|z'(T)\|_{s-1}^{2}\leq \|z'(0)\|_{s-1}^2+\Big|\|z'(T)\|^2_{s-1}-\|z'(0)\|^2_{s-1}\Big|
<\frac{5}{6}\varepsilon^{2}+\frac{1}{6}\varepsilon^{2},
\end{align*}
which is impossible. Hence, one has $T\geq2\varepsilon^{-r}$.
By \eqref{form75}, for any $\tau\leq2\varepsilon^{-r}$, one has
$$\|z(\tau)\|_{s-1}\leq\|z'(\tau)\|_{s-1}+\|z'(\tau)-\phi(z'(\tau))\|_{s-1}\leq\frac{2}{\sqrt{3}}\varepsilon.$$

 In addition, for any $a\in\mb{N}_{*}$, one has
\begin{align}
\label{form715}
&\big||z_{a}(\tau)|^2-|z_{a}(0)|^2\big|
\\\notag\leq&\big||z_{a}(\tau)|^2-|z'_{a}(\tau)|^2\big|+\big||z'_{a}(\tau)|^2-|z'_{a}(0)|^2\big|+\big||z'_{a}(0)|^2-|z_{a}(0)|^2\big|.
\end{align}
Similarly to \eqref{form77}--\eqref{form714} but $a^{2s-2}|z'_{a}(\tau)|^2$ instead of $\|z'\|_{s-1}$, for any $|\tau|\leq2\varepsilon^{-r}$, one has
\begin{equation}
\label{form716}
a^{2s-2}\big||z'_{a}(\tau)|^2-|z'_{a}(0)|^2\big|\leq4(1+36+3456C_1^4\varepsilon^2+1)\varepsilon^{4}.
\end{equation}
By \eqref{form74}, one has
\begin{align}
\label{form717}
|a|^{2(s-1)}\big||z_{a}(\tau)|^2-|z'_{a}(\tau)|^2\big|\leq&\big(\|z(\tau)\|_{s-1}+\|z'(\tau)\|_{s-1}\big)\|z(\tau)-z'(\tau)\|_{s-1}
\\\notag\leq&\big(\frac{2}{\sqrt{3}}+1\big)\frac{3^{12}\times4^{r}N^{28}}{\gamma^{2}}\varepsilon^{4}
\\\notag<&\frac{3^{13}\times4^{r}N^{28}}{\gamma^{2}}\varepsilon^{4}.
\end{align}
Then the estimate \eqref{form727} follows from \eqref{form715}--\eqref{form717} for  $\varepsilon\leq\gamma^{13}(C_3N)^{-1128(r-1)}$.

Go back to the original variables. For any $s\geq s_0=O(r^2)$, we take 
\begin{equation}
\label{form718}
\lambda\gamma=\varepsilon^{\frac{1}{14}} 
\end{equation} 
and in view of the condition \eqref{form71}, let the open set
\begin{equation}
\label{form719}
\mc{V}_{r,s}:=\bigcup_{0<\varepsilon\leq\varepsilon_{0}}\bigg(\Big(\bigcup_{C_3\big(\lambda^{13}\varepsilon^{\frac{1}{14}}\big)^{\frac{1}{1128(r-1)}}<\frac{1}{N}<2\varepsilon^{\frac{r}{2s}}}\mc{U}_{\gamma}^{N}\Big)\bigcap\Big(B_{s}(\varepsilon)\backslash \overline{B_{s}(\frac{\varepsilon}{2})}\Big)\bigg),
\end{equation}
where $\overline{B_{s}(\frac{\varepsilon}{2})}$ is the closure of $B_{s}(\frac{\varepsilon}{2})$.
Then by \eqref{form723} and \eqref{form724}, one has
\begin{equation}
\label{form725}
\mu\Big(\varepsilon(u(0), v(0))\in\mc{V}_{r,s}\Big)\geq1-\varepsilon^{\frac{1}{14}}.
\end{equation}
Hence, for $\big(u(0,x),v(0,x)\big)\in \mc{V}_{r,s}\bigcap B_{s}(\varepsilon)$, by \eqref{form730} \eqref{form723} and \eqref{form726}, for any 
$|t|\leq\varepsilon^{-r}$, one has
\begin{equation}
\label{form728}
\|u(t)\|^2_{s+\frac{1}{2}}+\|v(t)\|^2_{s-\frac{1}{2}}\leq3\|z(t)\|^2_{s-1}\leq4\varepsilon^2,
\end{equation}
which indeed implies the prior estimates \eqref{form722} and \eqref{form730}.
Notice that in view of \eqref{form721}, one has
\begin{equation}
\label{form731}
\frac{2}{\sqrt{6}}\big||\eta_a(t)|^2-|\eta_a(0)|^2\big|\leq\big||\psi_a(t)|^2-|\psi_a(0)|^2\big|\leq\frac{\sqrt{6}}{2}\big||\eta_a(t)|^2-|\eta_a(0)|^2\big|,
\end{equation}
where $\{\psi_a\}_{a\in\mb{N}_*}$ and $\{\eta_a\}_{a\in\mb{N}_*}$ are the sequences of Fourier coefficients of $\psi$ and $\eta$, respectively. Hence, in view of $I_a:=\frac{a|u_a|^{2}+a^{-1}|v_a|^2}{2}=|\psi_a|^2$ and $\eta_a=\frac{z_a}{a}$, by \eqref{form727} and \eqref{form731}, one has
\begin{equation}
\label{form729}
\sup_{a\in\mb{N}_*}a^{2s}|I_{a}(t)-I_{a}(0)|\leq\varepsilon^{3}.
\end{equation}
%

\section{The case of Gevrey and  analytic spaces}
\label{sec7}
\subsection{Notations and estimation of vector fields}
\label{sec71}
For $\rho\geq0$ and $0<\theta\leq1$, introduce the Hilbert space 
$$\ell_{\rho,\theta}^2:=\{z=\{z_a\}_{a\in\mb{N}_*}\in\mb{C}^{\mb{N}_*}\mid \|z\|_{\rho,\theta}^2:=\sum_{a\in \mb{N}_*}e^{2\rho a^{\theta}}|z_a|^2<+\infty\}.$$
Taking $\rho=0$,  it is the usual $\ell^2$ space.
Denote the phase space $\ms{P}_{\rho,\theta}:=\ell_{\rho,\theta}^2\times \ell_{\rho,\theta}^2$
and the open ball
$$B_{\rho,\theta}(\varepsilon):=\{\bs{z}:= (z,\bar{z})\in\ms{P}_{\rho,\theta}\mid \|\bs{z}\|^2_{\rho,\theta}:=\|z\|^2_{\rho,\theta}+\|\bar{z}\|^2_{\rho,\theta}<\varepsilon^2\}.$$
Then the convenient notation $\zeta=(\zeta_{j})_{j=(\delta,a)\in\mb{U}_{3}\times\mb{N}_*}$ in \eqref{form3-23-notation} has 
\begin{equation}
\label{form81}
\|\zeta\|_{\rho,\theta}:=\sum_{j\in\mb{U}_{3}\times\mb{N}_*}e^{2\rho|j|^{\theta}}|\zeta_{j}|=3\|z\|_{\rho,\theta}^2.
\end{equation}

Now, we estimate polynomial vector fields in the  space $\ell_{\rho,\theta}^2$.
\begin{lemma}
\label{le72}
For $\rho\geq0$, $0<\theta\leq1$ and $X(\bs{z})\in\ms{M}_{2l+1}$, we have
\begin{equation}
\label{form89}
\|X(\bs{z})\|_{\rho,\theta}<3^{l+1}\|X\|_{\ell^{\infty}}\|z\|_{\rho,\theta}\|z\|_{0,\theta}^{2l}.
\end{equation}
\end{lemma}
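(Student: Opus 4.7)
The plan is to mirror the proof of \Cref{le21-1} almost verbatim, with the Sobolev weight $a^{2s}$ replaced throughout by the Gevrey/analytic weight $e^{2\rho a^{\theta}}$. The key observation that makes this transparent is that in the structural formula \eqref{form25} for a component of $X \in \ms{M}_{2l+1}$, the large weight only needs to be placed on the single outer factor $z_a$ or $\bar{z}_a$, while the inner monomial $\zeta_{\bs{j}}$ is summed in the unweighted $\ell^1$ sense; this avoids any need to distribute $e^{2\rho a^\theta}$ across the frequencies appearing inside $\bs{j}$.

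First I would use the reversal identity $\overline{X^{(z_a)}}=X^{(\bar z_a)}$ (valid for every $X\in\ms{M}_{2l+1}$ as noted after \eqref{form27-12-1}) to reduce to the $z$-component:
\begin{equation*}
\|X(\bs{z})\|_{\rho,\theta}=\sqrt{\|X^{(z)}(\bs{z})\|^2_{\rho,\theta}+\|X^{(\bar z)}(\bs{z})\|^2_{\rho,\theta}}=\sqrt{2}\,\|X^{(z)}(\bs{z})\|_{\rho,\theta}.
\end{equation*}
Next, I would invoke \eqref{form25} and the coefficient bound \eqref{defnorm} to estimate
\begin{equation*}
\|X^{(z)}(\bs{z})\|_{\rho,\theta}\le \|X\|_{\ell^\infty}\Big(\sum_{a\in\mb N_*}e^{2\rho a^\theta}(|z_a|+|\bar z_a|)^2\Big)^{\frac12}\sum_{\bs{j}\in(\mb U_3\times\mb N_*)^l}|\zeta_{\bs{j}}|.
\end{equation*}

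Then I would bound the two factors separately. Using $|z_a|=|\bar z_a|$, the weighted sum is at most $2\|z\|_{\rho,\theta}$, exactly as in the Sobolev proof. For the second factor, since $\|z\|_{0,\theta}=\|z\|_{0}$ (the $\rho=0$ norm is simply the $\ell^2$ norm, independent of $\theta$), the analogue of \eqref{formzetanorm} reads $\sum_{\bs{j}}|\zeta_{\bs{j}}|=\|\zeta\|_{0,\theta}^{l}=(3\|z\|_{0,\theta}^{2})^{l}$. Combining these estimates gives
\begin{equation*}
\|X(\bs{z})\|_{\rho,\theta}\le 2\sqrt{2}\,\|X\|_{\ell^\infty}\|z\|_{\rho,\theta}(3\|z\|_{0,\theta}^2)^l<3^{l+1}\|X\|_{\ell^\infty}\|z\|_{\rho,\theta}\|z\|_{0,\theta}^{2l},
\end{equation*}
where the last inequality uses $2\sqrt{2}\cdot 3^l<3^{l+1}$.

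There is really no main obstacle here: the argument is a direct transcription of \Cref{le21-1} because the subadditivity or even the specific form of $a^\theta$ is never invoked, and the outer weight is carried entirely by the single index $a$ of $X^{(z_a)}$. The only mildly subtle point worth flagging in the write-up is the observation $\|z\|_{0,\theta}=\|z\|_0$, which aligns the statement of \eqref{form89} with the analogous Sobolev bound; this is what lets the inner monomial sum be controlled without paying any exponential weight.
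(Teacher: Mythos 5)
Your proposal is correct and follows exactly the route the paper takes: the paper's proof of this lemma simply says it is the same as \Cref{le21-1} with $\|\cdot\|_{\rho,\theta}$ in place of $\|\cdot\|_{s}$, which is precisely the transcription you carry out (weight only on the outer factor $z_a$ or $\bar z_a$, unweighted $\ell^1$ sum over $\zeta_{\bs j}$ giving $(3\|z\|_{0,\theta}^2)^l$, and the final $2\sqrt{2}\cdot 3^l<3^{l+1}$ step). No issues.
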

\begin{proof}
It is same as \Cref{le21-1} with the norm $\|\;\|_{\rho,\theta}$ in place of $\|\;\|_{s}$.
\end{proof}

Especially, we estimate the truncated remainder terms of  resonant polynomial vector fields in the following lemma.
\begin{lemma}
\label{le73}
Consider the vector field $K^{>N}(\bs{z})\in\ms{R}_{2l+1}$ with the $z_a$-component  
\begin{equation}
\label{form810}
K^{(z_a)}_{>N}(\bs{z})=
\left\{\begin{aligned}
z_a\sum_{\substack{\bs{j}\in\mc{R}_{l}^0\\j_{1}^{*}>N}}\tilde{K}^{(z_a,z_a)}_{\bs{j}}\zeta_{\bs{j}}+\bar{z}_a\sum_{\substack{\bs{j}\in\mc{R}_{l}^a\\j_{1}^{*}>N}}\tilde{K}^{(z_a,\bar{z}_a)}_{\bs{j}}\zeta_{\bs{j}},\;  &\text{for}\;a\leq N,
\\z_a\sum_{\substack{\bs{j}\in\mc{R}_{l}^0\\j_{1}^{*}>N}}\tilde{K}^{(z_a,z_a)}_{\bs{j}}\zeta_{\bs{j}}+\bar{z}_a\sum_{\bs{j}\in\mc{R}_{l}^a}\tilde{K}^{(z_a,\bar{z}_a)}_{\bs{j}}\zeta_{\bs{j}}, \quad  &\text{for}\; a>N.
\end{aligned}\right.
\end{equation}
For $\rho\geq0$ and $0<\theta\leq1$, one has
\begin{equation}
\label{form811}
\|K^{>N}(\bs{z})\|_{\rho,\theta}<\frac{3^{l+1}}{e^{2\rho(\frac{N}{l})^{\theta}}}\|K\|_{\ell^{\infty}}\|z\|_{\rho,\theta}^3\|z\|_{0,\theta}^{2l-2}.
\end{equation}
\end{lemma}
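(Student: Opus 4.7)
My plan is to follow the proof of \Cref{le30} line by line, replacing the polynomial weight $|\cdot|^{2s}$ by the Gevrey weight $e^{2\rho|\cdot|^\theta}$. The starting combinatorial observation is identical: every multi-index $\bs{j}$ that actually appears in the sum defining $K^{(z_a)}_{>N}$ satisfies $j_1^* > N/l$. When $a\leq N$ this is imposed directly by the truncation $j_1^*>N$ in \eqref{form810}; for $\bs{j}\in\mc{R}_l^a$ with $a>N$, the resonance identity $\sum_k\delta_k a_k = a > N$ forces $j_1^* = \max_k|a_k| > N/l$. Together with the computation from the proof of \Cref{le21-1}, this already gives
$$\|K^{>N}(\bs{z})\|_{\rho,\theta} \leq 2\sqrt{2}\,\|K\|_{\ell^\infty}\,\|z\|_{\rho,\theta}\sum_{\substack{\bs{j}\in(\mb{U}_3\times\mb{N}_*)^l\\ j_1^*>N/l}}|\zeta_{\bs{j}}|.$$

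The only genuinely new step is to estimate this index sum with the exponential weight. In place of the polynomial gain $1\leq(lj_1^*/N)^{2s}$ used in \Cref{le30}, I would use the elementary inequality
$$1\leq e^{2\rho\bigl((j_1^*)^\theta - (N/l)^\theta\bigr)}\qquad\text{valid whenever }j_1^*>N/l,$$
which holds because $x\mapsto x^\theta$ is increasing on $(0,\infty)$ for $0<\theta\leq 1$. This factors the sum as $e^{-2\rho(N/l)^\theta}\sum_{\bs{j}} e^{2\rho(j_1^*)^\theta}|\zeta_{\bs{j}}|$. To obtain the advertised splitting $\|z\|_{\rho,\theta}^3\|z\|_{0,\theta}^{2l-2}$ rather than the weaker $\|z\|_{\rho,\theta}^{2l+1}$, the exponential weight must be placed on only one coordinate of $\bs{j}$: using $e^{2\rho(j_1^*)^\theta}=\max_k e^{2\rho|j_k|^\theta}\leq\sum_k e^{2\rho|j_k|^\theta}$ and relabeling symmetry, the sum decouples as a single Gevrey-weighted factor $\|\zeta\|_{\rho,\theta}$ times $l-1$ unweighted factors $\|\zeta\|_{0,\theta}$, up to a harmless combinatorial constant to be absorbed into $3^{l+1}$.

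Substituting $\|\zeta\|_{\rho,\theta}=3\|z\|_{\rho,\theta}^2$ and $\|\zeta\|_{0,\theta}=3\|z\|_{0,\theta}^2$ from \eqref{form81} and collecting the numerical constants then yields \eqref{form811}. The delicate point, exactly as in the Sobolev proof, is the $\max$-versus-$\sum$ bookkeeping deciding which coordinate of $\bs{j}$ carries the large weight: bounding $e^{2\rho(j_1^*)^\theta}$ by the product $\prod_k e^{2\rho|j_k|^\theta}$ would wastefully promote every factor in $|\zeta_{\bs{j}}|$ to the Gevrey norm and destroy the sharp power distribution, whereas bounding it by the sum keeps the weight localized and preserves the separation between $\|z\|_{\rho,\theta}$ and $\|z\|_{0,\theta}$ that the later applications of the lemma will require.
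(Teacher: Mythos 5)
Your proposal is correct and follows essentially the same route as the paper's proof: it reduces, exactly as in \Cref{le30}, to the sum over indices with $j_{1}^{*}>\frac{N}{l}$ (using the resonance identity for $a>N$), and then places the Gevrey weight on the single largest coordinate so that one factor is measured in $\|\zeta\|_{\rho,\theta}=3\|z\|_{\rho,\theta}^{2}$ and the remaining $l-1$ factors in $\|\zeta\|_{0,\theta}$, yielding \eqref{form811}. The paper phrases this slightly more directly by bounding $\sum_{|j|>\frac{N}{l}}|\zeta_{j}|\leq e^{-2\rho(\frac{N}{l})^{\theta}}\|\zeta\|_{\rho,\theta}$ rather than via your insertion of $1\leq e^{2\rho\bigl((j_{1}^{*})^{\theta}-(\frac{N}{l})^{\theta}\bigr)}$ followed by $\max\leq\sum$, but the two bookkeepings are the same in substance.
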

\begin{proof}
For $\bs{j}=(\delta_k,a_k)_{k=1}^l\in\mc{R}^a_{l}$ with $a>N$, one has $\sum_{k=1}^{l}\delta_ka_k=a>N$ and thus $j_{1}^{*}>\frac{N}{l}$. 
In view of \eqref{form810}, for any $\bs{j}\in\mc{R}^0_{l}$ or $\mc{R}^a_{l}$ in $K^{>N}(\bs{z})$, we have $j_{1}^{*}>\frac{N}{l}.$
Then by \eqref{form81} and the inequality 
$$\sum_{j>\frac{N}{l}}|\zeta_{j}|\leq  e^{-2\rho(\frac{N}{l})^{\theta}}\|\zeta\|_{\rho,\theta}=3e^{-2\rho(\frac{N}{l})^{\theta}}\|z\|_{\rho,\theta}^2,$$
 one has
\begin{align*}
\|K^{>N}(\bs{z})\|_{\rho,\theta}=&\sqrt{2}\Big(\sum_{a\in \mb{N}_*}e^{2\rho a^{\theta}}\big|K_{>N}^{(z_a)}\big|^2\Big)^{\frac{1}{2}}
\\\leq&\sqrt{2}\|K\|_{\ell^{\infty}}\Big(\sum_{a\in \mb{N}_*}e^{2\rho a^{\theta}}\big(|\bar{z}_a|+|z_{a}|\big)^2\Big)^{\frac{1}{2}}\sum_{\substack{\bs{j}\in(\mb{U}_{3}\times\mb{N}_*)^{l}\\j_{1}^{*}>\frac{N}{l}}}|\zeta_{\bs{j}}|
\\\leq&\sqrt{2}\|K\|_{\ell^{\infty}}\big(2\|z\|_{\rho,\theta}\big)\Big(3^{l}e^{-2\rho(\frac{N}{l})^{\theta}}\|z\|_{\rho,\theta}^2\|z\|_{0,\theta}^{2l-2}\Big)
\\<&\frac{3^{l+1}}{e^{2\rho(\frac{N}{l})^{\theta}}}\|z\|_{\rho,\theta}^3\|z\|_{0,\theta}^{2l-2}.
\end{align*}
which is the estimate \eqref{form811}.
\end{proof}

Next in view of the small divisors \eqref{formomega2} and \eqref{formomega4}, we give the new small divisor conditions with the weight $e^{2\rho j^{\theta}}$ in place of $j^{2s}$. Concretely, for $r\geq2$, $N\geq1$ and $\gamma>0$, we say that $z\in \ell_{\rho,\theta}^2$ belongs to the open set $\mc{U}_{\gamma}^{N}$, if for any $\bs{j}\in Irr(\mc{R}_{\leq N})$ with $\#\bs{j}=l\leq r$, one has
\begin{align}
\label{form82}
&|\Omega_{\bs{j}}^{(2)}(I)|>\gamma\|z\|_{\rho,\theta}^{2}N^{-4l-2}e^{-2\rho\kappa_{\bs{j}}^{\theta}},
\\\label{form83}
&|\Omega_{\bs{j}}^{(4)}(I)|>\gamma\|z\|_{\rho,\theta}^{2}N^{-4l-2}\max\{e^{-2\rho\kappa_{\bs{j}}^{\theta}},\gamma\|z\|_{\rho,\theta}^{2}\}.
\end{align}

Remark that the set $\mc{U}_{\gamma}^{N}$ is stable with respect to the action $I$. Concretely, For $z\in\mc{U}_{\gamma}^{N}$, if $z'\in\ell^2_{\rho,\theta}$ satisfies 
\begin{equation}
\label{form85}
\|z'\|_{\rho,\theta}\leq4\|z\|_{\rho,\theta}\quad\text{and}\quad
\sup_{a\leq N}e^{2\rho a^{\theta}}|I'_{a}-I_{a}|\leq\frac{\gamma^{2}\|z\|_{\rho,\theta}^{2}}{288(r+1)N^{4r+3}},
\end{equation}
then $z'\in\mc{U}_{\gamma/2}^{N}$.

Notice that in order to eliminate the resonant but non-integrable vector fields, we have set up a rational framework for the Sobolev space, where the control condition \eqref{form416} is 
$$\prod_{m=1}^{\#\bs{h}}\kappa_{\bs{h}_{m}}^{2s}\leq\prod_{m=1}^{\#\bs{j}}(j_{m}^*)^{2s}.$$
Similarly, we define the resonant rational vector fields in the space $\ell_{\rho,\theta}^2$.
In view of the small divisor conditions \eqref{form82} and \eqref{form83},  we use the weight $e^{2\rho j^{\theta}}$ of the space $\ell_{\rho,\theta}^2$ in place of $j^{2s}$ of the Sobolev space. Concretely,  instead of the condition \eqref{form416}, we use the following  condition
$$\prod_{m=1}^{\#\bs{h}}e^{2\rho\kappa_{\bs{h}_{m}}^{\theta}}\leq \prod_{m=1}^{\#\bs{j}}e^{2\rho(j_{m}^*)^{\theta}},$$
which is the control condition
\begin{equation}
\label{form84}
\sum_{m=1}^{\#\bs{h}}\kappa^{\theta}_{\bs{h}_{m}}\leq\sum_{m=1}^{\#\bs{j}}(j_{m}^*)^{\theta}.
\end{equation}
In the same way, the control condition \eqref{form84} is well kept in the commutator of rational vector fields.

In the following lemma, we estimate resonant rational vector fields in the space $\ell_{\rho,\theta}^2$.
\begin{lemma}
\label{le74}
Fix $\rho>0$, $0<\theta\leq1$,  $r\geq2$, $N\geq1$ and $\gamma\in(0,1)$. 
Being given $\Gamma_{l}, \Gamma'_{l}\in\mc{H}_{l,N}$ satisfying $\mc{J}_{\Gamma_l,\Gamma'_l}<+\infty$,
consider the rational vector field $Q_{\Gamma_{l},\Gamma'_{l}}\in\ms{H}_{l,N}$.
 For $z\in\mc{U}_{\gamma}^{N}$, one has
\begin{equation}
\label{form812}
\|Q_{\Gamma_l,\Gamma'_l}(\bs{z})\|_{\rho,\theta}\leq 6\sqrt{2}\mc{J}_{\Gamma_l,\Gamma'_l}^3\max_{(\alpha,\beta,\beta')\in\mc{F}_{l}}\frac{(12\alpha^2)^{\alpha-1}N^{(4\alpha+2)(\beta+\beta')}}{\gamma^{\beta+2\beta'}}\|Q_{\Gamma_{l},\Gamma'_{l}}\|_{\ell^{\infty}}\|z\|_{\rho,\theta}^{2l+1}.
\end{equation}
\end{lemma}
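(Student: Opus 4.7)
The proof runs in complete parallel with that of Lemma 3.2, with the Sobolev weight $|j|^{2s}$ systematically replaced by the Gevrey/analytic weight $e^{2\rho|j|^{\theta}}$, and the multiplicative control condition (3.24) superseded by its exponential analogue (7.5). First, I would invoke \Cref{re41}\eqref{re41-2} to decompose
\[
Q^{(z)}_{\Gamma_l,\Gamma'_l}(\bs{z})\pa_z=\sum_{a\in\mb{N}_*}\sum_{(\alpha,\beta,\beta')\in\mc{F}_l}\bigl(Q_1^{(a,\alpha,\beta,\beta')}(\bs{z})+Q_2^{(a,\alpha,\beta,\beta')}(\bs{z})\bigr)\pa_{z_a},
\]
with $\#\mc{F}_l\leq\mc{J}_{\Gamma_l,\Gamma'_l}^3$, and reduce, via $\|Q_{\Gamma_l,\Gamma'_l}\|_{\rho,\theta}=\sqrt{2}\|Q^{(z)}_{\Gamma_l,\Gamma'_l}\|_{\rho,\theta}$, to bounding $\bigl(\sum_{a}e^{2\rho a^{\theta}}|Q_i^{(a,\alpha,\beta,\beta')}|^2\bigr)^{1/2}$ for $i=1,2$, exactly as in \eqref{form427}.

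Next, I would insert the Gevrey small divisor conditions \eqref{form82}--\eqref{form83} in place of \eqref{form49}--\eqref{form410}, obtaining the analogue of \eqref{form430}:
\[
|\Omega^{(2)}_{\bs{h}_m}|,\,|\Omega^{(4)}_{\bs{h}_m}|>\gamma\|z\|_{\rho,\theta}^{2}N^{-4\alpha-2}e^{-2\rho\kappa^{\theta}_{\bs{h}_m}},\qquad |\Omega^{(4)}_{\bs{k}_m}|>\gamma^{2}\|z\|_{\rho,\theta}^{4}N^{-4\alpha-2}.
\]
Combined with the exponential control condition \eqref{form84}, written multiplicatively as $\prod_{m=1}^{\#\bs{h}}e^{2\rho\kappa^{\theta}_{\bs{h}_m}}\leq\prod_{m=1}^{\#\bs{j}}e^{2\rho(j_m^*)^{\theta}}$, this produces
\[
|f_J(I)|\leq\Bigl(\prod_{m=1}^{\alpha}e^{2\rho(j_m^*)^{\theta}}\Bigr)\frac{N^{(4\alpha+2)(\beta+\beta')}}{\gamma^{\beta+2\beta'}\|z\|_{\rho,\theta}^{2\beta+4\beta'}},
\]
which is the exact Gevrey analogue of \eqref{form431}.

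Then, using \Cref{def41}\eqref{def41-2} to control the cardinality $\#(\Gamma_l\cup\Gamma'_l;a,\bs{j})\leq(2\alpha)^{2\alpha-3}$, and invoking \eqref{form81} in the form
\[
\sum_{\bs{j}\in(\mb{U}_{3}\times\mb{N}_*)^{\alpha}}\Bigl(\prod_{m=1}^{\alpha}e^{2\rho|j_m|^{\theta}}\Bigr)|\zeta_{\bs{j}}|=\|\zeta\|_{\rho,\theta}^{\alpha}=(3\|z\|_{\rho,\theta}^{2})^{\alpha},
\]
I would obtain the counterpart of \eqref{form432}, namely
\[
\sum_{J\in\Gamma_l^{(a,\alpha,\beta,\beta')}}|\zeta_{\bs{j}}|\,|f_J(I)|\leq 3(12\alpha^2)^{\alpha-1}\frac{N^{(4\alpha+2)(\beta+\beta')}}{\gamma^{\beta+2\beta'}}\|z\|_{\rho,\theta}^{2l},
\]
where the reduction $\alpha-\beta-2\beta'=l$ is used. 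Multiplying by $\|Q_{\Gamma_l,\Gamma'_l}\|_{\ell^{\infty}}\|z\|_{\rho,\theta}$ and summing the $Q_1,Q_2$ contributions yields \eqref{form812}.

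There is no substantive obstacle: the proof is essentially a transcription, since the condition \eqref{form84} was engineered so that the exponential factors $\prod_m e^{2\rho\kappa^{\theta}_{\bs{h}_m}}$ arising from the small divisor bounds are precisely absorbed by the monomial weights $\prod_m e^{2\rho(j_m^*)^{\theta}}$. The only subtlety worth noting is the passage from a product of weights (Sobolev case) to a sum of exponents $\sum_m\kappa^{\theta}_{\bs{h}_m}\leq\sum_m(j_m^*)^{\theta}$ (Gevrey case), which exploits the additivity of the exponent $\theta$ in the weight and explains why \eqref{form84} is stated as an inequality of sums.
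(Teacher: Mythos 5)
Your proposal is correct and follows exactly the route the paper intends: the paper's own proof of \Cref{le74} is literally the one-line remark that it is the same as \Cref{le42} with $\|\cdot\|_{s}$ replaced by $\|\cdot\|_{\rho,\theta}$, and your transcription (decomposition via \Cref{re41}, the Gevrey small divisor bounds \eqref{form82}--\eqref{form83}, the exponential control condition \eqref{form84} absorbing the factors $e^{2\rho\kappa^{\theta}_{\bs h_m}}$, and the counting bound from \Cref{def41}) is precisely how that replacement works out.
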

\begin{proof}
It is same as \Cref{le42} with the norm $\|\;\|_{\rho,\theta}$ in place of $\|\;\|_{s}$.
\end{proof}

\subsection{Normal formal theorem}
\label{sec72}
In view of \eqref{form21}, \eqref{form22} and \eqref{form24}, we have the change of variables
$$\mc{G}_{\rho,\theta,\frac{3}{2}}\times \mc{G}_{\rho,\theta,\frac{1}{2}}\to \ell_{\rho,\theta}^2\times\ell_{\rho,\theta}^2, \; (u,\pa_tu)\mapsto(z,\bar{z}):=\bs{z}.$$
Inserting this change into the original system \eqref{form11}, we obtain the system $\pa_{\tau}\bs{z}=X(\bs{z})$ with 
\begin{equation}
\label{form816}
X:=Z_1+\sum_{l=1}^{r}P_{2l+1}+R_{\geq2r+3},
\end{equation}
where $P_{2l+1}(\bs{z})\in\ms{M}^{{\rm rev}}_{2l+1}$ and the remainder term $R_{\geq2r+3}$ satisfy
$$\|P_{2l+1}\|_{\ell^{\infty}}\leq C_0^{l},$$
$$\|R_{\geq2r+3}(\bs{z})\|_{\rho,\theta}\leq C_0^r\|z\|_{\rho,\theta}^{2r+3}.$$ 
Remark that this is the system \eqref{form28} with the norm $\|\;\|_{\rho,\theta}$ in place of $\|\;\|_{s}$.

In the following, we will normalize the vector field \eqref{form816}, where the normal form process in the space $\ell_{\rho,\theta}^2$ is parallel to that in the Sobolev space. 
\begin{theorem}
\label{th71}
Fix $\rho>0$ and $0<\theta\leq1$. For $r\geq4$, $N\geq r$,  $0<\gamma<1$ and $0<\varepsilon\ll 1$ satisfying $\varepsilon\leq\frac{\gamma^{13}}{(C_3N)^{1128(r-1)}}$, there exists a nearly identity coordinate transformation 
$$\phi: \Big(\mc{U}^{N}_{\frac{\gamma}{2}}\times\mc{U}^{N}_{\frac{\gamma}{2}}\Big)\bigcap B_{\rho,\theta}(\frac{3}{2}\varepsilon)\to\Big(\mc{U}^{N}_{\frac{\gamma}{2^{r}}}\times\mc{U}^{N}_{\frac{\gamma}{2^{r}}}\Big)\bigcap B_{\rho,\theta}(4\varepsilon)$$ satisfying the estimates
\begin{align}
\label{form818}
&\sup_{\|z\|_{\rho,\theta}\leq \varepsilon}\|\bs{z}-\phi(\bs{z})\|_{\rho,\theta}\leq\frac{3^{12}\times4^{r}N^{28}}{\gamma^{2}}\|z\|_{\rho,\theta}^{3},
\\\label{form819}
&\sup_{\|z\|_{\rho,\theta}\leq \varepsilon}\|\phi^{-1}(\bs{z})-\bs{z}\|_{\rho,\theta}\leq\frac{3^{12}\times4^{r}N^{28}}{\gamma^{2}}\|z\|^{3}_{\rho,\theta},
\end{align} 
such that the vector field \eqref{form816} is transformed into
\begin{equation}
\label{form820}
X^{(1)}=Z_{\leq2r+1}+R'''_{\geq2r+3}+(D\phi')^{-1}\big(Z_3^{>N}+\sum_{l=2}^{r}K^{>N}_{2l+1}+R'_{\geq2r+3}\big)\circ\phi',
\end{equation}
where
\begin{enumerate}[(i)]
\item  the vector field $Z_{\leq2r+1}$ is a normal form of order at most $2r+1$ and has no effect on the actions;
\item the remainder term $R^{(1)}_{\geq2r+3}$ satisfies the estimate
  \begin{equation}
  \label{form821}
  \|R'''_{\geq2r+3}(\bs{z}')\|_{\rho,\theta}\leq\frac{4^{(r-3)(r+3)}(C_3N)^{564(r-1)^2}}{\gamma^{13r-17}}\|z'\|_{\rho,\theta}^{2r+3};
  \end{equation}
  \item the vector field $Z_3^{>N}$ is a cubic integrable truncated remainder term with the coefficient estimate  $\|Z^{>N}_{3}\|_{\ell^{\infty}}\leq\frac{1}{4}$;
   \item  $K_{2l+1}^{>N}\in\ms{R}^{{\rm rev}}_{2l+1}$ is the truncated remainder term with the coefficient estimate
     \begin{equation}
    \label{form822}
    \|K^{>N}_{2l+1}\|_{\ell^{\infty}}\leq C_1^{l^2};
     \end{equation}
 \item the remainder term $R'_{\geq2r+3}$ satisfies the estimate
    \begin{equation}
    \label{form823}
    \|R'_{\geq2r+3}(\bs{z}')\|_{\rho,\theta}<(3C_1)^{r(r+1)}\|z'\|_{\rho,\theta}^{2r+3}.
     \end{equation}
  \item  the transformation $\phi'$ is a nearly identity coordinate transformation and satisfies the estimate
    \begin{equation}
    \label{form824}
    \|D\phi'\|_{\ms{P}_{\rho,\theta}\mapsto\ms{P}_{\rho,\theta}}\leq2,
    \end{equation}
and the same estimate is fulfilled by the inverse transformation.
\end{enumerate}
\end{theorem}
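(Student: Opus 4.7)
The plan is to mirror the three-step procedure used in the Sobolev case (Theorems \ref{th31}, \ref{th51}, \ref{th52}), but now operating in the space $\ell_{\rho,\theta}^{2}$ with the norm $\|\cdot\|_{\rho,\theta}$ in place of $\|\cdot\|_s$ and with the weight $e^{2\rho a^{\theta}}$ in place of $a^{2s}$. Concretely, I would first apply the resonant normal form procedure of \Cref{th31} verbatim: solve the homological equations with respect to $Z_1$ step by step, producing $Z_3 + \sum_{l=2}^{r} K_{2l+1} + R'_{\geq 2r+3}$, where the only change in the argument is that the vector field estimate \Cref{le21-1} is replaced by its Gevrey/analytic analog \Cref{le72}. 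This gives the vector field $X'$, together with the bounds on $K_{2l+1}$ and $R'_{\geq 2r+3}$ claimed in items (iv) and (v) of the theorem statement.

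Next I would truncate at mode $N$, using \Cref{le73} to absorb the discarded tails into the remainder term along with $Z_3^{>N}$, and then rerun the rational normal form construction of \Cref{sec4} and \Cref{sec5} in the Gevrey/analytic setting. The rational framework has already been adapted in \Cref{sec71}: the non-resonant conditions \eqref{form82}--\eqref{form83} replace \eqref{form49}--\eqref{form410}; the control condition \eqref{form84} replaces \eqref{form416}; and \Cref{le74} provides the crucial bound on rational vector fields in $\ell_{\rho,\theta}^{2}$, which is structurally identical to \Cref{le42}. The solutions of the three homological equations in \Cref{le44}, \Cref{le51} and \Cref{le52} are algebraic and depend only on the structure of reversible/anti-reversible vector fields and the identity $DI_a[M_3]=0$; they transplant without modification. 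Thus the iterative lemma \Cref{le53} goes through with $\|\cdot\|_{\rho,\theta}$ replacing $\|\cdot\|_s$, yielding, after two steps with $Z_3^{\leq N}$ alone (to eliminate the non-integrable part of $K_5^{\leq N}$ and the non-normal form part of $K_7^{\leq N}$) and $r-3$ further steps with $Z_3^{\leq N} + Z_5^{\leq N}$, a composed transformation $\phi = \phi^{(1)} \circ \phi^{(2)} \circ \phi^{(3)}$ with the exact coefficient estimates of \Cref{th51} and \Cref{th52}.

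Collecting the three transformations, the transformed vector field has the form
\begin{equation*}
X^{(1)} = Z_{\leq 2r+1} + R'''_{\geq 2r+3} + (D\phi')^{-1}\bigl(Z_3^{>N} + \textstyle\sum_{l=2}^{r} K^{>N}_{2l+1} + R'_{\geq 2r+3}\bigr) \circ \phi',
\end{equation*}
where $Z_{\leq 2r+1}$ is the sum $Z_3^{\leq N} + Z_5^{\leq N} + \sum_{l=3}^{r} Z_{\Gamma_l, \Gamma'_l} + \sum_{l=3}^{r} \tilde{Z}_{\tilde{\Gamma}_l} + \sum_{l=3}^{r-1} \tilde{Z}_{\tilde{\tilde{\Gamma}}_l}$ of integrable rational vector fields and rational normal forms, and $\phi' := \phi^{(2)} \circ \phi^{(3)}$. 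By \Cref{re41-12}, each summand of $Z_{\leq 2r+1}$ annihilates every action, so $Z_{\leq 2r+1}$ is a normal form in the sense required. The bounds \eqref{form818}--\eqref{form819} on $\phi$ follow by summing the transformation estimates \eqref{form36}, \eqref{form53}, \eqref{form585}, while the bound \eqref{form821} on $R'''_{\geq 2r+3}$ is \eqref{form587} with $\|\cdot\|_{\rho,\theta}$ replacing $\|\cdot\|_s$; the inner data $Z_3^{>N}$, $K^{>N}_{2l+1}$, $R'_{\geq 2r+3}$ and the $C^0$-bound on $D\phi'$ are inherited from \Cref{th31} and the flow estimates used in \Cref{th51}, \Cref{th52}.

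The main subtlety, and the only place where a genuinely new verification is required, lies in confirming that all the combinatorial/algebraic bookkeeping of \Cref{def41}--\Cref{def43}, \Cref{le43}, \Cref{re42}, \Cref{re43} and the iterative lemma \Cref{le53} is compatible with the new control condition \eqref{form84}: one must check that $\sum_{m} \kappa_{\bs{h}_m}^{\theta} \leq \sum_{m} (j_m^*)^{\theta}$ is preserved under commutators and under the three homological equations exactly as the Sobolev version \eqref{form416} was. Since $\kappa_{\bs{h}_i} \leq |j|$ implies $\kappa_{\bs{h}_i}^{\theta} \leq |j|^{\theta}$ for any $\theta \in (0,1]$, and since the conditions are additive in the same way that the Sobolev conditions are multiplicative, this verification is line-by-line analogous. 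Once this bookkeeping is in place, the counting of small divisors, the $N$-dependence in the remainder bounds, and the choice of $\varepsilon \leq \gamma^{13}(C_3 N)^{-1128(r-1)}$ all go through unchanged, and the theorem follows.
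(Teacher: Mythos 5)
Your proposal is correct and follows essentially the same route as the paper: the paper's proof of \Cref{th71} likewise transplants \Cref{th31}, \Cref{th51} and \Cref{th52} to $\ell_{\rho,\theta}^{2}$ via \Cref{le72}--\Cref{le74} and the adapted conditions \eqref{form82}--\eqref{form84}, composes $\phi=\phi^{(1)}\circ\phi^{(2)}\circ\phi^{(3)}$ with $\phi'=\phi^{(2)}\circ\phi^{(3)}$, and identifies $Z_{\leq2r+1}$ as the action-preserving normal form by \Cref{re41-12}. The compatibility check of the control condition \eqref{form84} that you flag as the main subtlety is exactly the point the paper settles in \Cref{sec71} before invoking the Sobolev arguments.
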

\begin{proof}
It follows from \Cref{th31}, \Cref{th51} and \Cref{th52} but using the space $\ell_{\rho,\theta}^2$ in place of the Sobolev space. Concretely, by \Cref{le72} and \Cref{le74},  the vector field estimates in the space $\ell_{\rho,\theta}^2$ is same as that in the Sobolev space with the norm $\|\;\|_{\rho,\theta}$ in place of $\|\;\|_{s}$. Then we have
\begin{align}
\label{formth71-1}
&\sup_{\bs{z}\in B_{\rho,\theta}(3\varepsilon)}\|\bs{z}-\phi^{(1)}(\bs{z})\|_{\rho,\theta}<9C_1\|z\|_{\rho,\theta}^{3},
\\\label{formth71-2}
&\sup_{\bs{z}\in B_{\rho,\theta}(2\varepsilon)}\|\bs{z}-\phi^{(2)}(\bs{z})\|_{\rho,\theta}\leq\frac{3^{11}\times4^{r}N^{28}}{\gamma^{2}}\|z\|_{\rho,\theta}^{3},
\\\label{formth71-3}
&\sup_{\bs{z}\in B_{\rho,\theta}(\frac{3}{2}\varepsilon)}\|\bs{z}-\phi^{(3)}(\bs{z})\|_{\rho,\theta}\leq\frac{2^{9(r-2)}(C_3N)^{2256}}{\gamma^{9}}\|z\|_{\rho,\theta}^{5},
\end{align}
and their inverse transformations satisfy the same estimates.
Thus for $\varepsilon\leq\frac{\gamma^{13}}{(C_3N)^{1128(r-1)}}$, the transformation $\phi=\phi^{(1)}\circ\phi^{2}\circ\phi^{(3)}$ satisfies the estimate \eqref{form818} and \eqref{form819}.
In view of  \eqref{form584}, we have the transformed vector field \eqref{form820} with the normal form 
$$Z_{\leq2r+1}=Z_1+Z_3^{\leq N}+Z_{5}^{\leq N}+\sum_{l=3}^{r}Z_{\Gamma_{l},\Gamma'_{l}}+\sum_{l=3}^{r}\tilde{Z}_{\tilde{\Gamma}_{l}}+\sum_{l=3}^{r-1}\tilde{Z}_{\tilde{\tilde{\Gamma}}_{l}}$$
and the transformation $\phi'=\phi^{(2)}\circ\phi^{(3)}$.
Moreover, the coefficient estimate \eqref{form822} follows from the estimate \eqref{form37}; by \eqref{form38} and \Cref{le72}, the remainder term $R'_{\geq2r+3}$ satisfies the estimate \eqref{form823};  and by \eqref{formth71-2}, \eqref{formth71-3}, the transformation $\phi'$ satisfies the estimate \eqref{form824}.
\end{proof}

\subsection{Proof of \Cref{th12}}
\label{sec73}
Introduce the Gaussian measure
\begin{equation}
\label{form86}
d\mu=\frac{e^{-\sum_{m\in\mb{N}_{*}}e^{2\rho m^{\theta}}m^{2}|z_m|^{2}}dzd\bar{z}}{\int_{\sum_{m\in\mb{N}_{*}}e^{2\rho m^{\theta}}|z_m|^{2}\leq\frac{1}{2}}e^{-\sum_{m\in\mb{N}_{*}}e^{2\rho m^{\theta}}m^{2}|z_m|^{2}}dzd\bar{z}},
\end{equation}
and then we have the following measure estimate.
\begin{lemma}
\label{le71}
Fix $\rho>0$ and $0<\theta\leq1$. For $r\geq2$, $N\geq2(r+1)$ and $\gamma>0$, if
\begin{equation}
\label{form87}
\varepsilon^{2}\leq\frac{2\gamma}{(r+1)N^{4r+2}},
\end{equation}
then there exists a positive constant $\lambda$ independent of $r,N,\gamma,\varepsilon$ such that
\begin{equation}
\label{form88}
\mu(\varepsilon z\in\mc{U}_{\gamma}^{N})\geq1-re^{2\rho(2r+2)^{\theta}}\lambda\gamma.
\end{equation}
\end{lemma}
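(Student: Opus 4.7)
The plan is to follow the structure of the proof of \Cref{le61} almost verbatim, substituting the Sobolev weight $m^{2s}$ by the Gevrey weight $e^{2\rho m^{\theta}}m^{2}$ throughout. I would first set $\tilde{\Omega}_{\bs{j}}^{(4)}(I):=\Omega_{\bs{j}}^{(2)}(I)+\tilde{\Omega}_{\bs{j}}^{(4,4)}(I)$ with $\tilde{\omega}_{a}^{(4)}$ defined exactly as in \eqref{form612}, and introduce the event
\begin{equation*}
\Theta=\Big\{|\Omega_{\bs{j}}^{(2)}(\varepsilon^{2}I)|>\tilde{\gamma}'\text{ and }|\tilde{\Omega}_{\bs{j}}^{(4)}(\varepsilon^{2}I)|>\tilde{\gamma}'' \text{ for all } \bs{j}\in Irr(\mc{R}_{\leq N}),\ \#\bs{j}\leq r\Big\},
\end{equation*}
with $\tilde{\gamma}'=\gamma\varepsilon^{2}\|z\|_{\rho,\theta}^{2}N^{-4l-2}e^{-2\rho\kappa_{\bs{j}}^{\theta}}$ and $\tilde{\gamma}''$ its $\Omega^{(4)}$-analog. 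The containment $\Theta\subseteq\mc{U}_{\gamma}^{N}$ then follows as in the proof of \Cref{le61}, using the elementary estimate $|\Omega_{\bs{j}}^{(4)}-\tilde{\Omega}_{\bs{j}}^{(4)}|\leq(l+1)\varepsilon^{4}\|z\|_{\rho,\theta}^{4}e^{-2\rho\kappa_{\bs{j}}^{\theta}}$, which by hypothesis \eqref{form87} is absorbed into the threshold.

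The bulk of the work is then to prove two Gevrey analogs of \Cref{le62} and \Cref{le63}. In both cases I would perform the change of variables $x_{m}=e^{2\rho m^{\theta}}m^{2}I_{m}$, after which the measure becomes a product Gaussian $e^{-\sum x_{m}}\prod dx_{m}$ constrained by $\sum m^{-2}x_{m}\leq 1/2$. This region contains the box $\{x_{m}\leq A_{1}\sqrt{m}\}$ with $A_{1}^{-1}=2\sum m^{-3/2}$, giving a uniform lower bound on the denominator exactly as in \eqref{form69}. For the linear divisor $\Omega_{\bs{j}}^{(2)}$, freezing all actions except $I_{\tilde{a}}$ with $\tilde{a}=\kappa_{\bs{j}}$ confines $x_{\tilde{a}}$ to an interval of length $2\tilde{a}^{2}e^{2\rho\tilde{a}^{\theta}}\tilde{\gamma}'/\varepsilon^{2}\leq 2N^{2}\gamma N^{-4l-2}$ (the exponential weight cancels against $e^{-2\rho\kappa_{\bs{j}}^{\theta}}$ in $\tilde{\gamma}'$), and summing over the $(3N)^{l}$ possibilities for $\bs{j}$ yields a geometric series bounded by a constant multiple of $\gamma$.

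The subtle step is the quadratic divisor. Exactly as in \eqref{form617}, a pigeonhole on the polynomial $P_{\bs{j}}(d)$ of degree $l$ in $d^{2}$ produces an integer test index $\tilde{a}\in[1,2l+2]\leq 2r+2$ (outside the forbidden set) with $|d_{\tilde{a}}|\geq(8N^{2l+2})^{-1}$. Writing $\tilde{\Omega}_{\bs{j}}^{(4)}=d_{\tilde{a}}\varepsilon^{4}I_{\tilde{a}}^{2}+A_{\bs{j}}((\varepsilon^{2}I_{m})_{m\neq\tilde{a}})$ and estimating the length of the resulting interval for $I_{\tilde{a}}$ by $\sqrt{\tilde{\gamma}''/|d_{\tilde{a}}|}\lesssim N^{l+1}\sqrt{\tilde{\gamma}''}$, the translation to the variable $x_{\tilde{a}}$ introduces precisely the factor $\tilde{a}^{2}e^{2\rho\tilde{a}^{\theta}}\leq(2r+2)^{2}e^{2\rho(2r+2)^{\theta}}$. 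Taking the minimum of this bound and the trivial bound $\tilde{\gamma}''\kappa_{\bs{j}}^{2s}/\varepsilon^{2}$ (as in the derivation of \eqref{form621}) and summing over $l\leq r$ and over the $(3N)^{l}$ irreducible indices produces the stated prefactor $re^{2\rho(2r+2)^{\theta}}\lambda\gamma$.

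The main obstacle is bookkeeping rather than genuine difficulty: in the Gevrey regime the exponential weight $e^{2\rho\tilde{a}^{\theta}}$ from the pigeonhole index cannot be absorbed as it could be in the Sobolev case (where $(2l+2)^{2s}$ was dominated by the geometric decay in $N$), so this weight has to be carried through the estimate and appears explicitly in the final bound. One must also verify that the hypothesis $\varepsilon^{2}\leq 2\gamma/((r+1)N^{4r+2})$ is strong enough to make the perturbation $\Omega^{(4)}-\tilde{\Omega}^{(4)}$ negligible and to guarantee $\tilde{\gamma}'<\gamma'$, $\tilde{\gamma}''<\gamma''$; both are immediate from $\|z\|_{\rho,\theta}^{2}\leq 1/2$ combined with \eqref{form87}.
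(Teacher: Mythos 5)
Your proposal follows essentially the same route as the paper's proof: the same reduction to the event $\Theta$ (with thresholds carrying $e^{-2\rho\kappa_{\bs{j}}^{\theta}}$ in place of $\kappa_{\bs{j}}^{-2s}$), the same containment $\Theta\subseteq\mc{U}_{\gamma}^{N}$ as in \Cref{le61}, and the same Gevrey adaptations of \Cref{le62} and \Cref{le63} via the change of variables, the pigeonhole on $P_{\bs{j}}(d)$ giving $\tilde{a}\leq 2l+2$ with $|d_{\tilde{a}}|\geq(8N^{2l+2})^{-1}$, the two-way minimum, and the final summation over $l\leq r$ producing the prefactor $re^{2\rho(2r+2)^{\theta}}$. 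The only difference is cosmetic: you explicitly carry the factor $\tilde{a}^{2}\leq(2r+2)^{2}$ coming from the $m^{2}$ in the Gaussian weight of \eqref{form86} (the paper silently drops it when substituting $e^{2\rho j^{\theta}}$ for $j^{2s}$); this polynomial factor is of the same harmless nature as the combinatorial constants both arguments absorb, given the large slack with which the lemma is applied in \Cref{sec73}.
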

\begin{proof}
Define the set
\begin{align*}
\Theta:=\big\{|\Omega_{\bs{j}}^{(2)}(\varepsilon^{2}I)|>\gamma'\;\text{and}\;|\tilde{\Omega}_{\bs{j}}^{(4)}(\varepsilon^{2}I)|>\gamma''\;\text{for any}\;\bs{j}\in Irr(\mc{R}_{\leq N})\;\text{with}\;\#\bs{j}\leq r\big\},
\end{align*}
where
\begin{align*}
&\gamma':=\gamma\varepsilon^{2}\|z\|_{\rho,\theta}^{2}N^{-4l-2}e^{-2\rho\kappa_{\bs{j}}^{\theta}},
\\&\gamma'':=2\gamma\varepsilon^{2}\|z\|_{\rho,\theta}^{2}N^{-4l-2}\max\{e^{-2\rho\kappa_{\bs{j}}^{\theta}},2\gamma\varepsilon^{2}\|z\|_{\rho,\theta}^{2}\}
\end{align*}
with $l:=\#\bs{j}$.
According to the proof of \Cref{le61}, we have $\Theta\subseteq\mc{U}_{\gamma}^{N}$.
Thus we only need to prove
\begin{equation}
\label{formle824}
\mu(\Theta)\geq1-re^{2\rho(2r+2)^{\theta}}\lambda\gamma.
\end{equation}

Consider the complementary set of $\Theta$:
\begin{equation}
\label{formle825}
\Theta^{c}=\Theta^{(1)}\cup\Theta^{(2)},
\end{equation}
 where 
\begin{align}
\label{formle826}
&\Theta^{(1)}=\{\exists \;\bs{j}\in Irr(\mc{R}_{\leq N})\;\text{with}\;\#\bs{j}\leq r\;\text{such that}\;|\Omega_{\bs{j}}^{(2)}(\varepsilon^{2}I)|\leq\gamma'\},
\\\label{formle827}
&\Theta^{(2)}=\{\exists \;\bs{j}\in Irr(\mc{R}_{\leq N})\;\text{with}\;\#\bs{j}\leq r\;\text{such that}\;|\tilde{\Omega}_{\bs{j}}^{(4)}(\varepsilon^{2}I)|\leq\gamma''\}.
\end{align}
Now we estimate the measure of $\Theta^{(1)}$ and $\Theta^{(2)}$, respectively.

\noindent\textbf{Estimation of $\Theta^{(1)}$.} By $\|z\|_{\rho,\theta}^2<\frac{1}{2}$ and $\bs{j}\in Irr(\mc{R}_{\leq N})$, one has
$$\mu(|\Omega_{\bs{j}}^{(2)}(\varepsilon^{2}I)|\leq\gamma')\leq\mu\big(|\Omega_{\bs{j}}^{(2)}(\varepsilon^{2}I)|\leq\frac{1}{2}\gamma\varepsilon^{2}N^{-4l-2}e^{-2\rho\kappa_{\bs{j}}^{\theta}}\big).$$
By \Cref{le62} with the weight $e^{2\rho j^{\theta}}$ in place of $j^{2s}$,  there exists a positive constant $\lambda_1$ independent of $r,N,\gamma,\varepsilon$ such that
\begin{align}
\label{formle828}
\mu(\Theta^{(1)})\leq\lambda_1\gamma.
\end{align}

\noindent\textbf{Estimation of $\Theta^{(2)}$.} By $\|z\|_{\rho,\theta}^2<\frac{1}{2}$ and $\bs{j}\in Irr(\mc{R}_{\leq N})$, one has
\begin{equation}
\label{formle829}
\mu(|\tilde{\Omega}_{\bs{j}}^{(4)}(\varepsilon^{2}I)|\leq\gamma'')\leq\mu\big(|\tilde{\Omega}_{\bs{j}}^{(4)}(\varepsilon^{2}I)|\leq\tilde{\gamma}''\big)
\end{equation}
with $\tilde{\gamma}''=\gamma\varepsilon^{2}N^{-4l-2}\max\{e^{-2\rho\kappa_{\bs{j}}^{\theta}},\gamma\varepsilon^{2}\}$.
In view of the proof of \Cref{le63}, by 
\eqref{form616} and \eqref{form620} with the weight $e^{2\rho j^{\theta}}$ in place of $j^{2s}$, we obtain
 \begin{equation}
\label{formle830}
\mu\big(|\tilde{\Omega}_{\bs{j}}^{(4)}(\varepsilon^{2}I)|\leq\tilde{\gamma}''\big)\leq\frac{4\tilde{\gamma}''e^{2\rho\kappa_{\bs{j}}^{\theta}}}{\varepsilon^{2}\prod_{m\in\mb{N}_*}(1-e^{-A_1\sqrt{m}})},
\end{equation}
\begin{align}
\label{formle831}
\mu\big(|\tilde{\Omega}_{\bs{j}}^{(4)}(\varepsilon^{2}I)|\leq\tilde{\gamma}''\big)\leq\frac{4\sqrt{2}e^{2\rho(2l+2)^{\theta}}N^{l+1}\sqrt{\tilde{\gamma}''}}{\varepsilon^{2}\prod_{m\in\mb{N}_*}(1-e^{-A_1\sqrt{m}})},
\end{align}
where $A_1=\frac{1}{2}(\sum_{m\in\mb{N}_*}m^{-\frac{3}{2}})^{-1}$.
By the estimates \eqref{formle830} and \eqref{formle831}, we have
\begin{align}
\label{formle832}
\mu\big(|\tilde{\Omega}_{\bs{j}}^{(4)}(\varepsilon^{2}I)|\leq\tilde{\gamma}''\big)\leq&\frac{4\min\{\tilde{\gamma}''e^{2\rho\kappa_{\bs{j}}^{\theta}},\sqrt{2}e^{2\rho(2l+2)^{\theta}}N^{l+1}\sqrt{\tilde{\gamma}''}\}}{\varepsilon^{2}\prod_{m\in\mb{N}_*}(1-e^{-A_1\sqrt{m}})}
\\\notag\leq&\frac{4\sqrt{2}e^{2\rho(2l+2)^{\theta}}\gamma}{N^{l}\prod_{m\in\mb{N}_*}(1-e^{-A_1\sqrt{m}})}.
\end{align}
In view of \eqref{formle827}, by \eqref{formle829} and \eqref{formle832}, there exists a positive constant $\lambda_2$ independent of $r,N,\gamma,\varepsilon$ such that
\begin{align}
\label{formle833}
\mu(\Theta^{(2)})
&\leq\sum_{l=2}^{r}\sum_{\substack{\bs{j}\in Irr(\mc{R}_{\leq N})\\\#\bs{j}=l}}\frac{4\sqrt{2}e^{2\rho(2l+2)^{\theta}}\gamma}{N^{l}\prod_{m\in\mb{N}_*}(1-e^{-A_1\sqrt{m}})}
\\\notag&\leq\frac{4\sqrt{2}\gamma}{\prod_{m\in\mb{N}_*}(1-e^{-A_1\sqrt{m}})}\sum_{l=2}^{r}e^{2\rho(2l+2)^{\theta}}
\\\notag&\leq re^{2\rho(2r+2)^{\theta}}\lambda_2\gamma.
\end{align}
To sum up, in view of \eqref{formle825}, by \eqref{formle828} and \eqref{formle833}, one has
\begin{align*}
\mu(\Theta^{c})\leq\mu(\Theta^{(1)})+\mu(\Theta^{(2)})\leq\lambda_1\gamma+re^{2\rho(2r+2)^{\theta}}\lambda_2\gamma\leq re^{2\rho(2r+2)^{\theta}}(\lambda_1+\lambda_2)\gamma,
\end{align*}
 which implies that the measure estimate \eqref{formle824} holds with taking $\lambda=\lambda_1+\lambda_2$.
\end{proof}

Finally, we prove \Cref{th12}.
In view of the change of variables \eqref{form21}, \eqref{form22} and \eqref{form24}, we have the transformation $(u,v)\mapsto(z,\bar{z}):=\bs{z}$, and if $\|u\|_{1}$ is properly small, we have the prior estimates
\begin{equation}
\label{form813}
\frac{1}{\sqrt{3}}\big(\|u\|^2_{\rho,\theta,\frac{3}{2}}+\|v\|^2_{\rho,\theta,\frac{1}{2}}\big)^{\frac{1}{2}}\leq\|z\|_{\rho,\theta}\leq\frac{\sqrt{3}}{2}\big(\|u\|^2_{\rho,\theta,\frac{3}{2}}+\|v\|^2_{\rho,\theta,\frac{1}{2}}\big)^{\frac{1}{2}}
\end{equation}
\begin{equation}
\label{form814}
\frac{1}{2}t\leq\tau\leq2t.
\end{equation}
Thus for any $\big(u(0,x),v(0,x)\big)\in B_{\rho,\theta}(\varepsilon)$, one has 
\begin{equation}
\label{form815}
\|z(0)\|_{\rho,\theta}<\frac{\sqrt{3}}{2}\varepsilon.
\end{equation}

Let  $N=|\ln\varepsilon|^{1+\frac{2}{\theta}}$, $r=[\frac{|\ln\varepsilon|}{15800\ln N}]$,  $\gamma=\varepsilon^{\frac{1}{14}}$
 and the open set
\begin{equation}
\label{form817}
\mc{V}_{\rho,\theta}:=\bigcup_{0<\varepsilon\leq\varepsilon_{0}}\Big(\mc{U}_{\gamma}^{N}\bigcap\big(B_{\rho,\theta}(\varepsilon)\backslash \overline{B_{\rho,\theta}(\frac{\varepsilon}{2})}\big)\Big),
\end{equation}
where $\overline{B_{\rho,\theta}(\frac{\varepsilon}{2})}$ is the closure of $B_{\rho,\theta}(\frac{\varepsilon}{2})$.
In view of \Cref{le71}, for any $0<\varepsilon\leq\varepsilon_0$ with the enough small $\varepsilon_0$, we have
\begin{equation}
\label{form819-1}
\varepsilon^{2}\leq\frac{2\gamma}{(r+1)N^{4r+2}}\quad\text{and}\quad re^{2\rho(2r+2)^{\theta}}\lambda<\varepsilon^{-\frac{1}{210}},
\end{equation}
and thus
\begin{equation}
\label{form818-1}
\mu(\frac{\sqrt{3}}{2}\varepsilon z(0)\in\mc{U}_{\gamma}^{N})\geq1-re^{2\rho(2r+2)^{\theta}}\lambda\gamma>1-\varepsilon^{\frac{1}{15}},
\end{equation}
which implies the measure estimate \eqref{form113}.

For any $0<\varepsilon\leq\varepsilon_0$ with the enough small $\varepsilon_0$, we have 
\begin{equation}
\label{form3-28}
\varepsilon\leq\frac{\gamma^{13}}{(C_3N)^{1128(r-1)}}.
\end{equation}
 Then by \Cref{th71}, there exists a nearly identity coordinate transformation such that $\phi(\bs{z}')=\bs{z}$. 
By \eqref{form818} and \eqref{form815}, one has
\begin{equation}
\label{form827}
\|z'(0)\|_{\rho,\theta}\leq\|z(0)\|_{\rho,\theta}+\|z(0)-z'(0)\|_{\rho,\theta}\leq\sqrt{\frac{5}{6}}\varepsilon.
\end{equation}
Denote the escape time of $z'$ by $T:=\inf\{\tau\mid \|z'(\tau)\|_{\rho,\theta}=\varepsilon\}$. For any $|\tau|\leq T$, by \eqref{form821}--\eqref{form824} and \Cref{le73}, one has
\begin{align}
\label{form828}
&\Big|\|z'(\tau)\|^2_{\rho,\theta}-\|z'(0)\|^2_{\rho,\theta}\Big|
\\\notag\leq&T\sup_{\tau\leq T}\pa_{\tau}(\|z'(\tau)\|^2_{\rho,\theta})
\\\notag\leq&2T\sup_{\|z'\|_{\rho,\theta}\leq\varepsilon}\Big(\|R'''_{\geq2r+3}(\bs{z}')\|_{\rho,\theta}\|z'\|_{\rho,\theta}
\\\notag&\qquad\quad+\|(D\phi')^{-1}\|_{\ms{P}_{\rho,\theta}\mapsto\ms{P}_{\rho,\theta}}\big\|\big(Z_3^{>N}+\sum_{l=2}^{r}K^{>N}_{2l+1}+R'_{\geq2r+3}\big)\circ\phi'(\bs{z}')\big\|_{\rho,\theta}\|z'\|_{\rho,\theta}\Big)
\\\notag\leq&2T\Big(\frac{4^{(r-3)(r+3)}
(C_3N)^{564(r-1)^2}}{\gamma^{13r-17}}\varepsilon^{2r+4}
\\\notag&\qquad\quad+\frac{36}{e^{2\rho(\frac{N}{r})^{\theta}}}\varepsilon^{4}+\sum_{l=2}^{r}\frac{3^{l+1}C_1^{2l}}{e^{2\rho(\frac{N}{l})^{\theta}}}(2\varepsilon)^{2l+2}+2^{2r+4}(3C_1)^{r(r+1)}\varepsilon^{2r+4}\Big)
\end{align}
For any $0<\varepsilon\leq\varepsilon_0$ with the enough small $\varepsilon_0$, we have
\begin{equation}
\label{form829}
e^{-2\rho(\frac{N}{r})^{\theta}}\leq\varepsilon^{\frac{|\ln\varepsilon|}{15800(1+\frac{2}{\theta})\ln|\ln\varepsilon|}},
\end{equation}
\begin{equation}
\label{form830}
\Big(\frac{4^{(r-3)(r+3)}
(C_3N)^{564(r-1)^2}}{\gamma^{13r-17}}+2^{2r+4}(3C_1)^{r(r+1)}\Big)\varepsilon^{2r+4}\leq\varepsilon^{4+\frac{|\ln\varepsilon|}{15800(1+\frac{2}{\theta})\ln|\ln\varepsilon|}}.
\end{equation}
Hence, by \eqref{form828}--\eqref{form830}, one has
\begin{equation}
\label{form831}
\Big|\|z'(\tau)\|^2_{\rho,\theta}-\|z'(0)\|^2_{\rho,\theta}\Big|\lesssim T\varepsilon^{4+\frac{|\ln\varepsilon|}{15800(1+\frac{2}{\theta})\ln|\ln\varepsilon|}}.
\end{equation}
If $T\leq2\varepsilon^{-\frac{|\ln\varepsilon|}{15800(1+\frac{2}{\theta})\ln|\ln\varepsilon|}}$, then by \eqref{form827} and \eqref{form831}, one has
\begin{align*}
\varepsilon^{2}=\|z'(T)\|_{\rho,\theta}^{2}\leq \|z'(0)\|_{\rho,\theta}^2+\Big|\|z'(T)\|^2_{\rho,\theta}-\|z'(0)\|^2_{\rho,\theta}\Big|
<\frac{5}{6}\varepsilon^{2}+\frac{1}{6}\varepsilon^{2},
\end{align*}
which is impossible. Hence, one has 
$$T\geq2\varepsilon^{-\frac{|\ln\varepsilon|}{15800(1+\frac{2}{\theta})\ln|\ln\varepsilon|}}.$$
By \eqref{form819}, for any $|\tau|\leq2\varepsilon^{-\frac{|\ln\varepsilon|}{15800(1+\frac{2}{\theta})\ln|\ln\varepsilon|}}$, one has
\begin{equation}
\label{form832-1}
\|z(\tau)\|_{\rho,\theta}\leq\|z'(\tau)\|_{\rho,\theta}+\|z'(\tau)-z(\tau)\|_{\rho,\theta}\leq\frac{2}{\sqrt{3}}\varepsilon.
\end{equation}

 In addition, for any $a\in\mb{N}_{*}$, one has
\begin{align}
\label{form832}
&\big||z_{a}(\tau)|^2-|z_{a}(0)|^2\big|
\\\notag\leq&\big||z_{a}(\tau)|^2-|z'_{a}(\tau)|^2\big|+\big||z'_{a}(\tau)|^2-|z'_{a}(0)|^2\big|+\big||z'_{a}(0)|^2-|z_{a}(0)|^2\big|.
\end{align}
Similarly to the estimates \eqref{form828}--\eqref{form831} but $e^{2\rho a^{\theta}}|z'_{a}(\tau)|^2$ instead of $\|z'\|_{\rho,\theta}$, for any $|\tau|\leq2\varepsilon^{-\frac{|\ln\varepsilon|}{15800(1+\frac{2}{\theta})\ln|\ln\varepsilon|}}$, one has
\begin{equation}
\label{form833}
e^{2\rho a^{\theta}}\big||z'_{a}(\tau)|^2-|z'_{a}(0)|^2\big|\lesssim\varepsilon^{4}.
\end{equation}
By \eqref{form819}, one has
\begin{align}
\label{form834}
e^{2\rho a^{\theta}}\big||z_{a}(\tau)|^2-|z'_{a}(\tau)|^2\big|
\leq&\big(\|z(\tau)\|_{\rho,\theta}+\|z'(\tau)\|_{\rho,\theta}\big)\|z(\tau)-z'(\tau)\|_{\rho,\theta}
\\\notag\leq&\big(\frac{2}{\sqrt{3}}+1\big)\frac{3^{12}\times4^{r}N^{28}}{\gamma^{2}}\varepsilon^{4}
\\\notag<&\frac{3^{13}\times4^{r}N^{28}}{\gamma^{2}}\varepsilon^{4}.
\end{align}
Then by \eqref{form832}--\eqref{form834}, one has
\begin{equation}
\label{form836}
\sup_{a\in\mb{N}_*}e^{2\rho a^{\theta}}\big||z_{a}(\tau)|^2-|z_{a}(0)|^2\big|\leq\frac{2}{\sqrt{6}}\varepsilon^{3}.
\end{equation}

Go back to the original variables. For $\big(u(0,x),v(0,x)\big)\in \mc{V}_{\rho,\theta}\bigcap B_{\rho,\theta}(\varepsilon)$, by \eqref{form813} and \eqref{form832-1}, for any 
$|t|\leq\varepsilon^{-\frac{|\ln\varepsilon|}{15800(1+\frac{2}{\theta})\ln|\ln\varepsilon|}}$, one has
\begin{equation}
\label{form728'}
\|u(t)\|^2_{\rho,\theta,\frac{3}{2}}+\|v(t)\|^2_{\rho,\theta,\frac{1}{2}}\leq3\|z(t)\|^2_{\rho,\theta}\leq4\varepsilon^2,
\end{equation}
which indeed implies the prior estimates \eqref{form813} and \eqref{form814}.
Notice that 
\begin{equation}
\label{form838}
\frac{2}{\sqrt{6}a^2}\big||z_a(t)|^2-|z_a(0)|^2\big|\leq|I_a(t)-I_a(0)|\leq\frac{\sqrt{6}}{2a^2}\big||z_a(t)|^2-|z_a(0)|^2\big|
\end{equation}
with the action $I_a:=\frac{a|u_a|^{2}+a^{-1}|v_a|^2}{2}$. Thus by \eqref{form836} and \eqref{form838}, one has
\begin{equation}
\label{form729'}
\sup_{a\in\mb{N}_*}e^{2\rho a^{\theta}}a^{2}|I_{a}(t)-I_{a}(0)|\leq\varepsilon^{3}.
\end{equation}





\end{document}